\pdfoutput=1
\documentclass[11pt]{article}
\usepackage{amssymb}
\usepackage[all,cmtip]{xy}
\usepackage[width=17cm,top = 3.5cm,bottom=3.5cm]{geometry}
\usepackage{amsthm,amsfonts,amssymb,amsmath,amscd} 
\usepackage{aliascnt} 
\usepackage{mathrsfs} 
\usepackage{xargs,ifthen} 

\usepackage{comment}
\usepackage{color}




\usepackage{hyperref}
\definecolor{tocolor}{rgb}{.1,.1,.1}
\definecolor{urlcolor}{rgb}{.2,.2,.6}
\definecolor{linkcolor}{rgb}{.1,.1,.5}
\definecolor{citecolor}{rgb}{.4,.2,.1}
\hypersetup{backref=true, colorlinks=true, urlcolor=urlcolor, linkcolor=linkcolor, citecolor=citecolor}


\newcommandx{\thdef}[2]{
	\newaliascnt{#1}{theorem}  
	\newtheorem{#1}[#1]{#2}
	\aliascntresetthe{#1}  
	\newtheorem*{#1*}{#2}
	\expandafter\newcommand\expandafter{\csname #1autorefname\endcsname}{#2}
}

\makeatletter
\newtheorem*{rep@theorem}{\rep@title}
\newcommand{\newreptheorem}[2]{%
\newenvironment{rep#1}[1]{%
 \def\rep@title{#2 \ref{##1}}%
 \begin{rep@theorem}}%
 {\end{rep@theorem}}}
\makeatother

\newtheorem{theorem}{Theorem}[section]
\newreptheorem{theorem}{Theorem}

\thdef{lemma}{Lemma}
\thdef{corollary}{Corollary}
\thdef{conjecture}{Conjecture}
\newreptheorem{conjecture}{Conjecture}
\thdef{proposition}{Proposition}

\theoremstyle{definition}
\thdef{definition}{Definition}
\thdef{notation}{Notation}

\theoremstyle{remark}
\thdef{remark}{Remark}

\theoremstyle{remark}
\thdef{ex}{Example}

\newenvironment{example}
{\begin{ex}}%
{\hfill $\blacksquare$\end{ex}}



\newcommand{\spc}[1]{\mathsf{#1}} 
\newcommand{\shf}[1]{\mathcal{#1}} 


\newcommand{\RR}{\mathbb{R}}
\newcommand{\CC}{\mathbb{C}}
\newcommand{\ZZ}{\mathbb{Z}}
\newcommand{\NN}{\mathbb{N}}


\newcommand{\rbrac}[1]{\left(#1\right)} 


\newcommandx{\fn}[2][2=]{#1\ifthenelse{\equal{#2}{}}{}{\!\rbrac{{#2}}}} 
\newcommandx{\id}[2][2=]{\fn{{\rm id}_{#1}}[#2]} 


\newcommand{\ext}[2][\bullet]{\spc{\Lambda}^{#1}{#2}} 
\newcommandx{\End}[2][1=]{\fn{\spc{End}_{#1}}[#2]} 
\newcommandx{\Hom}[2][1=]{\fn{\spc{Hom}_{#1}}[#2]} 
\newcommandx{\Aut}[2][1=]{\fn{\spc{Aut}_{#1}}[#2]} 
\newcommandx{\image}[1]{\fn{\spc{img}}[#1]} 
\renewcommandx{\ker}[1]{\fn{\spc{ker}}[#1]} 
\newcommandx{\rank}[1]{\fn{\mathrm{rank}}[#1]} 

\newcommandx{\ann}[1]{\fn{\spc{ann}}[#1]} 
\newcommand{\ad}[1][]{\mathrm{ad}_{#1}} 


\newcommandx{\hlgy}[3][1=\bullet,3=]{\spc{H}_{#1}^{#3}\!\rbrac{{#2}}} 
\newcommandx{\cohlgy}[3][1=\bullet,3=]{\spc{H}^{#1}_{#3}\!\rbrac{{#2}}} 
\newcommandx{\chow}[3][1=\bullet,3=]{\spc{A}^{#1}_{#3}\!\rbrac{{#2}}} 

\newcommandx{\Ext}[3][1=\bullet,3=]{\fn{\spc{Ext}^{#1}_{#3}}[{#2}]} 
\newcommandx{\Tor}[3][1=\bullet,3=]{\fn{\spc{Tor}^{#1}_{#3}}[{#2}]} 

\newcommandx{\Pic}[1]{\fn{\spc{Pic}}[{#1}]} 
\newcommandx{\chernalg}[2][1=\bullet]{\fn{\spc{Chern}^{#1}}[{#2}]} 
\newcommandx{\chern}[2][1=]{\fn{c_{#1}}[#2]} 
\newcommandx{\ch}[2][1=]{\fn{\mathrm{ch}_{#1}}[{#2}]} 


\newcommandx{\sKer}[2][1=]{ \fn{ \shf{K}er_{#1}}[{#2}] } 
\newcommandx{\sHom}[2][1=]{ \fn{ \shf{H}om_{#1}}[{#2}] } 
\newcommandx{\sEnd}[2][1=]{ \fn{ \shf{E}nd_{#1}}[{#2}] } 
\newcommandx{\sExt}[3][1=\bullet,3=]{\fn{\shf{E}xt^{#1}_{#3}}[{#2}]} 
\newcommandx{\sTor}[3][1=\bullet,3=]{\fn{\shf{T}or^{#1}_{#3}}[{#2}]} 

\newcommandx{\forms}[2][1=\bullet]{\Omega^{#1}_{#2}} 
\newcommandx{\can}[1][1=]{\omega_{#1}} 
\newcommandx{\acan}[1][1=]{\omega_{#1}^{-1}} 
\newcommandx{\tshf}[1]{\shf{T}_{#1}} 
\newcommandx{\mvect}[2][1=\bullet]{ \ext[#1]{\tshf{#2}} }
\newcommandx{\der}[2][1=\bullet]{\mathscr{X}^{#1}_{#2}} 
\newcommandx{\sJet}[3][1=,2=]{\shf{J}^{#1}_{#2}#3} 


\newcommandx{\tb}[2][1=]{\spc{T}_{\!#1}{#2}} 
\newcommandx{\ctb}[2][1=]{\spc{T}_{\!#1}^*{#2}} 
\newcommandx{\lie}[2][2=]{\fn{\mathscr{L}_{#1}}[#2]} 
\newcommandx{\hook}[2][2=]{\fn{i_{#1}}[#2]} 


\newcommand{\del}{\partial}










\newcommand{\sI}{\shf{I}} 





\newcommand{\sA}{\shf{A}}






\makeatletter
\newcommand{\thickbar}{\mathpalette\@thickbar}
\newcommand{\@thickbar}[2]{{#1\mkern1.5mu\vbox{
  \sbox\z@{$#1\mkern-1mu#2\mkern-1mu$}%
  \sbox\tw@{$#1\overline{#2}$}%
  \dimen@=\dimexpr\ht\tw@-\ht\z@-.6\p@\relax
  \hrule\@height.4\p@ 
  \vskip1\p@
  \hrule\@height.4\p@ 
  \vskip\dimen@
  \box\z@}\mkern1.5mu}
}
\makeatother

\numberwithin{equation}{section}

\newtheoremstyle{parag}
  {\topsep}   
  {\topsep}   
  {}  
  {}       
  {\bfseries} 
  {.}         
  { } 
  {}          
\theoremstyle{parag}

\makeatletter
\def\@cite#1#2{{\normalfont[{#1\if@tempswa , #2\fi}]}}
\makeatother


\newcommand{\tot}{\mathrm{tot}}

\newcommand{\OO}{\mathcal{O}}
\newcommand{\JJ}{\mathbb J}
\newcommand{\J}{\mathbb J}

\newcommand{\Def}{\mathbf{Def}}
\newcommand{\sP}{\mathcal{P}}

\newcommand{\LL}{\mathcal{L}}

\newcommand{\delbar}{\overline\partial}
\newcommand{\eps}{\epsilon}

\newcommand{\gl}{\mathfrak{gl}}
\newcommand{\At}{\mathrm{At}}

\newcommand{\N}{N}
\newcommand{\Res}{\mathrm{Res}}
\newcommand{\TT}{\mathbb{T}}
\newcommand{\T}{\mathbb{T}}
\newcommand{\kk}{\mathfrak{k}}
\newcommand{\ul}{\underline}
\newcommand{\gcm}{generalized complex manifold}
\newcommand{\gcs}{generalized complex structure}
\newcommand{\into}{\to}
\newcommand{\wrt}{with respect to}
\newcommand{\ev}{\mathrm{ev}}
\newcommand{\Ii}{\mathcal{I}}
\newcommand{\anc}{\mathrm{a}}
\newcommand{\ol}{\overline}

\newcommand{\mc}[1]{\text{$\mathcal{#1}$}}
\renewcommand{\SS}[1]{S^1\hspace{-.1em}#1}
\newcommand{\C}{\mathbb{C}}
\newcommand{\wt}{\widetilde}
\renewcommand{\Re}{\mathrm{Re}}
\renewcommand{\Im}{\mathrm{Im}}
\newcommand{\minus}{\!\setminus\!}

\newcommand{\sU}{\shf{U}}

\newcommand{\gcss}{generalized complex structures}

\newcommand{\gc}{generalized complex}
\newcommand{\gcms}{generalized complex manifolds}

\newcommand{\gcy}{generalized Calabi--Yau}
\newcommand{\gcys}{generalized Calabi--Yau structure}

\newcommand{\gf}{\text{$\varphi$}}
\newcommand{\nhood}{neighbourhood}
\newcommand{\e}{\varepsilon}
\newcommand{\SSS}{\textsection}
\newcommand{\R}{\text{${\mathbb R}$}}


\begin{document}

\title{\vspace{-4em} \huge Stable generalized complex structures}
\date{}

\author{
Gil R. Cavalcanti\thanks{Utrecht University; {\tt g.r.cavalcanti@uu.nl}}
\and
Marco Gualtieri \thanks{University of Toronto; {\tt mgualt@math.toronto.edu}}
}
\maketitle

\renewcommand{\abstractname}{\vspace{-\baselineskip}}

\abstract{\vspace{-2em}
A stable generalized complex structure is one that is generically symplectic but degenerates along a real codimension two submanifold, where it defines a generalized Calabi-Yau structure. We introduce a Lie algebroid which allows us to view such structures as symplectic forms. 
This allows us to construct new examples of stable structures, and also to define period maps for their deformations in which the background three-form flux is either fixed or not, proving the unobstructedness of both deformation problems.  We then use the same tools to establish local normal forms for the degeneracy locus and for Lagrangian branes. Applying our normal forms to the four-dimensional case, we prove that any compact stable generalized complex 4-manifold has a symplectic completion, in the sense that it can be modified near its degeneracy locus to produce a compact symplectic 4-manifold. 
}
\renewcommand\contentsname{\vspace{-\baselineskip}}
\tableofcontents

\section*{Introduction}

Generalized complex geometry~\cite{MR2013140, MR2811595} is a common generalization of complex and symplectic geometry in which the pointwise structure may be described as a symplectic subspace with transverse complex structure.  This symplectic distribution is controlled by a real Poisson structure, and so its rank may vary in any given example.  Four-dimensional generalized complex manifolds have been thoroughly investigated, the main focus being on structures which are generically symplectic and degenerate along a 2-dimensional submanifold, which then inherits a complex structure rendering it a Riemann surface of genus one.  In~\cite{MR2312048,MR2574746,Goto:2013vn,MR2958956,MR3177992}, many examples of generalized complex four-manifolds were found, the most interesting of which were on manifolds, such as $\CC P^{2}\#\CC P^{2}\#\CC P^{2}$, which admit neither symplectic nor complex structures.

In this paper we develop the main properties of \emph{stable} generalized complex structures, in which the structure is generically symplectic but degenerates along a real codimension 2 submanifold $D$, a direct generalization of the four-dimensional case described above. We show that $D$ inherits a generalized Calabi-Yau structure (of type 1) as well as a holomorphic structure on its normal bundle, and we prove that a tubular neighbourhood of $D$ is completely classified by this data, a result which was not available even in dimension four.  As an application of this result, we prove that any compact stable generalized complex 4-manifold has a symplectic completion, in the sense that it can be modified near $D$ to produce a compact symplectic 4-manifold.  We prove a similar normal form theorem for Lagrangian branes, half-dimensional submanifolds analogous to Lagrangians in symplectic geometry. This involves a generalization of the cotangent bundle construction in symplectic geometry, where for example, we associate a natural stable generalized 6-manifold to any co-oriented link $K\subset S^{3}$, by modifying the cotangent bundle of $S^{3}$ along $K$ in a certain way.  We also provide a construction of stable structures on torus fibrations, obtaining, for instance, a stable structure on $S^{1}\times S^{5}$.  We then move to deformation theory and define two period maps controlling deformations of stable generalized complex structures on compact manifolds $M$. The first describes deformations with fixed background 3-form and is a map to $H^{2}(M\backslash D,\CC)$, independently discovered by Goto~\cite{Goto:2015fk}. The second describes simultaneous deformations of the pair $(\JJ,H)$ comprised of a stable structure $\JJ$ integrable with respect to the 3-form $H$, and is a map to 
$
H^{2}(M\backslash D,\RR)\oplus H^{1}(D,\RR).
$
In both cases, we obtain the unobstructedness of the deformation problem. Finally, we describe a number of topological constraints which the pair $(M,D)$ must satisfy in order to admit a stable generalized complex structure. These exclude, for example, the possibility that $D$ is a positive divisor in a compact complex manifold $M$.

The main insight behind the above results is that a stable generalized complex structure is equivalent to a  \emph{complex log symplectic form}, a complex 2-form with a type of logarithmic singularity along the divisor $D$ and whose imaginary part defines an \emph{elliptic symplectic form}, which is a symplectic form but for a Lie algebroid which we introduce called the \emph{elliptic tangent bundle}. This approach, analogous to that taken in holomorphic log symplectic geometry~\cite{MR1953353} as well as in the recent development of real log symplectic geometry~\cite{MR3250302,MR3214314,Marcut-Osorno,MR3245143,Cavalcanti13}, 
 justifies the intuition that a stable generalized complex structure is a type of singular symplectic structure, and it allows us to apply symplectic techniques such as Moser interpolation.  For this reason, we carefully develop the theory of logarithmic and elliptic forms associated to smooth codimension 2 submanifolds.  In particular, we compute the Lie algebroid cohomology of the elliptic tangent bundle and give an explicit description of its cup product. \\
 
{\noindent \it Organization of the paper:}

\noindent In Section~\ref{sectcxdiv}, we introduce the notion of a complex divisor in the smooth category and its associated pair of Lie algebroids, the logarithmic tangent bundle (\SSS\ref{logderhamcxsect}) and the elliptic tangent bundle (\SSS\ref{elltn}), which facilitate working with codimension 2 logarithmic forms.  We describe the various residues of an elliptic form (\SSS\ref{elllogcohmg}), allowing an explicit description of the elliptic de Rham cohomology and its cup product.  We then compare (\SSS\ref{compelllog}) the logarithmic and elliptic de Rham complexes in the case that the elliptic residue vanishes, a condition  which is relevant since stable generalized complex structures satisfy it.  We end with an alternative geometric definition of the key Lie algebroids we use, identifying them with certain generalized Atiyah algebroids (\SSS\ref{atiyahsect}) and using this we obtain a key lemma (\SSS\ref{isodiffdiv}) that any family of complex divisors may be rectified in the smooth category. 

In Section~\ref{sec2} we introduce the main object of study: stable generalized complex structures. Sections \SSS\ref{canbun}--\ref{hollinbu} establish general results about the geometry of canonical line bundles and of generalized Calabi-Yau manifolds, and in~\SSS\ref{stablestructures} we define stable structures and determine in Theorem~\ref{gholstrnd} the inherited geometry of the anticanonical divisor, ending with a method for constructing new examples (\SSS\ref{exst}).

In Section~\ref{equivloggc}, we establish 
the equivalence between stable structures and complex log symplectic structures~(Theorem~\ref{equivsgcils}), or with co-oriented elliptic symplectic structures (\SSS\ref{elllogsymp}) if we consider only gauge equivalence classes of stable structures, and we use this to define two period maps, one for deformations in which $H$ is fixed (\SSS\ref{perfixH}) and one where it is not (\SSS\ref{pernofixH}).  These then impose certain topological constraints on $D$ and on its complement (\SSS\ref{topconst}).  In the remainder of this section we establish three main local normal form theorems: Theorem~\ref{darbouximaglog} is a Darboux theorem for the neighbourhood of a point in $D$, Theorem~\ref{linaboutd} classifies a tubular neighbourhood of $D$, and Theorem~\ref{canonicalsymp} is a Lagrangian brane neighbourhood theorem.  We prove our symplectic completion result for stable 4-manifolds (Theorem~\ref{theo:symplectization}) using the second of these. 

The period maps defined in Section~\ref{equivloggc} establish the unobstructedness of the deformation problems (with and without fixed 3-form flux), which suggests that the $L_{\infty}$ algebras controlling them are formal. In Section~\ref{sec4} we prove that the dgLa controlling the first problem is in fact formal, and we prove that the $L_{\infty}$ algebra controlling the second is quasi-isomorphic to a formal dgLa; we conjecture (Conjecture~\ref{conjlinfty}) that our quasi-isomorphism extends to a $L_{\infty}$ morphism. \\
 
{\noindent \it Acknowledgements:}

\noindent We thank R. Goto for the opportunity to visit Kyoto in 2013 and share some of the results of the present work, including Corollary~\ref{independentgoto} concerning unobstructedness for deformations with fixed 3-form, which was independently discovered by him and has recently appeared~\cite{Goto:2015fk}. 

G. C. was supported by a VIDI grant from NWO, the Dutch science foundation.
M. G. was supported by an NSERC Discovery Grant and acknowledges support from U.S. National Science Foundation grants DMS 1107452, 1107263, 1107367 ``RNMS: GEometric structures And Representation varieties'' (the GEAR Network).

\section{Complex divisors on smooth manifolds}\label{sectcxdiv}

\begin{definition}
Let $U$ be a smooth complex line bundle over the smooth 
$n$--manifold $M$, and let $s\in C^\infty(M,U)$ be a section transverse to the zero section.  We refer to the pair $D=(U,s)$ as a \emph{complex divisor}. 
\end{definition}

Our nomenclature is by analogy with the well-known correspondence between holomorphic line bundles with section and divisors on complex manifolds.  In our case, we regard the pair $(U,s)$ as the divisor, though we may abuse notation and use $D$ to refer to the smooth real codimension 2 submanifold given by the zero set of $s$.
Note that as $s$ vanishes transversely along $D$, it has a nonvanishing normal derivative which establishes an isomorphism between the real normal bundle $N$ of $D$ and the restriction of $U$ to $D$.
\begin{equation}\label{isonu}
d_\nu s : \xymatrix{N\ar[r]^{\cong} & U|_D}.
\end{equation} 
As a result, we obtain a complex structure on $N$, and so 
$D$ is co-oriented, defining an integral class in the second cohomology group $H^2(M,\ZZ)$ which coincides with the Chern class of $U$, just as in the holomorphic theory.

We now observe that by considering infinitesimal symmetries of a complex divisor we obtain several useful Lie algebroids.

\subsection{The logarithmic tangent bundle}\label{logderhamcxsect}

\begin{definition}
The \emph{logarithmic tangent bundle} associated to the complex divisor $D$ is the Lie algebroid, denoted by $T(-\log D)$, given by the locally free sheaf of complex vector fields on $M$ which preserve the ideal $\Ii_s \subset C^\infty_\CC(M)$  given by the image of the map
\begin{equation}
\xymatrix{C^\infty(M,U^{*})\ar[r]^-{s} & C^\infty_\CC(M)}.
\end{equation} 
The anchor map $a:T(-\log D)\to T_{\CC}M$ is defined by the inclusion of sheaves, and the bracket is inherited from the Lie bracket of vector fields.
\end{definition}

Away from the zero locus of $s$, the anchor is an isomorphism, and we give an explicit description of $T(-\log D)$ near a point on the zero locus as follows.  
By the transversality of $s$, we may choose a local trivialization in which $s$ is given by the complex coordinate function $w$, and let $x_3,\ldots, x_n$ be real functions forming a completion to a coordinate system, so that $T^*_\CC M$ is locally generated by $(dw,d\bar w, dx_3,\ldots, dx_n)$. Then the algebroid $T(-\log D)$ is locally freely generated over $C^\infty_\CC$ as follows.
\begin{equation}
T(-\log D) = \left< w\del_w, \del_{\bar w}, \del_{x_3},\ldots, \del_{x_n}\right>.	
\end{equation}
In the case of a nonsingular divisor $\mathcal{D}$ on a complex manifold, with holomorphic log tangent bundle $\mathcal{T}(-\log \mathcal{D})$, the above complex Lie algebroid coincides with the natural Lie algebroid structure on $\mathcal{T}(-\log\mathcal{D})\oplus T^{0,1}M$, the smooth Lie algebroid underlying the holomorphic one (see~\cite{Laurent-Gengoux01012008}).


Also by analogy with the holomorphic case, we refer to the de Rham complex of the algebroid $T(-\log D)$ as the logarithmic de Rham complex of the complex divisor $D=(U,s)$. 
Similarly, we use the notation $\Omega^k(\log D)$ for the sheaf of sections of the bundle of logarithmic $k$-forms; in particular
\begin{equation}
\Omega^k(M,\log D) = C^\infty(M, \wedge^k(T(-\log D))^*).
\end{equation}
In the above coordinates, a general logarithmic form may be written as
\begin{equation}\label{expan}
\rho = d\log w\wedge \alpha + \beta,
\end{equation}
for uniquely determined $\alpha,\beta$ in the subalgebra generated by $(d\bar w,dx_3,\ldots, dx_n)$.

There are two important morphisms comparing the logarithmic de Rham complex with the usual de Rham complex. The first derives from the fact that the anchor map 
\begin{equation}
\xymatrix{T(-\log D)\ar[r]^-{a} & TM}
\end{equation}   
is an isomorphism over the divisor complement, i.e. the nonvanishing locus of $s$.  We obtain a pullback along the inclusion $i$ of the complement:
\begin{equation}
i^*: \Omega^k(M,\log D)\to \Omega^k(M\backslash D,\CC).
\end{equation}
Just as in the holomorphic theory~\cite{MR0199194}, $i$ induces an isomorphism on cohomology groups.
\begin{theorem}\label{logdcoh}
 The inclusion of the divisor complement induces an isomorphism between the logarithmic cohomology of $(M,D)$ and the complex de Rham cohomology of the complement:
\begin{equation}
H^k(i^{*}):\xymatrix{H^k(M,\log D)\ar[r]^-{\cong}& H^k(M\backslash D, \CC)}.
\end{equation}
\end{theorem}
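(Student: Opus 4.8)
The plan is to mimic the classical holomorphic argument of Deligne/Grothendieck, but adapted to the smooth setting where the anchor is only generically an isomorphism. The key point is that the statement is local in nature once one sets up a Mayer--Vietoris / sheaf-cohomology comparison, so the whole proof reduces to a local computation in a tubular neighbourhood of $D$.

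Concretely, I would first observe that $i^*$ is a morphism of complexes of sheaves on $M$, from the logarithmic de Rham complex $\Omega^\bullet(\log D)$ to the pushforward $Rj_* \underline{\CC}$-type complex coming from the complement, and that away from $D$ the anchor is an isomorphism so $i^*$ is a quasi-isomorphism there. Hence the only thing to check is that $i^*$ induces an isomorphism on stalks (equivalently, on cohomology of small balls) at points of $D$. By the transversality of $s$ and the explicit coordinate description $T(-\log D) = \langle w\del_w, \del_{\bar w}, \del_{x_3},\dots,\del_{x_n}\rangle$ from~\eqref{expan}, a neighbourhood of a point of $D$ looks like $\DD \times \RR^{n-2}$, where $\DD$ is a disc with the punctured origin. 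The Poincar\'e-lemma factor $\RR^{n-2}$ contributes trivially, so the computation collapses to the two-dimensional model: I must show that the logarithmic de Rham cohomology of the disc $\DD$ agrees with $H^\bullet(\DD\minus\{0\},\CC)$, which is $\CC$ in degrees $0$ and $1$ (the degree-one class represented by $d\log w$) and vanishes otherwise.

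The heart of the argument is therefore a \emph{logarithmic Poincar\'e lemma} on the disc. Using the normal form~\eqref{expan}, any logarithmic form decomposes as $\rho = d\log w \wedge \alpha + \beta$ with $\alpha,\beta$ smooth in $(d\bar w, dx_i)$, so that $d_{\log}\rho = -d\log w\wedge d\alpha + d\beta$, where $d$ on the $\beta,\alpha$ pieces is the ordinary exterior derivative involving only $\del_{\bar w}$ and $\del_{x_i}$ (the $\del_w$-direction being twisted to $w\del_w$, which kills $d\log w$). I would then run a Moser-type or explicit homotopy-operator argument in these coordinates: the residue $\alpha|_{w=0}$ along $D$ is a closed form on $D$ computing the image in the $H^{k-1}(D)$ summand, and after subtracting off a multiple of $d\log w$ times a primitive one reduces $\rho$ to a genuinely smooth form $\beta$, whose cohomology on the disc is computed by the ordinary Poincar\'e lemma. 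This matches exactly the Gysin-type decomposition $H^k(\DD\minus\{0\}) = H^k(\DD)\oplus d\log w\wedge H^{k-1}(D)$ on the complement side, giving the stalkwise isomorphism.

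\textbf{The main obstacle} I expect is the smooth (as opposed to holomorphic) character of the homotopy operator: the classical integral formula for the Poincar\'e lemma on the punctured disc does not a priori produce forms that remain in the logarithmic subalgebra, because integrating in the $w$-direction can introduce $\log|w|$ or $1/w$ terms that are not of the allowed type $d\log w\wedge(\text{smooth})+(\text{smooth})$. Controlling the behaviour of the primitive near $w=0$ — ensuring that the only singular contribution is exactly the $d\log w$ term isolated by the residue, and that the remainder extends smoothly across $D$ — is the delicate step, and it is precisely where the transversality of $s$ and the tameness of the logarithmic structure must be used. Once this local lemma is established, globalizing via a partition of unity and the Mayer--Vietoris spectral sequence (or equivalently, a hypercohomology argument comparing the two complexes of sheaves) is routine and yields $H^k(i^*)$ as the desired isomorphism in all degrees.
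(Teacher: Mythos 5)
Your global strategy coincides with the paper's own proof: view both sides as hypercohomology of complexes of sheaves, note that $i^*$ is a morphism of sheaf complexes which is an isomorphism away from $D$, and reduce everything to a local cohomology computation in a small ball about a point of $D$, where the answer should be $\CC$ in degrees $0$ and $1$, generated by $1$ and $d\log w$, matching the cohomology of the punctured ball, which is homotopic to a circle. The genuine gap is that the local step --- which you yourself call ``the heart of the argument'' --- is never actually carried out, and the mechanism you sketch for it would fail as stated. You claim that after subtracting off the residue contribution one reduces $\rho$ ``to a genuinely smooth form $\beta$'' and can then invoke the ordinary Poincar\'e lemma. In the smooth category this is false: vanishing of the residue does not force smoothness, as the paper warns immediately after defining the residue (a zero-residue log form may have components in the ideal generated by $\bar w\, d\log w$ and $d\log w\wedge d\bar w$). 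Concretely, on the disc the closed degree-two form $d\log w\wedge d\bar w$ has zero residue and is not smooth; it \emph{is} exact in the logarithmic complex (indeed $d\log w \wedge d\bar w = d(-\bar w\, d\log w)$), but establishing exactness of such forms is precisely the nontrivial content you have deferred. There is also a small error in your formula for the differential: with $\rho = d\log w\wedge\alpha + \beta$ one has $d\rho = d\log w\wedge(w\del_w\beta - d'\alpha) + d'\beta$, where $d'$ omits the $\del_w$-derivative; the term $w\del_w\beta$ cannot be dropped.

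What the local lemma actually requires is not a radial homotopy operator controlled by ``transversality of $s$ and tameness,'' but the smooth Dolbeault ($\bar\del$-) lemma in the normal disc together with holomorphic division, neither of which appears in your sketch. In the two-dimensional model a logarithmic $1$-form $\rho = g\, d\log w + h\, d\bar w$ is closed iff $\del_{\bar w}g = w\del_w h$; solving $\del_{\bar w}f_0 = h$ smoothly by the Cauchy transform makes $\phi := g - w\del_w f_0$ holomorphic, and writing $\phi = \phi(0) + w\psi$ with $\psi$ holomorphic, with holomorphic primitive $\Psi$, gives $\rho = \phi(0)\, d\log w + d(f_0 + \Psi)$, so that $H^1 = \CC\cdot[d\log w]$; in degree two one solves $\del_{\bar w}g = -k$ to exhibit $k\, d\log w\wedge d\bar w = d(g\, d\log w)$ as exact. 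Higher-dimensional balls then follow from the double-complex/Poincar\'e-lemma reduction you describe, exactly as the paper carries out for the elliptic algebroid in Theorem~\ref{thm:lie algebroid cohomology}. So your outline is the same as the paper's, but until this $\bar\del$-based local computation is supplied, the proof is missing its mathematical core.
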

\begin{proof}
As is done in the holomorphic case, we view the logarithmic cohomology as the hypercohomology of the sheaf-theoretic logarithmic de Rham complex, and similarly for the de Rham cohomology of the divisor complement.  The pullback $i^*$ is a morphism of complexes of sheaves, and we verify that it is a quasi-isomorphism, in the sense that it induces an isomorphism on local cohomology sheaves. By the usual argument using the hypercohomology spectral sequence, the quasi-isomorphism induces an isomorphism on hypercohomology, yielding the result.

Away from the divisor, $i^*$ is an isomorphism, so to prove it is a quasi-isomorphism we compute local cohomology in a small ball surrounding a point in $D$.  With local coordinates chosen as above, the logarithmic cohomology is 1-dimensional in degrees $0$ and $1$ and zero otherwise, generated by a constant and by $d\log w$, respectively.  Applying $i^*$ takes these to generators for the cohomology of the divisor complement in the ball, which is homotopic to the circle.
\end{proof}

The second comparison map between usual and logarithmic forms is given by the residue map, which takes a logarithmic $k$--form to a usual $(k-1)$--form along $D$. The residue of our general form~\eqref{expan} is given by
\begin{equation}\label{defres}
\Res(d\log w \wedge\alpha + \beta) = j^*\alpha,
\end{equation}
where $j:D\hookrightarrow M$ is the inclusion. Note that, in contrast to the holomorphic theory, the vanishing of the residue does not guarantee that a form is smooth, i.e. a member of the subcomplex of usual differential forms: it may have a nonvanishing component in the ideal generated by $\bar w d\log w$ and $d\log w\wedge d\bar w$.  

Just as in the holomorphic theory, however, the residue defined above~\eqref{defres} is a cochain morphism and so induces a map of de Rham cohomology groups 
\begin{equation}
\Res_{*}: H^{k}(M,\log D)\to H^{k-1}(D,\CC).
\end{equation}
The topological description of this map is well-known in the study of residues in the holomorphic category, see e.g.~\cite{MR1194180}.  We leave the translation of the usual proof to our situation to the reader.

\begin{proposition}\label{poincaregysin}
The residue map coincides with the Poincar\'e--Leray residue map in the Thom--Gysin sequence for inclusion $i$ of the complement $M\backslash D$ into $M$. That is, we have the long exact sequence of cohomology groups with complex coefficients:
\begin{equation}
\xymatrix@C=1em{
\cdots\ar[r] & H^{k}(M)\ar[r]^-{i^{*}} & H^{k}(M\backslash D)\ar[r]^-{R} & H^{k-1}(D)\ar[r]^-{j_{*}} & H^{k+1}(M)\ar[r] & \cdots
}
\end{equation}
where $R=2\pi i\cdot \Res_{*}$ and $j_{*}$ is the pushforward on cohomology associated to the co-oriented inclusion $j:D\hookrightarrow M$.
\end{proposition}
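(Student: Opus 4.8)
The plan is to compute both maps explicitly on logarithmic cohomology classes, using the concrete realization of the Poincar\'e--Leray residue as the operator dual to the Leray tube coboundary. Recall that for the co-oriented codimension-2 inclusion $j:D\hookrightarrow M$, a tubular neighbourhood of $D$ is a disk bundle $\pi:N\to D$ whose boundary $\partial N$ is an oriented circle bundle, the fibre orientation being determined by the complex structure on the normal bundle $N\isom U|_D$ of~\eqref{isonu}. The Thom--Gysin residue $R:H^k(M\backslash D,\CC)\to H^{k-1}(D,\CC)$ is then characterized, as the adjoint of the Leray coboundary $\gamma\mapsto\tau(\gamma):=(\pi|_{\partial N})^{-1}(\gamma)$, by the pairing
\begin{equation}
\int_\gamma R[\omega] = \int_{\tau(\gamma)}\omega
\end{equation}
for every $(k-1)$-cycle $\gamma$ in $D$. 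By Theorem~\ref{logdcoh}, every class in $H^k(M\backslash D,\CC)$ is of the form $i^*\rho$ for a closed logarithmic form $\rho$, and since $\Res$ is already a cochain map it suffices to prove the identity $\int_{\tau(\gamma)}i^*\rho = 2\pi i\int_\gamma \Res(\rho)$.

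This is a purely local computation near $D$. First I would work in logarithmic coordinates $(w,x_3,\ldots,x_n)$ in which $s=w$, writing $\rho = d\log w\wedge\alpha+\beta$ as in~\eqref{expan}, with $\alpha,\beta$ in the subalgebra generated by $(d\bar w,dx_3,\ldots,dx_n)$. Shrinking the neighbourhood so that $\partial N=\{|w|=\epsilon\}$, I would evaluate $\int_{\tau(\gamma)}i^*\rho$ by fibre integration: integrate first over the circle $w=\epsilon e^{i\theta}$ and then over $\gamma$. On the fibre one has $d\log w = i\,d\theta$ and $d\bar w = -i\epsilon e^{-i\theta}\,d\theta$, so that the $\beta$ contribution and the $d\bar w$-components of $\alpha$ produce terms of order $\epsilon$, whereas $\int_{S^1}d\log w = 2\pi i$. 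Because the tube integral is independent of $\epsilon$, I may pass to the limit $\epsilon\to0$, in which $\alpha$ restricts to $j^*\alpha$ and the fibre integral yields $2\pi i\,j^*\alpha = 2\pi i\,\Res(\rho)$. Fibre integration then gives $\int_{\tau(\gamma)}i^*\rho = 2\pi i\int_\gamma\Res(\rho)$, as required, so that $R=2\pi i\cdot\Res_*$.

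The main point to handle with care is the $\epsilon\to0$ limiting argument together with the orientation bookkeeping: one must confirm that homological invariance of the tube integral genuinely licenses the passage to the limit, that the non-logarithmic pieces (in particular the part of $\alpha$ involving $d\bar w$, which is a novelty compared to the holomorphic setting) contribute nothing in the limit, and that the fibre orientation induced by the complex normal structure produces the factor $+2\pi i$ rather than its negative. As a coordinate-free cross-check on the sign, I would realize both maps as connecting homomorphisms: up to the Thom isomorphism $H^{k-1}(D)\isom H^{k+1}(M,M\backslash D)$, the residue is the boundary map in the long exact sequence of the pair $(M,M\backslash D)$, and matching it with the algebraic residue again reduces to the local model, where $i^*(d\log w)$ is the generator of $H^1$ of the punctured disk with period $2\pi i$.
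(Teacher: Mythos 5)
You should know at the outset that the paper offers no proof of this proposition: it remarks that the statement is the analogue of a classical fact about holomorphic residues, cites the literature, and explicitly leaves ``the translation of the usual proof to our situation to the reader.'' Your proposal carries out exactly that translation, and it is correct. The two ingredients are the standard ones: the characterization of the Thom--Gysin map $R$ as the adjoint of the Leray tube map, which is legitimate with $\CC$-coefficients because $H^{k-1}(D,\CC)\cong\mathrm{Hom}(H_{k-1}(D;\ZZ),\CC)$, so a class is detected by its periods; and the representation of every class on $M\backslash D$ by a closed logarithmic form via Theorem~\ref{logdcoh}. What is genuinely new relative to the holomorphic case is exactly what you flag: the $d\bar w$-components. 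Your treatment of them is right --- on the tube $\{|w|=\epsilon\}$ both $d\log w$ and $d\bar w$ restrict to multiples of $d\theta$, so the $d\log w\wedge d\bar w$ part of the form restricts to zero identically, while the $d\bar w$-part of $\beta$ contributes only $O(\epsilon)$ --- and, because the coefficients are merely smooth rather than holomorphic, the fibre integral is no longer independent of $\epsilon$ term by term, so the homological invariance of $\int_{\tau_\epsilon(\gamma)}i^*\rho$ (tubes of different radii cobound an annular chain in $M\backslash D$, and $i^*\rho$ is closed there) is precisely what licenses the passage to the limit $\epsilon\to0$; your proof correctly rests on this point rather than on termwise $\epsilon$-independence. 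The orientation bookkeeping also comes out right: the co-orientation of $D$ is the one induced by the complex structure on $N\cong U|_D$ via~\eqref{isonu}, so the fibre circle is oriented by increasing $\theta$, giving $\int_{S^1}d\log w=+2\pi i$ and matching the normalization $R=2\pi i\cdot\Res_*$. The one step I would make explicit in a write-up is the local-to-global passage: what the coordinate computation really establishes is that the pushforward of $i^*\rho$ along the circle bundle $\partial N_\epsilon\to D$ converges uniformly on compact sets to $2\pi i\,\Res(\rho)$, and the tube integral over $\gamma$ equals the integral of this pushforward over $\gamma$ by the projection formula; phrased this way, no single coordinate chart needs to contain the tube $\tau(\gamma)$, and the argument is genuinely global.
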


\subsection{The elliptic tangent bundle}\label{elltn}

Any complex divisor $D=(U,s)$ determines a complex conjugate divisor $\ol D=(\ol U,\ol s)$, with the same zero locus. As described above, this divisor gives rise to a Lie algebroid $T(-\log\ol D)$, which is, appropriately, complex conjugate to $T(-\log D)$.  In the same coordinates chosen above, we have
\begin{equation}
T(-\log \ol D) = \left< \del_w, \bar w\del_{\bar w}, \del_{x_3},\ldots, \del_{x_n}\right>.
\end{equation}
The anchor maps $\anc,\ol\anc$ of the algebroids $T(-\log D), T(-\log\ol D)$ are transverse as bundle maps to $T_\CC M$, and so there is a well-defined fibre product Lie algebroid, which is sent to itself by complex conjugation.
\begin{definition}
The elliptic tangent bundle associated to the complex divisor $D=(U,s)$ is the real Lie algebroid $T(-\log |D|)$ whose complexification is the fiber product of the logarithmic tangent bundle of $D$ with its complex conjugate.
\end{definition}

A convenient way to describe $T(-\log |D|)$ is as the real infinitesimal symmetries of the tensor product $(U\otimes \ol U, s\otimes \ol s)$ of $D$ with $\ol D$.  While $s\otimes\ol s$ is not transverse to the zero section, it defines an ideal of functions $\Ii_{s\ol s}\subset C^\infty_\RR(M)$ as before; the elliptic algebroid is identified with the subsheaf of vector fields preserving this ideal. In the coordinate system chosen above, it is given by the vector fields preserving the quadratic defining function $w\ol w$: in polar coordinates $w = re^{i\theta}$, we have 
\begin{equation}\label{basisell}
T(-\log |D|) = \left< r\del_r, \del_\theta,\del_{x_3},\ldots, \del_{x_n}\right>.
\end{equation}

In general, we may define the elliptic tangent bundle as follows.
\begin{definition}
Let $(R,q)$ be an \emph{elliptic divisor}, consisting of a real line bundle $R$ over $M$ with a smooth section $q$  whose zero set $D$ is a smooth codimension 2 critical submanifold along which the normal Hessian is positive-definite. The sheaf of vector fields preserving the ideal $\Ii_q = q(C^\infty(M,R^*))$ is called the \emph{elliptic tangent bundle} associated to $(R,q)$. 
\end{definition}
Note that the real line bundle $R$ is oriented by $q$ since it is a trivialization away from a codimension $2$ submanifold.  Also, the normal Hessian referred to above is the leading term of the Taylor expansion of $q$:
\begin{equation}\label{hessian}
\mathrm{Hess}(q)\in C^\infty(D, S^2N^*\otimes R).
\end{equation}
Using the Morse--Bott lemma and the factorization $x^2 + y^2 = (x+iy)(x-iy) = w\bar w$, one sees that an elliptic divisor $(R,q)$  may be expressed as $(U\otimes \ol U, s\otimes\ol s)$ for a complex divisor $(U,s)$ if and only if its zero set is co-orientable; then $(U,s)$ is uniquely determined up to diffeomorphism by the choice of a co-orientation.

\subsection{Elliptic logarithmic cohomology}\label{elllogcohmg}

We now describe the Lie algebroid de Rham complex of the elliptic tangent bundle $T(-\log|D|)$ associated to the elliptic divisor $|D|=(R,q)$. We use $D$ to denote the zero locus of $q$. In the main case of interest, $(R,q)$ shall be the elliptic divisor obtained from a complex divisor, i.e. $q = s\otimes \ol s$.  
We use $\Omega^{k}(\log |D|)$ to denote the smooth sections of $\wedge^{k}((T(-\log|D|))^{*})$ and call these \emph{elliptic forms}.
From~\eqref{basisell} we see that locally, a general elliptic form may be written
\begin{equation}\label{ellform}
\rho = d\log r \wedge d\theta \wedge \rho_0 + d\log r \wedge \rho_1 + d\theta \wedge \rho_2 + \rho_3,
\end{equation}
with $\rho_i$ smooth forms. Just as for logarithmic forms, there is a well-defined pullback $i^*$ from logarithmic forms to the de Rham complex of the complement $M\backslash D$ (with real coefficients).  In contrast to the previous case, however, since there are two independent singular generators $d\theta, d\log r$ for the elliptic forms, we obtain several residue forms, which we organize as follows.

The restriction of $T(-\log|D|)$ to $D$ defines an algebroid of infinitesimal symmetries of the normal bundle $N$ of $D$; it forms an exact sequence
\begin{equation}\label{restd}
0\to \ul\RR\oplus \kk\to T(-\log |D|) |_{D}\to TD\to 0,
\end{equation}
where $\ul \RR$ is the trivial bundle generated by the Euler vector field $E$, and $\kk\cong \wedge^{2}N^{*}\otimes R$ is the adjoint bundle of infinitesimal rotations preserving the Hessian; when $N$ is orientable, a choice of orientation on $N$ distiguishes a global trivialization $I$ of $\kk$, a complex structure which generates the $S^{1}$ action on $N$. So, the logarithmic forms, restricted along $D$, define algebroid forms for the above algebroid. Therefore we may define an \emph{elliptic} residue $\Res_q$ and, if this vanishes, a \emph{complex} residue $\Res_c$, by the canonical projection maps given by dualizing \eqref{restd}:
\begin{equation}\label{proja}
\begin{gathered}
\xymatrix@R=1em{
      \Omega^k_{0}(\log |D|)\ar[r] & \Omega^k(\log |D|) \ar[r]^-{\Res_q} &  \Omega^{k-2}(D, \kk^*)\\
     &  \Omega^k_{0}(\log |D|)\ar[r]^-{\Res_{c}} &   \Omega^{k-1}(D,\ul\RR \oplus \kk^*)   }
\end{gathered}
\end{equation}
We denote the kernel of $\Res_q$ by $\Omega^\bullet_{0}(\log |D|)$; it is a natural subcomplex of the elliptic de Rham complex.
The orientation bundle $\kk$ is flat, and the elliptic residue is a degree $-2$ cochain map to $\Omega^{\bullet}(D,\kk^{*})$.  Also, if $N$ is oriented, $\kk$ is trivialized and we may view $\Res_{c}$ as a form with complex coefficients.  Applied to the form~\eqref{ellform} and using $\del_{\theta}$ to orient $N$, we define $\Res_q(\rho) = j^*\rho_0$, and if this vanishes, we define 
\begin{equation}\label{ellcxres}
\Res_{c}(\rho) = j^*(\rho_{1}-i\rho_2),
\end{equation}
where $j$ denotes the inclusion $D\hookrightarrow M$.
This complex residue is compatible with the logarithmic residue defined in~\eqref{defres}: any $\rho\in\Omega^{\bullet}(\log D)$ may be pulled back to a complexified elliptic form, whose real and imaginary parts satisfy
\begin{equation}\label{identityres}
\Res_{c}(\Re(\rho)) = i\Res_{c}(\Im(\rho)) = \Res(\rho).
\end{equation}

The radial components of the residues $\Res_{q}, \Res_{c}$ play a special role. Quotienting the sequence~\eqref{restd} by the Euler vector field $E$, we obtain the Atiyah algebroid of the  circle bundle $\SS{N}$ associated to the rank 2 bundle $N$, an extension as below:
\begin{equation}\label{princ}
0\to \kk \to \At(\SS{N})\to TD\to 0.
\end{equation}
Because $T(-\log |D|)|_D$ is an extension of $\At(\SS{N})$ by a trivial bundle, the elliptic residue factors through a \emph{radial} residue map 
\begin{equation}
\Res_r:\Omega^k(\log|D|) \to C^\infty(D,\wedge^{k-1}\At(\SS{N})^*),
\end{equation}
\begin{definition}
The radial residue of the form~\eqref{ellform} is given by
\begin{equation}
\Res_r(\rho) = (d\theta\wedge\rho_0  + \rho_1)|_{D},
\end{equation}
well-defined as an algebroid form for the Atiyah algebroid of the principal circle bundle associated to the normal bundle of $D$.
\end{definition}

The radial residue may be viewed as an invariant form on the  $S^1$--bundle $\SS{N}$ associated to $N$ (i.e., the exceptional divisor of the real-oriented blow-up of $M$ along $D$). Also, the contraction $i_{\del_\theta}\Res_r(\rho)$ coincides with the elliptic residue. When this vanishes, $\Res_r(\rho)$ coincides with the real part of the complex residue.
We now compute the elliptic de Rham cohomology in terms of the de Rham cohomology of the complement and of the normal circle bundle. 

\begin{theorem}\label{thm:lie algebroid cohomology}
Let $|D|$ be an elliptic divisor.
Then the restriction of forms to the divisor complement, together with the radial residue map, defines an isomorphism 
\begin{equation}\label{decomposelllog}
H^k(\log|D|) = H^k(M\backslash D,\RR) \oplus H^{k-1}(\SS{N},\RR),
\end{equation}
where $\SS{N}$ is the $S^1$--bundle associated to the normal bundle of $D$.
\end{theorem}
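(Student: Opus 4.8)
The plan is to transport the entire complex to the real-oriented blow-up of $M$ along $D$, where the two maps $i^*$ and $\Res_r$ become the interior-restriction and the boundary-residue of an ordinary $b$-de Rham (logarithmic-at-the-boundary) complex, and then to exhibit the asserted splitting by an explicit closed lift. Let $p\colon \wt M\to M$ be the real-oriented blow-up of $M$ along $D$: it is a manifold with boundary whose boundary $\partial\wt M$ is the normal circle bundle $\SS{N}$ and whose interior is $M\backslash D$. In the polar coordinates of \eqref{basisell} the function $r$ (globally $r=\sqrt{|q|}$ for a choice of metric on $R$) is a boundary defining function, and the generators $r\del_r,\del_\theta,\del_{x_3},\ldots,\del_{x_n}$ are exactly the vector fields that are either $r\del_r$ or tangent to $\partial\wt M$. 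Since both $\del_\theta$ (the generator of the $S^1$-action on $\SS{N}$) and the operator $r\del_r$ are globally defined, the pullback along $p$ yields a coordinate-free isomorphism of Lie algebroids between $T(-\log|D|)$ and the $b$-tangent bundle of $\wt M$, hence an isomorphism of $\Omega^\bullet(\log|D|)$ with the complex $\Omega^\bullet_b(\wt M)$ of $b$-forms. Under this identification $i^*$ is restriction to the interior and $\Res_r$ is the $b$-residue onto $\partial\wt M=\SS{N}$, landing in the complex of $S^1$-invariant forms, whose cohomology is $H^\bullet(\SS{N},\RR)$ by averaging over the compact group $S^1$.

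Next I would set up the residue short exact sequence of complexes of sheaves on $\wt M$,
\[
0\to \Omega^\bullet_{\wt M}\to \Omega^\bullet_b(\wt M)\xrightarrow{\ \Res_r\ } \iota_*\,\Omega^{\bullet-1}_{\SS{N}}\to 0,
\]
where $\iota\colon \SS{N}\hookrightarrow \wt M$ is the boundary inclusion. Exactness is local: near the boundary a $b$-form is $d\log r\wedge\alpha+\beta$ with $\alpha,\beta$ smooth, its residue $\alpha|_{\partial}$ is clearly surjective, and the kernel is the genuinely smooth forms, since $\alpha|_{\partial}=0$ forces $d\log r\wedge\alpha=dr\wedge\alpha'$ to be smooth. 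Passing to (hyper)cohomology, and using that the inclusion of the interior $M\backslash D\hookrightarrow\wt M$ is a homotopy equivalence so that $H^k(\wt M,\RR)\cong H^k(M\backslash D,\RR)$ with the induced map being $i^*$, produces the long exact sequence
\[
\cdots\to H^k(M\backslash D)\to H^k(\log|D|)\xrightarrow{\ \Res_r\ } H^{k-1}(\SS{N})\xrightarrow{\ \delta\ } H^{k+1}(M\backslash D)\to\cdots.
\]

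It remains to split this sequence via the two projections $i^*$ and $\Res_r$, i.e. to show the connecting map $\delta$ vanishes, equivalently that every closed invariant form on $\SS{N}$ is the residue of a \emph{closed} $b$-form. Fixing a product collar $\SS{N}\times[0,\e)_r\subset\wt M$ with projection $\pi$, and a cutoff $\chi(r)$ equal to $1$ near the boundary and supported in the collar, I would take the global $b$-form $\chi(r)\,\tfrac{dr}{r}\wedge\pi^*\sigma$ for a closed invariant representative $\sigma$; its residue is $\sigma$, and a direct computation gives $d\big(\chi(r)\,\tfrac{dr}{r}\wedge\pi^*\sigma\big)=\chi'(r)\,dr\wedge\tfrac{dr}{r}\wedge\pi^*\sigma+\chi(r)\,\tfrac{dr}{r}\wedge\pi^*d\sigma=0$, since $dr\wedge dr=0$ and $d\sigma=0$. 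This exhibits an explicit closed lift, forcing $\delta=0$ and splitting the sequence into the claimed direct sum $H^k(\log|D|)\cong H^k(M\backslash D,\RR)\oplus H^{k-1}(\SS{N},\RR)$.

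The step I expect to be the main obstacle is not the splitting — which the computation above makes routine — but establishing the first paragraph's global, coordinate-free identification of $T(-\log|D|)$ with the $b$-tangent bundle of $\wt M$, and verifying that under it the abstractly defined radial residue $\Res_r$ of \eqref{proja}–\eqref{basisell} really coincides with the $b$-residue into the $S^1$-invariant complex on $\SS{N}$ (together with the averaging quasi-isomorphism identifying that complex's cohomology with $H^\bullet(\SS{N},\RR)$). An alternative, fully self-contained route avoiding the blow-up would mimic the proof of Theorem~\ref{logdcoh}, computing the local elliptic cohomology sheaves in a tubular model around $D$; there the obstacle shifts to checking that these local computations reproduce the cohomology of a solid torus with its core removed, matching $H^\bullet(M\backslash D)\oplus H^{\bullet-1}(\SS{N})$ locally before globalizing by the hypercohomology spectral sequence.
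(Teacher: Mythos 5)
Your strategy --- pass to the real-oriented blow-up $p:\wt M\to M$, use the $b$-de Rham residue sequence there, and split it with the explicit closed lift $\chi(r)\,\tfrac{dr}{r}\wedge\pi^*\sigma$ --- is genuinely different from the paper's proof, which never leaves $M$: there $(i^*,\Res_r)$ is shown to be a quasi-isomorphism of complexes of sheaves on $M$, with the key local computation done by a double complex spectral sequence in a coordinate ball around a point of $D$. However, your argument has a genuine gap at its foundation, and it is not the kind of routine verification your last paragraph suggests: the claimed identification of $\Omega^{\bullet}(\log|D|)$ with $\Omega^{\bullet}_{b}(\wt M)$ is false. The pullback $p^*$ is injective but far from surjective, because $C^{\infty}(\wt M)$ is strictly larger than $p^*C^{\infty}(M)$: the function $\cos\theta$ is smooth on $\wt M$ near the boundary, but on $M$ it equals $x/\sqrt{x^{2}+y^{2}}$, which does not extend over $D$. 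Consequently the $b$-complex is strictly larger than the elliptic complex; for instance $f(\theta)\,d\theta$ is a smooth form on $\wt M$ for every smooth $f$, but it is an elliptic form only when $f$ is constant. Put differently, $T(-\log|D|)$ is an algebroid over $M$ while the $b$-tangent bundle is an algebroid over $\wt M$; they are related by a Lie algebroid morphism covering $p$ (inducing an inclusion of complexes), not by an isomorphism.

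This matters because your residue sequence and splitting argument then compute $H^{\bullet}_{b}(\wt M)$ --- that is, they prove the Mazzeo--Melrose theorem for the blow-up --- and not $H^{\bullet}(\log|D|)$. Nor does your sequence restrict to the elliptic subcomplex: the radial residue of an elliptic form is always an $S^{1}$-invariant form on $\SS{N}$, so the quotient $\iota_*\Omega^{\bullet-1}_{\SS{N}}$ is too big on the elliptic side, and the kernel of $\Res_r$ on elliptic forms is strictly larger than the smooth forms on $M$ (e.g.\ $d\theta\wedge\beta$ with $\beta$ smooth and nonzero along $D$ has vanishing radial residue but is not smooth on $M$), so the assertion ``kernel $=$ genuinely smooth forms'' fails there. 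What is missing is precisely the statement that the inclusion $\Omega^{\bullet}(\log|D|)\hookrightarrow\Omega^{\bullet}_{b}(\wt M)$ is a quasi-isomorphism; since your steps 2--3 already give the cohomology of the right-hand side, this missing statement is essentially equivalent to the theorem itself, and the natural way to prove it is a local (sheaf-level) computation of the elliptic cohomology near $D$ --- which is exactly the double complex calculation \eqref{localcompelld} at the heart of the paper's proof, and exactly the ``alternative, fully self-contained route'' you defer to at the end. So the blow-up machinery, as deployed, does not bypass the real work: either carry out that local computation directly on $M$ as the paper does, or carry it out to justify the quasi-isomorphism with the $b$-complex and then invoke Mazzeo--Melrose.
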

\begin{proof}
Following the same strategy as in Theorem~\ref{logdcoh}, we show that restriction to the complement, together with the radial residue, define a quasi-isomorphism of complexes of sheaves
\begin{equation}
(i^*,\Res_r):\Omega^k(\log |D|) \into \Omega^k(M\backslash D,\RR) \oplus j_*C^\infty(\wedge^{k-1} A^*),
\end{equation}
where $j_*C^\infty(\wedge^k A^*)$ is the sheaf of algebroid forms for the Atiyah algebroid~\eqref{princ}, pushed forward to a sheaf on $M$ supported along $D$.  Recall that $A$-forms may be viewed as invariant forms on $\SS{N}$, so that this complex computes the required cohomology of $\SS{N}$. 

For open sets disjoint from $D$, the map is an isomorphism of complexes. For a sufficiently small ball centered on a point in $D$, the cohomology of the complement of $D$ is homotopic to the circle and the bundle $\SS{N}$ is locally trivial, so the right hand side has local cohomology given by
\begin{equation}
H^k(S^1,\RR) \oplus H^{k-1}(S^1,\RR)
\end{equation}

We now compute the local cohomology of the left hand side. In a coordinate chart $U$ as above, we choose the local splitting $T(-\log |D|) = E\oplus F$, with $E$ generated by $\del_{x_3},\ldots, \del_{x_n}$ and $F$ generated by $r\del_r, \del_\theta$. 
We then doubly grade the complex of logarithmic forms:
\begin{equation}
(\Omega^k(U,\log |D|), d)= (\oplus_{i+j=k} C^\infty(\wedge^i E^*\otimes \wedge^j F^*), d_E + d_F).
\end{equation}
We compute cohomology using the spectral sequence of a double complex. The $d_E$ cohomology is easily computed by the Poincar\'e lemma: it is generated by a constant in degree $(0,0)$, the forms $d\log r$ and $d\theta$ in degree $(0,1)$, and $d\log r\wedge d\theta$ in degree $(0,2)$.  The next differential is induced by $d_F$, and vanishes since the aforementioned generators are closed. Further differentials vanish, hence we obtain
\begin{equation}\label{localcompelld}
H^\bullet(U,\log |D|) = \wedge^\bullet (\RR [d\log r] \oplus \RR [d\theta]). 
\end{equation}
Finally, observe that $i^*1, i^*[d\theta]$ generate the local cohomology of the complement, whereas $\Res_r[d\log r] = 1$ and $\Res_r[d\log r \wedge d\theta] = [d\theta]$ generate the local cohomology of the normal $S^1$--bundle, establishing the quasi-isomorphism.
\end{proof}

Note that since the radial residue has cohomology class in 
$H^{k-1}(\SS{N})$, in the case that $N$ is oriented, we may compose with the pushforward along $\pi:\SS{N}\to D$ to obtain the elliptic residue in $H^{k-2}(D)$. We may also use the Gysin sequence
\begin{equation}
\xymatrix{  H^{k}(D)\ar[r]^-{\pi^*}& H^k(\SS{N})\ar[r]^{\pi_{*}}& H^{k-1}(D)\ar[r]^-{c_{1}} & H^{k+1}(D)
}
\end{equation}
to simplify the computation of elliptic de Rham cohomology: in the case that $N$ is trivial, for example, we conclude from Theorem~\ref{thm:lie algebroid cohomology} that  
\begin{equation}
H^k(\log|D|) = H^k(M\backslash D,\RR) \oplus H^{k-1}(D,\RR)\oplus H^{k-2}(D,\RR),
\end{equation}
where the first component corresponds to the restriction to the complement, and the second and third components are the radial residue, consisting of the real part of the complex residue and the elliptic residue, respectively.

Since the circle bundle $\SS{N}$ is homotopic to the intersection of a tubular neighbourhood of $D$ with $M\backslash D$, we have a canonical restriction homomorphism 
\begin{equation}\label{resttos1}
r:H^{k}(M\backslash D,\RR)\to H^{k}(S^{1}N,\RR),
\end{equation}
which is important for describing the product on $H^{\bullet}(\log |D|)$, as follows.
\begin{theorem}\label{cupprod}
The cup product on $H^{\bullet}(\log |D|)$ inherited from the differential graded algebra structure on the elliptic de Rham complex decomposes according to the splitting~\eqref{decomposelllog} as a sum of the usual cup product on $H^{\bullet}(M\backslash D,\RR)$ and the composition
\begin{equation}
\xymatrix@C=2em{
H^{k}(\SS{N})\times H^{l}(M\backslash D) \ar[r]^-{1\times r}&
H^{k}(\SS{N})\times H^{l}(\SS{N})\ar[r]^-{\cup}&
H^{k+l}(\SS{N})}.
\end{equation}
\end{theorem}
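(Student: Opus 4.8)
The plan is to compute the transported product directly on representative forms, exploiting that the splitting~\eqref{decomposelllog} is assembled from the two cochain maps $i^{*}$ and $\Res_{r}$ occurring in the quasi-isomorphism of Theorem~\ref{thm:lie algebroid cohomology}. For closed elliptic forms $\rho,\sigma$ it therefore suffices to express the two components $i^{*}(\rho\wedge\sigma)$ and $\Res_{r}(\rho\wedge\sigma)$ of the product in terms of the components of $\rho$ and $\sigma$, and then to pass to cohomology, where every term below is closed because $i^{*}$ and $\Res_{r}$ are cochain maps.

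The first component is immediate: since $i^{*}$ is pullback along the inclusion of the divisor complement, it is a morphism of differential graded algebras, so $i^{*}(\rho\wedge\sigma)=i^{*}\rho\cup i^{*}\sigma$. This identifies the $H^{\bullet}(M\backslash D)$--component of the product with the ordinary cup product, accounting for the first summand in the statement.

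For the radial residue I would use the intrinsic description $\Res_{r}(\rho)=(\iota_{E}\rho)|_{D}$, where $E$ is the Euler vector field of~\eqref{restd}, locally $r\del_{r}$, and the restriction is to the Atiyah algebroid~\eqref{princ}; this follows at once from the local generators~\eqref{basisell}, as $\iota_{E}$ kills $\del_{\theta},\del_{x_{i}}$ and sends $d\log r$ to $1$. Because $\iota_{E}$ is a graded antiderivation, restricting the identity $\iota_{E}(\rho\wedge\sigma)=\iota_{E}\rho\wedge\sigma+(-1)^{|\rho|}\rho\wedge\iota_{E}\sigma$ to $D$ produces a Leibniz rule for $\Res_{r}$; equivalently, writing $\rho=d\log r\wedge A_{\rho}+B_{\rho}$ with $A_{\rho},B_{\rho}$ free of $d\log r$ (the local splitting already used to prove Theorem~\ref{thm:lie algebroid cohomology}) and extracting the $d\log r$--coefficient of $\rho\wedge\sigma$ yields the same formula. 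The one point requiring care is the identification of the uncontracted factors: on the circle bundle $\SS{N}$, the exceptional divisor of the real-oriented blow-up, the form $d\log r$ restricts to zero, so $B_{\rho}|_{D}$ is exactly the geometric restriction $r(i^{*}\rho)$ of the complement class along the map~\eqref{resttos1}. Granting this, one obtains
\begin{equation*}
\Res_{r}(\rho\wedge\sigma)=\Res_{r}(\rho)\cup r(i^{*}\sigma)+(-1)^{|\rho|}\,r(i^{*}\rho)\cup\Res_{r}(\sigma).
\end{equation*}

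This formula shows that the radial-residue component of a product is a sum of cross terms, each pairing one factor's radial residue with the restriction of the other factor's complement class, and that no term pairs two radial residues — a reflection of the fact that $d\log r$ occurs at most linearly. Graded commutativity interchanges the two cross terms, so the whole product is determined by the single composition $H^{k}(\SS{N})\times H^{l}(M\backslash D)\xrightarrow{1\times r}H^{k}(\SS{N})\times H^{l}(\SS{N})\xrightarrow{\cup}H^{k+l}(\SS{N})$ of the statement, together with the ordinary product on $H^{\bullet}(M\backslash D)$. I expect the main obstacle to be precisely the compatibility invoked above: verifying that the uncontracted restriction $B_{\rho}|_{D}$, a priori only an Atiyah-algebroid form produced from $\rho$, coincides under the identifications of Theorem~\ref{thm:lie algebroid cohomology} with the topological restriction $r\circ i^{*}$ to the link $\SS{N}$. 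This is the step that genuinely couples the two descriptions of $H^{k-1}(\SS{N})$ — as radial residue and as restriction from the complement — underlying the splitting, and hence is what forces the mixed term of the product into the stated form.
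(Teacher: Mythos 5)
Your outline follows the same route as the paper: decompose each form as $\rho = d\log r\wedge A_{\rho} + B_{\rho}$ relative to a radial coordinate on a tubular neighbourhood, extract the $d\log r$-coefficient of a product, and arrive at the cross-term formula $\Res_{r}[\rho\wedge\rho'] = \Res_{r}[\rho]\cup r(i^{*}[\rho']) + (-1)^{k}\,r(i^{*}[\rho])\cup\Res_{r}[\rho']$. However, the step you explicitly ``grant'' --- that the class of $B_{\rho}|_{r=0}$ in $H^{\bullet}(\SS{N})$ equals $r(i^{*}[\rho])$ --- is the actual mathematical content of the theorem, and your proposed justification (that $d\log r$ restricts to zero on the exceptional divisor, so $B_{\rho}|_{D}$ ``is exactly'' the geometric restriction) does not hold as stated. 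The map $r$ of~\eqref{resttos1} is defined by restricting a class on $M\backslash D$ to a link $\{r=\epsilon\}$ at \emph{positive} radius and using the homotopy equivalence of that link with a punctured neighbourhood; the form $B_{\rho}|_{r=0}$, by contrast, lives at radius zero as an $\At(\SS{N})$-algebroid form. At radius $\epsilon$ the pullback of $i^{*}\rho$ to the link is $B_{\rho}|_{r=\epsilon}$, which differs from $B_{\rho}|_{r=0}$ as a form; and because $\rho$ itself is singular at $r=0$, there is no obvious chain homotopy relating the restrictions at different radii. Asserting that they are cohomologous is precisely the point at issue, not a consequence of $d\log r|_{\SS{N}}=0$. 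You correctly flag this as ``the main obstacle,'' but flagging it leaves the proof incomplete.

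The paper closes this gap as follows. Set $\alpha_{0}=\Res_{r}(\rho)$ and $\beta_{0}=(\rho - d\log r\wedge\alpha_{0})|_{D}$, and extend both to $\CC^{*}$-invariant forms on the tubular neighbourhood $\sU$. One checks that if $d\rho=0$ then $d\alpha_{0}=0$ and $d\beta_{0}=0$, so that $\rho - d\log r\wedge\alpha_{0}-\beta_{0}$ is a \emph{closed} elliptic form vanishing along $D$. Lemma~\ref{retractlog} --- whose proof is a genuine argument involving averaging over the circle action and divisibility by $r^{2}$ --- then shows this difference is exact on $\sU$, so $[\rho]=[d\log r\wedge\alpha_{0}+\beta_{0}]$ there. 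Restricting to $\sU\backslash D$, the term $d\log r\wedge\alpha_{0}$ becomes exact with primitive $(\log r)\,\alpha_{0}$ (here invariance of $\alpha_{0}$ is used, so that this is a well-defined form), giving $r(i^{*}[\rho])=[\beta_{0}]$. This invariant-extension argument, with Lemma~\ref{retractlog} as the essential input, is exactly what substitutes for your ``granting''; without it the mixed-term formula, and with it the theorem, remains unproven.
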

\begin{proof}
Choose a tubular neighbourhood and an identification with a neighbourhood $\sU$ of $D$ in the normal bundle $N$.  Choose a metric on $N$ so that we have a well-defined radial coordinate $r$ in $\sU$ and we may write any elliptic form as $\rho = d\log r \wedge \alpha + \beta$ with $i_{r\del_{r}}\alpha =i_{r\del_{r}}\beta =0$.
Denote the radial residue $\Res_{r}(\rho) = \alpha|_{r=0}$ by $\alpha_{0}$.  
We may view $\alpha_{0}$ as a $\CC^{*}$-invariant elliptic form on $\tot(N)$, and in this case the form $d\log r \wedge\alpha_{0}$ has the same residue as $\rho$, and the restriction 
\begin{equation}
(\rho - d\log r \wedge \alpha_{0})|_{D} = \beta|_{r=0}
\end{equation}
is a well-defined section $\beta_{0}$ of $\wedge^{\bullet}(\At(\SS{N})^{*})$ which we may again view as an invariant form on $\tot(N)$.  Therefore
\begin{equation}\label{removeprincpart}
\rho - d\log r \wedge \alpha_{0} - \beta_{0}
\end{equation}
vanishes as a logarithmic form along $D$.  Now observe that if $d\rho=0$, then $d\alpha_{0}=0$ as well, and further $d\beta_{0}=0$.  So, the form~\eqref{removeprincpart} is a closed elliptic form which vanishes along $D$.  By Lemma~\ref{retractlog}, such a form is exact in $\sU$, i.e.,
\begin{equation}
[\rho] = [d\log r \wedge \alpha_{0} + \beta_{0}].
\end{equation}
If we now restrict to the complement of $D$ in $\sU$, we see that $d\log r \wedge \alpha_{0}$ is exact, with primitive $(\log r) \alpha_{0}$, and so $[\rho] = [\beta_{0}]$ on $\sU\backslash D$.  Summarizing, we have 
\begin{equation}
r i^{*}[\rho] = [\beta_{0}].
\end{equation}

 If we apply this observation to the product of forms $\rho=d\log r\wedge \alpha + \beta$ and $\rho'=d\log r\wedge \alpha' + \beta'$ of degree $k$ and $l$ respectively, we obtain equalities
\begin{equation}
\begin{aligned}
\rho\wedge\rho' &= d\log r \wedge (\alpha\wedge\beta' + (-1)^{k} \beta \wedge\alpha') + \beta\wedge\beta'\\
\Res_{r}[\rho\wedge\rho'] &= \Res_{r}[\rho]\cup r(i^{*}[\rho'])
 + (-1)^{k} r(i^{*}[\rho])\cup \Res_{r}[\rho'],
\end{aligned}
\end{equation}
yielding the required expression for the elliptic cup product.
\end{proof}

\begin{lemma}\label{retractlog}
If a closed elliptic form vanishes along $D$, then it is trivial in the elliptic de Rham cohomology of a tubular neighbourhood of $D$.
\end{lemma}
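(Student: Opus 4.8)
The plan is to prove a Poincar\'e lemma for the elliptic de Rham complex on a tubular neighbourhood, using the radial scaling flow together with Cartan's homotopy formula. First I would fix an identification of a tubular neighbourhood $\sU$ of $D$ with a disk bundle in the normal bundle $N$, and consider the fibrewise scaling $\psi_t(v)=e^t v$ for $t\le 0$, which maps $\sU$ into itself and collapses it onto $D$ as $t\to-\infty$. Its infinitesimal generator is the Euler vector field $E$, which in the coordinates of~\eqref{basisell} is $r\del_r$ and is a global section of $\tm{|D|}$. The crucial observation is that $\psi_t$ acts by Lie algebroid automorphisms: since $\psi_t^*(d\log r)=d\log r$ and $\psi_t^*(d\theta)=d\theta$, the pullback $\psi_t^*$ preserves the space of smooth elliptic forms and does not worsen the singularity along $D$, so the whole construction stays inside the elliptic complex.

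Using that $\rho$ is closed, Cartan's formula gives $\tfrac{d}{dt}\psi_t^*\rho=\psi_t^*\mathcal{L}_E\rho=d\bigl(\psi_t^*\iota_E\rho\bigr)$. I would then integrate this identity over $t\in(-\infty,0]$ to obtain
\begin{equation}
\rho-\lim_{t\to-\infty}\psi_t^*\rho = d\left(\int_{-\infty}^{0}\psi_t^*\iota_E\rho\,dt\right).
\end{equation}
The hypothesis that $\rho$ vanishes along $D$ enters twice. Writing $\rho$ in the local frame of~\eqref{ellform}, vanishing along $D$ means that every coefficient function vanishes on the zero section $r=0$; hence the coefficients of $\psi_t^*\rho$, namely $c(e^tw,e^t\bar w,x)$, tend to $0$ as $t\to-\infty$, so the boundary term $\lim_{t\to-\infty}\psi_t^*\rho$ vanishes. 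Since $\iota_E\rho$ also vanishes along $D$, the same decay makes the improper integral converge, and I would conclude that $\rho$ is exact in the elliptic de Rham complex of $\sU$, with explicit primitive given by this integral. As $E$, the flow $\psi_t$, and the disk-bundle neighbourhood $\sU$ are all globally defined, the local computations assemble into a single global primitive on $\sU$.

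The main technical point, and the step I expect to require the most care, is showing that the homotopy operator $H\rho:=\int_{-\infty}^{0}\psi_t^*\iota_E\rho\,dt$ is a genuine \emph{smooth} elliptic form rather than merely a continuous one. For this I would apply Hadamard's lemma to each coefficient $c$ of $\iota_E\rho$, writing $c=\Re(w)\,a+\Im(w)\,b$ with $a,b$ smooth (possible precisely because $c$ vanishes on $r=0$), and substitute $s=e^t$ to rewrite
\begin{equation}
\int_{-\infty}^{0} c(e^tw,e^t\bar w,x)\,dt=\int_{0}^{1}\bigl(\Re(w)\,a(sw,s\bar w,x)+\Im(w)\,b(sw,s\bar w,x)\bigr)\,ds,
\end{equation}
a proper integral over $[0,1]$ whose integrand is smooth in all variables, so that differentiation under the integral sign shows $H\rho$ is smooth; the same Hadamard factorization supplies the exponential decay invoked above. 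This disposes of the only real subtlety, the competition between the improper integral and the logarithmic singularity, and completes the argument.
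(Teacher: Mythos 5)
Your proof is correct, but it takes a genuinely different route from the paper's. You establish the lemma by a direct radial Poincar\'e lemma: the scaling flow $\psi_t$ generated by the Euler vector field preserves the elliptic divisor and hence acts on the elliptic complex, Cartan's formula gives $\rho-\psi_{-T}^{*}\rho=d\bigl(\int_{-T}^{0}\psi_t^{*}\iota_E\rho\,dt\bigr)$, and Hadamard's lemma applied to coefficients vanishing at $r=0$ supplies both the exponential decay that kills the boundary term as $T\to\infty$ and, after the substitution $s=e^{t}$, the smoothness of the explicit primitive $\int_{-\infty}^{0}\psi_t^{*}\iota_E\rho\,dt$ as an elliptic form --- you correctly identify this last point as the crux, since a priori the improper integral need not be smooth in the logarithmic frame. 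The paper argues instead cohomologically: by Theorem~\ref{thm:lie algebroid cohomology} the class of $\rho$ on the tubular neighbourhood $U$ is detected by its restriction to $U\setminus D$ together with its radial residue; the residue vanishes trivially, and for the restriction the paper averages $\rho$ over the circle action (generated by an elliptic vector field, so averaging acts trivially on elliptic cohomology), observes that the averaged, $\theta$-independent coefficients vanish along $D$ and are therefore divisible by $r^{2}$, so the averaged form is an honest smooth form pulling back to zero on $D$, whence its class dies under the retraction of $U$ onto $D$. Your approach buys self-containedness and constructiveness: it never invokes the computation of $H^{\bullet}(\log|D|)$ and produces an explicit global homotopy operator on $U$; the paper's approach avoids improper integrals and analytic estimates by reusing machinery already established, and its divisibility-by-$r^{2}$ trick is the exact counterpart of your Hadamard factorization.
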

\begin{proof}
Choose a tubular neighbourhood $U$ of $D$, which we identify with an $S^{1}$--invariant neighbourhood of the zero section in the total space of the normal bundle of $D$.
By Theorem~\ref{thm:lie algebroid cohomology}, the class defined by the closed elliptic form $\rho$ has two components: one in $H^{k}(U\backslash D,\RR)$ obtained by restricting to $U\backslash D$, and another in $H^{k-1}(S^{1}N,\RR)$, defined by the class of $\Res_{r}(\rho)$.  But this residue vanishes, since $\rho$ vanishes along $D$.  So it remains to show that $\rho$ is exact on $U\backslash D$.

We prove this by showing first that $\rho$ is cohomologous to a smooth $k$-form $\ol \rho$ on $M$, and that this $k$-form is zero when pulled back to $D$.  Since $M$ retracts onto $D$, this implies that $[\ol\rho]$ vanishes in $H^{k}(U,\RR)$, and hence in $H^{k}(U\backslash D,\RR)$, as required.

We construct $\ol\rho$ by averaging the family of forms $\rho_{t} = \phi_{t}^{*}\rho,\ t\in S^{1}$, where $\phi_{t}:U\to U$  is the circle action on the tubular neighbourhood. The rotational vector field generating the $S^{1}$ action is a section of $T(-\log|D|)$, and so the averaging process acts trivially on $H^{k}(\log |D|)$.  If we locally trivialize the bundle and choose polar coordinates $(r,\theta)$ on the fibre, we may write 
\begin{equation}
\ol\rho = d\log r \wedge d\theta \wedge \rho_0 + d\log r \wedge \rho_1 + d\theta \wedge \rho_2 + \rho_3,
\end{equation}
where $\rho_i$ are smooth forms independent of $\theta$.  Since the circle action fixes $D$, we also have that $\ol\rho$, and hence each $\rho_{i}$, vanishes along $D$. But this implies that each $\rho_{i}$ is divisible by $r^{2}$ in the smooth forms, and therefore that $\ol\rho$ is itself a smooth form. Finally, its pullback to $D$ is the pullback of $\rho_{3}$ to $D$, which vanishes since $\rho_{3}$ vanishes along $D$.
\end{proof}

\subsection{Comparison of elliptic and logarithmic forms}\label{compelllog}
Let $T_{\CC}(-\log |D|)$ be the complexification of $T(-\log |D|)$ and let $\iota$ be the algebroid morphism from $T_{\CC}(-\log |D|)$ to $T(-\log D)$ coming from the definition of $T(-\log |D|)$ as the fibre product of the logarithmic tangent bundle with its complex conjugate. This defines a pullback morphism from logarithmic forms to the complexified elliptic forms:
\begin{equation}
\iota^{*}:\Omega^{\bullet}(\log D)\to \Omega^{\bullet}_{\CC}(\log|D|).
\end{equation}

\begin{proposition}\label{isomzerores}
The composition $\Im^{*}$ of $\iota^{*}$ with the projection to the imaginary part is a surjection from the log forms to the elliptic forms with zero residue, with kernel given by the real smooth forms, defining an exact sequence of complexes
\begin{equation}\label{complogell}
\xymatrix{
0\ar[r] & \Omega^{\bullet}(M,\RR)\ar[r] & \Omega^{\bullet}(\log D)\ar[r]^-{\Im^{*}} & \Omega^{\bullet}_{0}(\log |D|)\ar[r] & 0}
\end{equation}
\end{proposition}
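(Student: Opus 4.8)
The plan is to prove exactness locally. All three maps are morphisms of sheaves of $C^\infty$-modules built from the algebroid morphism $\iota$ and the anchors, and since the sheaves involved are fine, exactness of global sections follows from exactness on stalks; this is a local question, which I would settle in a chart around a point of $D$ with coordinates $w=re^{i\theta},x_3,\dots,x_n$ as in~\eqref{basisell}. Before computing I would record that $\Im^{*}$ is a cochain map: $\iota^{*}$ commutes with $d$ because it is an algebroid pullback, and since $T(-\log|D|)$ is a real algebroid the differential on $\Omega^{\bullet}_{\CC}(\log|D|)$ commutes with complex conjugation, so the real-linear operation $\Im=\tfrac{1}{2i}(\mathrm{id}-\mathrm{conj})$ also commutes with $d$. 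The left-hand inclusion is the dual of the anchor restricted to real forms, hence a subcomplex. Thus the whole sequence is a sequence of complexes, and only its exactness as graded sheaves remains.

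The key computation is the coordinate form of $\iota^{*}$. From $w\partial_w=\tfrac12(r\partial_r-i\partial_\theta)$ and its conjugate one reads off $\iota$ on generators, and dually
\[
\iota^{*}(d\log w)=d\log r+i\,d\theta,\qquad \iota^{*}(d\bar w)=\bar w\,(d\log r-i\,d\theta),\qquad \iota^{*}(dx_j)=dx_j.
\]
Writing a general log form as $\rho=d\log w\wedge\alpha+\beta$ as in~\eqref{expan} and decomposing $\alpha=d\bar w\wedge\alpha_1+\alpha_0$, $\beta=d\bar w\wedge\beta_1+\beta_0$ with $\alpha_i,\beta_i$ in the $dx_j$, I would expand $\iota^{*}\rho$ in the elliptic basis $\{d\log r\wedge d\theta,\,d\log r,\,d\theta,\,1\}$, using $(d\log r+i\,d\theta)\wedge(d\log r-i\,d\theta)=-2i\,d\log r\wedge d\theta$, and find components $(-2i\bar w\alpha_1,\ \alpha_0+\bar w\beta_1,\ i(\alpha_0-\bar w\beta_1),\ \beta_0)$. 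In particular the $d\log r\wedge d\theta$-component is divisible by $\bar w$, hence restricts to zero along $D$; since $\Res_q$ is real-linear and commutes with conjugation, $\Res_q(\Im^{*}\rho)=\Im(\Res_q(\iota^{*}\rho))=0$, so $\Im^{*}$ indeed lands in $\Omega^{\bullet}_0(\log|D|)$.

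Surjectivity onto $\Omega^{\bullet}_0$ and exactness in the middle then become division problems. Taking imaginary parts of the four components, an element $\sigma=d\log r\wedge d\theta\wedge\sigma_0+d\log r\wedge\sigma_1+d\theta\wedge\sigma_2+\sigma_3$ with $\Res_q\sigma=j^{*}\sigma_0=0$ is hit by choosing $\beta_1=0$, $\alpha_0=\sigma_2+i\sigma_1$, $\beta_0=i\sigma_3$, and solving $-2\,\Re(\bar w\,\alpha_1)=\sigma_0$; writing $\alpha_1=u+iv$ this reads $xu+yv=-\tfrac12\sigma_0$, solvable by Hadamard's lemma precisely because $\sigma_0$ vanishes along $D$. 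For the kernel, $\Im^{*}\rho=0$ forces all four components to be real; reality of $\alpha_0+\bar w\beta_1$ and of $i(\alpha_0-\bar w\beta_1)$ gives $\bar w\beta_1=\bar\alpha_0$, whence $\alpha_0$ is divisible by $w$, while reality of $-2i\bar w\alpha_1$ says $\bar w\alpha_1$ is purely imaginary, which (again by division) forces $\alpha_1\in w\cdot i\,C^\infty$. Substituting back and using $w\,d\log w=dw$, the form $\rho$ becomes a genuine real smooth form, i.e. lies in the image of the inclusion; conversely any real smooth $\omega$ satisfies $\iota^{*}(a^{*}\omega)=a_{\mathrm{ell}}^{*}\omega$ (since $a\circ\iota=a_{\mathrm{ell}}$), a real elliptic form, and so dies under $\Im^{*}$. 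Injectivity of the inclusion is immediate because the anchor is an isomorphism off $D$.

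The main obstacle is not the linear algebra but the two division steps: promoting the pointwise conditions ``$\sigma_0$ vanishes on $D$'' and ``$\bar w\alpha_1$ is purely imaginary'' to honest smooth divisibility by $w$ (or $\bar w$). These are exactly where the hypothesis $\Res_q=0$ is consumed and where one must invoke Hadamard's lemma together with the structure of smooth functions vanishing on the codimension-two submanifold $D$; everything else is bookkeeping in the frame~\eqref{basisell}.
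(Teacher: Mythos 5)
Your proof is correct and takes essentially the same route as the paper's: a sheaf-level verification in local coordinates, expanding a logarithmic form as in~\eqref{expan} and reducing both surjectivity and middle exactness to smooth division (Hadamard-type) arguments, which is exactly where the zero-residue hypothesis is consumed in the paper as well. The only difference is bookkeeping: you expand elliptic forms in the real polar frame $d\log r$, $d\theta$ while the paper uses the complexified generators $d\log w$, $d\log \ol{w}$, so your conditions (e.g.\ ``$\ol{w}\alpha_1$ purely imaginary'') are literally the paper's equations (e.g.\ $\ol{w}\alpha_1 + w\ol{\alpha}_1 = 0$) in different notation.
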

\begin{proof}
This is an exact sequence on the level of complexes of sheaves, so we may verify the statement locally in the coordinate system used above. A general log form $\rho = d\log w \wedge\alpha + \beta$ as in~\eqref{expan} has $\Res_{q}(i^{*}\rho) = 0$ since it has no $d\log \ol w$ component, so the same is true of its real and imaginary parts, showing that $\Im^{*}$ has the claimed codomain. We now show surjectivity of $\Im^{*}$: write a general real elliptic form as 
\begin{equation}
\rho =  d\log w \wedge d\log \ol w \wedge i\alpha + d\log w\wedge\beta  + d\log\ol w\wedge\ol\beta + \gamma,
\end{equation}
where $\alpha,\beta,\gamma$ are in the subalgebra generated by the remaining generators $dx_{3},\ldots,dx_{n}$, and $\alpha,\gamma$ are real. 
This form has vanishing elliptic residue if and only if  $\alpha$ vanishes along $D$, meaning $\alpha = w\alpha' + \ol w\ol \alpha'$ for smooth $\alpha'$.  Then we have 
\begin{equation}
\rho = d\log w \wedge (\beta + d\ol w\wedge i\ol\alpha') + d\log\ol w\wedge ( \ol\beta - dw\wedge i\alpha') + \gamma,
\end{equation}
but this is the imaginary part of $2i d\log w \wedge(\beta + d\ol w \wedge i\ol\alpha') + i\gamma$, a form in $\Omega^{\bullet}(\log D)$, as required.  

Injectivity at the first place follows from the fact that smooth forms form a subsheaf of the logarithmic forms. 	To show exactness at the middle place, suppose that $\Im^{*}\rho = 0$, i.e., 
\begin{equation}
0=d\log w \wedge \alpha - d\log\ol w\wedge\ol \alpha  +  \beta - \ol\beta,
\end{equation}
where $\alpha,\beta$ are in the subalgebra generated by $(d\bar w,dx_3,\ldots, dx_n)$.  If we write $\alpha = d\ol w\wedge \alpha_{1} + \alpha_{2}$ and $\beta = d\ol w \wedge \beta_{1} + \beta_{2}$, with $\alpha_{i}, \beta_{i}$ in the subalgebra generated by $(dx_{3},\ldots, dx_{n})$, then we obtain 
\begin{equation}
\begin{split}
0 &= d\log w\wedge d\log\ol w\wedge (\ol w \alpha_{1} + w\ol\alpha_{1}) \\
 &\quad +  d\log w \wedge (\alpha_{2} - w\ol\beta_{1}) + d\log\ol w\wedge (\ol\alpha_{2} -\ol w \beta_{1})\\
 &\quad\quad  +  (\beta_{2} - \ol\beta_{2}),
\end{split}
\end{equation}
and in this form each summand vanishes independently. This implies that each of $\alpha_{1}$ and $\alpha_{2}$, and therefore $\alpha$, is divisible by $w$, which proves that $\rho = d\log w\wedge\alpha + \beta$ is smooth.
\end{proof}

\begin{theorem}\label{ellzerores}
Let $|D|$ be an elliptic divisor.  The same morphism $(i^{*}, \Res_{r})$ from Theorem~\ref{thm:lie algebroid cohomology}, when applied to elliptic forms with vanishing elliptic residue,
 defines an isomorphism  
\begin{equation}\label{decompelliptzerores}
H^k_{0}(\log|D|) = H^k(M\backslash D,\RR) \oplus H^{k-1}(D,\RR),
\end{equation}
where $H^{k}_{0}(\log|D|)$ denotes the cohomology of the complex $\Omega^{\bullet}_{0}(\log |D|)$ of forms with vanishing elliptic residue.
\end{theorem}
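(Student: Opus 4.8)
The plan is to follow the sheaf-theoretic strategy of Theorem~\ref{thm:lie algebroid cohomology} essentially verbatim, restricting the quasi-isomorphism $(i^{*},\Res_{r})$ to the subcomplex $\Omega^{\bullet}_{0}(\log|D|)$ of forms with vanishing elliptic residue. First I would pin down the target. Since the elliptic residue is the contraction $i_{\del_{\theta}}\Res_{r}$, a form with $\Res_{q}=0$ has radial residue with no $d\theta$--component, so $\Res_{r}$ restricted to $\Omega^{\bullet}_{0}(\log|D|)$ takes values in the horizontal forms on $\SS{N}$, i.e. in $j_{*}\Omega^{\bullet-1}(D,\RR)$, and there it agrees with $\Re\Res_{c}$, which~\eqref{ellcxres} exhibits as a genuine cochain map into the de Rham complex of $D$. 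Thus $(i^{*},\Res_{r})$ is a morphism of complexes of sheaves
\[
\Omega^{\bullet}_{0}(\log|D|)\to \Omega^{\bullet}(M\backslash D,\RR)\oplus j_{*}\Omega^{\bullet-1}(D,\RR),
\]
whose right-hand side computes precisely $H^{\bullet}(M\backslash D,\RR)\oplus H^{\bullet-1}(D,\RR)$.

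Next I would verify that this map is a quasi-isomorphism by a stalk computation, exactly as before. Away from $D$ it is an isomorphism. On a small ball meeting $D$, rather than redo the double-complex argument I would feed the already-computed local cohomology $\wedge^{\bullet}(\RR[d\log r]\oplus\RR[d\theta])$ from~\eqref{localcompelld} into the short exact sequence of complexes
\[
0\to \Omega^{\bullet}_{0}(\log|D|)\to \Omega^{\bullet}(\log|D|)\xrightarrow{\ \Res_{q}\ } j_{*}\Omega^{\bullet-2}(D,\kk^{*})\to 0,
\]
which is exact because $\Res_{q}$ is surjective (any form on $D$ is the $d\log r\wedge d\theta$--coefficient of some elliptic form). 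Locally the quotient has cohomology $\RR$ concentrated in degree $2$, and in the associated long exact sequence the map $\Res_{q}\colon H^{2}(\log|D|)\to H^{0}(D)$ sends $[d\log r\wedge d\theta]$ to $1$, hence is an isomorphism. It follows that locally $H^{k}_{0}(\log|D|)$ is $\RR$ in degree $0$, $\RR^{2}$ in degree $1$, and $0$ otherwise.

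I would then check the generators match: in degree $0$ the constant maps to the generator of $H^{0}(M\backslash D)$ (locally $H^{0}(S^{1})$); in degree $1$ the classes $[d\theta]$ and $[d\log r]$, both of vanishing elliptic residue, are sent by $i^{*}$ and $\Res_{r}$ respectively to generators of $H^{1}(M\backslash D)$ (locally $H^{1}(S^{1})$) and of $H^{0}(D)$ (locally $H^{0}(\mathrm{pt})$); and in degrees $\geq 2$ both sides vanish. Hence $(i^{*},\Res_{r})$ is an isomorphism on local cohomology, i.e. a quasi-isomorphism of complexes of sheaves, and the usual hypercohomology spectral sequence argument upgrades this to the claimed isomorphism~\eqref{decompelliptzerores} on global cohomology.

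The main obstacle is the bookkeeping in the middle step: one must be sure that passing to the kernel of $\Res_{q}$ removes precisely the degree-$2$ local class $[d\log r\wedge d\theta]$ and leaves everything else untouched. The short exact sequence above isolates this, since the only nonzero local cohomology of its quotient sits in degree $2$ and is hit isomorphically by $\Res_{q}$; this is exactly what forces $H^{\geq 2}_{0}(\log|D|)=0$ locally while $H^{0}$ and $H^{1}$ are unchanged. One should also confirm, globally, that $\Res_{r}|_{\Omega_{0}}=\Re\Res_{c}$ genuinely is a cochain map into $(\Omega^{\bullet-1}(D,\RR),d_{D})$ rather than merely into the Atiyah-algebroid forms, so that the target is the honest de Rham complex of $D$; this is guaranteed by the compatibility~\eqref{identityres} together with the fact that $\Res_{q}$, and hence the condition $\Res_{q}=0$, is preserved by $d$.
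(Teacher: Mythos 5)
Your proposal is correct, and its global skeleton is exactly the paper's: restrict the sheaf morphism $(i^{*},\Res_{r})$ to the zero-residue subcomplex, verify it is a quasi-isomorphism on stalks, and conclude via the hypercohomology spectral sequence. Where you genuinely differ is the local step. The paper simply reruns the double-complex argument of Theorem~\ref{thm:lie algebroid cohomology} on the subcomplex, asserting that the only change is the disappearance of the generator $[d\log r\wedge d\theta]$; this is terse, and to carry it out one must check that $\Omega^{\bullet}_{0}(\log|D|)$ (whose defining condition is that the $d\log r\wedge d\theta$-coefficient vanishes \emph{along} $D$, not that it is absent) really forms a sub-double-complex to which the spectral sequence adapts. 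You instead feed the already-computed local answer~\eqref{localcompelld} into the short exact sequence of complexes of sheaves
\begin{equation*}
0\to \Omega^{\bullet}_{0}(\log|D|)\to \Omega^{\bullet}(\log|D|)\xrightarrow{\ \Res_{q}\ } j_{*}\Omega^{\bullet-2}(D,\kk^{*})\to 0,
\end{equation*}
whose long exact sequence removes exactly the degree-$2$ class (since $\Res_{q}[d\log r\wedge d\theta]=1$) and leaves degrees $0$ and $1$ untouched. This buys transparency --- the cancellation is visible in one diagram and nothing about the double complex needs to be revisited --- at the modest cost of checking local surjectivity of $\Res_{q}$, which you do. One small imprecision: the cochain-map property of $\Res_{r}|_{\Omega_{0}}$ into $(\Omega^{\bullet-1}(D,\RR),d_{D})$ does not really follow from~\eqref{identityres}, which concerns pullbacks of logarithmic forms; the correct justification is the other one you give, namely that $d$ preserves the condition $\Res_{q}=0$ and that invariant horizontal (basic) forms on $\SS{N}$ form a subcomplex of the Atiyah-algebroid forms canonically identified with the de Rham complex of $D$.
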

\begin{proof}
The same strategy employed in the proof of Theorem~\ref{thm:lie algebroid cohomology} may be used here, with two differences: first, that when the elliptic residue vanishes, $\Res_{r}$ maps naturally to $\Omega^{k-1}(D,\RR)$ (without the need of a co-orientation), and second, that the local computation~\eqref{localcompelld} is modified by the absence of the generator $d\log r\wedge d\theta$. The remainder of the proof remains unchanged.
\end{proof}

We now combine our knowledge of the elliptic and logarithmic cohomology groups from Theorems~\ref{logdcoh} and~\ref{ellzerores} to give a purely topological description of the long exact sequence of cohomology groups resulting from the sequence~\eqref{complogell}. 

\begin{theorem}\label{sequenceselllog}
The long exact sequence deriving from~\eqref{complogell} may be written
\begin{equation}\label{toplogseq}
\xymatrix@C=1.1em{
\cdots\ar[r]& H^{k}(M,\RR)\ar[r]& H^k(M\backslash D,\CC)\ar[r]  &
H^{k}_{0}(\log |D|)
 \ar[r]&\cdots 
},
\end{equation}
and splits according to decomposition~\eqref{decompelliptzerores} as a sum of the trivial sequence 
\begin{equation}\label{impart}
\xymatrix@C=1.1em{
\cdots\ar[r]& 0\ar[r]& \Im(H^k(M\backslash D,\CC))\ar[r]^-{=}  &
 H^{k}(M\backslash D,\RR)
 \ar[r]&\cdots 
}
\end{equation}
and the Thom--Gysin sequence associated to the inclusion $i$ of $M\backslash D$ into $M$
\begin{equation}\label{relpart}
\xymatrix@C=1em{
\cdots\ar[r]& H^{k}(M,\RR)\ar[r]& \Re(H^k(M\minus D,\CC))\ar[r]  &
H^{k-1}(D,\RR)
 \ar[r]& \cdots
}.
\end{equation}

\end{theorem}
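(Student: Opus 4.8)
The plan is to feed the short exact sequence of complexes~\eqref{complogell} into the long exact sequence in cohomology and then translate each term and each arrow into topological language using the identifications already established. Taking cohomology of~\eqref{complogell} yields
\[
\cdots\to H^k(M,\RR)\to H^k(M,\log D)\xrightarrow{\Im^*} H^k_0(\log|D|)\xrightarrow{\delta} H^{k+1}(M,\RR)\to\cdots,
\]
and substituting the isomorphism $H^k(M,\log D)\cong H^k(M\setminus D,\CC)$ of Theorem~\ref{logdcoh} into the middle slot produces precisely the sequence~\eqref{toplogseq}. The remaining work is entirely a matter of identifying the three maps with respect to the real/imaginary decomposition $H^k(M\setminus D,\CC)=\Re\oplus i\Im$ (each summand a copy of $H^k(M\setminus D,\RR)$) on one side and the splitting $H^k_0(\log|D|)=H^k(M\setminus D,\RR)\oplus H^{k-1}(D,\RR)$ of Theorem~\ref{ellzerores} on the other.

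First I would analyze the leftmost map $H^k(M,\RR)\to H^k(M\setminus D,\CC)$. It is induced by the inclusion of real smooth forms into complex logarithmic forms followed by $i^*$; since a real form restricts to a real form on the complement, this map is just the complexified restriction homomorphism, and in particular its image lies entirely in the real summand $\Re$. Next I would compute the two components of $\Im^*$ under the isomorphism $(i^*,\Res_r)$ of Theorem~\ref{ellzerores}. Because the elliptic and logarithmic algebroids agree on the complement, the $i^*$-component sends a class $[\sigma]\in H^k(M\setminus D,\CC)$ to $\Im[\sigma]\in H^k(M\setminus D,\RR)$; thus it kills $\Re$ and restricts to an isomorphism from $i\Im$ onto the first summand. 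For the $\Res_r$-component I would use that $\Im^*\rho$ has vanishing elliptic residue, so $\Res_r$ coincides with the real part of the complex residue, and then invoke the compatibility identity~\eqref{identityres} to rewrite $\Res_r(\Im^*\rho)=\Im(\Res(\rho))$, where $\Res$ is the logarithmic residue of Section~\ref{logderhamcxsect}.

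Combining these computations, the map $H^k(M\setminus D,\CC)\to H^k_0(\log|D|)$ is block-diagonal with respect to the two decompositions: $i\Im$ maps isomorphically onto the first summand and to zero in the second, while $\Re$ maps to zero in the first summand and, via Proposition~\ref{poincaregysin} (using $R=2\pi i\cdot\Res_*$, which is the complexification of the real residue and so preserves real and imaginary parts), to a nonzero multiple of the real Poincar\'e--Leray residue in $H^{k-1}(D,\RR)$. This block structure splits the long exact sequence as a direct sum. The summand built from $i\Im$ together with the first summand of $H^k_0(\log|D|)$ is the trivial sequence~\eqref{impart}, since the leftmost map contributes nothing there and the middle arrow is an isomorphism. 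The summand built from $\Re$ together with $H^{k-1}(D,\RR)$ is the Thom--Gysin sequence~\eqref{relpart}: its first arrow is the restriction $H^k(M,\RR)\to H^k(M\setminus D,\RR)$ and its second is the Poincar\'e--Leray residue, both identified above, while exactness of the original sequence forces the connecting map $\delta$ on the second summand $H^{k-1}(D,\RR)\to H^{k+1}(M,\RR)$ to be the Gysin pushforward $j_*$.

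The step I expect to require the most care is the normalization bookkeeping in the $\Res_r$-component: tracking the factors of $i$ and $2\pi i$ relating the logarithmic residue $\Res$, the complex residue $\Res_c$, the radial residue $\Res_r$, and the Poincar\'e--Leray residue $R$, and confirming that on $\Re$ the resulting class is genuinely real (landing in $H^{k-1}(D,\RR)$) while on $i\Im$ it vanishes. Once these constants are pinned down, the splitting of the sequence into~\eqref{impart} and~\eqref{relpart} is forced by the block-diagonal form of all three maps.
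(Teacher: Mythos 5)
Your proposal is correct and follows essentially the same route as the paper's proof: form the long exact sequence of~\eqref{complogell}, substitute the identifications of Proposition~\ref{isomzerores}, Theorem~\ref{logdcoh} and Theorem~\ref{ellzerores}, note that the map from $H^{k}(M,\RR)$ lands in the real summand, and split $\Im^{*}$ into its $(i^{*},\Res_{r})$ components, with the residue component computed via~\eqref{identityres} — your formula $\Res_{r}\circ\Im^{*}=\Im\circ\Res$ agrees with the paper's $-(2\pi)^{-1}\Re\circ R$ since $\Re(-iz)=\Im(z)$ and $R=2\pi i\cdot\Res_{*}$. The only point stated more strongly than it is proved is that exactness alone ``forces'' the connecting map to equal $j_{*}$ (exactness only pins down its kernel and image), but the paper leaves this same identification implicit, resting on the standard Gysin-sequence facts of Proposition~\ref{poincaregysin}, so this is not a gap relative to the paper's own argument.
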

\begin{proof}
We first use Proposition~\ref{isomzerores} and Theorem~\ref{logdcoh} to write the sequence~\eqref{toplogseq}.  The  homomorphism from $H^{k}(M,\RR)$ to the cohomology of the complement is simply $i^{*}$, which maps only to the real part of $H^{k}(M\backslash D,\CC)$. We now compute the induced map of $\Im^{*}$ from sequence~\eqref{complogell}.  Since the pullback to the complement commutes with taking imaginary part, the component of $\Im^{*}$ mapping to $H^{k}(M\backslash D,\RR)$ is simply the projection to the imaginary part, as claimed in~\eqref{impart}.  The component of $\Im^{*}$ mapping to $H^{k-1}(D,\RR)$ is induced by the map $\Res_{r}\circ\Im^{*}$, and since $\Res_{r}$ coincides with the real part of the complex residue and we have identity~\eqref{identityres}, we have 
\begin{equation}
\Res_{r}\circ \Im^{*} = \Re\circ \Res_{c}\circ \Im^{*} = -i \Re\circ \Res = -(2\pi)^{-1}\Re\circ R,
\end{equation}
as operators on $H^{k}(M,\log D)$, where $R$ is the map from the Gysin sequence in Proposition~\ref{poincaregysin}, verifying~\eqref{relpart}.
\end{proof}

\subsection{Atiyah algebroids}\label{atiyahsect}

In this supplementary section, we describe a useful geometric construction of the algebroids $T(-\log D)$ and $T(-\log |D|)$ which we associated to a complex divisor $(U,s)$  in the previous sections.

The real derivations of $U$ preserving its complex structure are the sections of the Atiyah algebroid of $U$, a real Lie algebroid forming an extension of the form 
\begin{equation}\label{seqreati}
0\to \gl(U)\to \At(U)\to TM\to 0,
\end{equation}
where $\gl(U)$ is a trivial bundle generated over $\RR$ by the identity endomorphism and the complex structure on $U$. 
If $s$ is a section of $U$, then there is a vector bundle map
\begin{equation}
\ev_{s}:\At(U)\to U
\end{equation}
which evaluates a derivation on the given section $s$.  Transversality of $s$ is equivalent to the fact that $\ev_{s}$ 
is a surjective map of real vector bundles.  Since $U$ has real rank $2$, the kernel of $\ev_{s}$ is a corank 2 subalgebroid $\At(U,s)\subset \At(U)$.  In terms of vector fields, this subalgebroid consists of all $\CC^{*}$--invariant real vector fields on the total space of $U$ which are tangent to the image of the section $s$.

\begin{proposition}\label{subell}
The subalgebroid $\At(U,s)\subset \At(U)$ of symmetries of $U$ preserving $s$ is identified with $T(-\log |D|)$ by the anchor map. \end{proposition}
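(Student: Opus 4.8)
The plan is to prove the identification locally in the adapted coordinates, since both $\At(U,s)$ and $T(-\log|D|)$ are sheaves of vector fields and the anchor map is a morphism of sheaves; an isomorphism established compatibly in each chart will glue. First I would fix the local trivialization in which $s$ is the complex coordinate function $w$, with the transverse coordinates $x_3,\ldots,x_n$ completing a coordinate system on $M$, as in the setup preceding~\eqref{basisell}. In this trivialization a fibre coordinate on the total space of $U$ is a complex number $\zeta$, and a real derivation preserving the complex structure of $U$ is, by~\eqref{seqreati}, a vector field on $\mathrm{Tot}(U)$ of the form $X + (a\, \zeta\del_\zeta + b\, I\zeta\del_\zeta)$, where $X$ is a vector field on $M$ (lifted in the obvious way), $I$ is the complex structure, and $a,b$ are functions on $M$ generating the $\gl(U)$ part. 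The $\CC^*$-invariance is exactly the statement that $a,b$ and the coefficients of $X$ are pulled back from $M$, so such a vector field is genuinely a section of $\At(U)$.

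Next I would compute $\ev_s$ explicitly. Evaluating the derivation on the section $s = w$ amounts to restricting the vector field along the graph $\zeta = w$ and reading off its vertical component relative to that graph; concretely this gives $\ev_s$ in terms of $X(w)$ together with $(a + bI)$ acting on $w$. Setting $\ev_s = 0$ then forces the vertical infinitesimal rotation/scaling to cancel the variation of $w$ along $X$, which after unwinding says precisely that the vector field preserves the ideal generated by $w\bar w$, i.e.\ preserves the defining function $w\bar w$ up to the appropriate order. Passing to polar coordinates $w = re^{i\theta}$, the vanishing of $\ev_s$ is equivalent to the statement that the allowed generators are exactly $r\del_r, \del_\theta, \del_{x_3},\ldots,\del_{x_n}$, which is the local frame~\eqref{basisell} for $T(-\log|D|)$. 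This matches the corank: $\ev_s$ is surjective by transversality (so its kernel $\At(U,s)$ has corank $2$ in $\At(U)$), while $T(-\log|D|)$ likewise has a frame of $n$ elements, and the anchor sends the frame of $\At(U,s)$ isomorphically onto~\eqref{basisell}.

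I would then check that the identification is compatible with the anchor and bracket. The anchor of $\At(U,s)$ is the projection to $TM$ in~\eqref{seqreati}, and under the frame computation above its image is spanned by the real and imaginary parts of $r\del_r,\del_\theta$ and the $\del_{x_j}$, which is exactly the image of the anchor of $T(-\log|D|)$; since both are subsheaves of vector fields closed under the Lie bracket inherited from $TM$, and the frames agree, the anchor map restricts to a sheaf isomorphism $\At(U,s)\xrightarrow{\cong} T(-\log|D|)$ respecting brackets. Finally I would remark that away from $D$ the section $s$ is nonvanishing, so tangency to its graph together with $\CC^*$-invariance forces the derivation to be determined by its base vector field, and both algebroids restrict to the full tangent bundle there, confirming the identification is an isomorphism everywhere and not merely along $D$.

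The main obstacle I expect is bookkeeping in the computation of $\ev_s$: one must be careful to treat $\At(U)$ as a \emph{real} rank-$2$ extension of $TM$ (not to accidentally impose holomorphicity), to correctly identify the $\CC^*$-invariant vector fields tangent to $\mathrm{Graph}(s)$, and to verify that the kernel condition reproduces the $r\del_r,\del_\theta$ generators rather than, say, $\del_r,\del_\theta$. Once the polar-coordinate frame is matched to~\eqref{basisell}, the remaining assertions (corank count, anchor compatibility, bracket compatibility, behaviour off $D$) are immediate.
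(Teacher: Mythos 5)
Your proposal is correct and takes essentially the same route as the paper: the paper's proof likewise works in the adapted local coordinates, views sections of $\At(U,s)$ as $\CC^*$-invariant vector fields on $\tot(U)$ tangent to the graph $\{\tau=w\}$ (equivalently, the kernel of $\ev_s$), computes the invariant frame $(r\del_r+t\del_t,\ \del_\theta+\del_\phi,\ \del_{x_3},\ldots,\del_{x_n})$ in polar coordinates, and notes that the anchor projection carries it onto the frame~\eqref{basisell} of $T(-\log|D|)$. Your additional checks (corank count via surjectivity of $\ev_s$, bracket compatibility, and the identification away from $D$) are left implicit in the paper but agree with it.
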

\begin{proof}
Choose local coordinates as above near a point on $D$, and let $\tau$ be a local trivialization for $U^{*}$, so that it defines a complex fibre coordinate on $U$.  The image of the section is cut out by the real and imaginary parts of the equation $\tau = w$. If we write $w = re^{i\theta}$ and $\tau=te^{i\phi}$, the invariant vector fields preserving this submanifold are then generated by  
\begin{equation}
(r\del_{r} + t\del_{t}, \del_{\theta} + \del_{\phi}, \del_{x_{3}},\ldots, \del_{x_{n}}).
\end{equation}
Comparing with~\eqref{basisell}, we see that the anchor projection to $TM$ factors through a canonical isomorphism to $T(-\log |D|)$.
\end{proof}

\begin{remark}
Proposition~\ref{subell} provides an alternative definition of the elliptic tangent bundle, one with the advantage that it is described simply as the tangent bundle to a $\CC^{*}$--invariant foliation on $\tot^{*}(U)$, the total space of $U$ with the zero section deleted. In the local calculation above, the foliation is given by the level sets of $w/t$, the function on $\tot^{*}(U)$ determined by $s$.  Note also that duality defines an isomorphism between $\tot^{*}(U)$ and $\tot^{*}(U^{*})$, so that we may work equivalently with a foliation on either space: the foliation described on $\tot^{*}(U)$ is sent to the foliation on $\tot^{*}(U^{*})$ defined by the level sets of $s$, viewed as a fibrewise linear function on the total space of $U^{*}$.  
\end{remark}

To obtain a similar geometric description of the complex log algebroid $T(-\log D)$, we begin by complexifying the Atiyah sequence~\eqref{seqreati}, obtaining
\begin{equation}
0\to \gl(U^{1,0})\oplus \gl(U^{0,1})\to  \At_{\CC}(U)\to T_\CC M\to 0,
\end{equation}
where we have decomposed $U\otimes_{\RR} \CC= U^{1,0} \oplus U^{0,1}$ into the $+i,-i$ eigenspaces of the given complex structure on $U$, so that $U^{1,0}\cong U$ and $U^{0,1}\cong \ol U$. Also, $\gl(U^{1,0})$ and $\gl(U^{0,1})$ are trivial complex line bundles generated by $\tau\del_{\tau}$ and $\ol\tau\del_{\ol\tau}$, respectively, in the coordinates chosen above.  As before, the sections of $\At_{\CC}(U)$ may be interpreted as $\CC^{*}$--invariant complex vector fields on the total space of $U$. 

The subalgebroid $\At_{\CC}(U,s)$ which preserves the section $s$ (when it viewed as a section of $U^{1,0}$) is then generated by 
\begin{equation}\label{defatcus}
(w\del_{w}+\tau\del_{\tau}, \ol\tau\del_{\ol\tau}, \del_{\ol w}, \del_{x_{3}},\ldots, \del_{x_{n}}),
\end{equation}
so that we have the following diagram of algebroids:
\begin{equation}
\xymatrix@R=1em@C=1em{0\ar[r] &\gl(U^{1,0})\oplus \gl(U^{0,1})\ar[r] &  
\At_{\CC}(U)\ar[r] & T_\CC M\ar[r] &  0\\
0\ar[r] &\gl(U^{0,1})\ar[r]\ar@{^{(}->}[u] & \At_{\CC}(U,s)\ar@{^{(}->}[u]\ar[r] &T(-\log D)\ar[r]\ar[u]_{\anc} &0}
\end{equation}
In this way, $T(-\log D)$ is expressed as the quotient of $\At_{\CC}(U,s)$ by $\gl(U^{0,1})$. In fact, we may take the same quotient of $\At_{\CC}(U)$, obtaining an algebroid of the form 
\begin{equation}
0\to \gl(U^{1,0})\to  \At^{1,0}(U)\to T_\CC M\to 0,
\end{equation}
which involves only the vertical vector fields generated by $\tau\del_{\tau}$.  Then the log tangent algebroid $T(-\log D)$ includes as a subalgebroid of corank 1 in $\At^{1,0}(U)$. We may argue similarly for the complex conjugate divisor $(\ol U, \ol s)$, and obtain the following result, summarizing the above discussion.
\begin{proposition}\label{geolog}
The subalgebroids $\At_{\CC}(U,s)$ and $\At_{\CC}(\ol U, \ol s)$ of the complexified Atiyah algebroid $\At_{\CC}(U)$ map surjectively to $T(-\log D)$ and $T(-\log \ol D)$, respectively, with kernels $\gl(U^{0,1})$ and $\gl(U^{1,0})$.  Their intersection $\At_{\CC}(U,s)\cap\At_{\CC}(\ol U, \ol s)$ is canonically identified with the complexification of the elliptic tangent bundle $T(-\log |D|)$.
\end{proposition}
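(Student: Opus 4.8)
The plan is to treat the two assertions separately, deriving the surjections directly from the generators already exhibited and reducing the intersection statement to the real identification of Proposition~\ref{subell}.

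The first assertion is essentially read off from the diagram preceding the proposition. The generators~\eqref{defatcus} show that quotienting $\At_{\CC}(U,s)$ by the vertical line $\gl(U^{0,1})=\langle \ol\tau\del_{\ol\tau}\rangle$ carries the remaining generators $(w\del_{w}+\tau\del_{\tau},\ \del_{\ol w},\ \del_{x_{3}},\ldots,\del_{x_{n}})$ to the generators $(w\del_{w},\ \del_{\ol w},\ \del_{x_{3}},\ldots,\del_{x_{n}})$ of $\tm{D}$, giving the stated surjection and kernel. Applying complex conjugation---which interchanges $U^{1,0}$ with $U^{0,1}$, $s$ with $\ol s$, and $\tm{D}$ with $\tm{\ol D}$---produces the analogous statement for $\At_{\CC}(\ol U,\ol s)$ with kernel $\gl(U^{1,0})$.

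For the intersection, I would first conjugate~\eqref{defatcus} to record the generators $(\ol w\del_{\ol w}+\ol\tau\del_{\ol\tau},\ \tau\del_{\tau},\ \del_{w},\ \del_{x_{3}},\ldots,\del_{x_{n}})$ of $\At_{\CC}(\ol U,\ol s)$, and then intersect the two submodules inside $\At_{\CC}(U)$ by matching coefficients against the frame $(\del_{w},\del_{\ol w},\del_{x_{i}},\tau\del_{\tau},\ol\tau\del_{\ol\tau})$. A short linear computation shows that a field lies in both submodules exactly when it is a combination of
\begin{equation}
w\del_{w}+\tau\del_{\tau},\qquad \ol w\del_{\ol w}+\ol\tau\del_{\ol\tau},\qquad \del_{x_{3}},\ldots,\del_{x_{n}}.
\end{equation}
This module is manifestly preserved by complex conjugation, so the intersection is the complexification of its real part, and transversality of the two anchors (noted when the elliptic algebroid was defined) guarantees that it is a genuine constant-rank subalgebroid.

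Finally, I would identify the real part with $\At(U,s)$: passing to polar coordinates $w=re^{i\theta}$, $\tau=te^{i\phi}$ gives $w\del_{w}+\tau\del_{\tau}=\tfrac12(r\del_{r}+t\del_{t})-\tfrac{i}{2}(\del_{\theta}+\del_{\phi})$ together with its conjugate, so the conjugation-fixed fields are spanned precisely by $(r\del_{r}+t\del_{t},\ \del_{\theta}+\del_{\phi},\ \del_{x_{3}},\ldots,\del_{x_{n}})$, which are the generators of $\At(U,s)$ appearing in the proof of Proposition~\ref{subell}. Since that proposition identifies $\At(U,s)$ with $\tm{|D|}$ via the anchor, complexifying yields the canonical bracket-preserving identification of the intersection with $T_{\CC}(-\log|D|)$, and conjugation-invariance guarantees that no coordinate-dependent choice enters. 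The one point that genuinely requires attention is that $\At_{\CC}(U,s)$ imposes only a one-sided, holomorphic-type condition on the $U^{1,0}$-component of $\ev_{s}$, whereas $\At(U,s)$ imposes the full real condition; the content of the computation is exactly that intersecting with the conjugate subalgebroid reinstates the missing half, recovering the genuine real constraint.
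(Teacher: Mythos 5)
Your proof is correct and follows essentially the same route as the paper, which states this proposition as a summary of its preceding discussion: the local generators~\eqref{defatcus} give the surjections onto $T(-\log D)$ and $T(-\log \ol D)$ with vertical kernels $\gl(U^{0,1})$ and $\gl(U^{1,0})$, and the intersection claim reduces, via conjugation-invariance, to the real identification $\At(U,s)\cong T(-\log|D|)$ of Proposition~\ref{subell}. Your explicit coefficient-matching computation of the intersection, and your closing remark that intersecting with the conjugate subalgebroid upgrades the one-sided condition on the $U^{1,0}$-component of $\ev_s$ to the full real constraint, simply make explicit what the paper leaves implicit in its diagram of algebroid morphisms.
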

The algebroid morphisms used in Proposition~\ref{geolog} are displayed below. 
\begin{equation*}
\xymatrix@R=1.3em@C=0em{
&\At_{\CC}(U) \ar[d]         & \\
\At_{\CC}(U,s)\ar[ru]\ar[d]  & T_{\CC}M & \At_{\CC}(\ol U, \ol s)\ar[lu]\ar[d]  \\
T(-\log D)\ar[ru]|\hole &   \At(U,s)\ar[lu]\ar[ru]\ar[d]^{\cong}       & T(-\log \ol D)\ar[lu]|\hole   \\
 & T(-\log|D|)\ar[lu]\ar[ru]         &  \\
}
\end{equation*}

\subsection{Rectification of complex divisors}\label{isodiffdiv}
 
We say that the complex divisors $(U_1, u_1)$, $(U_2, u_2)$ on $M$ are \emph{isomorphic} when there is a bundle map $\psi:U_1\to U_2$, covering the identity on $M$, taking $u_1$ to $u_2$. In terms of ideals, we are requiring $\Ii_{u_1} = \Ii_{u_2}$. This notion is fairly strict: even if $u_1, u_2$ are sections of the same bundle with the same zero set and inducing on it the same co-orientation, they need not be isomorphic as divisors.  For example, take the complex functions $w$ and $w + \bar w^2$ in a small neighbourhood of the origin in the complex plane;  since $\bar w^2$ is not in $\Ii_{w}$, these are non-isomorphic divisors. 

Less strict is the notion of \emph{diffeomorphism} of divisors, where we allow the bundle map $\psi:U_1\to U_2$ to cover a nontrivial diffeomorphism of $M$.  
\begin{lemma}\label{mosdiv}
Let $(U_{s}, u_{s})$, $s\in[0,1]$ be a smooth family of complex divisors on a compact manifold $M$.  Then the family may be 
\emph{rectified}, i.e., there is a smooth family of diffeomorphisms 
$\psi_{s}, s\in[0,1]$, $\psi_{0} =\id{}$, taking the given family to the constant family $(U_{0}, u_{0})$.  
\end{lemma}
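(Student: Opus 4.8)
The plan is to run a Moser-type argument at the level of the Atiyah algebroid $\At(U)$, converting the rectification into the integration of a time-dependent infinitesimal symmetry, exactly as suggested by the symplectic analogy. First I would reduce to a single fixed line bundle. The family $(U_{s})$ assembles into a smooth complex line bundle $\mathcal U$ over $M\times[0,1]$, and since $[0,1]$ is contractible, homotopy invariance of smooth vector bundles gives an isomorphism $\mathcal U\cong\pi^{*}U_{0}$ (with $\pi:M\times[0,1]\to M$) restricting to the identity over $s=0$. Absorbing this isomorphism into the data, I may assume $U_{s}=U_{0}=:U$ for all $s$, so that only the section varies, through a smooth family $u_{s}\in C^{\infty}(M,U)$ transverse to the zero section with $u_{0}$ the given one. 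The rectifying diffeomorphisms will then be produced as the flow of a derivation of $U$.

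The heart of the argument is the Moser equation. Recall from Section~\ref{atiyahsect} that the evaluation map $\ev_{u_{s}}:\At(U)\to U$, sending a derivation $\delta$ to its action $\delta\cdot u_{s}$ on the section, is a surjective morphism of real vector bundles precisely because $u_{s}$ is transverse, its kernel being the elliptic tangent bundle $T(-\log|D_{s}|)$ (Proposition~\ref{subell}). I would therefore seek a smooth time-dependent section $\delta_{s}\in C^{\infty}(M,\At(U))$ solving
\[
\ev_{u_{s}}(\delta_{s})=-\tfrac{d}{ds}u_{s}.
\]
Since the bundle map $\ev$ over $M\times[0,1]$ built from the family $(u_{s})$ is fibrewise surjective, it admits a smooth right inverse $\sigma$ (for instance the orthogonal complement of $\ker\ev$ for a fixed metric on $\At(U)$), and $\delta_{s}:=-\sigma(\tfrac{d}{ds}u_{s})$ is a smooth solution, well defined up to a section of $T(-\log|D_{s}|)$, which is the expected infinitesimal-symmetry ambiguity of the Moser method.

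Finally I would integrate. The anchor $a(\delta_{s})$ is a time-dependent vector field on the compact manifold $M$, so its flow $\psi_{s}$ is defined for all $s\in[0,1]$ with $\psi_{0}=\id{}$, and $\delta_{s}$ lifts it to a smooth family $\Phi_{s}$ of bundle automorphisms of $U$ covering $\psi_{s}$ with $\Phi_{0}=\id{}$. The defining equation yields $\tfrac{d}{ds}(\Phi_{s}^{*}u_{s})=\Phi_{s}^{*}\bigl(\tfrac{d}{ds}u_{s}+\ev_{u_{s}}(\delta_{s})\bigr)=0$, so $\Phi_{s}^{*}u_{s}=u_{0}$; equivalently $\psi_{s}^{*}\Ii_{u_{s}}=\Ii_{u_{0}}$, which is exactly the assertion that $\psi_{s}$ rectifies the family to the constant divisor $(U_{0},u_{0})$. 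The main obstacle is the solvability of the displayed Moser equation, and this is where transversality is indispensable: it is the precise hypothesis making $\ev_{u_{s}}$ surjective, hence making $\tfrac{d}{ds}u_{s}$ attainable in its image; by contrast the smooth dependence of $\sigma$ on $s$ and the completeness of the flow are routine once $M$ is compact.
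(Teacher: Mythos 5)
Your proposal is correct and takes essentially the same approach as the paper: both arguments rest on Proposition~\ref{subell} (transversality of $u_s$ makes $\ev_{u_s}:\At(U)\to U$ surjective, with kernel the symmetries preserving the divisor), choose a smooth right inverse/splitting, and rectify by integrating the resulting derivation, using compactness of $M$ for completeness of the flow. The only difference is packaging: where you trivialize the bundle family first and solve the Moser equation $\ev_{u_s}(\delta_s)=-\tfrac{d}{ds}u_s$, the paper works directly on $X=M\times[0,1]$ and obtains the same data as a splitting $\nabla$ of the sequence $0\to T_{X/S}(-\log|\ul D/S|)\to T_{X}(-\log|\ul D|)\to \pi^{*}TS\to 0$, the lift $Z=\nabla(-\del_{s})$ being exactly your $-\del_{s}+\delta_{s}$ in the trivialization.
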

\begin{proof}
Let $S=[0,1]$ and $X = M\times S$. The family $(U_{s},u_{s})$ is properly specified by giving a complex divisor $\ul D = (\ul U, \ul u)$ on $X$ which restricts to $(U_{s},u_{s})$ on each fiber $\pi^{-1}(s)$; in particular the zero set of $\ul u$ is transverse to the fibers of the projection $\pi:X\to S$.   Because of this, the derivative of $\pi$ induces the short exact sequence
\begin{equation}\label{vertelltang}
\xymatrix{0\ar[r] & T_{X/S}(-\log|\ul D/S|)\ar[r] &
T_{X}(-\log |\ul D|)\ar[r]^-{\pi_{*}} &
\pi^{*}TS\ar[r] &
0}, 
\end{equation}
where the first bundle is the vertical Lie algebroid assembled from the elliptic tangent bundles of each of the fibers of $\pi$.

Choose a splitting $\nabla:\pi^{*}TS\to T_{X}(-\log |\ul D|)$ for sequence~\eqref{vertelltang}, and use it to lift the standard generator $\del_{s}$ of $TS$.  Identifying the elliptic tangent bundle with the Atiyah algebroid using Proposition~\ref{subell}, we then have a derivation $Z=\nabla(-\del_{s})$ of $\ul U$ preserving $\ul u$ whose time-$s$ flow $\psi^{Z}_{s}$ takes $(U_{s},u_{s})$ diffeomorphically onto $(U_{0},u_{0})$.    
\begin{equation}
\xymatrix{\psi^{Z}_{s}:(U_{s},u_{s})\ar[r]^-{\cong} & (U_{0},s_{0}) }, 
\end{equation}
providing the required fammily of diffeomorphisms.
\end{proof}

%
\begin{corollary}\label{coincimpdiffeo}
Complex divisors with diffeomorphic compact co-oriented zero loci are diffeomorphic in the above sense.
\end{corollary}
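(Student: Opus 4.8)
The plan is to deduce the statement from the rectification result, Lemma~\ref{mosdiv}: once the two divisors have been connected by a smooth family of complex divisors on a common manifold, that lemma produces a diffeomorphism taking one to the other. So the work lies entirely in constructing such a family, and the naive approach of linearly interpolating the defining sections will be the thing to beat.

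First I would normalize the data. Let $(U_1,u_1)$ and $(U_2,u_2)$ have zero loci $D_1,D_2$, and interpret the hypothesis as a diffeomorphism of $M$ carrying the co-oriented $D_1$ onto the co-oriented $D_2$ (this ambient hypothesis is genuinely needed: a knotted and an unknotted circle in $S^3$ are abstractly the same co-oriented submanifold but give non-diffeomorphic divisors). Pulling $(U_2,u_2)$ back along this diffeomorphism reduces us to $D_1=D_2=:D$ with a common co-orientation. Since a complex line bundle is determined by its first Chern class, and $c_1(U_i)$ equals the Poincar\'e dual of the co-oriented $D$ by the normal isomorphism~\eqref{isonu}, we have $U_1\cong U_2$; fixing such an isomorphism we may take $U_1=U_2=:U$. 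We are then left with two sections $u_0,u_1$ of $U$, each transverse to zero with zero locus exactly $D$ and inducing the same complex structure on the normal bundle $N$.

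Because the two co-orientations agree, the normal derivatives $d_\nu u_0,d_\nu u_1\colon N\to U|_D$ are both complex-linear isomorphisms, so $d_\nu u_1=\lambda\, d_\nu u_0$ for some $\lambda\colon D\to\CC^{*}$. The linear family $u_t=(1-t)u_0+tu_1$ does \emph{not} suffice: the non-transverse sections form a codimension-one ``wall'' in the space of sections, and $u_t$ generically meets it, both near $D$ (where the leading coefficient $(1-t)+t\lambda$ can vanish when $\lambda$ takes negative real values) and away from $D$ (where the interpolant can acquire spurious zeros). The essential obstruction is the relative winding class $[u_1/u_0]\in H^1(M\setminus D,\ZZ)$. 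Here I would use that $u_0$ and $u_1$ each link $D$ with degree $+1$, so their ratio has vanishing residue around the normal circles; by the Thom--Gysin sequence of Proposition~\ref{poincaregysin} this forces $[u_1/u_0]$ into the image of $H^1(M,\ZZ)$. Choosing $h\colon M\to\CC^{*}$ representing that class and replacing $u_0$ by $h\,u_0$—a divisor isomorphism, hence harmless—I may assume $u_1/u_0$ is null-homotopic on $M\setminus D$.

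With the winding removed, I would connect $u_0$ to $u_1$ through genuine complex divisors: the null-homotopy supplies a path of nowhere-vanishing sections over $M\setminus D$ whose radial limits along $D$ run through the linearizations $\lambda^{s}\,d_\nu u_0$, which are nonzero for every $s$, so each member stays transverse with zero locus exactly $D$. The hard part will be this final step—ensuring the interpolating family is genuinely smooth across $D$, since the ratio $u_1/u_0$ is bounded but in general only continuous there. I would handle this by first exploiting the tubular-neighbourhood diffeomorphism to make $u_0$ and $u_1$ agree to high order along $D$, so that their difference is negligible near $D$ and the interpolation is both smooth and transverse throughout. Feeding the resulting smooth family $(U,u_s)$ into Lemma~\ref{mosdiv} then yields the required diffeomorphism of divisors.
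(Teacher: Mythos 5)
Your proposal follows the paper's proof almost step for step: an ambient diffeomorphism plus a Chern-class bundle isomorphism reduces the problem to two sections of a single bundle with the same co-oriented zero locus, the sections are then normalized to agree outside a tubular neighbourhood (the paper's choice of $\psi$ with $\psi(s_0)=s_1$ away from the zero loci is exactly your winding-class/Gysin step), and the resulting compactly supported family of divisors is fed into Lemma~\ref{mosdiv}. The only divergence is one of care: the paper simply writes down the linear family $s_t=(1-t)\psi(s_0)+ts_1$ and asserts it satisfies the hypotheses of Lemma~\ref{mosdiv}, whereas you rightly observe that transversality of an interpolating family is not automatic (the normal derivatives can differ by an orientation-preserving but non-positive endomorphism, and spurious zeros can appear off $D$) and must be arranged, e.g.\ by matching jets along $D$ after killing the winding --- a genuine subtlety that the paper's terse argument glosses over.
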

\begin{proof}
Denote the divisors by $D_{0}=(U_0, s_0)$, $D_{1}=(U_1,s_1)$. Let $\phi$ be the diffeomorphism of the manifold taking the zero locus of $D_{0}$ to that of $D_{1}$, preserving co-orientations. This implies that, $c_1(U_0)=\phi^{*}c_1(U_1)$, and so there exists a bundle isomorphism $\psi:U_0\to U_1$ covering $\phi$.  We may choose $\psi$ so that $\psi(s_0) = s_1$ outside tubular neighbourhoods of the zero loci.  Then the family $(U_{1},s_t = (1-t)\psi(s_0) + ts_1)$ satisfies the hypotheses of Lemma~\ref{mosdiv} ($\tfrac{d}{dt} s_{t}$ has compact support), which provides a diffeomorphism $\varphi:U_1\to U_1$ covering some diffeomorphism of the base. The composition $\varphi\circ \psi$ is the required diffeomorphism of divisors.
\end{proof}

If $(U,s)$ is a complex divisor on $M$, then it may be \emph{linearized} along $D$, in the following sense. Let $\tot(N)$ be the total space of the normal bundle to $D$, and let $\pi:\tot(N)\to D$ be the bundle projection.  By the isomorphism~\eqref{isonu}, $N$ is a complex line bundle, and defines a complex line bundle $U_{0}=\pi^{*}N$ on $\tot(N)$, which furthermore has a tautological section $s_{0}\in C^{\infty}(\tot(N),\pi^{*}N)$.  This defines a complex divisor $(U_{0},s_{0})$ on $\tot(N)$ which we may call the \emph{linearization} of $(U,s)$ along $D$.  The tubular neighbourhood theorem, together with Corollary~\ref{coincimpdiffeo}, then directly yields the following normal form result.

\begin{corollary}\label{normaldiv}
Any complex divisor $(U,s)$ is diffeomorphic to its linearization $(U_{0},s_{0})$ in some tubular neighbourhood of its zero locus.
\end{corollary}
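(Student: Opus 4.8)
The plan is to reduce the statement to Corollary~\ref{coincimpdiffeo} via the tubular neighbourhood theorem. First I would invoke the tubular neighbourhood theorem to obtain a diffeomorphism $\Phi$ from a tubular neighbourhood of $D$ in $M$ onto a neighbourhood of the zero section in $\tot(N)$, carrying $D$ to the zero section and compatible with the identification $N\cong U|_D$ of~\eqref{isonu}. Transporting $(U,s)$ along $\Phi$ yields a complex divisor $(U',s')$ defined on a neighbourhood of the zero section in $\tot(N)$.

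Next I would observe that the two divisors $(U',s')$ and $(U_0,s_0)$ both live on $\tot(N)$, share the same zero locus---namely the zero section, which we identify with $D$---and induce the same co-orientation there. Indeed, the co-orientation coming from $(U,s)$ is recorded by the complex structure on $N$ furnished by $d_\nu s$ in~\eqref{isonu}, and the linearization $(U_0,s_0)=(\pi^{*}N,s_0)$ is built precisely so that its own normal derivative reproduces this same complex structure. Because a tubular neighbourhood deformation retracts onto $D$, the line bundles $U'$ and $U_0$ both restrict to $N$ along $D$ and hence are isomorphic, so $c_1(U')=c_1(U_0)$ and the hypotheses needed to run Corollary~\ref{coincimpdiffeo} are in force.

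I would then apply Corollary~\ref{coincimpdiffeo}, via the rectification Lemma~\ref{mosdiv}, to produce a diffeomorphism of divisors $(U',s')\cong(U_0,s_0)$ on a possibly smaller tubular neighbourhood. Composing with $\Phi$ gives the asserted diffeomorphism between $(U,s)$ and its linearization.

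The main obstacle is that $\tot(N)$ is non-compact, whereas Corollary~\ref{coincimpdiffeo} and Lemma~\ref{mosdiv} are stated for compact ambient manifolds. I expect to circumvent this by exploiting the fact that the rectifying diffeomorphism in those results arises from the flow of a vector field $Z$ whose generating deformation $\tfrac{d}{dt}s_t$ is compactly supported near $D$; such a flow is defined for all time on any manifold, compact or not. Thus, after first choosing a bundle isomorphism $\psi:U'\to U_0$ and arranging $\psi(s')$ and $s_0$ to agree outside a smaller tubular neighbourhood (using that they already induce the same co-orientation on $D$ and that their bundles are isomorphic), the interpolation $s_t=(1-t)\psi(s')+ts_0$ has $\tfrac{d}{dt}s_t$ of compact support, and the Moser argument underlying Lemma~\ref{mosdiv} goes through verbatim, delivering the required local diffeomorphism of divisors.
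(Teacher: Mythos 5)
Your proposal is correct and takes essentially the same route as the paper, whose proof consists precisely of invoking the tubular neighbourhood theorem together with Corollary~\ref{coincimpdiffeo}. Your additional care about the non-compactness of the ambient tubular neighbourhood is the same point the paper itself makes parenthetically in proving Corollary~\ref{coincimpdiffeo}, where the compactness hypothesis of Lemma~\ref{mosdiv} is effectively replaced by compact support of $\tfrac{d}{dt}s_{t}$, which guarantees the rectifying flow exists for unit time.
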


\section{Generalized complex structures}\label{sec2}

We refer to~\cite{MR2811595} for the theory of generalized complex structures, and summarize the key facts we will need below.  Let $H$ be a real closed 3-form on the smooth manifold $M$. A generalized complex structure $\JJ$ is a complex structure on $\T M = TM\oplus T^*M$, orthogonal for the split-signature metric on this bundle, whose $+i$--eigenbundle $L$
is involutive for the Courant bracket twisted by $H$.  

Generalized complex structures $(M,\JJ, H), (M',\JJ', H')$ are considered equivalent when there is a diffeomorphism $\varphi:M\to M'$ and a two-form $b\in \Omega^2(M,\RR)$ such that $\varphi^*H' = H + db$ and $\JJ'\circ ( \varphi_{*}e^b)=(\varphi_{*}e^b)\circ \JJ$,  where $e^b$ is the automorphism of $\T M$ given by 
\begin{equation}\label{bsym}
e^{b}: X+\xi\mapsto X + \xi + i_X b.
\end{equation}
If the diffeomorphism $\varphi$ is trivial in the above equivalence, we say that $\JJ,\JJ'$ are \emph{gauge equivalent}.
In particular, two generalized complex structures with the same underlying pair $(M,H)$ are gauge equivalent when they are conjugate by a B-field gauge symmetry, namely the automorphism $e^b$ for $b$ closed. 

At each point $p$, a generalized complex structure $\JJ$ induces on $T_p M$ the structure of a symplectic subspace with transverse complex structure; this structure is left invariant by B-field symmetries.  The transverse complex dimension is called the type of $\JJ$ at $p$. Courant involutivity of $L$ guarantees that the symplectic distribution on $M$ integrates to a singular foliation  and that the complex structure transverse to this foliation is integrable in the appropriate sense. In fact, the singular symplectic foliation is associated to a real Poisson structure 
$Q$ which underlies $\JJ$: if we view $\JJ$ as a section of $\wedge^2\T M$, then $Q = \wedge^2\pi(\JJ)$, for $\pi:\T M\to TM$ the projection. The data of a real Poisson structure with transverse complex structure is, however, not sufficient to describe a generalized complex structure; indeed, even for symplectic fibrations over a complex base there are delicate obstructions, as described in~\cite{MR3150703}.

\subsection{The canonical line bundle}\label{canbun}

The action of $\T M$ by interior and exterior product renders the differential forms into a spinor module for the Clifford algebra bundle of $\T M$.  In this action, the maximal isotropic subbundle $L\subset \T_\CC M$ annihilates a rank 1 subbundle $K\subset \wedge^\bullet T^*_\CC M$ of the complex differential forms called the \emph{canonical bundle} of $\JJ$.

The subbundle $K$ is generated pointwise by a form $\rho$ of the following algebraic type
\begin{equation}\label{ptwtype}
\rho = \Omega\wedge e^{B+ i \omega}.
\end{equation}
Here $\Omega$ is a decomposable form and $B$ and $\omega$ are real two-forms satisfying the nondegeneracy condition
\begin{equation}\label{nondegeneracycondition}
\Omega \wedge \ol{\Omega} \wedge \omega^{n-k}\neq 0,
\end{equation}
where $k$ is the degree of $\Omega$ and the dimension of $M$ is $2n$.  We see from this formula that the kernel of $\Omega\wedge\ol\Omega$ is a symplectic distribution and that $\Omega$ defines a transverse complex structure, as discussed earlier. 

While $L$ annihilates $K$, the Clifford action of the subalgebra $\wedge^\bullet \ol L$ on $K$ defines an isomorphism (using the canonical identification $\ol{L}=L^*$)
\begin{equation}\label{fockspace}
(\CC \oplus L^*\oplus \wedge^2 L^* \oplus \cdots \oplus \wedge^n L^*)\otimes K\cong \wedge^\bullet T^*M
\end{equation}
which induces a new $\ZZ$-grading on the differential forms.

The involutivity of $L$ may then be expressed in terms of $K$ by requiring that the differential operator $d^H = d + H\wedge$ takes $K$ into $L^*\otimes K$.  That is, for every nonvanishing local section $\rho$ of $K$, there exists a section $F = X+\xi$ of $\overline{L}$, called the \emph{modular field} of $\rho$, such that 
\begin{equation}\label{modfield}
d^H\rho = F\cdot \rho= i_X \rho + \xi \wedge \rho.
\end{equation}

This condition makes $K$ a generalized holomorphic bundle, in the sense that it has a flat $L$-connection: the restriction $\delbar$ of $d^H$ to the sections of $K$ defines an operator 
\begin{equation}
\delbar: C^\infty(K)\to C^\infty(L^*\otimes K),
\end{equation}
satisfying the Leibniz rule $\delbar(f\rho)= f\delbar\rho +  d_L f \otimes \rho$ and having zero curvature in $\Omega^2_L$, where here $(\Omega^\bullet_L, d_L)$ refers to the Lie algebroid de Rham complex of $L$. In this way, we see that the modular field is the analogue of a connection 1-form for usual connections. As for complex manifolds, there is a distinguished class of generalized complex structures with holomorphically trivial canonical bundle:

\begin{definition}
A {\it generalized Calabi--Yau structure} on $(M,H)$ is a \gcs\ determined by a nowhere vanishing $d^H$-closed form.
\end{definition}

The generalized holomorphic structure on $K$ actually renders the total space $\tot(K)$ into a generalized complex manifold, in the same way that a rank-1 Poisson module inherits a Poisson structure on its total space~\cite{Polishchuk1997,Hitchin2011a}. To see this, let $\rho$ be a local trivialization of $K$ as before, and let $\tau$ be the dual section, viewed as a fibrewise linear coordinate on $\tot(K)$.  We consider the product generalized complex structure $d\tau\wedge\rho$, which represents the product of the generalized complex structure with the standard complex structure on $\CC$, and we deform it by the Maurer-Cartan element $E\wedge F$, where $E=\tau\del_\tau$ is the complex Euler vector field and $F$ the modular field of $\rho$. The resulting form $\varpi$, given by 
\begin{equation}\label{localKCY}
\varpi = e^{E\wedge F}d\tau\wedge \rho = \tau F\cdot \rho + d\tau \wedge \rho,
\end{equation}
is then independent of the choice of local section $\rho$ and defines a canonical generalized Calabi-Yau structure on $\tot(K)$, integrable with respect to the pullback of the 3-form $H$ to the total space, which we also denote by $H$ below.
\begin{theorem}\label{gcytotk}
Let $\Theta$ be the tautological form on the total space of the canonical line bundle $K$.  Then the differential form 
\begin{equation}\label{cangcyform}
\varpi = d^{H} \Theta
\end{equation}
defines a generalized Calabi-Yau structure, which furthermore satisfies 
\begin{equation}\label{liouville}
i_{E}\varpi = \Theta,
\end{equation}
so that the Euler vector field acts as a Liouville vector field, in the sense that $L^{H}_{E}\varpi = (d^{H}i_{E} + i_{E} d^{H})\varpi = \varpi$.
\end{theorem}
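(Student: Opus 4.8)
The plan is to identify the coordinate-free form $\varpi = d^{H}\Theta$ with the locally defined generalized Calabi--Yau form \eqref{localKCY}, from which integrability follows, and then to extract the Liouville identity \eqref{liouville} by a direct Cartan-calculus computation. First I would unwind the tautological form. Over a local trivialization $\rho$ of $K$ with dual fibrewise-linear coordinate $\tau$, a point of $\tot(K)$ is $\tau\rho$, so $\Theta = \tau\,\pi^{*}\rho$, where $\pi:\tot(K)\to M$ is the projection and $\pi^{*}\rho$ carries no $d\tau$ component and is annihilated by $i_{\del_{\tau}}$. Writing $d^{H}=d+\pi^{*}H\wedge$ on $\tot(K)$ and applying the Leibniz rule gives
\begin{equation}
\varpi = d^{H}\Theta = d\tau\wedge\pi^{*}\rho + \tau\,\pi^{*}(d^{H}\rho) = d\tau\wedge\rho + \tau\,F\cdot\rho,
\end{equation}
where the last equality uses the modular field equation \eqref{modfield}, $d^{H}\rho = F\cdot\rho$, and abbreviates $\pi^{*}\rho$ by $\rho$. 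This is precisely \eqref{localKCY}, so $\varpi$ is the globally well-defined form already constructed there; in particular the identification reduces the generalized Calabi--Yau claim to the Maurer--Cartan construction preceding the theorem.

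For completeness I would record why $\varpi$ is a generalized Calabi--Yau form. Closedness is automatic: since $H$ is closed and $H\wedge H=0$, one has $(d^{H})^{2}=0$, whence $d^{H}\varpi=(d^{H})^{2}\Theta=0$. The form is nowhere vanishing, since only the summand $d\tau\wedge\rho$ contains $d\tau$ and $\rho\neq 0$, so $\varpi$ is a nowhere-vanishing $d^{H}$-closed form as the definition requires. That $\varpi$ is a nondegenerate pure spinor on all of $\tot(K)$ is exactly the content of the Clifford deformation $\varpi = e^{E\wedge F}(d\tau\wedge\rho)$ recalled before the theorem, which we may invoke; as a check one verifies at the zero section $\tau=0$, where $\varpi = d\tau\wedge\rho = d\tau\wedge\Omega\wedge e^{B+i\omega}$ is the product of $\JJ$ with the standard complex structure on $\CC$, that $\overline{(d\tau\wedge\Omega)}\wedge(d\tau\wedge\Omega)\wedge\omega^{n-k}\neq 0$, which follows from \eqref{nondegeneracycondition} for $\rho$ and confirms the generalized Calabi--Yau type.

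The remaining and genuinely new content is the Liouville identity, which I would obtain by computing $i_{E}\varpi$ termwise with $E=\tau\del_{\tau}$. Since $i_{\del_{\tau}}d\tau = 1$ and $i_{\del_{\tau}}\rho = 0$, the first summand contributes $i_{E}(d\tau\wedge\rho) = \tau\rho = \Theta$, while the second summand $\tau\,F\cdot\rho$ contains no $d\tau$ factor and is assembled from forms pulled back along $\pi$, so $i_{E}(\tau\,F\cdot\rho)=0$; hence $i_{E}\varpi = \Theta$. Combining this with closedness and the Cartan formula yields
\begin{equation}
L^{H}_{E}\varpi = (d^{H}i_{E} + i_{E}d^{H})\varpi = d^{H}(i_{E}\varpi) = d^{H}\Theta = \varpi,
\end{equation}
exhibiting $E$ as a Liouville field.

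I expect the main obstacle to be the nondegeneracy and globality step rather than these formal identities. One must ensure that the Clifford deformation $e^{E\wedge F}$ preserves both purity and the type of the spinor \emph{away from} the zero section, so that \eqref{nondegeneracycondition} holds throughout $\tot(K)$ and not merely near $\tau=0$; this is precisely where the Maurer--Cartan description underlying \eqref{localKCY} does the work, and it is what guarantees that the manifestly global and manifestly closed form $d^{H}\Theta$ is everywhere a generalized complex structure. Once this is granted, the identification $\varpi = d^{H}\Theta$ makes the Liouville property a one-line consequence of $d^{H}$-closedness.
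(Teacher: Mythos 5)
Your formal computations are correct and coincide with the paper's own proof: the identification $d^{H}\Theta = d\tau\wedge\rho + \tau\,F\cdot\rho$ with \eqref{localKCY} via the modular field equation \eqref{modfield}, closedness of $\varpi$ (the paper notes it is exact, hence closed), and the contraction argument $i_{E}(d\tau\wedge\rho)=\tau\rho=\Theta$, $i_{E}(F\cdot\rho)=0$ giving \eqref{liouville} and the Liouville property. The genuine gap is exactly where you flagged it: nondegeneracy away from the zero section. You verify \eqref{nondegeneracycondition} only at $\tau=0$ and then defer to ``the Maurer--Cartan description underlying \eqref{localKCY}.'' That deferral is circular, because the paragraph preceding the theorem is an assertion which the theorem itself is meant to establish; and it is also insufficient in principle, because a Maurer--Cartan element of the deformation algebra only guarantees involutivity of the deformed isotropic $L' = e^{E\wedge F}\cdot L$, while the transversality condition $L'\cap\ol{L'}=0$ (equivalently, nonvanishing of the Chevalley--Mukai pairing $(\varpi,\ol\varpi)$) is an additional open condition that can fail for large deformations. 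No smallness argument is available here: the deformation $E\wedge F = \tau\del_{\tau}\wedge F$ grows without bound in $\tau$ on the noncompact total space, so ``openness near the zero section'' proves nothing about the rest of $\tot(K)$.

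The fix is short, and it is what the paper's one-line nondegeneracy statement encodes. Since $F\cdot\rho = i_{X}\rho + \xi\wedge\rho$ is built entirely from forms pulled back from $M$, it contains neither $d\tau$ nor $d\ol\tau$; hence every term of $(\varpi,\ol\varpi)$ involving $\tau F\cdot\rho$ or its conjugate has at most one of $d\tau,d\ol\tau$ and cannot reach top degree $2n+2$ on $\tot(K)$. Therefore
\begin{equation*}
(\varpi,\ol\varpi) \;=\; \pm\, d\tau\wedge d\ol\tau\wedge(\rho,\ol\rho) \;=\; c\, d\tau\wedge d\ol\tau\wedge\Omega\wedge\ol\Omega\wedge\omega^{n-k},
\end{equation*}
with $c$ a nonzero universal constant, and this is nonvanishing on all of $\tot(K)$ --- including far from the zero section --- by the pointwise nondegeneracy \eqref{nondegeneracycondition} of $\rho$. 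This is precisely the paper's remark that $d\tau\wedge d\ol\tau\wedge\Omega\wedge\ol\Omega\wedge\omega^{n-k}$ is nonvanishing on $\tot(K)$; note that with it the Maurer--Cartan machinery is never needed at all, since exactness gives closedness and the pairing gives nondegeneracy. Adding this computation makes your argument complete.
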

\begin{proof}
If $\rho$ is a local trivialization of $K$ with corresponding fibre coordinate $\tau$, then $\Theta=\tau\rho$ is a local expression for the tautological form on $\tot(K)$. We then see that $d^{H}\Theta = d\tau\wedge\rho + \tau d^{H}\rho$, and using the definition~\eqref{modfield} of the modular field we obtain expression~\eqref{localKCY}, showing that it is independent of the local trivialization.  Nondegeneracy of $\varpi$ follows from the fact that $d\tau\wedge d\ol\tau \wedge\Omega\wedge\ol \Omega \wedge \omega^{n-k}$ is nonvanishing on $\tot(K)$.  Finally, $\varpi$ is exact and so certainly closed, defining the required generalized Calabi-Yau structure. Identity~\eqref{liouville} then follows from the local expression~\eqref{localKCY}, since $i_{E}(F\cdot\rho)=0$ and $i_{E}(d\tau\wedge\rho) = \tau\rho = \Theta$, as required.
\end{proof}

Since $\varpi$ satisfies $L_{E}\varpi = \varpi$, it follows that the line generated by $\varpi$ in the forms on $\tot(K)$ is invariant under rescaling, defining a $\CC^{*}$--invariant generalized complex structure $\JJ_{K}$ on $\tot(K)$. In fact, if we consider the principal $\CC^{*}$--bundle $\tot^{*}(K)$ defined by deleting the zero section, we may express the original generalized complex structure on $M$ as a Courant reduction of the structure on $\tot^{*}(K)$ along the generalized symmetry $E$, in the sense developed in~\cite{MR2323543}.  In particular, $\JJ$ is given by the Dirac pushforward~\cite{MR1973074} of $\JJ_{K}$, as follows.
\begin{proposition}\label{pushforwardrelation}
Let $L, L_{K}$ be the $+i$-eigenbundles of $\JJ$ and $\JJ_{K}$, respectively.  Then $L$ is given by the Dirac pushforward of $L_{K}$ along the bundle projection $\pi:K\to M$, i.e. 
\begin{equation}
L = \pi_{*}L_{K} = \{\pi_{*}X + \eta\in \T M\ |\ X + \pi^{*}\eta \in L_{K}\}.
\end{equation}
\end{proposition}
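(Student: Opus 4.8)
The plan is to verify the pushforward identity directly at the level of pure spinors, by comparing the Clifford annihilator of the generator $\varpi$ of the canonical bundle of $\JJ_{K}$ with that of a local generator $\rho$ of $K$. Recall that for any local trivialization $\rho$ of $K$ we have $L = \{v\in\T_{\CC}M : v\cdot\rho = 0\}$, while by Theorem~\ref{gcytotk} the bundle $L_{K}$ is the Clifford annihilator of $\varpi = d\tau\wedge\rho + \tau\,F\cdot\rho$, where $\tau$ is the fibre coordinate dual to $\rho$, $F=X_{F}+\xi_{F}$ is the modular field, and the pullback $\pi^{*}$ of base forms is suppressed. First I would fix such a trivialization and evaluate the annihilator condition on a general element of the shape prescribed by the pushforward, namely $v = X + \pi^{*}\eta$ with $X\in T_{\CC}\tot(K)$ and $\eta\in T^{*}_{\CC}M$; writing the vertical part of $X$ as $a\del_{\tau}+b\del_{\bar\tau}$ and setting $Y=\pi_{*}X$, the aim is to show that $v\cdot\varpi = 0$ forces $Y+\eta\in L$, and conversely that every element of $L$ arises in this way.

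The core computation is to expand $v\cdot\varpi = i_{X}\varpi + \pi^{*}\eta\wedge\varpi$ and to organize the result according to its $d\tau$- and $d\bar\tau$-content. Since $\rho$ and $F\cdot\rho$ are basic, their contraction only sees $Y=\pi_{*}X$, the vertical directions $\del_{\tau},\del_{\bar\tau}$ contracting to zero; a short calculation then shows that $v\cdot\varpi$ has no $d\bar\tau$-terms, a part linear in $d\tau$ equal to $-d\tau\wedge\big((Y+\eta)\cdot\rho\big)$, and a $d\tau$-free part $a\rho + \tau\,(Y+\eta)\cdot(F\cdot\rho)$. Because these three groups involve independent form types on $\tot(K)$, each must vanish separately. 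Vanishing of the $d\tau$-linear part is precisely the condition $(Y+\eta)\cdot\rho = 0$, that is $\pi_{*}X+\eta\in L$, which already establishes the inclusion $\pi_{*}L_{K}\subseteq L$.

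To obtain the reverse inclusion and to confirm that no extra constraint is hidden in the remaining equation, I would analyse the $d\tau$-free part $a\rho + \tau\,(Y+\eta)\cdot(F\cdot\rho)=0$. As $F$ is a section of $\ol L$ and $Y+\eta\in L$ annihilates $\rho$, the Clifford relation gives $(Y+\eta)\cdot(F\cdot\rho) = 2\langle Y+\eta,\,F\rangle\,\rho$, a scalar multiple of $\rho$; hence the equation merely fixes the vertical component $a=-2\tau\langle Y+\eta,F\rangle$ and leaves $b$ unconstrained. Thus for every $Y+\eta\in L$ there exists $X+\pi^{*}\eta\in L_{K}$ projecting to it, and since $\pi_{*}$ annihilates $\del_{\tau}$ and $\del_{\bar\tau}$, the image $\pi_{*}X+\eta = Y+\eta$ does not depend on the choices of $a$ and $b$. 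Combining the two inclusions gives $\pi_{*}L_{K}=L$.

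I expect the principal subtlety to lie not in the expansion itself but in two bookkeeping points. First, the degree separation must be justified: it relies on $\rho$ and $F\cdot\rho$ being pulled back from $M$, so that they contribute no $d\tau$ or $d\bar\tau$ and the three groups are genuinely independent. Second, one must check that the resulting pushforward is a well-defined smooth subbundle over $M$, and not merely a pointwise recipe that could change rank or depend on the fibre point; this follows from the $\CC^{*}$-invariance of $\JJ_{K}$ encoded in the Liouville identity $L^{H}_{E}\varpi=\varpi$ of Theorem~\ref{gcytotk}, which forces the answer to be constant along the fibres of $\tot^{*}(K)$ and hence to descend to $M$. Alternatively, the statement could be derived from the general behaviour of Dirac structures under the Courant reduction along the symmetry $E$ developed in~\cite{MR2323543}, but the direct annihilator computation above seems the most economical.
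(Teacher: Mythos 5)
Your proof is correct and follows essentially the same route as the paper's: expand $(X+\pi^{*}\eta)\cdot\varpi$ in a local trivialization, separate the $d\tau$-linear part $-d\tau\wedge((\pi_{*}X+\eta)\cdot\rho)$ from the basic part, and use the pairing $\left<\pi_{*}X+\eta,F\right>$ to solve for the vertical component in the reverse inclusion. The only difference is presentational: you make explicit the Clifford relation $(Y+\eta)\cdot(F\cdot\rho)=2\left<Y+\eta,F\right>\rho$ (valid once $(Y+\eta)\cdot\rho=0$) and the freedom in the $\del_{\bar\tau}$-component, both of which the paper's equation~\eqref{pushdirgc} absorbs implicitly.
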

\begin{proof}
Let $\rho$ be a local trivialization of $K$ and $\tau$ the corresponding fibrewise linear coordinate on $\tot(K)$, so that $\varpi = d\tau\wedge\rho + \tau F\cdot \rho$, where $F$ is the modular field of $\rho$.  Then we have
\begin{equation}\label{pushdirgc}
(X + \pi^{*}\eta)\cdot  \varpi = -d\tau \wedge ((\pi_{*}X+\eta)\cdot \rho) + 
(i_{X}d\tau + 2\tau \left<\pi_{*}X + \eta, F\right>)\rho,
\end{equation}
where $\left<\cdot,\cdot\right>$ denotes the natural split-signature metric on $\T M$. 
If $X+\pi^{*}\eta\in L_{K}$, it annihilates $\varpi$, and both summands in~\eqref{pushdirgc} vanish independently since $d\tau$ is the only non-basic form. Thus, $\pi_{*}X+\eta\in L$ and we have $\pi_{*}L_{K}\subset L$. For the reverse inclusion, let $Y+\eta\in L$.  By choosing $X = Y - 2 \left<Y + \eta, F\right>\tau\del_{\tau}$, we see from~\eqref{pushdirgc} that $(X+\pi^{*}\eta)\cdot\varpi = 0$, as required.
\end{proof}

While there is a $\CC^{*}$-invariant generalized complex structure on $\tot^{*}(K)$, the Calabi-Yau form $\varpi$ is not invariant.  As a result, $i_{E}\varpi$ is not a basic form. Instead, $i_{E}\varpi$ varies linearly on each fibre and can be viewed as a section of $K^{*}\otimes \wedge^{\bullet}_{\CC}T^{*}M$ on $M$, defining an inclusion
\begin{equation}
i_{E}\varpi : K\hookrightarrow  \wedge^\bullet T^*_\CC M,
\end{equation}
recovering the original canonical bundle as a subbundle of the complex forms. 

\subsection{Generalized complex structures of type 1}

While our main interest in this paper is in generalized complex structures which are almost everywhere of type 0, it will be helpful to understand structures of type 1, which, as we shall see, govern the singular behaviour of generically type 0 structures. 

Let $D$ be a smooth manifold with real closed 3-form $H$, and let $\JJ$ be a generalized complex structure of type 1 on $(D,H)$, so that the underlying real Poisson structure $Q$ defines a foliation by symplectic leaves of real codimension 2.  The conormal bundle $\nu^{*} = \JJ(T^{*}D)\cap T^{*}D$ to the symplectic foliation is then a rank 1 complex subbundle of $\T D$, whose complexification decomposes in $+i, -i$ eigenbundles for $\JJ$ respectively:
\begin{equation}
\nu^{*}_\CC = \nu^{*}_{1,0} \oplus\nu^{*}_{0,1}.
\end{equation}
As a result, if we apply the tangent projection to the $+i$-eigenbundle $L\subset \T_{\CC}D$ of $\JJ$, we obtain the abelian Lie algebroid extension 
\begin{equation}\label{cxdir}
0\to \nu^*_{1,0} \to L \to A \to 0,
\end{equation} 
where $A\subset T_{\CC}D$ is the involutive corank 1 complex distribution with annihilator $\nu^{*}_{1,0}$.  We use the notation $(\Omega^{\bullet}_{A},d_{A})$ for the de Rham complex of $A$, an elliptic complex with cohomology groups denoted by $H^{\bullet}_{A}$.
As is always the case for regular Dirac structures~\cite{MR2811595}, the subbundle $L\subset\TT_{\CC}M$ determines and is determined by a 2-form 
$\sigma\in \Omega^{2}_{A}(D)$ via the graph construction
\begin{equation}\label{subdir}
L = \{ Z + \zeta \in A\oplus T^*D\ |\ \iota^*\zeta = i_Z\sigma\},
\end{equation}
where $\iota:A\hookrightarrow T_{\CC}M$ is the inclusion. Involutivity of $L$ holds if and only if 
\begin{equation}\label{dsigma}
d_A \sigma + \iota^{*}H=0,
\end{equation}
 From expression~\eqref{subdir}, we see that the condition $L\cap\ol L = \{0\}$ holds if and only if $\sigma$ has nondegenerate imaginary part when pulled back to the real distribution $\Delta$ defined by the transverse intersection $A\cap\ol A$, recovering the symplectic structure determined by the Poisson structure $Q$.  We summarize these observations as follows.
\begin{theorem}\label{asigma}
A type 1 generalized complex structure on $(D,H)$ is equivalently specified by a pair $(A,\sigma)$, where:
 $A\subset T_{\CC}D$ is an involutive distribution of complex corank 1 that is transverse to its complex conjugate, and 
 $\sigma$ is a section of $\wedge^{2}A^{*}$ such that the integrability condition~\eqref{dsigma} holds and such that its pullback to $\Delta\otimes\CC = A\cap\ol A$ has nondegenerate imaginary part.
\end{theorem}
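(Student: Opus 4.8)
The statement is an equivalence, so my plan is to exhibit mutually inverse constructions and check they are inverse. The forward direction, sending a type 1 structure to a pair $(A,\sigma)$, is essentially already assembled in the discussion preceding the theorem: from the $+i$-eigenbundle $L$ one reads off $A=\pi(L)$, the tangent projection, which by~\eqref{cxdir} is the involutive corank 1 distribution with annihilator $\nu^*_{1,0}$; transversality of $A$ to $\ol A$ follows from $L\cap\ol L=\{0\}$ together with the splitting $\nu^*_\CC=\nu^*_{1,0}\oplus\nu^*_{0,1}$. The form $\sigma\in\Omega^2_A$ is the one whose graph~\eqref{subdir} reproduces $L$, integrability of $\JJ$ yields~\eqref{dsigma}, and $L\cap\ol L=\{0\}$ forces $\Im\sigma$ to be nondegenerate on $\Delta$. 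So the real work is the converse, namely that any pair satisfying the three conditions defines a type 1 structure.

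For the converse I would start from $(A,\sigma)$, define $L$ by the graph formula~\eqref{subdir}, and verify in turn that it is the $+i$-eigenbundle of a type 1 generalized complex structure. Maximal isotropy is a short computation: for $Z+\zeta,\,W+\omega\in L$ the natural pairing evaluates to $\tfrac12(\sigma(Z,W)+\sigma(W,Z))=0$, using $\iota^*\zeta=i_Z\sigma$ together with $W\in A$, while a dimension count (the fibre of $L$ over each $Z\in A$ is a torsor for $\mathrm{Ann}(A)=\nu^*_{1,0}$) gives $\mathrm{rank}_\CC L=\dim_\CC D$. The condition $L\cap\ol L=\{0\}$ is exactly where both nondegeneracy hypotheses enter: an element of the intersection has tangent part in $A\cap\ol A=\Delta_\CC$ annihilated by $\Im\sigma|_\Delta$, hence zero by nondegeneracy, and its cotangent part then lies in $\mathrm{Ann}(A)\cap\mathrm{Ann}(\ol A)=\mathrm{Ann}(A+\ol A)=0$ by transversality. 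Finally the type is precisely 1 because $L\cap T^{*}_{\CC}D=\nu^*_{1,0}$ has rank 1.

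The hard part is the integrability step: showing that $L$ is involutive for the $H$-twisted Courant bracket if and only if~\eqref{dsigma} holds. This is the familiar identification of graphs of $2$-forms with Dirac structures, but carried out relative to the Lie algebroid $A$ rather than $TD$, so some care is needed in treating $\sigma$ as an $A$-form. Concretely, for sections $Z+\zeta,\,W+\omega$ of $L$ the bracket has tangent part $[Z,W]$, which lies in $A$ since $A$ is involutive; the content is that the cotangent part, pulled back along $\iota$, equals $i_{[Z,W]}\sigma$ exactly when $d_A\sigma+\iota^*H=0$. Using Cartan calculus and the defining relations $\iota^*\zeta=i_Z\sigma$, $\iota^*\omega=i_W\sigma$, I expect the discrepancy to collapse to $(d_A\sigma+\iota^*H)(Z,W,\cdot)$, yielding the equivalence. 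Since the graph construction and the extraction of $(A,\sigma)$ are visibly inverse to one another, this completes the proof.
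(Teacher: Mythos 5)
Your proposal is correct and follows essentially the same route as the paper: the paper's "proof" is precisely the discussion preceding the theorem statement, which identifies $L$ with the graph~\eqref{subdir} of a form $\sigma\in\Omega^2_A(D)$ (citing the standard theory of regular Dirac structures), asserts that involutivity is equivalent to~\eqref{dsigma}, and that $L\cap\ol L=\{0\}$ is equivalent to nondegeneracy of $\Im\sigma$ on $\Delta$. The only difference is that you spell out the verifications (isotropy, the rank count, the intersection argument using $\mathrm{Ann}(A)\cap\mathrm{Ann}(\ol A)=0$, and the Cartan-calculus computation reducing involutivity to $i_Wi_Z(d_A\sigma+\iota^*H)=0$) that the paper delegates to the reference, and indeed that computation collapses exactly as you predict.
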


The conormal bundle $\nu^{*}_{1,0}$ has a natural partial flat connection along $A$, given by the Lie derivative and often called the Bott connection. This equips $A\oplus \nu^{*}_{1,0}$ with a standard Lie bracket $[-,-]_{0}$, making it a Lie algebroid. Since $L$ is an extension of $A$ by $\nu^{*}_{1,0}$, we may split the sequence~\eqref{cxdir} and express the bracket on $L$ as a deformation of the standard one by a tensorial term $F\in\Omega^{2}_{A}(D,\nu^{*}_{1,0})$, that is, 
\begin{equation}\label{algcurv}
[X+\xi,Y+\eta]_{L} = [X+\xi,Y+\eta]_{0}+F(X,Y).
\end{equation}
The cohomology class $[F]\in H^{2}_{A}(D,\nu^{*}_{1,0})$ is independent of the splitting and called the \emph{twisting class} of the generalized complex structure.  We now describe this class in terms of the data provided by Theorem~\ref{asigma}.

The pullback $\iota^{*}$ of differential forms to forms on $A$ has kernel given by the differential ideal generated by $\nu^{*}_{1,0}$. This ideal may be identified with the de Rham complex of $A$ with coefficients in $\nu^{*}_{1,0}$, so that we have a short exact sequence of complexes, which in degree $k$ gives
\begin{equation}\label{relmorp}
\xymatrix{
0\ar[r] & \Omega^{k-1}_{A}(D,\nu^{*}_{1,0})\ar[r] & \Omega^{k}(D,\CC) \ar[r]^-{\iota^{*}} & \Omega^{k}_{A}(D)\ar[r] & 0
}
\end{equation}
Suppose we choose a 2-form $\wt\sigma\in\Omega^{2}(D,\CC)$ such that $\iota^{*}\wt\sigma = \sigma$. Then the map $Z\mapsto Z + i_{Z}\wt\sigma$ 
defines a splitting of~\eqref{cxdir}. Using the Courant bracket twisted by $H$ to compute the curvature of this splitting, we obtain the closed 3-form $H+d\wt\sigma$, which since it is in the kernel of $\iota^{*}$, is a closed form in $\Omega^{2}_{A}(D,\nu^{*}_{1,0})$, providing a representative of the twisting class.
\begin{proposition}\label{twprop}
The twisting class of the generalized complex structure associated to $(A,\sigma)$ by Theorem~\ref{asigma} coincides with the class in $H^{2}_{A}(D,\nu^{*}_{1,0})$ determined by the class $[(H,\sigma)]$ in the third relative cohomology of the morphism $\iota:A\to T_{\CC}D$.  

Furthermore, this class vanishes if and only if there exists $\wt\sigma\in \Omega^{2}(M,\CC)$ with $\iota^{*}\wt\sigma = \sigma$ and $d\wt\sigma + H=0$.
\end{proposition}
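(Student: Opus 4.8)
The plan is to read the integrability condition~\eqref{dsigma} as a cocycle condition for the mapping cone of $\iota^{*}$, and then to identify the resulting relative class with the twisting class by pushing it into the kernel subcomplex of~\eqref{relmorp}. Concretely, I would first form the relative complex of the degreewise surjective chain map $\iota^{*}\colon\Omega^{\bullet}(D,\CC)\to\Omega^{\bullet}_{A}(D)$, whose degree-$k$ term is $\Omega^{k}(D,\CC)\oplus\Omega^{k-1}_{A}(D)$ with differential $(\omega,\theta)\mapsto(d\omega,\iota^{*}\omega-d_{A}\theta)$; a relative cocycle is a pair $(\omega,\theta)$ with $d\omega=0$ and $\iota^{*}\omega=d_{A}\theta$. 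Condition~\eqref{dsigma} is exactly this condition for $(H,-\sigma)$, so $(H,\sigma)$ determines a class in the third relative cohomology $H^{3}(\iota^{*})$. Because $\iota^{*}$ is surjective, the inclusion $\kappa\mapsto(\kappa,0)$ of the kernel complex, which by~\eqref{relmorp} is $\Omega^{\bullet-1}_{A}(D,\nu^{*}_{1,0})$, is a quasi-isomorphism, giving $H^{3}(\iota^{*})\cong H^{2}_{A}(D,\nu^{*}_{1,0})$.

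To make the first statement explicit I would choose $\wt\sigma\in\Omega^{2}(D,\CC)$ with $\iota^{*}\wt\sigma=\sigma$ and subtract the relative coboundary of $(-\wt\sigma,0)$ from the cocycle $(H,-\sigma)$. This produces the cohomologous representative $(H+d\wt\sigma,0)$, and $H+d\wt\sigma$ lies in the kernel of $\iota^{*}$ since $\iota^{*}(H+d\wt\sigma)=\iota^{*}H+d_{A}\sigma=0$ by~\eqref{dsigma}. Thus the image of $[(H,\sigma)]$ in $H^{2}_{A}(D,\nu^{*}_{1,0})$ is $[H+d\wt\sigma]$, which is precisely the representative of the twisting class obtained from the splitting $Z\mapsto Z+i_{Z}\wt\sigma$ of~\eqref{cxdir} in the discussion preceding the statement. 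This identifies the two classes, and en passant shows $[H+d\wt\sigma]$ is independent of the choice of $\wt\sigma$.

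For the second statement I would unwind what vanishing of $[H+d\wt\sigma]$ in $H^{2}_{A}(D,\nu^{*}_{1,0})$ means: the closed element $H+d\wt\sigma$ of the kernel complex is exact, i.e. $H+d\wt\sigma=d\mu$ for some $\mu\in\Omega^{1}_{A}(D,\nu^{*}_{1,0})$ (the degree-two term of the kernel complex), viewed as a $2$-form $\mu\in\Omega^{2}(D,\CC)$ with $\iota^{*}\mu=0$. Setting $\wt\sigma'=\wt\sigma-\mu$ then gives $\iota^{*}\wt\sigma'=\sigma$ and $d\wt\sigma'+H=(d\wt\sigma-d\mu)+H=0$, which is the asserted global primitive; conversely, any such $\wt\sigma'$ realizes $H+d\wt\sigma'=0$, so the class vanishes.

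The real geometric content---that the curvature of the splitting equals $H+d\wt\sigma$---is already secured in the paragraph before the statement, so the only genuine difficulty here is bookkeeping: fixing the sign of the cone differential so that~\eqref{dsigma} becomes the cocycle condition, getting the degree shift right so that the \emph{third} relative cohomology matches $H^{2}_{A}(D,\nu^{*}_{1,0})$, and checking that the kernel-complex representative is honestly $H+d\wt\sigma$ rather than merely something cohomologous to it.
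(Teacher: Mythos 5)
Your proposal is correct and follows essentially the same route as the paper's proof: both identify the third relative cohomology of $\iota$ with $H^{2}_{A}(D,\nu^{*}_{1,0})$ via the kernel complex of~\eqref{relmorp}, push the cocycle determined by $(H,\sigma)$ to the representative $H+d\wt\sigma$, and invoke the curvature computation preceding the statement to identify it with the twisting class. Your treatment of the second statement---trivializing $H+d\wt\sigma$ inside the kernel complex and correcting $\wt\sigma$ by $\mu$---is just a cleaner, sign-consistent version of the paper's construction of $\wt\sigma$ from a relative coboundary pair $(b,\tau)$.
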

\begin{proof}
The relative cohomology of $\iota$ is the cohomology of the total complex of the double complex defined by the morphism $\iota^{*}$.  By the exact sequence~\eqref{relmorp}, the $k^{\text{th}}$ relative cohomology of $\iota$ computes precisely $H^{k-1}_{A}(D,\nu^{*}_{1,0})$.  Using the exact sequence, the cocycle $(H,\sigma)$ is sent to $H+d\wt\sigma$, where $\wt\sigma$ is chosen such that $\iota^{*}\wt\sigma = \sigma$.  This coincides with the twisting class as computed by applying the Courant bracket to the splitting of~\eqref{cxdir} defined by $\wt\sigma$, as argued above.  

For the second statement, note that the class in relative cohomology vanishes if and only if there exists $b\in\Omega^{2}(D,\CC)$ and $\tau\in \Omega^{1}_{A}(D)$ such that $db = H$, and $\iota^{*}b -d_{A}\tau = \sigma$.  If such $(b,\tau)$ exists, then choose $\wt \tau\in\Omega^{1}(D,\CC)$ such that $\iota^{*}\wt \tau = \tau$ and put $\wt\sigma = -b - d\wt \tau$, satisfying the requirements $\iota^{*}\wt\sigma = \sigma$ and $d\wt\sigma+H= 0$. Conversely, given such a $\wt\sigma$, simply put $b=-\wt\sigma$ and $\tau=0$, trivializing the twisting class. 
\end{proof}

In terms of differential forms, a local generator for the canonical line bundle $K$ of a generalized complex structure of type 1 may be written as  
\begin{equation}\label{type1spin}
\rho = \Omega \wedge e^{B+i\omega},
\end{equation}
where $\Omega$ is a complex 1-form locally trivializing $\nu^{*}_{1,0}$, and $B,\omega$ are real 2-forms such that $\omega$ is symplectic on the foliation determined by $\Omega\wedge\ol\Omega$.  The form $B+i\omega$ is uniquely determined only up to adding a 2-form in the ideal generated by $\Omega$, that is, $\sigma = \iota^{*}(B+i\omega)\in \Omega^{2}_{A}(D)$ is uniquely determined. The integrability condition is then that $\Omega\wedge(d(B+i\omega) + H) = 0$, which is a restatement of the condition $d_{A}\sigma=0$.  The form $d(B+i\omega) + H$, then, represents the twisting class in the cohomology of the differential ideal generated by $\Omega$. As shown in Proposition~\ref{twprop}, the twisting class is the obstruction to finding forms $B,\omega$ satisfying $d(B+i\omega) + H = 0$, which implies, but is stronger than, the integrability condition. 

The generalized Calabi-Yau condition holds for the above structure if and only if there is a global closed trivialization $\Omega$ for the line bundle $\nu^{*}_{1,0}$. In such a case, we have the exact sequence characterizing the differential graded ideal $\sI^{\bullet}_{\Omega}$ generated by $\Omega$:
\begin{equation}\label{idealkoszul}
\xymatrix{
0\ar[r] & \sI^{k}_{\Omega}\ar[r] & \Omega^{k}(D,\CC)\ar[r]^-{\Omega\wedge\cdot} & \sI^{k+1}_{\Omega}\ar[r] & 0
}.
\end{equation}
In this language, the twisting class is the class in $H^{3}(\sI^{\bullet}_{\Omega})$ determined by $d(B+i\omega) + H$.  Note that it maps to $[H]\in H^{3}(D,\RR)$ in the long exact sequence resulting from~\eqref{idealkoszul}, and so a necessary (but not sufficient) condition for the vanishing of the twisting class is that $[H]=0$ in $H^{3}(D,\RR)$. 


Assuming that the Poisson structure on $D$ makes it into a fibration, it is clear that if the fibers are not all symplectomorphic, the twisting class must be nontrivial. But even if the fibers are all symplectomorphic it may still happen that the twisting class is nonzero, as we illustrate next.
\begin{example}\label{kodthur}
 Let $D$ be the Kodaira--Thurston manifold, a $T^{2}$ principal bundle over $T^{2}$ given by the product of $S^{1}$ with the circle bundle with primitive Chern class. Let $\Omega$ be a complex 1-form defining a Calabi-Yau complex structure on $T^{2}$ and let $\theta_{1},\theta_{2}$ be connection 1-forms for the trivial and nontrivial circle bundles, respectively, so that $d\theta_{1}=0$ while $d\theta_{2}= i\Omega\wedge\ol\Omega$ after normalization. Then $\omega = \theta_{1}\wedge\theta_{2}$ defines a symplectic form on each torus fiber, and the following defines a generalized Calabi-Yau structure with $H=0$ on $D$: 
\begin{equation}
\rho = \Omega\wedge e^{i\omega}.
\end{equation}
If the twisting class vanished, there would be a closed form $B+ i\omega'$  for which $\rho = e^{B+i\omega'}\wedge \Omega$, but in this case $(\rho,\ol{\rho}) = 2i \Omega\wedge \ol{\Omega} \wedge \omega'$ would be exact, as $\Omega\wedge \ol{\Omega} $ is exact and $\omega'$  is closed, contradicting the fact that it is a nowhere vanishing volume form.

\end{example}

\subsection{Topological constraints for the anticanonical divisor}

We now describe several topological properties of manifolds $D$ which admit type 1 generalized Calabi-Yau structures, and indicate how these are affected by the vanishing of the twisting class introduced above.  	   

\begin{definition}
A \gcs\  is {\it proper} if it is compact and its symplectic leaves are compact.
\end{definition}

The following proposition summarizes basic topological properties of type one \gcy\ manifolds.

\begin{theorem}\label{prop:topology1} Let $D^{2n}$ be a compact, connected type one \gcy\ manifold. Then all of the following hold:
\begin{enumerate}
\item There is a surjective submersion $\pi: D \into T^2$, hence $b_1(D) \geq 2$ and $\chi(D) =0$.
\item If $D$ has a compact leaf, then $D$ is proper and $\pi$ can be chosen so that the components of the fibers of $\pi$  are the symplectic leaves of the underlying Poisson structure.
\item If the twisting class vanishes, the structure can be deformed into a proper one, $D$ admits a symplectic structure for which $\pi:D \into T^2$ is a symplectic fibration and there are classes $a,b \in H^1(D)$ and $c \in H^2(D)$ such that $abc^{n-1} \neq 0$. In particular $b_i(D)\geq 2 $ for $0<i<2n$.
\end{enumerate}
\end{theorem}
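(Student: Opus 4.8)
The plan is to extract from the type 1 generalized Calabi--Yau structure a global closed complex $1$-form and to analyse the codimension $2$ foliation it defines. By the discussion following~\eqref{type1spin}, the generalized Calabi--Yau condition provides a nowhere-vanishing, $d$-closed trivialization $\Omega=\Omega_1+i\Omega_2$ of $\nu^*_{1,0}$, so $\Omega_1,\Omega_2$ are real closed $1$-forms, and the nondegeneracy condition~\eqref{nondegeneracycondition} (with $k=1$) forces $\Omega\wedge\ol\Omega\wedge\omega^{n-1}\neq0$, hence $\Omega_1\wedge\Omega_2\neq0$ pointwise. Thus $\Omega_1,\Omega_2$ are everywhere linearly independent closed forms whose common kernel $\ker\Omega_1\cap\ker\Omega_2$ is exactly the symplectic distribution $\Delta$ of Theorem~\ref{asigma}; I will work throughout with the foliation $\mc{F}$ it integrates.

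For (1), I would perturb $\Omega_1,\Omega_2$ to closed forms $\Omega_1',\Omega_2'$ with integral periods. Since rational classes are dense in $H^1(D,\RR)$ and the harmonic representative depends linearly, hence continuously, on the cohomology class, I can choose $\Omega_i'$ representing nearby rational classes and $C^0$-close to $\Omega_i$; pointwise independence is an open condition, so $\Omega_1'\wedge\Omega_2'\neq0$ persists, and after clearing denominators the periods become integral. Integrating gives a map $\pi:D\to\RR^2/\ZZ^2=T^2$, which is a submersion because $d\pi$ has rank $2$, and is surjective since its image is open, closed, and $T^2$ is connected. That $[\Omega_1'],[\Omega_2']$ are independent in $H^1(D,\RR)$ follows because any nonzero real combination is a nowhere-vanishing closed form, which on a compact manifold cannot be exact; hence $b_1(D)\geq2$. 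Finally $\chi(D)=0$ because $\Omega_1'$ dualizes to a nowhere-vanishing vector field.

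For (2), the key input is that a foliation defined by closed $1$-forms has trivial leaf holonomy, so Reeb stability makes the set of compact leaves open and gives each compact leaf a product saturated neighbourhood. To globalize I would choose vector fields $X_1,X_2$ dual to $\Omega_1,\Omega_2$ (so $\Omega_j(X_i)=\delta_{ij}$); then $\mc{L}_{X_i}\Omega_j=0$ and $\Omega_j([X_1,X_2])=0$, so $X_1,X_2$ are complete (as $D$ is compact), preserve $\mc{F}$, and commute modulo $T\mc{F}$. They therefore induce commuting $\RR^2$-flows on the leaf space $B=D/\mc{F}$, acting with open orbits. Since $B$ is a quotient of the compact space $D$ it is compact, and near the class of the given compact leaf it is Euclidean; homogeneity under the transitive $\RR^2$-action then identifies $B\cong\RR^2/\Gamma$, and compactness forces $\Gamma$ to be a lattice, so $B\cong T^2$. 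Consequently $D\to B\cong T^2$ is the leaf fibration, every leaf is compact (so $D$ is proper), and the fibres of this choice of $\pi$ are exactly the leaves. \emph{This foliation-theoretic step---promoting a single compact leaf to a global fibration over $T^2$---is the main obstacle}, as it requires the no-holonomy property, completeness and commutativity of the transverse flows, and Hausdorffness of the leaf space.

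For (3), vanishing of the twisting class gives, by Proposition~\ref{twprop}, global real $2$-forms $B,\omega$ with $d(B+i\omega)+H=0$; taking imaginary parts yields $d\omega=0$, so $\omega$ is closed and symplectic on the leaves. Repeating the rational perturbation of (1) while keeping $B,\omega$ fixed, I would check that $\rho'=\Omega'\wedge e^{B+i\omega}$ is still nondegenerate and $d^H$-closed (the cross terms cancel because $d\Omega'=0$ and $d(B+i\omega)=-H$), producing a \emph{proper} deformation for which $\pi:D\to T^2$ is a proper submersion, hence by Ehresmann a symplectic fibration with fibre form $\omega$. For the cohomological statement set $a=[\Omega_1]$, $b=[\Omega_2]\in H^1(D)$ and $c=[\omega]\in H^2(D)$; since $\Omega_1\wedge\Omega_2\wedge\omega^{n-1}$ is a nowhere-vanishing top form (again by~\eqref{nondegeneracycondition}) and $D$ is oriented by the structure, its integral is nonzero, so $abc^{n-1}\neq0$ as all three factors are closed. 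The bounds $b_i(D)\geq2$ for $0<i<2n$ then follow from a short Poincar\'e duality computation: in odd degree $2j+1$ the classes $ac^j,bc^j$ are independent, and in even degree $2j$ the classes $c^j,abc^{j-1}$ are independent, the pairings being nondegenerate because $a^2=b^2=0$ while $abc^{n-1}\neq0$.
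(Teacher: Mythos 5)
Your parts (1) and (3) follow essentially the paper's own route: in (1) both arguments are Tischler-style --- perturb $\Omega$ to a nearby closed form whose real and imaginary parts have rational periods (nondegeneracy being open) and integrate to obtain a submersion to a two-torus --- and in (3) both use vanishing of the twisting class to produce global forms $B,\omega$ with $d(B+i\omega)+H=0$ and then rerun the rational perturbation; your Poincar\'e-duality computation giving $b_i(D)\geq 2$ is a welcome elaboration of what the paper leaves implicit. Part (2) is where you genuinely diverge. The paper never touches the leaf space: it takes the vector fields $X_R,X_I$ dual to $\Omega$ (your $X_1,X_2$, up to normalization), uses the maps $(p,\lambda_1,\lambda_2)\mapsto e^{\lambda_1X_R+\lambda_2X_I}(p)$ as local diffeomorphisms to show that the set of points lying on leaves diffeomorphic to a fixed compact leaf is open \emph{and} closed, and then reads off a fibration over a compact surface $\Sigma$, which must be $T^2$ because $\Omega$ is basic. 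You instead pass to $B=D/\mc{F}$, invoke trivial holonomy and Reeb stability, and identify $B$ as a homogeneous space $\RR^2/\Gamma$. Your route is correct and conceptually clean (it is essentially the structure theory of Lie foliations), but it buys that cleanliness at the price of exactly the facts you flag, whereas the paper's open--closed argument on $D$ itself avoids the leaf space entirely at the cost of a more hands-on case analysis.

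Three points in your sketch need to be executed rather than asserted. First, that the two flows commute \emph{on} $B$ does not follow formally from ``$[X_1,X_2]$ tangent to $\mc{F}$'': one needs the short argument that $t\mapsto\phi^1_{-t}\circ\phi^2_s\circ\phi^1_t(p)$ has velocity in $T\mc{F}$ (because $(\phi^2_s)_*X_1-X_1$ is an integral of pushforwards of $[X_2,X_1]$, hence lies in $T\mc{F}$), so this curve stays in a single leaf. Second, the identification $B\cong\RR^2/\Gamma$ with $\Gamma$ a lattice requires $\Gamma$ to be discrete; this follows by computing the orbit map in the transverse coordinates $\left(\int\Omega_1,\int\Omega_2\right)$ of a Reeb chart, where it becomes $(t,s)\mapsto(t,s)$, and discreteness then also yields the Hausdorffness of $B$ that you listed as an obstacle. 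Third, two smaller repairs: in (1), take $\Omega_i'=\Omega_i+(h_i'-h_i)$, where $h_i$ and $h_i'$ are the harmonic representatives of $[\Omega_i]$ and of the nearby rational class, rather than the harmonic representative of the rational class itself --- otherwise $C^0$-closeness to $\Omega_i$ fails when $\Omega_i$ is not harmonic; and in (3) the statement asks for a symplectic form on $D$ itself, which is $\omega+K\,\Omega_1'\wedge\Omega_2'$ for $K$ large (Thurston's trick, using precisely your nonvanishing of $\Omega_1\wedge\Omega_2\wedge\omega^{n-1}$), a form the paper writes down explicitly while you only assert that the fibration is symplectic.
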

\begin{proof}
Throughout the proof we let $\rho = e^{B+i\omega}\wedge \Omega$ be a $d^H$-closed trivialization of the canonical bundle of $D$.

{\it 1.}  Let $\Omega_R$ and $\Omega_I$ be the real and imaginary parts of $\Omega$. First we show that $[\Omega_R]$ and $[\Omega_I]$ are linearly independent classes in $H^1(D)$. If there were a nontrivial linear combination, say, $\lambda_R[\Omega_R] + \lambda_I[\Omega_I] =0$,  we could define a map $\pi_1:D\into \R$ by
$$\pi_1(p)= \int_{p_0}^p \lambda_R\Omega_R + \lambda_I\Omega_I,$$
where the integral, taken over any path connecting the reference point $p_0$ to $p$, is well defined  because the integrand is an exact form.  Finally, if, say, $\lambda _R \neq 0$, then nondegeneracy implies that $\omega^{n-1}\wedge d\pi_1 \wedge \Omega_I \neq 0$, showing that $d\pi_1$ is nowhere zero and hence $\pi_1$ is a submersion of a compact manifold in $\R$, which is a contradiction.

To construct $\pi$ observe that since nondegeneracy is an open condition, there is a closed form $\Omega'\in \Omega^1(D;\C)$ near $\Omega$, whose real and imaginary parts represent linearly independent rational cohomology classes  and such that
\begin{equation}\label{eq:nondegeneracy condition 2}
\omega^{n-1}\wedge\Omega'\wedge \overline{\Omega'} \neq 0.
\end{equation}
Linear independence of the real and imaginary parts of $\Omega'$ implies that the following map is  a submersion
$$\pi:D\into \C/\Lambda; \qquad \pi(p) =\int_{p_0}^p \Omega'.$$
where $\Lambda = [\Omega'](H_1(D;\ZZ))$ is a co-compact lattice,
$p_0$ is a fixed reference point and the integral is independent of the path connecting $p_0$ to $p$.

{\it 2.} Let $X$ be a complex vector field for which  $\Omega(X)=1$ and $\ol{\Omega}(X)=0$. Then the real and imaginary parts of $X$, $X_R$ and $X_I$, are pointwise linearly independent, preserve $\Omega$ and $\ol\Omega$ and hence preserve the foliation determined by $\Omega\wedge \ol\Omega$.

Let $F$ be a compact leaf and define a map $\gf_F:F \times \R^2 \into D$ by
$$\gf_F(p,\lambda_1,\lambda_2) = e^{\lambda_1 X_R + \lambda_2 X_I}(p).$$
Since $(\gf_F)_*(TF\oplus \R^2) = TD$, we conclude that $\gf_F$ is a local diffeomorphism and  since the flow of the vector field $\lambda_1 X_R + \lambda_2 X_I$ preserves the foliation we conclude that all leaves in a neighbourhood of $F$ are diffeomorphic to $F$, that is, 
\begin{enumerate}
\item[a)] if a leaf $F$ is compact, the map $\gf_F$ above gives a local diffeomorphism between a neighbourhood of $F$ and $F \times \mathbb{D}^2$ for which the projection onto the open disc $\mathbb{D}^{2}\subset\RR^{2}$ is the quotient map of the foliation, and 
\item[b)] the set of points which lie in a leaf diffeomorphic to $F$ is an open set, $U \subset D$.
\end{enumerate}
Next, if $p \in \ol{U}$, let $\alpha:\mathbb{D}^{2n-2} \into D$ be a parametrization of the leaf through $p$, with $\mathbb{D}^{2n-2}\subset \RR^{2n-2}$ an open ball. Then
\begin{equation}
\gf: \mathbb{D}^{2n-2} \times \mathbb{D}^2 \into D, \qquad \gf(x,\lambda_1,\lambda_2) = e^{\lambda_1 X_R + \lambda_2 X_I}(\alpha(x)),
\end{equation}
is a local diffeomorphism and hence its image contains a point $q \in U$, say $q = e^{\lambda_1 X_R + \lambda_2 X_I}(\alpha(x))$. Let $F$ be the compact leaf through $q$. Then $\alpha(\mathbb{D}^{2n-2})\cap \Im(\gf_F) \neq \emptyset$, as, inverting the exponential, we get $\gf_F(q,-\lambda_1,-\lambda_2) \in \Im(\alpha)$ and hence $\gf_F$ gives a diffeomorphism between $F$ and the leaf through $p$. That is, the set $U$ above is also closed, and since $D$ is connected, $U = D$ and by property {a)} we conclude that $D$ is a fibration $D \into \Sigma$ over a compact surface.
To determine $\Sigma$, we observe that $\Omega$ is basic for this fibration  since it is closed and annihilates vertical vectors. Thererefore $\Sigma$ has a nowhere vanishing closed 1-form, giving $\Sigma = T^2$.

{\it 3.} If the twisting class vanishes, we can choose the forms $B$ and $\omega$ so that $d^H  e^{B + i\omega} =0$. Changing $\Omega$ to nearby form representing a rational class transforms the structure into a proper one. Then $\omega + \frac{1}{2i}\Omega \wedge \ol{\Omega}$ defines a symplectic form on $D$, and due to \eqref{eq:nondegeneracy condition 2}, it is  symplectic on the leaves of the distribution generated by $\Omega_R'$ and $\Omega_I'$, rendering $\pi$ a symplectic fibration.
\end{proof}

\begin{example}
There is no generalized Calabi-Yau structure of type one on $\C P^n, n>0$.
\end{example}

\begin{example}
For $n_i \in \NN$, the manifold $S^{2n_1+1} \times \cdots \times S^{2n_{2k}+1}$ only admits a type one \gcys\ if $n_i=0$ for all $i$.
\end{example}

\begin{example}
The only compact Lie groups that admit type one \gcys s are Abelian.
\end{example}

The manifold $S^1\times S^3$ does admit a type one \gcs\ with topologically trivial canonical bundle and, as we will see in Example \ref{ex:S^1 x S^5}, so does $S^1 \times S^5$, but by the results above, these are not generalized Calabi-Yau.

According to Proposition \ref{prop:topology1}, every type one \gcy\ is a fibration over a 2-torus. Next we show that in four dimensions the converse to this statement is also true, therefore giving a full characterization of type one \gcys s in that dimension.

\begin{theorem}
Let $\pi:D \into T^2$ be a fibration of an orientable, connected, compact four-manifold $D$ over the torus. Then $D$ admits a proper \gcys, integrable \wrt\ the zero 3-form,  for which the fibers of $\pi$ are the symplectic leaves. 
\end{theorem}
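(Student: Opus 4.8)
The plan is to produce an explicit pure spinor of the form $\rho = \Omega\wedge e^{i\omega}$ as in~\eqref{type1spin} (taking $B=0$), defining a type $1$ generalized complex structure on $(D,H=0)$ whose symplectic foliation is the fibration. By Theorem~\ref{asigma} it suffices to exhibit the pair $(\Omega,\omega)$, and since we want a \emph{generalized Calabi--Yau} structure we must arrange $\Omega$ to be a global closed trivialization of $\nu^*_{1,0}$ and $\rho$ itself to be $d$-closed. For $\Omega$ I would fix a flat (Calabi--Yau) complex structure on the target torus, with global holomorphic coordinate $z$ on $T^2=\C/\Lambda$, and set $\Omega = \pi^{*}dz$. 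This is a nowhere-vanishing, closed complex $1$-form which annihilates the vertical tangent spaces, so the associated corank $1$ distribution $A=\ker\Omega$ satisfies $A\cap\overline A = (\ker\pi_{*})_{\C}$; thus the symplectic leaves are forced to be the fibers of $\pi$, as desired, and properness is automatic from compactness of $D$ and of its fibers.

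It remains to choose a real $2$-form $\omega$ on $D$ restricting to a symplectic (area) form on each fiber, and such that the integrability/closedness condition holds. A direct computation in dimension four shows $\rho=\Omega+i\,\Omega\wedge\omega$ and $d\rho = -i\,\Omega\wedge d\omega$, so the single requirement beyond fiberwise nondegeneracy is
\begin{equation}
\Omega\wedge d\omega = \pi^{*}dz\wedge d\omega = 0,
\end{equation}
which is exactly the condition $d_A\sigma=0$ of~\eqref{dsigma} for $\sigma=\iota^{*}(i\omega)$. In terms of the $(\text{horizontal},\text{vertical})$ bigrading, this says the component of $d\omega$ with two vertical legs must vanish.

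The key step, and the point I expect to be the main difficulty, is producing $\omega$ with $(d\omega)^{(1,2)}=0$. I would obtain it from a \emph{symplectic connection}. Since $\pi$ is an oriented surface bundle, Moser's theorem makes the inclusion $\mathrm{Symp}(F,\omega_0)\hookrightarrow\mathrm{Diff}^{+}(F)$ a homotopy equivalence (the space of area forms of fixed total area is convex), so the structure group reduces to $\mathrm{Symp}(F,\omega_0)$ and the bundle carries a well-defined fiberwise symplectic form together with a connection whose horizontal transport preserves it. Extending this fiberwise form to a $2$-form $\omega$ on $D$ by declaring it to vanish on the horizontal distribution, one checks that for horizontal $X^{H}$ and vertical $V_1,V_2$ one has $d\omega(X^{H},V_1,V_2)=(\mathcal{L}_{X^{H}}\omega)(V_1,V_2)=0$, so the two-vertical-leg part of $d\omega$ is killed and $\Omega\wedge d\omega=0$. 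I want to stress the conceptual reason this works: because type $1$ integrability only demands $d_A$-closedness of $\sigma$ rather than an honestly closed $2$-form on $D$, we never need the fiber to be homologically essential, and we sidestep the Thurston-type cohomological obstruction that a genuine symplectic form on $D$ would face.

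Finally I would assemble the verification: $\rho=\Omega\wedge e^{i\omega}$ is nowhere zero and of algebraic type $1$; nondegeneracy~\eqref{nondegeneracycondition} reads $\Omega\wedge\overline\Omega\wedge\omega = \pi^{*}(dz\wedge d\bar z)\wedge\omega\neq 0$, which holds since $\pi^{*}(dz\wedge d\bar z)$ is horizontally nondegenerate and $\omega$ is fiberwise nondegenerate; and $d\rho=-i\,\Omega\wedge d\omega=0$ by the previous paragraph, so $\rho$ is a $d^{H}$-closed trivialization with $H=0$. Hence $\rho$ defines a proper type $1$ generalized Calabi--Yau structure, integrable with respect to the zero $3$-form, whose symplectic leaves are precisely the fibers of $\pi$.
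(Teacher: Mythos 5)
Your proposal is correct, but it takes a genuinely different route from the paper. The paper argues by cases: for fibers of genus $\neq 1$ it invokes a Thurston-type result (Theorem~\ref{theo:Thurston}), which requires the fiber class to be nonzero in $H_2(D;\RR)$ so that a \emph{closed} $2$-form symplectic on the fibers exists; for torus bundles over $T^2$ it appeals to the Sakamoto--Fukuhara classification into fourteen classes, Geiges's determination of which of these admit symplectic fibration structures, and finally writes down explicit left-invariant forms $e^{ie^3\wedge e^4}\wedge(e^1+ie^2)$ on the two exceptional nilmanifolds (those associated to $\mathfrak{nil}^3\oplus\RR$ and $\mathfrak{nil}^4$) that are not symplectic fibrations. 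Your construction is uniform: reduce the structure group to $\mathrm{Symp}(F,\omega_0)$ via Moser, pick a symplectic connection, extend the fiberwise form by zero on the horizontal distribution, and observe that the resulting $\omega$ satisfies $\Omega\wedge d\omega=0$, which is all that type-$1$ integrability (condition~\eqref{dsigma}) demands. This isolates exactly the right conceptual point: the homological hypothesis on the fiber in Thurston's argument is an artifact of insisting on a globally closed $2$-form, and dropping it dissolves the case analysis entirely. Indeed, your construction specializes on the two exceptional nilmanifolds to precisely the invariant structures the paper exhibits by hand (there $d\omega = e^1\wedge e^2\wedge e^4$ has two horizontal legs and one vertical leg, so $\Omega\wedge d\omega =0$ even though $\omega$ is not closed). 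What the paper's route buys in exchange is finer information: it shows that outside the two exceptional classes the structure can be taken to come from a genuine symplectic fibration, hence with vanishing twisting class, and it locates exactly where this fails. The one step where you lean on an unproved assertion is the existence of a connection with symplectic parallel transport; this is standard symplectic fibration theory (patch the flat local forms $\mathrm{pr}_F^*\omega_0$ by a partition of unity pulled back from the base; the resulting $2$-form has no exterior-derivative component with two vertical legs, and the connection it induces is symplectic---see Guillemin--Lerman--Sternberg or McDuff--Salamon), but since it is the crux of your argument it deserves at least that one-line justification rather than being folded into the structure group reduction.
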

\begin{proof}
We use the following result, which translates Thurston's argument for symplectic structures on symplectic fibrations to \gc\ manifolds.

\begin{theorem}{\cite[\textsection 2.4]{cavalcanti-2004}}\label{theo:Thurston}
Let $\pi:M^{2n} \into N^{2n-2}$ be a fibration over a generalized complex base whose fibers are nontrivial in $H_2(M;\R)$. Then $M$ admits a \gcs\ whose canonical bundle is
$$K_M = e^{i \epsilon \omega} \wedge \pi^* K_N,$$
where $\omega$ is a closed 2-form on $M$ which is symplectic when restricted to the fibers, $K_N$ is the canonical bundle of $N$ and $\epsilon >0$ is a small real number.
\end{theorem}

It follows from the theorem that if $N$ is \gcy, then $M$ is as well, and the symplectic leaves of $M$ are the inverse images of the symplectic leaves of $N$.
Further, observe that the hypothesis of the theorem are fulfilled if the genus of the fiber is not one, as in this case the Euler class of the vertical bundle evaluates nonzero in any given fiber, showing that their real homology is nontrivial.

In the present case, if $D \to T^2$ is a fibration and the fibers have genus different than one, we can endow $T^2$ with its standard complex structure (making it a type one \gcy) and Theorem \ref{theo:Thurston} implies the desired result. Notice that in this case, the structure has trivial twisting class.

The only part of the proof that does not follow from Theorem \ref{theo:Thurston}  is the case of torus bundles over the torus. These were classified by Sakamoto and Fukuhara \cite{MR732086}: there are fourteen of these up to isomorphism. The existence of symplectic structures making these into sympletic fibrations was studied by Geiges \cite{MR1181312} and only two of the fourteen classes do not admit such structure. Those that do admit  this symplectic fibration structure satisfy the hypotheses of Theorem \ref{theo:Thurston} if we endow the base torus with a complex structure; hence these admit a type one \gcys.

The two exceptional cases that are not symplectic fibrations are nilmanifolds associated to the Lie algebras
\begin{equation}
\begin{aligned}
\mathfrak{nil}^3\oplus \R = \{e_1, e_2, e_3, e_4| [e_1,e_2]= -e_3 \mbox{ and } [e_i,e_j]=0 \mbox{ otherwise}\},\\
\mathfrak{nil}^4 = \{e_1, e_2, e_3, e_4| [e_1,e_2]= -e_3, [e_1,e_3]= -e_4,  \mbox{ and } [e_i,e_j]=0 \mbox{ otherwise}\}.
\end{aligned}
\end{equation}

The compact spaces associated to $\mathfrak{nil}^3\oplus \R$ have a type one \gcys\ given, at the Lie algebra level, by $e^{ie^3\wedge e^4}\wedge (e^1+ie^2)$, where  $\{ e^1, e^2, e^3, e^4\}\in \mathfrak{nil}^{3*}\oplus \R^*$ is the dual basis to the basis $\{e_1, e_2, e_3, e_4\}$ above. For this structure, the symplectic leaves are the fibers of the torus bundle in question and this is essentially the structure we introduced in the Kodaira--Thurston manifold in Example \ref{kodthur}.
The compact manifold obtained from $\mathfrak{nil}^4$ has a \gcys\ determined by the invariant form $e^{ie^3\wedge e^4}\wedge (e^1+ie^2)$ and again the fibers of the torus bundle agree with the symplectic leaves of the structure.
In both of these exceptional cases, the twisting class is nonzero as $\Omega\wedge \ol\Omega$ is exact.\end{proof}

\subsection{Generalized holomorphic line bundles}\label{hollinbu}

We now give a detailed description of generalized holomorphic bundles over generalized complex manifolds of type 1.  Recall that a generalized holomorphic bundle is simply a vector bundle equipped with flat algebroid connection for the $+i$-eigenbundle $L$.  Since we have described $L$ as the extension~\eqref{cxdir}, we choose a splitting of the sequence as before, with twisting form $F\in \Omega^{2}_{A}(D, \nu^{*}_{1,0})$ as defined in~\eqref{algcurv}.  Then an $L$-connection on the bundle $V$ decomposes as 
\begin{equation}
\delbar^{L} = \phi + \delbar^{A},
\end{equation}
where $\phi:V\to \nu_{1,0}\otimes V$ is called the \emph{transverse Higgs field} and $\delbar^{A}:C^{\infty}
(V)\to C^{\infty}(A^{*}\otimes V)$ is a partial connection along the distribution $A$. The tensor $\phi$ is independent of the splitting of $L$, but a change of splitting by $\alpha\in \Omega^{1}_{A}(\nu^{*}_{1,0})$ modifies $\delbar^{A}$ by the following transformation:
\begin{equation}
\delbar^{A}\mapsto \delbar^{A}+i_{\alpha}\phi.
\end{equation}
The flatness of $\delbar^{L}$ may then be expressed as follows, a special case of the general result in~\cite{MR2681704}. 
\begin{proposition}
The $L$-connection $\delbar^{L}=\phi + \delbar^{A}$ is flat if and only if $\phi$ is flat and the curvature of $\delbar^{A}$ coincides with the contraction of $\phi$ with the twisting form. That is, if and only if the following hold:
\begin{equation}
\begin{aligned}
{[}\delbar^{A},\phi{]}
 &=0\\
\mathrm{curv}(\delbar^{A}) &= i_{\phi}F.
\end{aligned}
\end{equation}
\end{proposition}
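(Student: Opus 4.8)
The plan is to compute the curvature of $\delbar^{L}$ directly in a local frame adapted to the chosen splitting $\sigma:A\to L$ of~\eqref{cxdir}, and to match its three types of components against the two displayed equations. Writing $\widetilde{X}:=\sigma(X)$ for the lift of a section $X$ of $A$, the decomposition $\delbar^{L}=\delbar^{A}+\phi$ is characterized, dually to $L=A\oplus\nu^{*}_{1,0}$, by $\delbar^{L}_{\widetilde X}=\delbar^{A}_{X}$ and $\delbar^{L}_{s}=i_{s}\phi=:\phi_{s}$ for $s$ a section of $\nu^{*}_{1,0}$. Since the curvature
\begin{equation*}
R(e_{1},e_{2})=\delbar^{L}_{e_{1}}\delbar^{L}_{e_{2}}-\delbar^{L}_{e_{2}}\delbar^{L}_{e_{1}}-\delbar^{L}_{[e_{1},e_{2}]_{L}}
\end{equation*}
is tensorial, it suffices to evaluate $R$ on pairs of frame sections of each of the three types: both in $\nu^{*}_{1,0}$, one in each summand, and both in $A$.

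First I would dispose of the purely vertical pair $R(s,t)$ with $s,t$ sections of $\nu^{*}_{1,0}$. Because the correction $F$ in~\eqref{algcurv} depends only on the $A$-components of its arguments and $\nu^{*}_{1,0}$ is an abelian ideal for the standard bracket $[-,-]_{0}$, one gets $[s,t]_{L}=0$ and hence $R(s,t)=[\phi_{s},\phi_{t}]$; as $\nu^{*}_{1,0}$ has complex rank one, $s$ and $t$ are proportional and this vanishes identically, so the vertical component imposes no condition. For the mixed pair $R(\widetilde X,s)$ I would use that $[\widetilde X,s]_{L}=\nabla^{\mathrm{Bott}}_{X}s$ again lies in $\nu^{*}_{1,0}$, since $F$ sees only the vanishing $A$-component of $s$ while $[-,-]_{0}$ restricts to the Bott connection on the ideal. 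Substituting gives $R(\widetilde X,s)=[\delbar^{A}_{X},\phi_{s}]-\phi_{\nabla^{\mathrm{Bott}}_{X}s}$, which is exactly the value on $s$ of the covariant derivative of $\phi$ along $A$; its vanishing for all $X,s$ is the first condition $[\delbar^{A},\phi]=0$.

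Finally, for the horizontal pair $R(\widetilde X,\widetilde Y)$ the key input is that the splitting identifies the standard bracket with $[\widetilde X,\widetilde Y]_{0}=\widetilde{[X,Y]_{A}}$, so that~\eqref{algcurv} gives $[\widetilde X,\widetilde Y]_{L}=\widetilde{[X,Y]_{A}}+F(X,Y)$ with $F(X,Y)$ a section of $\nu^{*}_{1,0}$. Hence $\delbar^{L}_{[\widetilde X,\widetilde Y]_{L}}=\delbar^{A}_{[X,Y]_{A}}+\phi_{F(X,Y)}$, and collecting the remaining terms yields $R(\widetilde X,\widetilde Y)=\mathrm{curv}(\delbar^{A})(X,Y)-(i_{\phi}F)(X,Y)$. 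Setting this to zero is precisely the second condition $\mathrm{curv}(\delbar^{A})=i_{\phi}F$. Since these three families of pairs run over a local frame of $\wedge^{2}L$, flatness of $\delbar^{L}$ is equivalent to the simultaneous vanishing of the mixed and horizontal components, that is, to the two displayed equations.

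I expect the only delicate point to be the bookkeeping in the mixed and horizontal cases: one must track carefully how the chosen splitting converts $[-,-]_{L}$ into the semidirect-product bracket $[-,-]_{0}$ plus the tensorial correction $F$, and that $\delbar^{A}$ differentiates the $\nu_{1,0}$-valued field $\phi$ through the connection dual to the Bott connection, so that the mixed term genuinely assembles into the commutator $[\delbar^{A},\phi]$ rather than a merely pointwise expression. The rest is a routine tensorial curvature computation; alternatively, the statement can be recovered as the rank-one transverse specialization of the general flatness criterion of~\cite{MR2681704}.
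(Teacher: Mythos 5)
Your computation is correct, and it is worth pointing out that the paper does not actually prove this proposition: it is stated as a special case of the general flatness criterion for generalized holomorphic bundles in~\cite{MR2681704}, which is exactly the alternative you mention in your closing sentence. Your direct argument is therefore a genuine, self-contained replacement for that citation, and the three-way splitting of the curvature according to $L\cong A\oplus \nu^{*}_{1,0}$ is the right bookkeeping. The purely vertical component vanishes because the anchor of $L$ annihilates $\nu^{*}_{1,0}$, so each $\phi_{s}$ is $C^{\infty}$-linear, and because $\nu^{*}_{1,0}$ has complex rank one; the mixed component assembles into $[\delbar^{A},\phi]$ precisely because the induced connection on $\nu_{1,0}\otimes \mathrm{End}(V)$ couples $\delbar^{A}$ with the dual of the Bott connection, which is what your formula $R(\widetilde X,s)=[\delbar^{A}_{X},\phi_{s}]-\phi_{\nabla^{\mathrm{Bott}}_{X}s}$ expresses; and the horizontal component produces $\mathrm{curv}(\delbar^{A})-i_{\phi}F$ because the chosen splitting converts $[-,-]_{L}$ into $[-,-]_{0}$ plus the tensorial correction $F$ of~\eqref{algcurv}. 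Beyond self-containedness, your route makes visible why the Higgs-type condition $[\phi,\phi]=0$, which is a nontrivial constraint in the general setting of~\cite{MR2681704}, is vacuous here: it degenerates because the transverse direction is rank one. What the paper's route buys is brevity and consistency with the general theory of generalized connections it has already invoked.
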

If $V$ has rank 1, then $\phi$ is simply a flat section of $\nu_{1,0}$, which if nonzero determines a generalized Calabi-Yau structure~\eqref{type1spin}, where $\Omega$ is dual to $\phi$.  By choosing an extension of $\delbar^{A}$ to a full connection, we immediately obtain the following analog of Bott's obstruction~\cite{MR0266248} governing generalized holomorphic line bundles. 
\begin{theorem}\label{theo:twisting class and chern class} 
Let $D$ be a type 1 generalized complex manifold with complex distribution $A$ and twisting class $F\in H^{2}_{A}(D,\nu^{*}_{1,0})$. Let $\phi\in H^{0}_{A}(D,\nu_{1,0})$ be a flat section of $\nu_{1,0} = T_{\CC}D/A$. 
A complex line bundle $N$ over $D$ admits a holomorphic structure with transverse Higgs field $\phi$ if and only if 
\begin{equation}\label{cohomconstr}
\iota^{*}c_{1}(N) = i_{\phi}F \in H^{2}_{A}(D),
\end{equation}
where $\iota:A\to T_{\CC}D$ is the inclusion map.  
\end{theorem}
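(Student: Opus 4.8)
The plan is to specialize the preceding proposition to the rank one case and then run a twisted Chern--Weil argument of Bott type. First I would observe that a generalized holomorphic structure on the line bundle $N$ with transverse Higgs field $\phi$ is precisely the data of a partial $A$-connection $\delbar^{A}$ on $N$ for which $\delbar^{L}=\phi+\delbar^{A}$ is flat. Since $N$ has rank one, $\mathrm{End}(N)$ is canonically trivial, so the Higgs field is simply a section of $\nu_{1,0}$ and the compatibility condition $[\delbar^{A},\phi]=0$ from the proposition reduces to the $d_{A}$-flatness of $\phi$, which holds by the hypothesis $\phi\in H^{0}_{A}(D,\nu_{1,0})$. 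Thus the existence of the desired holomorphic structure is equivalent to the existence of a partial $A$-connection on $N$ whose curvature equals the prescribed form $i_{\phi}F\in\Omega^{2}_{A}(D)$.

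Next I would check that $i_{\phi}F$ is $d_{A}$-closed: writing $i_{\phi}F=\langle \phi,F\rangle$ and differentiating, the two terms $\langle d_{A}\phi,F\rangle$ and $\pm\langle\phi,d_{A}F\rangle$ vanish because $\phi$ is flat and $F$, as a representative of the twisting class~\eqref{algcurv}, is $d_{A}$-closed with coefficients in $\nu^{*}_{1,0}$. Hence $i_{\phi}F$ determines a class in $H^{2}_{A}(D)$, and the curvature equation can only be solved cohomologically. The key structural observation is that the space of partial $A$-connections on the line bundle $N$ is an affine space modelled on $\Omega^{1}_{A}(D)$, and that altering $\delbar^{A}$ by $\alpha\in\Omega^{1}_{A}(D)$ changes its curvature by $d_{A}\alpha$. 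Consequently the curvatures realizable by partial $A$-connections on $N$ fill out exactly one $d_{A}$-cohomology class.

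It then remains to identify that class. Choosing any full connection $\nabla$ on $N$ with curvature $\Theta$, a closed $2$-form on $D$ representing $c_{1}(N)$, the induced partial connection $\iota^{*}\nabla$ has curvature $\iota^{*}\Theta$, whose class in $H^{2}_{A}(D)$ is $\iota^{*}c_{1}(N)$. Combining this with the affine argument, a partial $A$-connection with curvature $i_{\phi}F$ exists if and only if $[i_{\phi}F]=\iota^{*}c_{1}(N)$ in $H^{2}_{A}(D)$; and when these classes agree one writes $i_{\phi}F-\iota^{*}\Theta=d_{A}\alpha$ and sets $\delbar^{A}=\iota^{*}\nabla+\alpha$ to produce the connection explicitly. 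This is exactly condition~\eqref{cohomconstr}.

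The routine part is the Chern--Weil bookkeeping; the points requiring care are the verification that $i_{\phi}F$ is $d_{A}$-closed (so that the right-hand side of~\eqref{cohomconstr} is a genuine class) and the fixing of the normalization so that the curvature class of any connection pulls back to $\iota^{*}c_{1}(N)$ rather than a rescaling of it. I expect the main conceptual obstacle to be the reduction to a purely cohomological condition, that is, confirming via the affine structure on connections that only the class $[i_{\phi}F]$, and not a chosen representative, matters.
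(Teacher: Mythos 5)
Your proposal is correct and follows essentially the same route as the paper, which deduces the theorem from the flatness criterion of the preceding proposition by specializing to rank one and ``choosing an extension of $\delbar^{A}$ to a full connection'' in the spirit of Bott's obstruction; your affine-space argument on partial $A$-connections and the restriction $\iota^{*}\nabla$ of a full connection are precisely the details the paper leaves implicit. No gaps: the reduction of $[\delbar^{A},\phi]=0$ to the flatness hypothesis on $\phi$, the $d_{A}$-closedness of $i_{\phi}F$, and the identification of the unique realizable curvature class with $\iota^{*}c_{1}(N)$ are all handled correctly.
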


If $\phi$ vanishes, then the constraint~\eqref{cohomconstr} implies that the real Chern class $c_{1}(N)$ vanishes when pulled back to $A$, and in particular to the foliation defined by $A\cap \ol A$. If $\phi$ is nonzero, then contraction by $\phi$ identifies $H^{2}_{A}(D,\nu_{1,0}^{*})$ with $H^{2}_{A}(D)$, and \eqref{cohomconstr} implies that if $c_{1}(N)$ vanishes, then the twisting class must also vanish.  In general, however, for nonzero $\phi$, the real class $c_{1}(N)$ need not vanish along the symplectic foliation.

\subsection{Stable generalized complex structures}\label{stablestructures}

The projection of a differential form to its zero-degree component is a linear map which, when restricted to the canonical bundle $K\subset\wedge^{\bullet}T^{*}_{\CC}M$ of a generalized complex manifold,  defines an anticanonical section $s\in C^\infty(M, K^*)$. In view of the pointwise structure~\eqref{ptwtype}, we see that a generalized complex structure is of type zero, that is, equivalent to a usual symplectic structure, precisely on the nonvanishing locus of this anticanonical section. 
\begin{definition}
A generalized complex structure is \emph{stable} when its anticanonical section vanishes transversely, so that $D=(K^{*},s)$ defines a complex divisor called the \emph{anticanonical divisor}.
\end{definition}

Of course, the simplest example of a stable generalized complex structure is one where $s$ is nowhere vanishing; then $K$ is generated by the form $e^{B+i\omega}$, where $\omega$ is a usual symplectic form and $B$ is a real 2-form satisfying $dB = H$. In the following we are interested in studying nondegenerate structures with nontrivial anticanonical divisor. 

\begin{example}\label{holpoisson}
Let $M$ be a complex $2n$-manifold equipped with a holomorphic Poisson structure $\pi$, defining a generalized complex structure $\JJ_{\pi}$ with canonical line bundle locally generated by $e^{\pi}\Omega$, where $\Omega$ is a trivialization of the holomorphic canonical bundle. The structure $\JJ_{\pi}$ is stable if and only if the anticanonical section $\pi^{n}$ is transverse to zero in $\wedge^{2n}T_{1,0}M$.  
\end{example}

Many examples of stable generalized complex structures which are not of the above holomorphic Poisson type are now known in dimension four: see~\cite{MR2312048,MR2574746,MR2958956,MR3177992,Goto:2013vn}. These references also provide examples of almost complex 4-manifolds which admit neither complex nor symplectic structures, though they do admit stable generalized complex structures.

We now show that stable generalized complex structures are 
sandwiched between type 1 generalized Calabi--Yau structures: one on the total space of the canonical line bundle and another on the anticanonical divisor.
 
\begin{lemma}\label{totspaceksgc}
Let $\tot(K)$ be the total space of the canonical line bundle of a stable generalized complex structure.   The generalized Calabi-Yau structure~\eqref{cangcyform} on $\tot(K)$ has constant type 1 away from the zero section. 
\end{lemma}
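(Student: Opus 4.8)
The plan is to compute the type of the generalized Calabi--Yau form $\varpi = d^{H}\Theta$ of Theorem~\ref{gcytotk} pointwise, using the fact that the type at a point is read off as the degree of the lowest nonvanishing homogeneous component of a local generator of the canonical bundle (compare~\eqref{ptwtype}, where the type is $\deg\Omega$). Concretely, to establish constant type $1$ away from the zero section I would show that $\varpi$ has vanishing degree-zero component everywhere, while its degree-one component is nonzero exactly off the zero section.

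The first statement is immediate from $\varpi = d^{H}\Theta = d\Theta + H\wedge\Theta$: since the exterior derivative strictly raises form degree and $H$ is a $3$-form, neither $d\Theta$ nor $H\wedge\Theta$ contributes in degree zero, so the degree-zero part of $\varpi$ vanishes identically and the type is everywhere at least $1$. The same decomposition shows that the degree-one component of $\varpi$ is $d\Theta_{0}$, where $\Theta_{0}$ is the degree-zero component of the tautological form $\Theta$. I would then identify $\Theta_{0}$ with the tautological function $\wt s\in C^{\infty}(\tot(K),\CC)$ obtained by pairing the anticanonical section $s\in C^{\infty}(M,K^{*})$ with the tautological point of $\tot(K)$: in the local trivialization $\rho$ of $K$ with fibre coordinate $\tau$ used in~\eqref{localKCY} one has $\Theta = \tau\rho$, so $\Theta_{0} = \tau s = \wt s$ because $s$ is precisely the degree-zero component of $\rho$. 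Thus the degree-one component of $\varpi$ is the globally defined form $d\wt s$, and the problem reduces to showing $d\wt s\neq 0$ away from the zero section.

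This last step is where transversality of the anticanonical section enters, and I would treat the two regions separately. Over the symplectic locus $M\backslash D$, where $s$ is nonvanishing, the vertical derivative $\partial_{\tau}\wt s = s$ is nonzero, so $d\wt s\neq 0$ throughout the fibre. Over the divisor $D$, where $s$ vanishes, at a point with fibre coordinate $\tau\neq 0$ we have $d\wt s = \tau\, ds$, and the transversality of $s$ --- equivalently the fact that its normal derivative $d_{\nu}s$ is the isomorphism~\eqref{isonu} --- guarantees $ds\neq 0$ along $D$; hence $d\wt s\neq 0$ there as well. Combining the two cases, $d\wt s$ is nonzero at every point off the zero section, so the type is at most $1$ there, and together with the lower bound the type is constantly $1$ away from the zero section.

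I expect the only subtle point to be the clean identification of the degree-one component of $\varpi$ with $d\wt s$ as a global object, i.e. checking that the local computation $\Theta_{0}=\tau s$ patches to the intrinsic tautological function and that the degree filtration is applied coherently; once this is in place the transversality case-split over $D$ versus $M\backslash D$ is routine. As a consistency check one may instead expand $\varpi = \tau F\cdot\rho + d\tau\wedge\rho$ from~\eqref{localKCY} and use the degree-one part of the modular-field equation~\eqref{modfield}, namely $ds = i_{X}\rho_{2} + s\xi$ with $F = X+\xi$ and $\rho_{2}$ the degree-two part of $\rho$, to recover the same expression $d\wt s = s\,d\tau + \tau\,ds$.
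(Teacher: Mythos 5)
Your proof is correct and follows essentially the same route as the paper: both identify the lowest-degree component of $\varpi = d^{H}\Theta$ as $d(\tau\, s(\rho))$, the derivative of the tautological fibrewise-linear function defined by the anticanonical section, and invoke transversality of $s$ to see it is nonvanishing when $\tau\neq 0$. Your version merely spells out the case split over $D$ versus $M\backslash D$ and the global identification $\Theta_{0}=\wt s$, which the paper leaves implicit.
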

\begin{proof}
Let $\rho$ be a local trivialization of $K$ and let $\tau$ be the associated fibrewise linear coordinate on $\tot(K)$.  Then the tautological form may be written $\Theta = \tau\rho$, and the Calabi-Yau form is $\varpi = d^{H}\Theta = d^{H}(\tau\rho)$. 
Therefore, the component of $\varpi$ with lowest degree is $d(\tau s(\rho))$, the derivative of the fibrewise linear function on $\tot(K)$ defined by the anticanonical section $s$. The transversality of $s$ guarantees that this 1-form is nonzero when $\tau\neq 0$, showing that $\varpi$ has type 1, as required.
\end{proof}

We now describe the geometry inherited by the anticanonical divisor $D$.  In Section~\ref{lindeglocsec}, we will show that the generalized complex structure in a tubular neighbourhood of $D$ is completely determined by the structure of $D$ which we detail here.
The anticanonical divisor is an example of a \emph{generalized Poisson submanifold}, that is, its conormal bundle $N^*$ is a complex subbundle: $\JJ \N^* \subset \N^*$.  Such submanifolds inherit generalized complex structures by reduction~\cite{MR2323543}. Indeed, along $D$ the following exact sequence expresses $\T D = TD\oplus T^{*}D$ as a quotient of $\J$-invariant subbundles of $\T M|_D$: 
\begin{equation}\label{eq:submanifold}
\xymatrix{
0\ar[r] &  \N^*\ar[r]& \N^{*\perp}\ar[r]^-{\pi}& \N^{*\perp}/\N^*\cong\T D\ar[r]& 0
}.
\end{equation}
As a result, $D$ inherits a generalized complex structure $\J_D$, whose integrability with respect to the pullback of $H$ follows from integrability of $\J$. It also follows that $D$ is a Poisson submanifold (in fact, the degeneracy locus) for the underlying real Poisson structure $Q$.

For a Poisson structure $Q$, we say, following~\cite{MR3100779},  that the Poisson submanifold $D$ is \emph{strong} when any local Poisson vector field is tangent to $D$; degeneracy loci of $Q$ are the typical examples of strong Poisson submanifolds, whereas symplectic leaves need not be strong. There is a corresponding notion for generalized complex structures: we say that $D$ is strong when each local generalized complex symmetry $v\in C^\infty(\T M)$ restricts along $D$ to a section of the orthogonal complement of $N^*$ (that is, the vector component of $v$ must be tangent to $D$).  Strong submanifolds $D$ have the property that generalized holomorphic bundles pull back to $D$.  In particular, the anticanonical bundle $K^{*}$ pulls back to a generalized holomorphic bundle along $D$. Tranversality of the anticanonical section $s$ implies that $ds|_{D}:N\to K^{*}$ is an isomorphism, so that we obtain a generalized holomorphic structure on the normal bundle to $D$.

\begin{theorem}\label{gholstrnd}
The anticanonical divisor of a stable generalized complex structure inherits a generalized Calabi-Yau structure of type $1$ with distinguished Calabi-Yau form
\begin{equation}
\rho_{D}=\Omega\wedge e^{\sigma}.
\end{equation}
Furthermore, it inherits a generalized holomorphic structure on its normal bundle with transverse Higgs field dual to $\Omega$. 
\end{theorem}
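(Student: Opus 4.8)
The plan is to deduce both assertions from the Courant reduction~\eqref{eq:submanifold} together with the rank-one analysis of generalized holomorphic bundles in Section~\ref{hollinbu}, the single crucial input being the transversality of the anticanonical section. I would organize the argument so that the \emph{type}, the \emph{Calabi--Yau form}, and the \emph{Higgs field} all fall out of one local computation with the modular field.

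First I would pin down the type. Writing a local trivialization of $K$ as $\rho=\rho_0+\rho_2+\rho_4+\cdots$ with $\rho_0=s$ the anticanonical section (all components of even degree, by constancy of the spinor parity), the pointwise description~\eqref{ptwtype} identifies the type of $\JJ$ at a point with the degree of the lowest nonvanishing component of $\rho$. Off $D$ this degree is $0$; along $D=\{s=0\}$ it must jump by an even amount, and the transversality of $s$ forces the minimal jump, so $\JJ$ has constant type $2$ along $D$ with decomposable leading term $\Omega_2=\rho_2|_D$. Performing the reduction~\eqref{eq:submanifold} along the $\JJ$-invariant conormal $N^*$ contracts out the conormal direction and yields a structure $\JJ_D$ of type $1$; in the language of Theorem~\ref{asigma} its pure spinor is exactly of the shape~\eqref{type1spin}, $\rho_D=\Omega\wedge e^{\sigma}$, with $\Omega$ a complex $1$-form trivializing $\nu^{*}_{1,0}$ and $\sigma=\iota^{*}(B+i\omega)$.

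The heart of the proof is the identification of the transverse Higgs field. As already noted before the statement, $K^{*}$ pulls back along the strong submanifold $D$ to a rank-one generalized holomorphic bundle, which the isomorphism $ds|_{D}\colon N\to K^{*}|_{D}$ transports to $N$; by the decomposition of Section~\ref{hollinbu} its $L$-connection splits as $\delbar^{L}=\phi+\delbar^{A}$ with $\phi$ a flat section of $\nu_{1,0}=T_{\CC}D/A$, flatness being part of the integrability of the $L$-connection on $K^{*}$. Since the modular field $F=X+\xi$ of~\eqref{modfield} plays the role of the connection one-form for the $L$-connection on the canonical bundle, the Higgs field is the image of its vector part $X$ in $\nu_{1,0}$. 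Extracting the degree-one component of $d^{H}\rho=F\cdot\rho$ and restricting to $D$ gives a relation of the form $i_{X}\Omega_{2}=ds$; writing $\Omega_{2}=ds\wedge\Omega$ this shows that $\langle\Omega,\phi\rangle$ is a nonzero constant, the nonvanishing being \emph{precisely} the transversality of $s$. Hence $\phi$ is nowhere zero and dual to $\Omega$.

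Finally I would invoke the rank-one case of Section~\ref{hollinbu}: a nowhere-vanishing flat Higgs field on a type-one structure forces its dual $\Omega\in\nu^{*}_{1,0}$ to be a $d_{A}$-closed global trivialization, which is exactly the condition that $\rho_{D}=\Omega\wedge e^{\sigma}$ be a $d^{j^{*}H}$-closed pure spinor, i.e.\ that $\JJ_{D}$ be generalized Calabi--Yau of type $1$. This delivers simultaneously the distinguished form and the claim that the induced structure on $N$ has transverse Higgs field dual to $\Omega$. The main obstacle is the third paragraph: one must track the spinor reduction carefully enough to match the intrinsic Higgs field $\phi$ of $K^{*}|_{D}$ with the transverse derivative $ds$, thereby converting the transversality of the anticanonical section into both the nonvanishing of $\phi$ and its precise duality with $\Omega$. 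Once this matching is in place the remaining steps are formal consequences of the reduction theorem and the rank-one analysis.
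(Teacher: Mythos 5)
Your proposal is correct in substance but takes a genuinely different route from the paper's for the Calabi--Yau half of the statement. Both arguments share the same computational core: the degree-one component of the integrability relation~\eqref{modfield}, $d\rho^{0}=i_{X}\rho^{2}+\xi\rho^{0}$, which along $D$ converts transversality of $s$ into nonvanishing of $\rho^{2}|_{D}$ (hence type $2$) and into the pairing $i_{X}\Omega=-1$. After that the routes diverge. The paper pushes integrability further (via the identity $d\rho^{0}\wedge\rho^{2}=\rho^{0}d\rho^{2}-(\rho^{0})^{2}H$, valid on all of $M$ by continuity) to show that $\rho^{2}/\rho^{0}$, and hence $\rho/\rho^{0}$, extends across $D$ as a \emph{logarithmic} form; the Calabi--Yau form is then defined as $\Res(\rho/\rho^{0})$, so globality (independence of the trivialization $\rho$) and closedness come for free, and the Higgs-field duality is a separate check at the end. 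You instead get the type-1 structure by reduction along $N^{*}$, build the holomorphic structure on $N$ from the strong-submanifold pullback and $ds|_{D}$, show its Higgs field is flat and nowhere vanishing, and then invoke the rank-one statement of Section~\ref{hollinbu} (nonvanishing flat Higgs field $\Rightarrow$ closed global trivialization of $\nu^{*}_{1,0}$ $\Rightarrow$ generalized Calabi--Yau) to produce the distinguished form. This is a legitimate alternative: it is more structural, but it leans on the rank-one correspondence, which the paper states without detailed proof, and it does not yield the fact that $\sigma=\rho^{2}/\rho^{0}$ is a logarithmic form --- precisely the observation the paper reuses later to prove Theorem~\ref{equivsgcils}.

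Three points need repair, all fixable with ingredients you already have. First, your opening claim that transversality of $s$ alone ``forces the minimal jump'' to type $2$ is not a valid deduction: a priori $\rho^{2}$ could vanish along $D$ while $\rho^{4}$ does not. What rules this out is exactly the modular-field relation $i_{X}\rho^{2}|_{D}=ds$, which you only introduce in your third paragraph; the argument must be reordered so integrability is used before the reduction. Second, there is a sign slip in the Higgs field: if $\delbar\rho=F\otimes\rho$ on $K$, the induced structure on $K^{*}|_{D}\cong N$ has connection form $-\pi(F)$, so its Higgs field is $-[X]$, not $[X]$; combined with $i_{X}\Omega=-1$ this gives pairing $+1$, i.e.\ genuine duality, whereas your $\phi=[X]$ would pair to $-1$. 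The paper handles this by computing on $K|_{D}\cong N^{*}$ and then dualizing. Third, your computation $i_{X}(ds\wedge\Omega)=ds$ silently uses $i_{X}ds=0$, i.e.\ that $X$ is tangent to $D$ (equivalently that $F$ is orthogonal to $N^{*}_{\CC}$, which you also need for $\pi(F)$ to make sense); this follows by contracting with $X$ the degree-one components of $d^{H}\rho=F\cdot\rho$ and of $F\cdot\ol\rho=0$, but it should be stated rather than assumed.
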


\begin{proof}
Let $\rho$ be a local trivialization for $K$ near a point $p\in D$. Then $\rho^{0}(p)=0$ and $d\rho^{0}(p)\neq 0$. The integrability condition~\eqref{modfield} provides a section $F = X+\xi\in \ol L$ such that $d\rho^{0} = i_{X}\rho^{2} + \xi\rho^{0}$, implying that both $X$ and $\rho^{2}$ are nonvanishing along $D$; in particular this means $\JJ$ has type $2$ along $D$. 
Choosing a complement $C^{\bullet}$ to the kernel of $d\rho^{0}\wedge\cdot$ in a neighbourhood of $p$, we may write 
\begin{equation}
\rho^{2} = d\rho^{0}\wedge \wt\Omega + \beta,
\end{equation}
where $\wt\Omega,\beta$ are uniquely determined smooth forms in $C^{\bullet}$ and $\wt\Omega$ is nonzero along $D$.  
Away from $D$, $\rho$ is of type zero, so that $\rho = \rho^{0}e^{\rho^{2}/\rho^{0}}$, and the integrability condition implies that $d(\rho^{2}/\rho^{0}) = H$.  This implies that 
\begin{equation}
d\rho^{0}\wedge\rho^{2} = \rho^{0}d\rho^{2} - (\rho^{0})^{2}H,
\end{equation}
which must then hold on all of $M$ by continuity. As a result, we conclude that $\beta=\rho^{0}\wt\sigma$ for a smooth 2-form $\wt\sigma$, and consequently $\rho^{2}/\rho^{0}$ and $\rho/\rho^{0}$ are well-defined logarithmic forms for the divisor $D$.  

The reduction of complex structure is then performed by taking the residue of $\rho/\rho^{0}$, a smooth form on $D$ given by 
\begin{equation}
\rho_{D} = \Res(\rho/\rho^{0}) = \Omega\wedge e^{\sigma},
\end{equation}
where $\Omega =\iota_{D}^{*}\wt\Omega$ and $\sigma = \iota_{D}^{*}\wt\sigma$, for $\iota_{D}:D\to M$ the inclusion.  The logarithmic form $\rho/\rho^{0}$ is independent of the choice of trivialization $\rho$, and so its residue is as well. Finally, the residue is closed with respect to $d + \iota_{D}^{*}H\wedge\cdot$, since $\rho/\rho^{0}$ is closed for $d^{H}$. 

To obtain the holomorphic structure on the normal bundle, we use the fact that the transversality of $s$ implies that $ds|_{D}:N\to K^{*}|_{D}$ is an isomorphism, allowing us to transport the generalized holomorphic structure on $K^{*}$ to $N$.   If $\rho$ is a local trivialization for $K$, then $ds|_{D}^{*}$ takes this to the local trivialization of $N^{*}$ given by $\eta = d\rho^{0}|_{D}$. As explained in Section~\ref{canbun}, the generalized holomorphic structure on $K$ is given locally by $\delbar \rho = F\otimes\rho$; pulling back to $D$, we define the generalized holomorphic structure on the conormal bundle by $\delbar \eta = \pi(F)\otimes \eta$, where $\pi$ is the projection in~\eqref{eq:submanifold}, where we note that $F$ is orthogonal to ${N^{*}_{\CC}}$, so $\pi(F)$ lies in the $-i$-eigenbundle of the reduced generalized complex structure on $D$.  Explicitly, $\pi(F) = X + \iota_{D}^{*}\xi$, and we may verify that since $i_{X}\rho^{2} = d\rho^{0}$ along $D$, it follows that $i_{X}\Omega = -1$. This implies that the transverse Higgs field $[X]$ of $\delbar$ evaluates to $-1$ on $\Omega$, and so the generalized holomorphic structure on the dual bundle $N$ has opposite transverse Higgs field, evaluating to $+1$ on $\Omega$, as required.
\end{proof}

%
%

Because the generalized holomorphic structure on the normal bundle $N$ has transverse Higgs field $\phi\in C^{\infty}(D, \nu_{1,0})$ which satisfies $i_{\phi}\Omega=1$, it follows that the induced $\CC^{*}$-invariant generalized complex structure on $\tot(N)$ is symplectic away from the zero section. This can be seen by writing the structure on the total space as we did in~\eqref{localKCY}. Let $n$ be a local trivialization for $N$ and let $\delbar n = (X+\xi)\otimes n$, using the generalized holomorphic structure on $N$ defined in Theorem~\ref{gholstrnd}.  Here $F=X+\xi\in \ol L_{D}$, so that $X$ is the transverse Higgs field of $\delbar$. Let $\tau$ be the fibrewise linear coordinate on $\tot(N)$ corresponding to $n$.  Then the generalized complex structure on $\tot(N)$ may be defined locally by the form
\begin{equation}\label{locallindef}
e^{\tau\del_{\tau}\wedge F} d\tau \wedge \rho_{D} = (\tau + (d\tau + \tau (i_{X}\sigma +\xi ))\wedge\Omega)\wedge e^{\sigma},
\end{equation}
In the expression above, $\sigma$ is only defined modulo the ideal of $\Omega$, and in fact we may choose it such that $i_{X}\sigma + \xi = 0$, simplifying the above expression.  
In any case, the component of degree zero of~\eqref{locallindef} vanishes transversally, demonstrating that $\tot(N)$ has a stable generalized complex structure with anticanonical divisor given by the zero section.

\begin{definition}\label{linearizedef}
The natural $\CC^{*}$--invariant stable generalized complex structure~\eqref{locallindef} inherited by the total space of the normal bundle of $D$ is called the \emph{linearization} of the stable generalized complex manifold along $D$.
\end{definition}

We conclude this section by showing that there are no implicit constraints on the data determining the linearization; any type 1 generalized Calabi-Yau structure and any holomorphic line bundle over it may be realized as the linearization of a stable generalized complex manifold, as long as the transverse Higgs field pairs nontrivially with the Calabi-Yau form.
Since we show in Section~\ref{lindeglocsec} that the generalized complex structure in a neighbourhood of $D$ is completely determined by the linearization, the following result provides a local normal form for stable generalized complex manifolds about their anticanonical divisors.

\begin{proposition}\label{plus1boot}
Let $(D,\Omega\wedge e^{\sigma})$ be a type 1 generalized Calabi-Yau manifold and $N$
a generalized holomorphic line bundle over $D$ whose transverse Higgs field pairs nontrivially with $\Omega$. Then the total space $\tot(N)$ inherits a $\CC^{*}$-invariant stable generalized complex structure with anticanonical divisor given by the zero section.
\end{proposition}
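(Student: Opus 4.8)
The plan is to run the total-space construction of Theorem~\ref{gcytotk}, with the canonical bundle $K$ and its modular field replaced by the given line bundle $N$ and the connection form of its generalized holomorphic structure, exactly as previewed in the discussion leading to~\eqref{locallindef}. Concretely, I would choose a local frame $n$ for $N$ with $\delbar n=(X+\xi)\otimes n$, let $\tau$ be the associated fibrewise linear coordinate on $\tot(N)$, and set $E=\tau\del_{\tau}$ and $F=X+\xi$. The candidate structure is the Maurer--Cartan exponential
\begin{equation*}
\varpi=e^{E\wedge F}\,d\tau\wedge\rho_{D}=\tau\,(i_{X}\Omega)\,e^{\sigma}+\bigl(d\tau+\tau(i_{X}\sigma+\xi)\bigr)\wedge\Omega\wedge e^{\sigma},
\end{equation*}
which is just~\eqref{localKCY} with $\rho_{D}$ in place of $\rho$, and which reduces to~\eqref{locallindef} once the Higgs field is normalized so that $i_{\phi}\Omega=1$.

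There are then three things to verify. First, that $\varpi$ glues to a globally defined line of pure spinors: under a change of frame $n\mapsto gn$ the coordinate transforms as $\tau\mapsto g^{-1}\tau$ and the connection form as $F\mapsto F+d_{L}\log g$, and the induced change in $E\wedge F$ is absorbed exactly as in the independence argument of Theorem~\ref{gcytotk}, since $E$ is the intrinsic Euler field. Second, integrability \wrt\ the pullback of $H$: the undeformed form $d\tau\wedge\rho_{D}$ is the product of the type~1 structure on $D$ with the standard structure on $\CC$, hence $d^{H}$-closed, and the deformed form $e^{E\wedge F}d\tau\wedge\rho_{D}$ is integrable precisely when $E\wedge F$ solves the Maurer--Cartan equation of the product. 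Here the flatness of $\delbar$ plays the role that the modular-field equation~\eqref{modfield} played for $K$: the two flatness conditions established in Section~\ref{hollinbu} (flatness of the Higgs field $\phi$ together with the curvature identity relating $\mathrm{curv}(\delbar^{A})$ to the contraction of the twisting form by $\phi$) are exactly what is needed for $E\wedge F$ to be Maurer--Cartan.

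Third, stability and nondegeneracy. The degree-zero component of $\varpi$ is $\tau\,(i_{X}\Omega)=\tau\,i_{\phi}\Omega$, so the hypothesis that the transverse Higgs field pairs nontrivially with $\Omega$ is exactly what guarantees that the anticanonical section vanishes, and vanishes transversally, precisely along the zero section, giving the anticanonical divisor as claimed. For nondegeneracy, away from the zero section I would factor out $\tau$ and write $\varpi/\tau=e^{\Theta}$ with $\Theta=(d\log\tau+i_{X}\sigma+\xi)\wedge\Omega+\sigma$, a form of type~$0$ whose imaginary part is symplectic because $\Im\sigma$ is nondegenerate on the symplectic leaves while $\Im(d\log\tau\wedge\Omega)=d\log|\tau|\wedge\Omega_{I}+d\arg\tau\wedge\Omega_{R}$ is nondegenerate on the complementary four directions (the fibre of $N$ together with the conormal spanned by $\Omega_{R},\Omega_{I}$). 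Along the zero section $\varpi=d\tau\wedge\Omega\wedge e^{\sigma}$ has type~$2$, and the nondegeneracy~\eqref{nondegeneracycondition} reduces to $d\tau\wedge d\ol\tau\wedge\Omega\wedge\ol\Omega\wedge(\Im\sigma)^{n-1}\neq0$, which holds.

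The main obstacle is the integrability step: translating the flatness of the $L_{D}$-connection on $N$ into the Maurer--Cartan condition for $E\wedge F$ with respect to the $H$-twisted Courant bracket on $\tot(N)$, keeping careful track of the twisting form, and confirming that the local solutions assemble consistently. By contrast, the transverse vanishing of the anticanonical section and the nondegeneracy are direct pointwise computations, and the gluing is a verbatim repetition of the corresponding argument in Theorem~\ref{gcytotk}.
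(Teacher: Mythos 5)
Your proposal is correct and follows essentially the same route as the paper: the paper's proof likewise rescales so that $i_{X}\Omega=1$ (using that the pairing of the flat Higgs field with the closed form $\Omega$ is constant, which is what lets "pairs nontrivially" mean "nonvanishing") and then simply cites the local expression~\eqref{locallindef} for $e^{\tau\del_{\tau}\wedge F}d\tau\wedge\rho_{D}$, whose degree-zero component $\tau$ vanishes transversally exactly on the zero section. Your write-up just makes explicit the gluing, integrability (flatness of the $L$-connection, i.e.\ the two conditions of Section~\ref{hollinbu}), and nondegeneracy verifications that the paper leaves implicit in its appeal to Theorem~\ref{gcytotk} and Equation~\eqref{locallindef}.
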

\begin{proof}
Let $n$ be a local trivialization for $N$ and let $\delbar n = F\otimes n$, where $F\in \ol L_{D}$ is the algebroid connection 1-form and $X=\pi_{T_{\CC}}(F)$ is the transverse Higgs field. We may rescale $\Omega$ by a constant so that $i_{X}\Omega = 1$, since $X$ has constant pairing with $\Omega$ and this is nonzero by assumption. Let $\tau$ be the fibrewise linear coordinate on $\tot(N)$ corresponding to $n$.  Then the generalized complex structure on $\tot(N)$ may be written locally just as in Equation~\ref{locallindef},
demonstrating that it is a stable generalized complex structure with anticanonical divisor given by the zero section.
\end{proof}

\begin{example}
For $M$ 4-dimensional, $D$ has dimension 2, and so the generalized Calabi-Yau structure inherited by $D$ is a usual Calabi-Yau complex structure, implying that each component of $D$ must be a complex curve of genus 1 with a distinguished holomorphic 1-form $\Omega$. Furthermore, the generalized holomorphic structure induced on $N$ gives it the structure of a holomorphic line bundle over $D$ equipped with a transverse Higgs field, which in this case is simply the holomorphic vector field on $D$ dual to $\Omega$.  In this way, we recover the results of~\cite{MR2574746} characterizing the complex locus of a stable generalized complex 4-manifold.
\end{example}

\subsection{Constructions of stable structures}\label{exst}
 
If $(U,\delbar)$ is a generalized holomorphic line bundle over the stable generalized complex manifold $(M,\JJ)$, then we obtain via~\eqref{localKCY} a natural $\CC^{*}$--invariant generalized complex structure on the total space of $U$. We now describe how to construct a stable generalized complex structure on the same total space but with the zero section removed.  We discovered this construction by applying T-duality~\cite{MR2648900} to the canonical structure on $\tot(U)$.

Choose a Hermitian metric on the bundle $U$, and let $D:C^{\infty}(U)\to C^{\infty}(\TT M \otimes U)$ be the unique unitary generalized connection whose component along $L\subset\TT_{\CC}M$ coincides with $\delbar$ (see~\cite{MR2681704} for a detailed discussion of generalized connections).  If $u$ is a local unitary trivialization of $U$, and $\delbar u = \alpha\otimes u$ for $\alpha\in L^{*}\cong \ol L$, we have that 
\begin{equation}
Du = i\sA \otimes u = (\alpha-\ol\alpha)\otimes u,
\end{equation}
where $\sA$ is the generalized connection 1-form, a real local section of $\TT M$. If $\rho$ is a local trivialization for the canonical bundle of $\JJ$, and $(r,\theta)$ are fibrewise polar coordinates associated to $u$, then we construct the following form on the complement of the zero section in the total space of $U$:
\begin{equation}\label{bootstabgc}
e^{id\log r \wedge (d\theta - \sA)}\rho = \rho - id\log r\wedge(d\theta \wedge \rho -\sA \cdot \rho).
\end{equation}
This form is nondegenerate, independent of the choice of unitary trivialization and its integrability follows directly from the fact that $d_{L}\alpha = 0$. Its component of degree zero coincides with that of $\rho$, and hence defines a stable generalized complex structure on $\tot^{*}(U)$.  The form~\eqref{bootstabgc} is manifestly invariant by constant rescaling of the fibers; we may therefore take a $\ZZ$ quotient of $\tot^{*}(U)$ to form a torus bundle over $M$.  We summarize the construction as follows.
\begin{proposition}\label{bootstrapsgc}
Given a Hermitian metric on a generalized holomorphic line bundle $U$ over the stable generalized complex manifold $M$, the total space of $U$ inherits a $\CC^{*}$--invariant stable generalized complex structure away from the zero section given by~\eqref{bootstabgc}.  Quotienting by a subgroup $\ZZ\subset \CC^{*}$, we obtain a stable generalized complex $T^{2}$-bundle over $M$.
\end{proposition}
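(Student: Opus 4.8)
The plan is to check directly that the form $\wt\rho$ of~\eqref{bootstabgc}, namely $\wt\rho = e^{X}\rho$ with $X = i\,d\log r\wedge(d\theta - \sA)$, generates the canonical bundle of an integrable, $\CC^{*}$-invariant, stable generalized complex structure on $\tot^{*}(U)$; all computations are local, in a unitary trivialization $u$ of $U$ and a local generator $\rho$ of the canonical bundle $K$ of $M$. Note first that each summand of $X$ carries the factor $d\log r$, so $X^{2}=0$ as an operator on forms and $e^{X}=1+X$, recovering the explicit right-hand side of~\eqref{bootstabgc}. Five points must then be established: that the line $\langle\wt\rho\rangle$ is independent of $u$ (global well-definedness); that $\wt\rho$ is a nondegenerate pure spinor; that it is $d^{H}$-integrable; that its degree-zero part vanishes transversally (stability); and that the whole construction is invariant under constant rescaling of the fibre, which permits the final quotient.

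For well-definedness I would change the trivialization by $u\mapsto e^{i\chi}u$ and track the two competing changes. The connection form transforms as $\alpha\mapsto\alpha + i\,d_{L}\chi$, so the real generalized connection $\sA$ with $i\sA=\alpha-\ol\alpha$ changes only in its covector part, by the exact form $d\chi$, while its vector part $\sA_{TM}$ is unchanged; since the polar angle shifts by the same exact form $d\chi$, the difference $d\theta-\sA$ is invariant, and hence so is $e^{X}$ and the line it produces. Taking $\chi$ constant (and more generally a constant fibre rescaling, which fixes both $d\log r$ and $d\theta$) gives the $\CC^{*}$-invariance needed at the end. For nondegeneracy I would observe that $X$ is a section of $\mathfrak{so}(\TT_{\CC}\tot^{*}(U))$: writing $\sA=\sA_{TM}+\sA_{T^{*}M}$, the covector part makes $d\log r\wedge(d\theta-\sA_{T^{*}M})$ a complex two-form while $-d\log r\wedge\sA_{TM}\in T^{*}\wedge T$ is a $\mathfrak{gl}$-endomorphism, so $e^{X}$ lies in the complexified spin group and carries the pure spinor $\pi^{*}\rho$ to a pure spinor. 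To see $\wt L\cap\ol{\wt L}=\{0\}$ I would compute the Mukai pairing $(\wt\rho,\ol{\wt\rho})$: the only top-degree contributions are those supplying the fibre factor $d\log r\wedge d\theta$, and they assemble (after checking that the two such terms add rather than cancel) into $d\log r\wedge d\theta\wedge(\rho,\ol\rho)$, a nonvanishing volume form by the nondegeneracy of $\rho$ on $M$.

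The main obstacle is integrability. Here I would compute $d^{H}\wt\rho$ using $d(d\log r)=d(d\theta)=0$ together with the modular-field equation $d^{H}\rho=F\cdot\rho$ from~\eqref{modfield} for the structure on $M$. The terms not involving the derivative of $\sA$ reorganize into a Clifford multiple of $\wt\rho$ by a section of $\ol{\wt L}$ built from $F$ and $\partial_{\theta}$; the only potentially obstructing terms are those containing $d\sA$, and these are governed precisely by the curvature of the generalized holomorphic structure on $U$, i.e.\ by $d_{L}\alpha$. Substituting $d\rho=d^{H}\rho-H\wedge\rho$ and the modular field, I expect the $d\sA$ contributions to assemble into the contraction of $d_{L}\alpha$, which vanishes by flatness of $\delbar$, leaving $d^{H}\wt\rho=\wt F\cdot\wt\rho$ for a suitable $\wt F\in\Gamma(\ol{\wt L})$; this bookkeeping is the delicate step. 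Stability is then immediate: every summand of $\wt\rho$ other than $\rho^{0}$ carries a factor $d\log r$ and so has positive form degree, whence the degree-zero part of $\wt\rho$ is $\pi^{*}\rho^{0}=\pi^{*}s$, and since $s$ is transverse to zero and $\pi$ is a submersion, $\pi^{*}s$ is transverse, so the new anticanonical divisor is $\pi^{-1}(D)$. Finally, the $\CC^{*}$-invariance established above lets us quotient $\tot^{*}(U)$ by any $\ZZ\subset\CC^{*}$ acting by constant rescaling; the fibre $\CC^{*}/\ZZ$ is a two-torus, yielding the stable generalized complex $T^{2}$-bundle over $M$.
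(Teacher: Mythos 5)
Your proposal follows essentially the same route as the paper, whose own proof is simply the construction paragraph preceding the proposition: it asserts exactly the points you verify — independence of the unitary trivialization, nondegeneracy, integrability as a direct consequence of $d_{L}\alpha=0$, stability from the fact that the degree-zero component of the new form equals the pullback of $\rho^{0}$, and invariance under constant rescaling of the fibres, which permits the $\ZZ$ quotient. Your elaborations (the Mukai-pairing computation for nondegeneracy, transversality of $\pi^{*}s$ under the submersion $\pi$, and isolating $d_{L}\alpha$ as the only possible obstruction to integrability) are correct and in fact more detailed than what the paper records.
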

  
\begin{example}
On a symplectic manifold $(M,\omega)$, generalized holomorphic bundles are simply bundles equipped with complex flat connections. Let $U$ be a Hermitian line bundle with flat connection $\nabla$. If $u$ is a local unitary section then $\nabla u = \tfrac{1}{2}A\otimes u$ for $A$ a closed complex 1-form, and $\sA = \omega^{-1}(\Re(A)) + 2\Im(A)$.  The local expression for the stable structure~\eqref{bootstabgc} is then 
\begin{equation}
\exp(d\log r \wedge \Re(A) + i(d\log r \wedge (d\theta -\Im(A)) + \omega)) 
\end{equation}
\end{example}
%
%

\begin{example}\label{ex:S^1 x S^5} In this example we show that $S^1 \times S^5$ admits stable \gcss\ (generically of type 0) as well as structures of type 1, 2 and 3.

Equip $\CC P^{2}$ with the stable generalized complex structure obtained by deforming the complex structure by a holomorphic Poisson structure $\beta\in H^{0}(\CC P^{2},\wedge^{2}\shf{T})$ whose zero locus is  a smooth cubic curve $E$.  The $\OO(1)$ line bundle then has a canonical generalized holomorphic structure since its cube is the canonical line bundle.  Equipping it with the Fubini-Study metric, Proposition~\ref{bootstrapsgc} provides a stable generalized complex structure on the $T^{2}$-bundle $\tot^{*}(\OO(1))/\ZZ\cong S^{1}\times S^{5}$.  The anticanonical divisor $D\subset S^{1}\times S^{5}$ is then a symplectic fibre bundle of tori over the cubic curve $E$. The generalized Calabi-Yau structure on $D$ is analogous to that described in Example~\ref{kodthur} on the Kodaira--Thurston manifold.

It is interesting to note that, in addition to the stable generalized complex structure constructed above, $S^{1}\times S^{5}$ admits a generalized complex structure of type 1, by the construction of Lemma~\ref{totspaceksgc} adapted to $\OO(1)$, a fractional multiple of the canonical bundle of $\CC P^{2}$.  Furthermore, it admits a structure of constant type 2, by the following observation. 
Viewing $S^{5}$ as the unitary frame bundle of $\OO(1)$ over $\CC P^{2}$, it has a connection form $\theta_{1}$ with curvature of type $(1,1)$.  If  $\theta_{2}$ is a volume form on $S^{1}$, then 
the following defines the canonical line bundle of a generalized complex structure of type 2 on $S^{1}\times S^{5}$:
\begin{equation}
e^{i\theta_{1}\wedge\theta_{2}}\wedge \Omega^{2,0}.
\end{equation}
Finally, note that we may even endow  $S^{1}\times S^{5}$ with 
a type 3 structure, as it is in the Calabi-Eckmann family of compact complex manifolds.  
\end{example}

\section{Log symplectic forms}\label{equivloggc}

A stable generalized complex structure, since it is equivalent via a B-field $b$ to  a symplectic structure $\omega$ away from the anticanonical divisor $D$, may be viewed as a symplectic form which is singular along $D$.  In fact, in the proof of Theorem~\ref{gholstrnd}, we observed that the type of singularity is such that $\sigma = b+i\omega$ defines a logarithmic form in the sense of Section~\ref{logderhamcxsect}.  In this section we make precise the relationship between stable structures and logarithmic forms, and we use this relationship to give a local period map for stable generalized complex structures.

Let $D=(U,s)$ be a complex divisor on $M$ and let $\iota, a$ be the natural Lie algebroid morphisms (each an inclusion of sheaves) between the elliptic, logarithmic, and usual tangent bundles, as follows:
\begin{equation}\label{anciota}
\xymatrix{
T(-\log|D|)\otimes\CC \ar[r]^-{\iota} &  T(-\log D)\ar[r]^-{a} & T_{\CC}M
}.
\end{equation}
\begin{definition}
Let $H$ be a real closed 3-form and $D$ a complex divisor on the manifold $M$. A 
\emph{complex log symplectic form} is a logarithmic 2-form 
$\sigma \in \Omega^2(M,\log D)$ such that 
\begin{equation}
d\sigma = a^* H
\end{equation}
and such that the elliptic form $\iota^*\sigma = b + i\omega$ has nondegenerate imaginary part.\footnote{Note that a holomorphic log symplectic form for a reduced divisor $D$ is a special case of our notion of complex log symplectic form, which does not require the underlying manifold to be complex.}  
\end{definition}
The nondegeneracy condition on $\omega\in\Omega^2(M,\log |D|)$ is  that the induced skew map
\begin{equation}
\omega:T(-\log |D|)\to T(-\log |D|)^*
\end{equation}
is an isomorphism, and the integrability condition implies that $d\omega=0$, rendering $\omega$ into what we call an \emph{elliptic symplectic form} (see Section~\ref{elllogsymp}).  The notion of equivalence for complex log symplectic structures is the same as that for generalized complex structures: we say that $(M,H,D,\sigma)$ is equivalent to $(M',H',D',\sigma')$ when there is a diffeomorphism of divisors $\psi:(M,D)\to (M',D')$ in the sense of Section~\ref{isodiffdiv} as well as a real smooth 2-form $b\in\Omega^{2}(M,\RR)$ such that $\psi^{*}H' = H + db$ and 
\[
\psi^{*}\sigma' = \sigma + b.
\]
Given a complex log symplectic form $\sigma$, its graph defines a subbundle $\Gamma_{\sigma}\subset T(-\log D)\oplus T^{*}(\log D)$, and the anchor map $a$ may be used to push this forward to a subbundle $L\subset \TT_{\CC}M$: 
\begin{equation}\label{dirpushredsigma}
L = a_{*}\Gamma_{\sigma} = \{a(X) + \eta\ |\ X + a^{*}\eta\in \Gamma_{\sigma}\}.
\end{equation}
We now show that this defines the $+i$-eigenbundle of a stable generalized complex structure  
and that this establishes an isomorphism of categories between stable generalized complex structures and complex log symplectic structures on $(M,H)$.  

\begin{theorem}\label{equivsgcils}
There is a canonical bijection between stable generalized complex structures $\JJ$ on $(M,H)$ and complex log symplectic forms 
$\sigma$ for the anticanonical divisor $D= (K^{*},s)$, defined by the relation $L_{\JJ} = a_{*}\Gamma_{\sigma}$ between the $+i$-eigenbundle of $\JJ$ and the graph of $\sigma$.
In this correspondence, any local trivialization $\rho$ of the canonical line bundle satisfies the identity
\begin{equation}\label{actuallog}
a^{*}\rho = \rho^{0}e^{\sigma},
\end{equation}
so that $\rho/\rho^{0}$ extends over all of $M$ to a section of $\Omega^{\bullet}(\log D)$.
\end{theorem}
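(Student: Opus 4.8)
The plan is to establish the bijection by constructing both directions explicitly, using the identity $a^{*}\rho = \rho^{0}e^{\sigma}$ as the bridge between the canonical spinor and the logarithmic form. In the forward direction, starting from a stable structure $\JJ$ with local trivialization $\rho$ of $K$, I would extract $\sigma$ essentially as $\rho^{2}/\rho^{0}$, relying on the computation already carried out in the proof of Theorem~\ref{gholstrnd}; in the backward direction, starting from a complex log symplectic form $\sigma$, I would build the smooth spinor $\rho^{0}e^{\sigma}$ and check directly that it generates a stable generalized complex structure whose $+i$-eigenbundle is $a_{*}\Gamma_{\sigma}$.

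For the forward direction, first note that the degree-zero component $\rho^{0}$ of the local trivialization is precisely the anticanonical section $s$ expressed in this frame, hence a local defining function $w$ for $D$. Away from $D$ the structure has type $0$, so $\rho$ is a $B$-field transform of a symplectic spinor and therefore literally exponential: $\rho = \rho^{0}e^{\sigma}$ with $\sigma = \rho^{2}/\rho^{0}$ a complex two-form. The computation in the proof of Theorem~\ref{gholstrnd} shows that $\rho^{2}/\rho^{0}$ extends across $D$ as a genuine logarithmic two-form, so $\sigma \in \Omega^{2}(M,\log D)$; it is independent of the choice of $\rho$, because rescaling $\rho$ by a function rescales $\rho^{0}$ identically and leaves $\rho/\rho^{0}$ unchanged. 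I would then translate the integrability condition~\eqref{modfield}: the relation $d(\rho^{2}/\rho^{0}) = H$, valid on the dense set $M\setminus D$, upgrades by continuity to the equality $d\sigma = a^{*}H$ of logarithmic forms, and the pointwise nondegeneracy~\eqref{nondegeneracycondition} becomes nondegeneracy of $\Im(\iota^{*}\sigma)$ as an elliptic symplectic form.

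For the backward direction the technical heart is a smoothness lemma: in local coordinates where $s = w$, writing $\sigma = d\log w \wedge \alpha + \beta$ as in~\eqref{expan}, the form $\rho := w\,e^{\sigma}$ is smooth, since each term $\tfrac{1}{k!}\,w\,\sigma^{k}$ is regular once one uses $w\,d\log w = dw$ to absorb the logarithmic pole. This smooth spinor is pure of the algebraic type~\eqref{ptwtype} by nondegeneracy of $\Im(\iota^{*}\sigma)$, its degree-zero part is $w$ and so vanishes transversely with zero locus $D$, and the condition $d\sigma = a^{*}H$ forces $d^{H}\rho = F\cdot\rho$ for a section $F$ of $\ol{L}$, giving integrability. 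Thus $\rho$ generates the canonical bundle of a stable structure $\JJ$, and unwinding the annihilator of $\rho^{0}e^{\sigma}$ identifies its $+i$-eigenbundle with the Dirac pushforward $a_{*}\Gamma_{\sigma}$ from~\eqref{dirpushredsigma}. The identity~\eqref{actuallog} then makes the two constructions manifestly inverse to one another.

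The main obstacle is the behaviour along $D$, where the anchor $a$ drops rank: one must show that the pushforward $a_{*}\Gamma_{\sigma}$ remains a smooth maximal isotropic subbundle that jumps from type $0$ to type $2$ exactly along $D$, rather than degenerating uncontrollably. The smoothness lemma $w\,e^{\sigma}\in\Omega^{\bullet}_{\CC}(M)$ is what makes this work, and the remaining care is in checking that the nondegeneracy of $\Im(\iota^{*}\sigma)$ degenerates in precisely the controlled manner dictated by the transversal vanishing of $s$, so that $L\cap\ol{L}=0$ persists across $D$ and the resulting structure is genuinely generalized complex.
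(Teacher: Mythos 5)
Your proposal is correct in its essentials but takes a genuinely different route from the paper's. The paper never manipulates the spinor $\rho^{0}e^{\sigma}$ on $M$ directly: it lifts everything to the total space of the canonical bundle, where Theorem~\ref{gcytotk} and Lemma~\ref{totspaceksgc} give a type 1 generalized Calabi--Yau structure on $\tot^{*}(K)$, the type 1 classification (Theorem~\ref{asigma}) together with the Atiyah algebroid identification (Proposition~\ref{geolog}) exhibits the descended $\sigma$ as a logarithmic form with nondegeneracy and integrability inherited automatically, the eigenbundle relation $L_{\JJ}=a_{*}\Gamma_{\sigma}$ follows from Proposition~\ref{pushforwardrelation}, and the converse is obtained by Dirac reduction of $\varpi=ds\wedge e^{\sigma}$, with the identity~\eqref{actuallog} verified only at the end. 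You instead stay on $M$: the forward direction legitimately recycles the computation from Theorem~\ref{gholstrnd} (the paper itself notes at the start of Section~\ref{equivloggc} that this already shows $\rho/\rho^{0}$ is logarithmic), and the backward direction rests on your smoothness observation $w\,d\log w=dw$, which is the correct key point and makes~\eqref{actuallog} hold by construction. What the paper's detour buys is exactly the part you flag as ``the main obstacle'': purity, the type jump to $2$, nonvanishing of the pairing, and the pointwise identity $L_{\JJ}=a_{*}\Gamma_{\sigma}$ along $D$ come for free from the structural results, whereas on your route they must be checked by hand. They do check out: writing $\sigma=d\log w\wedge\alpha+\beta$ gives $we^{\sigma}=(w+dw\wedge\alpha)\wedge e^{\beta}$, so along $D$ the spinor is $dw\wedge\alpha\wedge e^{\beta}$, manifestly of the algebraic type~\eqref{ptwtype}; the Chevalley pairing computes to $\pm\tfrac{(2i)^{n}}{n!}\,w\ol{w}\,(\Im\sigma)^{n}$, and since $\det a^{*}$ identifies smooth volume forms with $w\ol{w}$ times elliptic volume forms, its nonvanishing is precisely elliptic nondegeneracy of $\Im(\iota^{*}\sigma)$, in both directions of the correspondence; and the equality $L_{\JJ}=a_{*}\Gamma_{\sigma}$ at points of $D$ is a finite-dimensional check (both sides have dimension $2n$, and containment follows from $i_{w\del_{w}}\sigma=\alpha$). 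So your plan closes and is more elementary than the paper's, at the cost of carrying out these local verifications explicitly at the locus where the anchor degenerates.
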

\begin{proof}
Let $\JJ$ be a stable generalized complex structure on $(M,H)$.  Then by Theorem~\ref{gcytotk} it determines a $\CC^{*}$-invariant generalized Calabi-Yau structure $\varpi$ on $\tot(K)$, the total space of its canonical line bundle, which is of type 1 on $\tot^{*}(K)$, the complement of the zero section. Let $L_{K}$ be the $+i$-eigenbundle of this Calabi-Yau structure.  By Theorem~\ref{asigma}, $L_{K}$ is completely determined by its tangent projection $A\subset T_{\CC}(\tot^{*}(K))$, together with the 2-form $\sigma\in \Omega^{2}_{A}$, which has nondegenerate imaginary part on $A\cap\ol A$.  

But $A$ coincides with $\At_{\CC}(K,s)$, the subbundle of the complexified Atiyah bundle of $K$ consisting of vector fields fixing the fibrewise linear function $s$.  By Proposition~\ref{geolog}, $A$ projects surjectively onto $T(-\log D)$ under the derivative of the bundle projection $\pi:K\to M$, with kernel generated by the conjugate Euler vector field $\ol E = \ol \tau\del_{\ol\tau}$:
\begin{equation}
\xymatrix{
0\ar[r] & 
\left< \ol E\right> \ar[r]  &
A\ar[r]^-{\pi_{*}} & 
T(-\log D)\ar[r] &
0
}
\end{equation}

But $\ol E$ annihilates $\varpi$ and so it lies in the kernel of $\sigma$, implying that $\sigma$ is basic for $\pi_{*}$, defining a logarithmic 2-form as required. The nondegeneracy condition is immediate from the fact that $\pi_{*}$  is an isomorphism from $A\cap \ol A$ to the complexification of $T(-\log |D|)$, and integrability is inherited from $d^{H}\varpi = 0$.   The relation $L_{\JJ}= a_{*}\Gamma_{\sigma}$ then follows from Proposition~\ref{pushforwardrelation}.

This argument is reversible: if $\sigma$ is a complex log symplectic form for the complex divisor $(K^{*},s)$, we may pull it back via $\pi$ to a 2-form on $A = \ker{ds}$.  The resulting Calabi-Yau structure $\varpi = ds\wedge e^{\sigma}$ on $\tot^{*}(K)$ may the be reduced along $\pi$ to define a generalized complex structure $\JJ$ on $M$.  The reduction is done as follows:  the action of $\CC^{*}$ is generated by the complex Euler vector field $E$ and its conjugate. Since $\ol E$ annihilates $\varpi$, the form $\rho_{K}=i_{E}\varpi$ is nowhere vanishing on $\tot^{*}(K)$, and satisfies 
\begin{equation}
i_{E}\rho_{K} = 0,\qquad L_{E}\rho_{K} = \rho_{K},
\end{equation}
and so defines an inclusion $\rho_{K}: K\hookrightarrow \wedge^{\bullet}T^{*}M$ of vector bundles over $M$; since $s$ is transverse, $\JJ$ is stable, as required. 

To verify identity~\eqref{actuallog}, if $\rho$ is a local trivialization for $K$ with dual trivialization $\tau$, can write $s = \rho^{0}\tau$ and $\varpi = d(\rho^{0}\tau) \wedge e^{\sigma}$.    Then $\rho_{K}=\tau \rho^{0}e^{\sigma}$ takes $\rho$ to the smooth differential form $\rho^{0}e^{\sigma}$, as required.
\end{proof}

\subsection{Elliptic log symplectic structures}\label{elllogsymp}

In the above equivalence between stable generalized complex structures $\JJ$ and logarithmic symplectic forms $\sigma = b+i\omega$, the imaginary part $\omega$ is a closed, nondegenerate elliptic 2-form for the elliptic divisor defined by $(K^{*}\otimes \ol K^{*}, s\otimes \ol s)$, where $s$ is the anticanonical section of $\JJ$. 
Recall that $\omega$ coincides with the inverse of the real Poisson structure $Q$ underlying $\JJ$. In fact, $Q$ itself determines the elliptic divisor, since 
the Chevalley pairing on differential forms restricts to an isomorphism
\begin{equation}
\xymatrix{
K\otimes \ol K\ar[r]^-{\cong} & \wedge^{2n} T^{*}_{\CC}M
}
\end{equation}
which takes $e^{\sigma}\otimes e^{\ol\sigma}$ to the top degree component of $e^{\sigma - \ol\sigma}$, namely to $(2i)^{n}\omega^{n}$.  This implies that $s\otimes\ol s$ is taken to $(2i)^{-n} Q^{n}$, giving the natural isomorphism
\begin{equation}
(K^{*}\otimes \ol K^{*}, s\otimes \ol s) \cong (\wedge^{2n}TM,\wedge^{n}Q).
\end{equation}
In this section, we show that the forgetful map taking $\JJ$ to its underlying real Poisson structure $Q$ defines a bijection between gauge equivalence classes of stable generalized complex structures (integrable with respect to any 3-form) and a certain class of Poisson structures, defined as follows.
\begin{definition}
A Poisson structure $Q$ is of elliptic log symplectic type when its Pfaffian defines an elliptic divisor $(\wedge^{2n} TM, \wedge^{n}Q)$. 
\end{definition}
To justify the terminology, we have the following equivalent characterization:
\begin{lemma}
A Poisson structure $Q$ is of elliptic log symplectic type if and only if its inverse $\omega = Q^{-1}$ is a closed, nondegenerate elliptic 2-form.
\end{lemma}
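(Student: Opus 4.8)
The plan is to exploit the duality between nondegenerate $2$-forms and bivectors, carried out for the elliptic Lie algebroid $\tm{|D|}$ rather than for $TM$, and to transport between the two along the anchor $a$. Throughout, let $D$ be the degeneracy locus of $Q$, and recall from~\eqref{basisell} that in local polar coordinates the anchor sends the generators $r\del_r,\del_\theta$ of $\tm{|D|}$ to vector fields vanishing along $D$. Computing
$\wedge^{2}a(r\del_r\wedge\del_\theta)=(x\del_x+y\del_y)\wedge(x\del_y-y\del_x)=(x^2+y^2)\,\del_x\wedge\del_y$,
one sees that the top power $\det a=\wedge^{2n}a\colon \wedge^{2n}\tm{|D|}\to\wedge^{2n}TM$ is an isomorphism away from $D$ and vanishes to second order along $D$; in local trivializations it is multiplication by the elliptic defining function $q=r^{2}$. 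Dually, $a^{*}$ carries an ordinary volume form to $q$ times an elliptic volume form. This single identity is the engine of the proof.

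First I would record the formal inversion correspondence. For any Lie algebroid a nondegenerate algebroid $2$-form $\omega$ has a unique inverse bivector $\Pi\in\wedge^{2}\tm{|D|}$, and $\omega$ is closed for the algebroid differential if and only if $[\Pi,\Pi]=0$ for the algebroid Schouten bracket. Since the anchor is a morphism of Lie algebroids it intertwines Schouten brackets, so $a_{*}\Pi:=\wedge^{2}a(\Pi)$ is an honest Poisson bivector on $M$, whose Pfaffian is $\wedge^{n}(a_{*}\Pi)=\det a\,(\wedge^{n}\Pi)=q\cdot(\wedge^{n}\Pi)$; as $\wedge^{n}\Pi$ is a nonvanishing elliptic top polyvector, this vanishes exactly like $q$. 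This already gives the implication $(\Leftarrow)$: starting from a closed nondegenerate elliptic $2$-form $\omega=Q^{-1}$ with smooth inverse bivector $\Pi=\omega^{-1}$, we have $Q=a_{*}\Pi$ and $\wedge^{n}Q=q\cdot(\text{unit})$, so the Pfaffian $\wedge^{n}Q$ defines an elliptic divisor, isomorphic to the one underlying $\omega$.

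For the converse $(\Rightarrow)$ I would show that a Poisson structure whose Pfaffian defines an elliptic divisor is necessarily $a_{*}\Pi$ for a unique nondegenerate elliptic bivector $\Pi$; then $\omega=\Pi^{-1}$ is the desired elliptic symplectic form, closed because $[\Pi,\Pi]=0$ follows from $[Q,Q]=0$ by injectivity of $a_{*}$ off $D$ and continuity, and nondegenerate because $\wedge^{n}\Pi=(\det a)^{-1}\wedge^{n}Q$ is nonvanishing by hypothesis. The whole content is thus the smooth extension of $\Pi:=(\wedge^{2}a)^{-1}Q$ across $D$, equivalently that $Q^{-1}$, defined a priori only on $M\setminus D$, extends to a smooth section of $\wedge^{2}(\tm{|D|})^{*}$. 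This is local near $D$, and here the positive-definiteness of the normal Hessian is essential: because $\wedge^{n}Q$ vanishes to exactly second order the rank of $Q$ drops by exactly two along $D$, so $D$ is a union of $(2n-2)$-dimensional symplectic leaves and hence a symplectic submanifold. Using the Morse--Bott lemma to write $q=x^{2}+y^{2}$ and a Weinstein-type splitting transverse to $D$, one brings $Q$ into the block form $g\,\del_{x}\wedge\del_{y}+Q_{\parallel}$, with $g$ a positive multiple of $r^{2}$ and $Q_{\parallel}$ the leafwise symplectic Poisson structure; inverting each block gives $Q^{-1}=\tfrac{r^{2}}{g}\,d\log r\wedge d\theta+\omega_{\parallel}$, manifestly a smooth nondegenerate elliptic form. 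I expect this transverse normal form — the verification that $Q$ genuinely lifts to the elliptic tangent bundle, with no anomalous off-diagonal terms that would spoil smoothness as an elliptic form — to be the main obstacle; everything else is formal once the determinant identity $\det a=q$ is in hand.
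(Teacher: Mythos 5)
Your $(\Leftarrow)$ direction is correct and is essentially the paper's argument: both rest on the fact that $\det\anc$ generates the same ideal as the defining function $q$, so that $\wedge^{n}Q=\det\anc\cdot\wedge^{n}\Pi$ exhibits $(\wedge^{2n}TM,\wedge^{n}Q)$ as an elliptic divisor isomorphic to $(R,q)$.

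The $(\Rightarrow)$ direction, however, contains a genuine error. The inference ``$\wedge^{n}Q$ vanishes to exactly second order, hence the rank of $Q$ drops by exactly two along $D$, hence $D$ is a symplectic submanifold'' is false, and with it the proposed block normal form $Q=g\,\del_{x}\wedge\del_{y}+Q_{\parallel}$. The counterexample is the paper's own Darboux model~\eqref{stdformimaglog}: the imaginary part of $\sigma_{0}=d\log w\wedge dz+i\omega$ is the elliptic symplectic form $d\log r\wedge dz_{2}+d\theta\wedge dz_{1}+\omega$, whose underlying Poisson structure is
\begin{equation*}
Q=(x\del_{x}+y\del_{y})\wedge\del_{z_{2}}+(x\del_{y}-y\del_{x})\wedge\del_{z_{1}}+\textstyle\sum_{i}\del_{x_{i}}\wedge\del_{p_{i}}.
\end{equation*}
Its Pfaffian is $(x^{2}+y^{2})$ times a nonvanishing section, so it does define an elliptic divisor with positive-definite normal Hessian; yet along $D=\{x=y=0\}$ the rank of $Q$ is $2n-4$, not $2n-2$, the normal directions couple to the tangential ones, and $D$ is foliated by codimension-two symplectic leaves rather than being symplectic. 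In general the dichotomy is governed by the elliptic residue $\Res_{q}\omega=\omega(r\del_{r},\del_{\theta})|_{D}$: the rank along $D$ is $2n-2$ where the residue is nonzero and $2n-4$ where it vanishes. Since the zero-residue case is precisely the one arising from stable generalized complex structures, your argument proves the lemma only in the case that is peripheral to the paper. The repair is the paper's one-line observation, which needs no transverse normal form at all: Hamiltonian vector fields preserve $\wedge^{n}Q$ and hence the ideal $\Ii_{q}$, so they are sections of $\tm{|D|}$; therefore $Q$ lifts to $\wt Q\in C^{\infty}(\wedge^{2}\tm{|D|})$ with $Q=\anc\wt Q\anc^{*}$, and comparing vanishing orders in $\wedge^{n}Q=\det\anc\cdot\wedge^{n}\wt Q$ (both $\wedge^{n}Q$ and $\det\anc$ generate $\Ii_{q}$) forces $\wedge^{n}\wt Q$ to be nonvanishing, so $\wt Q$ is invertible and $\omega=\wt Q^{-1}$ is the required closed nondegenerate elliptic form.
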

\begin{proof}
Let $Q$ be an elliptic log symplectic Poisson structure. Since $L_{X}(\wedge^{n}Q) = 0$ for any Hamiltonian vector field $X$, it follows immediately that $Q$ lifts to a section $\wt Q$ of $\wedge^{2}T(-\log |D|)$. Taking the top exterior power of the relation $Q = \anc \wt Q\anc^{*}$, where $\anc:T(-\log|D|)\to TM$ is the anchor, we see that $\wt Q$ is invertible, defining the elliptic log symplectic form
\begin{equation}
\omega = \wt Q^{-1}\in \Omega^{2}(\log |D|),\qquad d\omega = 0.
\end{equation}
For the reverse implication, let $D=(R,q)$ be an elliptic divisor and let $\omega$ be an elliptic log symplectic form.  We use the fact that the determinant of the algebroid anchor, a section of $\wedge^{2n}T^{*}(\log |D|)\otimes \wedge^{2n}TM$, lifts to an isomorphism:
\begin{equation}
\xymatrix{
R\ar[rr]^-{q^{-1}\det \anc}_-{\cong}& & \wedge^{2n}T^{*}(\log |D|)\otimes \wedge^{2n}TM.
}
\end{equation}
Since $\wedge^{n}\omega^{-1}$ trivializes $\wedge^{2n}T(-\log |D|)$ and is taken to $\wedge^{n}Q$ by $\det \anc$, we obtain an isomorphism between $(R,q)$ and $(\wedge^{2n}TM,  \wedge^{n}Q)$, proving that $Q$ is of elliptic log symplectic type.
\end{proof}

\begin{remark}
Note that $T(-\log |D|)$ is isomorphic to $TM$ away from $D$, which has real codimension 2; this implies that $M$ is oriented by the choice of an elliptic symplectic form.  
Further, the Hessian~\eqref{hessian} of $\wedge^{n}Q$ is a section over $D$ of $S^{2}N^{*}\otimes\wedge^{2n} TM$, and has determinant which trivializes the \emph{square} of the bundle $\kk = \wedge^{2}N^{*}\otimes\wedge^{2n} TM = \wedge^{2n-2} TD$.  The elliptic residue $\Res_{q}\omega\in \Omega^{0}(D,\kk^{*})$, if nonzero, is then a constant volume form on $D$ with respect to this trivialization, and defines an orientation on $D$.  If $\Res_{q}(\omega)=0$, then $D$ need not be orientable, as the following example shows.
\end{remark}

\begin{example}
Let $E$ be the elliptic curve $\CC/\ZZ(1,i)$ with standard coordinate $z$ and consider the holomorphic Poisson structure $\beta = w\del_{w}\wedge \del_{z}$ on $\CC P^{1}\times E$.  The $\ZZ_{2}$ action $\tau: (w,z)\mapsto (\ol w, \ol z + \tfrac{1}{2})$ acts via $\tau_{*}(\beta) = -\ol\beta$ and so preserves the imaginary part of $\beta$, an elliptic symplectic structure with vanishing elliptic residue.  The quotient $(\CC P^{1}\times E)/\ZZ_{2}$ then inherits an elliptic symplectic structure with degeneracy locus given by the pair of Klein bottles $\{w^{\pm 1}=0\}$. 
\end{example}

We now state the main result, a relative of Theorem~\ref{equivsgcils} in which $H$ is allowed to vary and \emph{gauge equivalence classes} of generalized complex structures are identified with elliptic symplectic structures; recall that $(\JJ,H)$ is gauge equivalent to $(\JJ',H')$ when there is a 2-form $b\in\Omega^{2}(M,\RR)$ such that $H' = H + db$ and $\JJ' = e^{b}\JJ e^{-b}$.

\begin{theorem}\label{correspellj}
Fix the smooth manifold $M$.  The forgetful map which takes the pair $(\JJ,H)$ of a stable generalized complex structure integrable with respect to the closed 3-form $H$ to the pair $(Q,\mathfrak{o})$, where $Q$ is the real Poisson structure of $\JJ$ and $\mathfrak{o}$ is the co-orientation of the anticanonical divisor $D$ of $\JJ$, defines a bijection between gauge equivalence classes of stable generalized complex structures and elliptic log symplectic structures  with vanishing elliptic residue and co-oriented degeneracy locus.  

The map $(\JJ,H)\mapsto (Q,\mathfrak{o})$ is equivariant for the action of the diffeomorphism group and commutes with the natural maps to $H^{3}(M,\RR)$; that is, the radial residue $\Res_{r}[Q^{-1}]\in H^{1}(D,\RR)$ is mapped to the class $[H]\in H^{3}(M,\RR)$ by the Thom--Gysin pushforward map associated to the co-orientation $\mathfrak{o}$ of the inclusion $j:D\hookrightarrow M$.
\end{theorem}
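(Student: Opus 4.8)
The plan is to run everything through the equivalence of Theorem~\ref{equivsgcils}, which already trades a stable structure $(\JJ,H)$ for a complex log symplectic form $\sigma\in\Omega^{2}(\log D)$ with $d\sigma=a^{*}H$, and then to exploit the exact sequence of complexes from Proposition~\ref{isomzerores},
\begin{equation*}
0\to \Omega^{\bullet}(M,\RR)\to \Omega^{\bullet}(\log D)\xrightarrow{\Im^{*}}\Omega^{\bullet}_{0}(\log|D|)\to 0,
\end{equation*}
in order to descend to the imaginary part $\omega=\Im(\iota^{*}\sigma)=\Im^{*}\sigma$. The whole statement is driven by two facts: that $\omega$ is precisely the elliptic symplectic form $Q^{-1}$ inverting the underlying Poisson structure (as established in Section~\ref{elllogsymp}), and that the gauge freedom $\sigma\mapsto\sigma+b$ with $b\in\Omega^{2}(M,\RR)$ is exactly the kernel of $\Im^{*}$. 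Thus $\omega$ is a complete gauge invariant once the divisor is fixed, and the co-orientation $\mathfrak{o}$ records the only remaining discrete datum.

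For well-definedness and the target, observe that under a B-field gauge transformation $\sigma\mapsto\sigma+b$ the imaginary part, hence $Q=\omega^{-1}$, is unchanged, while the anticanonical section $s$ (the degree-zero part of a generator of $K$) and therefore $D$ and its co-orientation are untouched; the image lands among elliptic log symplectic structures with vanishing elliptic residue because $\omega=\Im^{*}\sigma\in\Omega^{2}_{0}(\log|D|)$ by construction of $\Im^{*}$, and $Q$ is of elliptic log symplectic type by the Lemma preceding the theorem. Surjectivity is the reverse reading of the same sequence: given $(Q,\mathfrak{o})$, set $\omega=Q^{-1}$, promote the elliptic divisor $(\wedge^{2n}TM,\wedge^{n}Q)$ to a complex divisor $(U,s)$ using the co-orientation (the Morse--Bott construction of Section~\ref{elltn}), and lift $\omega$ through the surjection $\Im^{*}$ to some $\sigma\in\Omega^{2}(\log D)$. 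Then $\Im^{*}(d\sigma)=d\omega=0$, so $d\sigma$ lies in $\ker\Im^{*}$, i.e. $d\sigma=a^{*}H$ for a unique real closed smooth $3$-form $H$; since $\Im(\iota^{*}\sigma)=\omega$ is nondegenerate, $\sigma$ is complex log symplectic and Theorem~\ref{equivsgcils} returns a stable structure with the prescribed $(Q,\mathfrak{o})$.

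Injectivity is where the real work lies. Two stable structures with the same $(Q,\mathfrak{o})$ produce forms $\sigma\in\Omega^{2}(\log D)$ and $\sigma'\in\Omega^{2}(\log D')$ with $\Im^{*}\sigma=\omega=\Im^{*}\sigma'$; the difficulty is that these a priori live in different logarithmic complexes, so I cannot directly subtract them. The main obstacle is therefore to show that the two anticanonical divisors define the same logarithmic ideal, $\Ii_{s}=\Ii_{s'}$. I expect this to follow from the identity $(K^{*}\otimes\ol{K}^{*},s\otimes\ol s)\cong(\wedge^{2n}TM,\wedge^{n}Q)$ of Section~\ref{elllogsymp}: the ``modulus'' of $s$, namely the elliptic ideal $\Ii_{s\ol s}=\Ii_{q}$, is fixed by $Q$, and $\Ii_{s}$ is the unique square-root ideal of $\Ii_{q}$ compatible with $\mathfrak{o}$ (locally $s=w$ is determined by $w\ol w$ up to a unit once the ambiguity $w\leftrightarrow\ol w$ is broken by the co-orientation). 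Granting $\Ii_{s}=\Ii_{s'}$, the two complexes coincide, $\sigma-\sigma'$ lies in $\ker\Im^{*}=\Omega^{2}(M,\RR)$, and the integrability conditions force $H-H'=d(\sigma-\sigma')$, exhibiting the required gauge equivalence.

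Finally, equivariance under the diffeomorphism group is immediate, since every ingredient---the anticanonical divisor, the Poisson structure $Q$, and the co-orientation---is natural. For the compatibility with the maps to $H^{3}(M,\RR)$, I would read off the connecting homomorphism of the long exact sequence attached to the sequence above. On the one hand, $\sigma$ is by construction a logarithmic primitive of $H$, since $d\sigma=a^{*}H$, so the connecting map sends $[\omega]\in H^{2}_{0}(\log|D|)$ to $[H]$. On the other hand, Theorem~\ref{sequenceselllog} identifies this connecting map, restricted to the radial-residue summand $H^{1}(D,\RR)$ of the splitting~\eqref{decompelliptzerores}, with the Thom--Gysin pushforward $j_{*}$ attached to the co-orientation. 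Combining the two descriptions yields $j_{*}\Res_{r}[Q^{-1}]=[H]$, which is exactly the asserted compatibility.
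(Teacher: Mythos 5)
Your proposal is correct and follows essentially the same route as the paper's own proof: both arguments run the statement through Theorem~\ref{equivsgcils} and the exact sequence of Proposition~\ref{isomzerores}, obtaining well-definedness from the invariance of $\Im^{*}\sigma$ under $\sigma\mapsto\sigma+b$, surjectivity by lifting $\omega=Q^{-1}$ through $\Im^{*}$ and noting that $d\sigma$ then lies in $\ker\Im^{*}$ and is therefore a smooth real closed $3$-form, injectivity from $\ker\Im^{*}=\Omega^{2}(M,\RR)$ in degree two, and the $H^{3}$ compatibility by identifying the connecting homomorphism of the sequence with the Thom--Gysin pushforward via Theorem~\ref{sequenceselllog}.

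The one point where you depart from the paper is in the injectivity step, and it is a departure in care rather than in route: the paper simply writes $\sigma'=\sigma+b$, implicitly treating $\sigma$ and $\sigma'$ as elements of one and the same logarithmic complex, whereas you observe that the two anticanonical divisors must first be identified, i.e.\ that $\Ii_{s}=\Ii_{s'}$. Your claim that $\Ii_{s}$ is the unique square-root ideal of the elliptic ideal $\Ii_{q}$ compatible with the co-orientation is correct; it is the isomorphism-level sharpening of the remark in Section~\ref{elltn} (which is stated there only ``up to diffeomorphism''), and it can be verified by a local Taylor expansion: writing $s'\sim\sum_{j,k}a_{jk}w^{j}\ol w^{k}$ along $D$, divisibility of $s'\ol s'$ by $w\ol w$ forces $\sum_{j+m=P}a_{j0}\ol a_{0m}=0$ for every $P\geq 2$, and since agreement of co-orientations gives $|a_{10}|>|a_{01}|$, hence $a_{10}\neq 0$, an induction on $P$ yields $a_{0k}=0$ for all $k\geq 1$, so that $s'$ is formally (and then, by a standard division argument, smoothly) divisible by $w$. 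Since you only sketch this step (``I expect this to follow\ldots''), your write-up leaves unproven exactly the lemma that the paper leaves implicit; filling it in as above would make your injectivity argument complete, and indeed more complete than the published one.
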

\begin{proof}
By Theorem~\ref{equivsgcils}, $\omega = Q^{-1}$ is the imaginary part of a complex log symplectic form $\sigma$, and by Proposition~\ref{isomzerores}, the elliptic residue of this form must vanish.  Furthermore, $\omega$ is invariant under gauge transformations of $\JJ$: if $\pi:\TT M\to TM$ is the projection, then for any 2-form $b\in \Omega^{2}(M,\RR)$, we have  $\pi\circ e^{b}=\pi$, so $Q = \pi\JJ   \pi^{*} = \pi e^{b} \JJ e^{-b}\pi^{*}$. Of course the co-orientation of the anticanonical divisor $D$ is unaffected by the action of the connected group of gauge transformations, so we have finally that the forward map $\JJ\mapsto (Q,\mathfrak{o})$ is well-defined.  

We see the map is surjective as follows: given an elliptic log symplectic form $\omega$ with zero elliptic residue, we use the coorientation $\mathfrak{o}$ to apply Proposition~\ref{isomzerores}, which implies that there exists $\sigma\in \Omega^{\bullet}(\log D)$ such that $\Im^{*}(\sigma)= \omega$.  Since $d\omega=0$ by assumption, $d\sigma$ has vanishing imaginary part, and by the same Proposition, it must be a smooth real 3-form $H$, proving that $\sigma$ defines a complex log symplectic form, which is the required stable generalized complex structure by Theorem~\ref{equivsgcils}.  

Finally, the map is injective on gauge equivalence classes: if $\JJ$ and $\JJ'$ give rise to the same elliptic log symplectic form $\omega$, this means that their corresponding complex log symplectic forms $\sigma, \sigma'$ satisfy $\Im^{*}(\sigma) = \Im^{*}(\sigma')$. By Proposition~\ref{isomzerores}, this means $\sigma'=\sigma + b$ for $b$ a real smooth 2-form, implying that $\JJ' = e^{b}\JJ e^{-b}$, as needed.

Diffeomorphism equivariance is manifest from the description of the map. For compatibility with the maps to $H^{3}(M,\RR)$, note that each gauge equivalence class $[(\JJ,H)]$ has a well-defined class $[H]$, and by the long exact sequence~\eqref{toplogseq} this class coincides with the image of $[\omega] = [Q^{-1}]$ under the connecting homomorphism $H^{2}_{0}(\log |D|)\to H^{3}(M,\RR)$, which by sequence~\eqref{relpart} is the image of the radial residue $\Res_{r}[\omega]\in H^{1}(D,\RR)$ under the Thom--Gysin pushforward $j_{*}:H^{1}(D)\to H^{3}(M)$, which is well-defined by the specified co-orientation $\mathfrak{o}$.
\end{proof}

\begin{remark}
Theorem~\ref{correspellj} may be viewed as the statement that the set of pairs $(\JJ,H)$, where $\JJ$ is a stable generalized complex structure integrable with respect to the closed 3-form $H$, forms a principal bundle over the set of co-oriented elliptic symplectic structures, where the principal structure group is the abelian group of real smooth 2-forms, where $b\in\Omega^{2}(M,\RR)$ acts via 
\begin{equation}
b \cdot (\JJ,H) = (e^{b}\JJ e^{-b}, H+db).
\end{equation}
This principal bundle is twisted equivariant for the action of diffeomorphisms, in the sense that for any diffeomorphism $\varphi$, we have 
\begin{equation}
\varphi^{*} ( b \cdot (\JJ, H))  = (\varphi^{*}b) \cdot \varphi^{*}(\JJ,H).
\end{equation}
\end{remark}

\subsection{The period map for fixed 3-form flux}\label{perfixH}

Having established the equivalence between stable generalized complex structures and complex log symplectic forms in the previous section, we now observe that in analogy with usual symplectic structures, we may define a period map which gives a complete description of the local moduli space of deformations of these structures.  
We shall consider two period maps associated to stable generalized complex structures.  The difference between them is whether or not the 3-form $H$ is fixed in the definition of a family of structures and in the definition of equivalence for such families. In this section we treat the case with $H$ fixed.

\begin{definition}\label{defsdef}
Let $\JJ$ be a generalized complex structure on $(M,H)$.  A 
\emph{deformation} of $\JJ$ is defined to be a smoothly varying family of structures $\JJ_{s}$, $s\in[0,1]$, each integrable with respect to the fixed 3-form $H$ and such that $\JJ_{0}=\JJ$. 
Two such deformations $\JJ_{s},\JJ'_{s}$ of $\JJ$ are said to be \emph{equivalent} when there is a family of sections $e_{s}$ of $\TT M$, allowed to be time-dependent for each $s$, whose associated exact time-1 flow $\Phi_{1}^{e_{s}}$ (Definition 
\ref{exactflow}) takes $\JJ_{s}$ to $\JJ'_{s}$.
\end{definition}

The flow $\Phi_{t}(X,b)$ of a pair $(X, b)$ consisting of a time-dependent vector field $X$ and 2-form $b\in\Omega^{2}(M,\RR)$ is a smooth family of automorphisms of $\TT M$ defined by the initial value problem\footnote{We use conventions for flows in which $\tfrac{d}{dt}{(\varphi_{t})_{*}} = -L_{X_{t}}\circ(\varphi_{t})_{*}$ and $\tfrac{d}{dt}{(\varphi_{t})^{*}} = L_{X}\circ (\varphi_{t})^{*}$.} 
\begin{equation}
\tfrac{d}{dt} \Phi_{t} = -\LL_{(X,b)}\circ \Phi_{t},\qquad \Phi_{0}=\id,
\end{equation}
where $\LL_{(X,b)}$ acts on sections of $\TT M$ taking $Y+\eta$ to $L_{X}(Y+\eta) - i_{Y}b$. The solution may be written $\Phi_{t} = \varphi_{t}e^{B_{t}}$, where $\varphi_{t}$ is the time-$t$ flow of $X$, acting on $\TT M$ via $(\varphi_{t})_{*}\oplus (\varphi_{t}^{*})^{-1}$, and $B_{t}$ is given by 
\begin{equation}\label{thebequ}
B_{t} = \int_{0}^{t}\varphi_{s}^{*}b_{s}\ ds.
\end{equation}
If we equip $\TT M$ with the $H$-twisted Courant bracket, then $\Phi_{t}$ takes it to the $H_{t}$-twisted Courant bracket, where 
$H_{t}$ satisfies the initial value problem 
\begin{equation}\label{derivh}
\tfrac{d}{dt} H_{t} = - L_{\varphi^{t}_{*}X}H_{t} + db,\qquad H_{0}=H,
\end{equation}
which has solution $H_{t}$ defined by the equation 
\begin{equation}\label{hchange}
\varphi_{t}^{*}H_{t} = H + dB_{t}.
\end{equation}
\begin{definition}\label{exactflow}
The \emph{exact flow} $\Phi_{t}^{e}$ associated to the section $e = X+\xi$ of $\TT M$ is the flow $\Phi_{t}(X,b)$ defined above for $b$ given by 
\begin{equation}\label{exactb}
b = (\varphi^{t}_{*}X)\lrcorner\,H + d\xi.
\end{equation}
It is an automorphism of $\TT M$ preserving the $H$-Courant bracket.
\end{definition}
As automorphisms of $\TT M$, flows $\Phi_{t}(X,b)$ operate on generalized complex structures by conjugation. Such flows also act upon complex log symplectic structures, taking $\sigma$ to $\sigma_{t}$, where 
\begin{equation}\label{actonsigma}
\sigma_{t} = (\varphi_{t}^{*})^{-1}(\sigma + B_{t}), 
\end{equation}
the result of operating by $\varphi_{t} e^{B_{t}}$ on the graph of $\sigma$. Indeed, if $d\sigma = H$, then from~\eqref{hchange} we obtain that $d\sigma_{t} = H_{t}$.

Applying the equivalence from Theorem~\ref{equivsgcils}, we obtain the corresponding notion of deformation for complex log symplectic forms, taking care to include the possible variation of the complex divisor: 

\begin{definition}
A deformation of the pair $(D,\sigma)$ is a smoothly varying family $(D_{s}, \sigma_{s})$, $s\in[0,1]$, of complex divisors $D_{s}$ and complex log symplectic forms $\sigma_{s}\in\Omega^{2}(\log D_{s})$ such that $(D_{0},\sigma_{0}) = (D,\sigma)$.  Denote the set of deformations of $(D,\sigma)$ by $\Def(D,\sigma)$.  When a deformation of $(D,\sigma)$ is such that $D_{s} = D$ for all $s$, we say that it is a deformation of $\sigma$ only; denote the set of deformations of $\sigma$ by $\Def(\sigma)$.

Two deformations $(D_{s},\sigma_{s})$, $(D'_{s},\sigma'_{s})$ are equivalent when there is a family of (possibly time-dependent) sections $e_{s}$ of $\TT M$ whose associated time-1 flows 
$\Phi_{1}^{e_{s}}$ take $(D_{s},\sigma_{s})$ to $(D'_{s},\sigma'_{s})$ via~\eqref{actonsigma}.
For deformations of $\sigma$ only, we say that two deformations are equivalent when $e_{s}$ has vector component in $T(-\log |D_{s}|)$ for all $s\in[0,1]$.
\end{definition}

The main observation which makes it possible to define a period map for stable generalized complex structures is that any deformation of complex divisors may be rectified: 
By Lemma~\ref{mosdiv}, each deformation $(D_{s},\sigma_{s})$ is equivalent to a deformation with fixed divisor, and this defines a canonical bijection between equivalence classes of deformations of $(D,\sigma)$ and equivalence classes of deformations of $\sigma$ only:
\begin{equation}
\xymatrix{
 \Def(D,\sigma)/\!\sim\ar[rr]^-{\cong}& & \Def(\sigma)/\!\sim
}
\end{equation}
 
\begin{lemma}\label{rectlemma1}
Let $M$ be compact. The rectification of divisors given by Lemma~\ref{mosdiv} defines a canonical bijection between equivalence classes of deformations of $(D,\sigma)$ and equivalence classes of deformations of $\sigma$ with $D$ fixed.
\end{lemma}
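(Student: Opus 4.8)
The plan is to realize the asserted bijection as the map induced by inclusion, with inverse given by rectification. Let $\iota\colon \Def(\sigma)/\!\sim\ \to\ \Def(D,\sigma)/\!\sim$ be the map that regards a deformation of $\sigma$ only as a full deformation with constant divisor $D_s\equiv D$. This descends to equivalence classes because a $\sigma$-only equivalence, by definition, has vector component in $T(-\log |D|)$ and is therefore in particular a full equivalence. The whole content of the lemma is then that $\iota$ is a bijection, and I would prove this by constructing an inverse $R$ from rectification and checking the two composites are the identity.

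To build $R$, start from a deformation $(D_s,\sigma_s)$ and apply Lemma~\ref{mosdiv} to obtain a smooth family of divisor diffeomorphisms $\phi_s$ with $\phi_0=\mathrm{id}$ carrying $D_s$ onto $D$; its $s$-generating vector field is, by construction and Proposition~\ref{subell}, a section of the elliptic tangent bundle. I would promote each $\phi_s$ to an exact flow $\Phi_1^{e_s}$ in the sense of Definition~\ref{exactflow}, so that the accompanying B-field automatically keeps the closed $3$-form $H$ fixed; acting on $\sigma_s$ through~\eqref{actonsigma} produces a form $\tilde\sigma_s\in\Omega^2(\log D)$ with $d\tilde\sigma_s=a^*H$, i.e. an honest deformation of $\sigma$ only, and I set $R[(D_s,\sigma_s)]=[\tilde\sigma_s]$. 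By its very construction this $\Phi_1^{e_s}$ is a full equivalence between $(D_s,\sigma_s)$ and $(D,\tilde\sigma_s)$, which yields $\iota\circ R=\mathrm{id}$, and in particular surjectivity of $\iota$. For $R\circ\iota=\mathrm{id}$ I would observe that a constant family of divisors is rectified by the identity, so $R\iota[\sigma_s]=[\sigma_s]$.

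The technical core, which I would isolate as a preliminary observation, is that any smooth family $\chi_s$ of automorphisms of the complex divisor $(M,D)$ with $\chi_0=\mathrm{id}$ is, after reparametrizing $\chi_s$ as the $t$-path $\chi_{ts}$, the time-$1$ flow of a time-dependent section of $T(-\log |D|)$: its logarithmic derivative $\tfrac{d}{ds}\chi_s\circ\chi_s^{-1}$ is tangent to the elliptic structure exactly because each $\chi_s$ preserves the divisor (Proposition~\ref{subell}). Hence such a family induces a $\sigma$-only equivalence. I would invoke this twice to prove $R$ descends to equivalence classes: first, two rectifications of $(D_s,\sigma_s)$ coming from different splittings in Lemma~\ref{mosdiv} differ by a family of divisor automorphisms starting at the identity, so the resulting $\tilde\sigma_s$ agree up to $\sigma$-only equivalence; second, if $\Phi_1^{e_s}$ is a full equivalence from $(D_s,\sigma_s)$ to $(D'_s,\sigma'_s)$ with base diffeomorphisms $\psi_s$, then the composite $\chi_s=\phi'_s\circ\psi_s\circ\phi_s^{-1}$ is a family of automorphisms of $(M,D)$ starting at $\mathrm{id}$ that intertwines $\tilde\sigma_s$ with $\tilde\sigma'_s$, giving a $\sigma$-only equivalence by the observation.

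The step I expect to demand the most care is the B-field bookkeeping needed to stay inside the fixed-$H$ framework: each rectifying map and each composite $\chi_s$ must be realized as an exact flow $\Phi_1^{e_s}$, and one must verify that the induced B-fields assemble so that the transported forms remain complex log symplectic for the same $H$ and that~\eqref{actonsigma} genuinely sends $\tilde\sigma_s$ to $\tilde\sigma'_s$. Because the definition of $\sigma$-only equivalence constrains only the vector component of $e_s$ (to lie in $T(-\log |D|)$) while leaving the B-field free, I expect this to go through, but it is the point requiring the most attention.
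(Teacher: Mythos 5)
You take the route the paper intends but never actually writes down (the paper asserts this lemma on the strength of Lemma~\ref{mosdiv} alone), and your skeleton --- the inclusion $\iota$, the rectification-inverse $R$, and the observation that a family of divisor automorphisms has logarithmic derivative in $T(-\log|D|)$ via Proposition~\ref{subell} --- is the right one. But two steps have genuine gaps. The first is your claim that $\chi_s=\phi'_s\circ\psi_s\circ\phi_s^{-1}$ \emph{starts at the identity}. The paper's definitions of equivalence do not force the flow to be trivial at $s=0$: they only require that $\Phi_1^{e_0}$ take $(D_0,\sigma_0)=(D,\sigma)$ to $(D'_0,\sigma'_0)=(D,\sigma)$, so $\psi_0=\chi_0$ may be a nontrivial automorphism of $(D,\sigma)$. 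Then your ``technical core'' does not apply to $\chi_s$ (there is no path from $\mathrm{id}$ through divisor automorphisms), and the missing piece is substantive: $\psi_0$ is isotopic to the identity only through exact flows that move $D$, and nothing in your argument rules out that such an automorphism acts nontrivially on $H^2(M\setminus D,\CC)$; if it does, the two rectified families have different periods and, by Theorem~\ref{periodmapcx}, are \emph{not} $\sigma$-only equivalent. You must either normalize equivalences to be trivial at $s=0$ (surely the intended reading, but you should say you are adopting it) or dispose of $\psi_0$ by a separate argument. Relatedly, your observation needs each $\chi_s$ to preserve the divisor \emph{ideal}, not merely the zero set $D$; this does hold, because the $\psi_s$ intertwine log symplectic forms and the forms determine the ideals, but it deserves a sentence.

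The second gap is the B-field step, which you flag but then discount for a reason that is not correct: in an exact flow (Definition~\ref{exactflow}) the $2$-form component is \emph{not} free --- it is pinned to $(\varphi^t_*X)\lrcorner H + d\xi$, so the only freedom is an exact $2$-form. When you reparametrize $u\mapsto\Xi_{us}:=\Phi^{\phi'}_{us}\circ\Phi_1^{e_{us}}\circ(\Phi^{\phi}_{us})^{-1}$ as a $t$-path, its generator has elliptic vector part (good), but its $2$-form part agrees with the required form only up to a closed $2$-form $\kappa_u$, and trading closed for exact changes the endpoint of the flow: the discrepancy is the flux-type class $\bigl[\int_0^1\chi_u^*\kappa_u\,du\bigr]\in H^2(M,\RR)$, which has no pointwise reason to vanish when $H\neq 0$. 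It can be handled --- for instance, once equivalences are normalized at $s=0$, this defect is a continuous function of $s$ with values in a countable subgroup of $H^2(M,\RR)$ (it is the flux of the loop comparing your divisor-preserving path with the concatenation of the three exact flows), vanishes at $s=0$, hence vanishes identically, after which the residual exact part is absorbed into $\xi$ --- but some argument of this kind is required. As written, your concluding step ``giving a $\sigma$-only equivalence by the observation'' is unproved precisely at this point.
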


As a result, the deformation theory of the stable generalized complex structure $\JJ$ is equivalent to the theory of deformations of the corresponding complex log symplectic structure $\sigma$, keeping the anticanonical divisor fixed.  We now characterize these completely by defining a period map.

\begin{definition}
Let $\sigma_{s}$, $s\in[0,1]$, be a deformation of the complex log symplectic structure $\sigma$ with $(M,H,D)$ fixed. Its period is defined to be the path given by
\begin{equation}
s\mapsto \sP(\sigma_{s}) = [\sigma_{s}-\sigma] \in H^{2}(M\backslash D,\CC),
\end{equation}
where we use the identification of $H^{2}(M,\log D)$ with $H^{2}(M\backslash D,\CC)$ in Theorem~\ref{logdcoh}.
\end{definition} 

\begin{theorem}\label{periodmapcx}
Let $\sigma$ be a complex log symplectic structure on $(M,H,D)$ with $M$ compact.  The period map defines a canonical bijection between germs of deformations of $\sigma$ up to equivalence  and germs of smooth paths beginning at the origin in $H^{2}(M\backslash D, \CC)$.
\end{theorem}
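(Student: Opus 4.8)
The plan is to verify three things: that the period map $\sP$ descends to equivalence classes of deformations, that it is surjective onto germs of paths through the origin, and that it is injective. By Lemma~\ref{rectlemma1} I may assume throughout that the divisor $D$ is held fixed, so a deformation is a smooth family $\sigma_s\in\Omega^2(\log D)$ with $d\sigma_s=a^*H$ and with $\iota^*\sigma_s$ of nondegenerate imaginary part, and equivalences are generated by exact flows $\Phi^{e_s}_t$ (Definition~\ref{exactflow}) whose vector part lies in $\tm{|D|}$. The computational engine is the infinitesimal action of such a flow on $\sigma$: writing $e=X+\xi$ with $b=i_XH+d\xi$ as in~\eqref{exactb}, differentiating~\eqref{actonsigma} at $t=0$ and using $L_X\sigma=i_X(a^*H)+d(i_X\sigma)$ together with the identity $i_X(a^*H)=i_XH$ gives
\begin{equation}
\tfrac{d}{dt}\big|_{t=0}\sigma_t=-L_X\sigma+i_XH+d\xi=d(\xi-i_X\sigma),
\end{equation}
an exact logarithmic form. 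Thus an exact flow moves $\sigma$ only within its logarithmic cohomology class, so $[\sigma_s-\sigma]\in H^2(M,\log D)\isom H^2(M\backslash D,\CC)$ is an invariant of the equivalence class; this shows $\sP$ is well defined and begins at the origin, since $\sigma_0=\sigma$.

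For surjectivity, given a germ of smooth path $c_s$ in $H^2(M\backslash D,\CC)$ with $c_0=0$, I would fix once and for all a linear section of the projection from closed logarithmic $2$-forms onto $H^2(M,\log D)$ (for instance harmonic representatives for a chosen metric on the elliptic complex) and let $\tau_s$ be the resulting smooth family of closed representatives of $c_s$, so that $\tau_0=0$. Setting $\sigma_s=\sigma+\tau_s$ gives $d\sigma_s=d\sigma=a^*H$, while for small $s$ the imaginary part of $\iota^*\sigma_s$ remains nondegenerate by openness and compactness of $M$. Hence $\sigma_s$ is a deformation with $\sP(\sigma_s)=[\tau_s]=c_s$, realizing the prescribed germ.

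Injectivity is the Moser step. Given two deformations $\sigma_s,\sigma_s'$ of equal period, for each small $s$ both forms are $C^\infty$-close to $\sigma$, so the interpolation $\sigma_s^u=(1-u)\sigma_s+u\sigma_s'$, $u\in[0,1]$, consists of complex log symplectic forms, and $\tfrac{d}{du}\sigma_s^u=\sigma_s'-\sigma_s$ is exact with a primitive $\mu_s\in\Omega^1(\log D)$ depending smoothly on $s$ (again via a fixed linear section of $d$). In view of the displayed formula, it suffices to produce, for each $u$, a section $e^u=X^u+\xi^u$ of $\TT M$ with $X^u\in\tm{|D|}$ solving the homological equation $i_{X^u}\sigma_s^u-\xi^u=\mu_s$; integrating the resulting time-dependent exact flow from $u=0$ to $u=1$ then carries $\sigma_s$ to $\sigma_s'$, with smooth dependence on $s$, exhibiting the required equivalence.

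The crux, and the step I expect to be the main obstacle, is solving this homological equation within the prescribed function spaces. I would split it after applying $\iota^*$: the imaginary part reads $i_{X^u}\omega_s^u=\Im\iota^*\mu_s$, where $\omega_s^u=\Im\iota^*\sigma_s^u$ is a nondegenerate \emph{elliptic} symplectic form, and is solved at once by $X^u=(\omega_s^u)^{-1}\Im\iota^*\mu_s\in\tm{|D|}$; the real part then prescribes $\iota^*\xi^u=i_{X^u}\Re\iota^*\sigma_s^u-\Re\iota^*\mu_s$, and the genuine content is that this a priori \emph{elliptic} $1$-form actually lies in the image of $\iota^*$, i.e.\ is the pullback of a smooth $1$-form on $M$. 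This is precisely a matching of residues: the singular contributions of $i_{X^u}\Re\iota^*\sigma_s^u$ and of $\Re\iota^*\mu_s$ must cancel, which I would establish using the explicit residue maps of Section~\ref{elllogcohmg} together with the freedom to modify $\mu_s$ by a closed logarithmic $1$-form. Once this smoothness is secured uniformly in $(u,s)$, Moser interpolation yields injectivity and completes the bijection.
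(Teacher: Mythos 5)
Your treatment of well-definedness and surjectivity is correct and essentially identical to the paper's. The real divergence is in injectivity: you propose a one-step Moser interpolation $\sigma_s^u=(1-u)\sigma_s+u\sigma_s'$, where the paper runs a two-step argument (first a Moser flow on the imaginary parts alone, making them agree, then a second flow generated by a time-independent Poisson vector field). Your route is viable, but as written it stops exactly at the decisive point: the smoothness of $\xi^u$, which you yourself flag as ``the main obstacle,'' is given only a plan (``matching of residues\dots freedom to modify $\mu_s$ by a closed logarithmic 1-form''), not a proof. That is the gap. It has, however, an immediate resolution requiring no residue matching and no modification of $\mu_s$ whatsoever: since $X^u$ is a real section of $T(-\log|D|)$ and complexified elliptic vector fields include into $T(-\log D)$, the combination $\rho^u:=i_{X^u}\sigma_s^u-\mu_s$ is a logarithmic $1$-form, and its image under $\Im^*$ is $i_{X^u}\omega_s^u-\Im\iota^*\mu_s$, which vanishes precisely by your choice $X^u=(\omega_s^u)^{-1}\Im\iota^*\mu_s$. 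Exactness of the sequence~\eqref{complogell} at the middle term (Proposition~\ref{isomzerores}) then says that $\rho^u$ is a smooth \emph{real} $1$-form, and this is your $\xi^u$. This is exactly the mechanism the paper exploits in its Step 2, where $\zeta_s+i_{Y_s}\sigma_{s,t}$ is shown to have vanishing imaginary part and hence to be smooth and real; you had all the pieces but did not assemble them.

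There is a second, smaller issue your one-step scheme must face, which the paper's two-step scheme is engineered to avoid. Your field $X^u$ is genuinely time-dependent (through $\omega_s^u$), while the trivialization equation~\eqref{infinitesimalflow} for an exact flow involves the pushforward $\psi^u_*X^u$ rather than $X^u$ itself; so your homological equation determines the ``spatial'' generator, and one must still pass to the time-dependent section of $\TT M$ whose flow-pushforward it is — a standard but convention-sensitive reparametrization that you elide by writing $i_{X^u}$ directly. The paper sidesteps this entirely: after its Step 1 the interpolation $\sigma_{s,t}$ has constant imaginary part, so its Step 2 field $Y_s=-\omega_s^{-1}(\nu_s)$ is time-independent and satisfies $\psi^{s,t}_*Y_s=Y_s$. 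With these two points closed — both with tools already present in the paper — your argument becomes a complete and arguably more direct proof of injectivity; as submitted, it is incomplete at its self-identified crux.
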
 
\begin{proof}
First we show that the period map descends to equivalence classes. Let $\sigma_{s}, \sigma'_{s}$ be equivalent deformations of $\sigma$.  To show that $\sP(\sigma_{s}) = \sP(\sigma'_{s})$, it suffices to show that $[\sigma'_{s} - \sigma_{s}]=0$ for each $s$.   Fix $s$ and let $\sigma_{0}=\sigma_{s}$ and $\sigma_{1}=\sigma'_{s}$. Equivalence of deformations implies that there is a time-dependent elliptic vector field $X$ and 1-form $\xi$ which determine an interpolating family $\sigma_{t}$ given by 
$\sigma_{t} = (\varphi_{t}^{*})^{-1}(\sigma_{0} + B_{t})$,
where $\varphi_{t}$ is the $t$-flow of $X$ and $B_{t}$ is given by~\eqref{exactb}.  To show $[\sigma_{1}-\sigma_{0}]=0$, we prove the infinitesimal version, that $[\del_{t}\sigma_{t}]=0$:
\begin{equation}
\begin{aligned}
\del_{t}\sigma_{t} &= -L_{\varphi^{t}_{*}X}\sigma_{t} + b_{t}\\
 &= - d (\varphi^{t}_{*}X\lrcorner\, \sigma_{t}
 + \xi),
\end{aligned}
\end{equation}
where we have used that $d\sigma_{t}=H$ for all $t$ to obtain the result. 

The main difficulty is to show the period map is injective on germs: let $\sigma_{s}, \sigma'_{s}$ be two deformations of $\sigma$ with the same period, i.e. $[\sigma_{s}-\sigma] = [\sigma'_{s}-\sigma]$ for all $s$.  We aim to show that $\sigma_{s},\sigma'_{s}$ have equivalent germs about $s=0$, using a 2-step Moser method in families. 
\\

\noindent{\bf Step 1.} We begin by finding an equivalence between the family $\sigma'_{s}$ and a family $\sigma''_{s}$ whose imaginary part coincides with that of $\sigma_{s}$.  
Decompose $\sigma_{s}= b + i\omega_{s}$ and $\sigma'_{s} = b'_{s} + i\omega'_{s}$ into real and imaginary parts.  Since $[\sigma'_{s}-\sigma_{s}]=0$, the imaginary parts $\omega'_{s}, \omega_{s}$ are cohomologous in the elliptic de Rham cohomology, i.e. there exists a smooth family of primitives $\alpha_{s}\in\Omega^{1}(\log |D|)$ such that 
\begin{equation}
\omega'_{s} - \omega_{s} = d\alpha_{s}
\end{equation}
and $\alpha_{0}=0$.  We then interpolate between $\omega_{s}$ and $\omega'_{s}$, defining
\begin{equation}
\omega_{s,t} = t\omega'_{s} + (1-t)\omega_{s},
\end{equation}
which is an elliptic symplectic form for all $t\in[0,1]$ if $s$ is sufficiently small (since $\omega'_{0}=\omega_{0}$). 
Now let $X_{s} = \omega^{-1}_{s,t}\alpha_{s}$ be a family of vector fields, each time-dependent, and let $\Phi_{s,t}$ be the exact time-$t$ flow generated by $X_{s}$.  Applying this flow at time $1$ to $\sigma'_{s}$, 
we obtain a new deformation $\sigma''_{s}$ of $\sigma_{s}$, defined by 
\begin{equation}
\sigma''_{s} = (\varphi_{s,1}^{*})^{-1}(\sigma'_{s} + B'_{s,1}),\qquad 
B'_{s,t} = \int_{0}^{t}  i_{X_{s}}(\varphi_{s,u}^{*}H) du.
\end{equation}
Then $\sigma''_{s} - \sigma_{s}$ is an exact log form, but with zero imaginary part since $\varphi_{s,1}^{*}\omega_{s} = \omega'_{s}$.\\

\noindent{\bf Step 2.} We now produce an exact flow taking $\sigma''_{s}$ to $\sigma_{s}$.  First interpolate between the two families:
\begin{equation}
\sigma_{s,t} = t\sigma''_{s} + (1-t)\sigma_{s}. 
\end{equation}
By Step 1, the time derivative $\del_{t}\sigma_{s,t} = \sigma''_{s}-\sigma_{s}$ is exact as a log form, so we have $\del_{t}\sigma_{s,t} = d\zeta_{s}$, for $\zeta_{s}\in\Omega^{1}(\log D)$ with $\zeta_{0}=0$ and such that $d\zeta_{s}$ is a smooth real form.  If $\nu_{s} = \Im(\zeta_{s})$ then $\nu_{s}$ is a closed elliptic 1-form. 
To trivialize this interpolating family using an exact flow, we need a family of sections $e_{s} = Y_{s} +\xi_{s}$ of $\TT M$ with the property that 
\begin{equation}\label{infinitesimalflow}
\del_{t}\sigma_{s,t} = -d (({\psi^{s,t}_{*}Y_{s}})\lrcorner\, \sigma_{s,t} -  \xi_{s}),
\end{equation}
where $\psi_{s,t}$ is the time-$t$ flow of $Y_{s}$.  We may solve this as follows: let $Y_{s}=-\omega_{s}^{-1}(\nu_{s})$, a family of time-independent Poisson vector fields associated to the closed elliptic form $\nu_{s}$; then $\psi^{s,t}_{*}Y_{s} = Y_{s}$ and, crucially, $\zeta_{s} + i_{Y_{s}}\sigma_{s,t}$ has zero imaginary part.  By Proposition~\ref{isomzerores}, this implies that it is a family of smooth real closed 1-forms
\begin{equation}
\xi_{s} = \zeta_{s} + i_{Y_{s}}\sigma_{s,t},
\end{equation}
solving~\eqref{infinitesimalflow} and so providing the required exact flow identifying $\sigma''_{s}$ with $\sigma_{s}$, completing the proof of injectivity.

Finally, surjectivity of the period map follows from the fact that for any path $\gamma:[0,1]\to H^{2}(M\backslash D,\CC)$ with $\gamma(0)=0$, we may lift this to a smooth family of cocycles $\wt\gamma:[0,1]\to \Omega^{2}(\log D)$ with $\wt\gamma(0)=0$. Then, since the nondegeneracy condition is open, $\sigma_{s} = \sigma + \wt\gamma(s)$ is, for sufficiently small $s$, a deformation of $\sigma$ whose period realizes the given path germ. 
\end{proof}

\begin{remark}
When the flux $H$ vanishes, the complex log symplectic form $\sigma$ itself defines a class in $H^{2}(\log D)$, and so the image of the period map may be taken to be naturally centered at $[\sigma]$. 
\end{remark}

In summary, we have shown that the local deformation problem for a stable generalized complex structure $\JJ$ on the fixed manifold with 3-form $(M,H)$ is equivalent to the local deformation problem for the complex log symplectic structure $(D,\sigma)$ determined by $\JJ$, and that this, in turn, is equivalent to the local deformation problem for $\sigma$, keeping $D$ fixed.   Finally, this last deformation problem is governed by a period map to a neighbourhood of zero in $H^{2}(M\backslash D,\CC)$.  We therefore obtain the following explicit description, in the stable case, of the Kuranishi family of local deformations of generalized complex structures described in~\cite{MR2811595}:

\begin{corollary}\label{independentgoto}
The Kuranishi moduli space of deformations of the stable generalized complex structure $\JJ$ is unobstructed and is identified by the period map with an open set surrounding the origin in $H^{2}(M\backslash D,\CC)$, where $D$ is the anticanonical divisor of $\JJ$.
\end{corollary}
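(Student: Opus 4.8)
The plan is to obtain the statement as a direct consequence of the three main results already established, once the notion of equivalence underlying the Kuranishi construction is matched with the exact-flow equivalence of Definition~\ref{defsdef}. Recall that the Kuranishi family of~\cite{MR2811595} parametrizes germs of deformations of $\JJ$ with $H$ fixed, modulo the gauge action generated by sections of $\TT M$; this gauge action is exactly the exact-flow action $\Phi_{1}^{e_{s}}$ appearing in Definition~\ref{defsdef}. So the first thing I would record is that the Kuranishi moduli space is, as a germ, the set of germs of deformations of $\JJ$ modulo this equivalence.

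Next I would transport this moduli problem to the log symplectic side. By Theorem~\ref{equivsgcils}, a deformation $\JJ_{s}$ corresponds to a deformation of the pair $(D,\sigma)$ in which the anticanonical divisor $D$ is itself allowed to vary, since the anticanonical section depends on $\JJ_{s}$; moreover the exact-flow equivalence of structures translates, via the action~\eqref{actonsigma} of flows on log symplectic forms, into the equivalence of deformations of $(D,\sigma)$. I would then invoke the rectification of divisors in Lemma~\ref{rectlemma1} to replace this by the equivalent problem of deforming $\sigma$ with the divisor $D$ held fixed. At this stage the moduli problem for $\JJ$ has been identified, as a germ and respecting equivalence, with $\Def(\sigma)/\!\sim$.

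Finally I would apply Theorem~\ref{periodmapcx}, which identifies germs of deformations of $\sigma$ up to equivalence with germs of smooth paths through the origin in $H^{2}(M\backslash D,\CC)$ by means of the period map $\sP$. Because $H^{2}(M\backslash D,\CC)$ is a vector space and the period map is surjective onto path germs, every infinitesimal deformation integrates and no higher-order obstruction can occur; thus the moduli space is smooth, i.e.\ unobstructed, and the period map identifies it with an open neighbourhood of the origin in $H^{2}(M\backslash D,\CC)$. This also explains the appearance of $H^{2}(M\backslash D,\CC)$ as the expected tangent space: it is the degree-two Lie algebroid cohomology $H^{2}(L)$ of the $+i$-eigenbundle, which the complex log symplectic form $\sigma$ identifies with the logarithmic de Rham cohomology $H^{2}(M,\log D)$ and hence, by Theorem~\ref{logdcoh}, with $H^{2}(M\backslash D,\CC)$.

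The part that requires the most care---and where I expect the main obstacle---is the reconciliation of the two descriptions of equivalence: one must check that the gauge group acting in the Kuranishi theory of~\cite{MR2811595} is generated precisely by the exact flows of Definition~\ref{defsdef}, and that the bijection of Theorem~\ref{equivsgcils} carries one equivalence to the other under~\eqref{actonsigma}. Granting this, unobstructedness is not an independent computation but a formal consequence of the surjectivity half of Theorem~\ref{periodmapcx}: since arbitrary smooth path germs in the linear space $H^{2}(M\backslash D,\CC)$ are realized by genuine deformations, the Kuranishi obstruction map vanishes identically and the moduli space coincides with its tangent space.
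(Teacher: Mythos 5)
Your proposal is correct and follows essentially the same route as the paper: the equivalence of Theorem~\ref{equivsgcils}, the rectification of divisors in Lemma~\ref{rectlemma1}, and the bijection of Theorem~\ref{periodmapcx} are composed exactly as in the paper's summary preceding the corollary, with unobstructedness coming from the surjectivity of the period map onto path germs in the vector space $H^{2}(M\backslash D,\CC)$. The only difference is that you make explicit the identification of the Kuranishi gauge equivalence of~\cite{MR2811595} with the exact-flow equivalence of Definition~\ref{defsdef}, a point the paper leaves implicit.
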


\begin{example}
Consider the case of Example~\ref{holpoisson} in which $\pi$ is a generic holomorphic Poisson structure on the complex projective plane, vanishing on a smooth elliptic curve $D\subset \CC P^{2}$.  Then $H^{2}(M\backslash D)$ is 2-dimensional, implying that all germs of deformations deformations of the generalized complex structure defined by $\pi$ are, up to equivalence, obtained by deforming the holomorphic section $\pi$. 
\end{example}

\begin{example}\label{hopf}
The generalized K\"ahler structure on the Hopf surface 
described in~\cite[Example 1.21]{MR3232003} involves a pair $\JJ_{-},\JJ_{+}$ of stable generalized complex structures on $M = S^{3}\times S^{1}$, each integrable with respect to a cohomologically nontrivial 3-form $H$, and each of which has anticanonical divisor $D_{\pm}$ given by a single $T^{2}$ fiber of the Hopf projection $S^{3}\times S^{1}\to S^{2}$.  Therefore, the period map for $\JJ_{\pm}$ takes values in $H^{2}(M\backslash D_{\pm},\CC)$, which is 1-dimensional since $M\backslash D_{\pm}$ is homotopic to $T^{2}$.
\end{example}

\subsection{The period map for variable flux}\label{pernofixH}

A similar strategy to that used in the previous section may be used to describe the local moduli space of simultaneous deformations of the stable generalized complex structure $\JJ$ and the closed 3-form $H$ for which $\JJ$ is integrable.  

\begin{definition}
A deformation of the pair $(\JJ,H)$ is defined to be a smooth family $(\JJ_{s},H_{s})$, $s\in [0,1]$, of generalized complex structures where each $\JJ_{s}$ is integrable with respect to $H_{s}$ and such that $(\JJ_{0},H_{0})=(\JJ,H)$. Two such deformations $(\JJ_{s},H_{s})$ and $(\JJ'_{s},H'_{s})$ of the same pair are said to be equivalent when there is a family of vector fields $X_{s}$ and real 2-forms $b_{s}$, each allowed to be time-dependent, such that the time-1 flow $\Phi_{1}(X_{s},b_{s})$ takes $\JJ_{s}$ to $\JJ'_{s}$. 
\end{definition}

Unlike the equivalence relation in the previous section, where only exact flows were used, here we use any path of equivalences of generalized complex structures, without regard to the effect on the 3-form.  As a result, Theorem~\ref{correspellj} implies that deformations of pairs $(\JJ,H)$ are, up to equivalence, in bijection with equivalence classes of deformations of co-oriented elliptic symplectic structures $(Q,\mathfrak{o})$ with vanishing elliptic residue, where a deformation is defined as usual and two deformations are equivalent if there is a family of diffeomorphisms taking one to the other. 

Any family $Q_{s}$ of elliptic symplectic structures may be viewed as a family of elliptic symplectic forms $\omega_{s} = Q_{s}^{-1}$ for the family of elliptic divisors defined by $(\wedge^{2n}TM, \wedge^{n}Q_{s})$, and just as in the previous section, we may always rectify the family of divisors by a path of diffeomorphisms. As a result, we may pass directly to deformations of elliptic symplectic forms with fixed underlying elliptic divisor.
\begin{definition}
Fix the manifold $M$, the elliptic divisor $|D|$, and the elliptic symplectic form $\omega$.  A (zero-residue) deformation of $\omega$ is a smoothly varying family $\omega_{s}\in \Omega^{2}(\log |D|)$, $s\in[0,1]$, of elliptic symplectic forms such that $\omega_{0}=\omega$  (such that each $\omega_{s}$ has zero elliptic residue).  Two such deformations $\omega_{s},\omega'_{s}$ are equivalent when there is a family of time-dependent sections $X_{s}$ of the elliptic tangent bundle $T(-\log |D|)$ whose associated time-1 flow takes $\omega_{s}$ to $\omega'_{s}$.  
\end{definition}
We then have the analog of Lemma~\ref{rectlemma1}, allowing us to pass between deformations of pairs $(\JJ,H)$ and of elliptic symplectic forms.
\begin{lemma}
Let $\JJ$ be a stable generalized complex structure on $(M,H)$ with $M$ compact. Rectification of divisors defines a canonical bijection between equivalence classes of deformations of $(\JJ, H)$ and equivalence classes of zero-residue deformations of the elliptic symplectic structure $\omega = Q^{-1}$ with fixed divisor $|D|=(\wedge^{2n} TM, \wedge^{n} Q)$, where $Q$ is the real Poisson structure underlying $\JJ$. 
\end{lemma}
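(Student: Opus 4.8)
The plan is to follow the two-step reduction used for Lemma~\ref{rectlemma1}, substituting Theorem~\ref{correspellj} for Theorem~\ref{equivsgcils}. First I would translate the deformation problem for the pair $(\JJ,H)$ into one for co-oriented elliptic symplectic structures, and then rectify the resulting family of elliptic divisors using Lemma~\ref{mosdiv}.

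For the first step, applying Theorem~\ref{correspellj} pointwise in $s$ sends a deformation $(\JJ_s,H_s)$ to a smooth family $(Q_s,\mathfrak{o}_s)$ of co-oriented elliptic symplectic structures, each automatically of vanishing elliptic residue. The point requiring verification is that the two equivalence relations correspond. An equivalence of deformations $(\JJ_s,H_s)\sim(\JJ'_s,H'_s)$ is witnessed by a time-$1$ flow $\Phi_1(X_s,b_s)$; since the forgetful map is gauge-invariant (in the proof of Theorem~\ref{correspellj} one has $\pi\circ e^{b}=\pi$, so $Q$ is unchanged by the $b_s$-component) and diffeomorphism-equivariant, the induced equivalence on $(Q_s,\mathfrak{o}_s)$ is exactly an equivalence by the family of diffeomorphisms underlying the $X_s$. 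Conversely, any family of diffeomorphisms realizing an equivalence of $(Q_s,\mathfrak{o}_s)$ lifts, through the bijection of Theorem~\ref{correspellj} and the freedom to choose $b_s$ arbitrarily, to a flow $\Phi_1(X_s,b_s)$ of pairs. Thus the forgetful map descends to a bijection between equivalence classes of deformations of $(\JJ,H)$ and equivalence classes of deformations of co-oriented elliptic symplectic structures with vanishing residue.

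For the second step, I would view $\omega_s=Q_s^{-1}$ as a family of elliptic symplectic forms for the family of elliptic divisors $|D_s|=(\wedge^{2n}TM,\wedge^{n}Q_s)$, each co-oriented by $\mathfrak{o}_s$. By the discussion in Section~\ref{elltn}, the co-orientation expresses each $|D_s|$ as $(U_s\otimes\ol U_s,u_s\otimes\ol u_s)$ for a complex divisor $(U_s,u_s)$, assembling the data into a smooth family of complex divisors on the compact manifold $M$. Lemma~\ref{mosdiv} then produces a path of diffeomorphisms $\psi_s$, with $\psi_0=\id{}$, rectifying this family to the constant divisor $|D|=|D_0|$; applying $\psi_s$ carries $\omega_s$ to an equivalent family $\wt\omega_s$ of elliptic symplectic forms on $T(-\log|D|)^{*}$, still of vanishing elliptic residue since this is preserved by diffeomorphisms. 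Because a flow of a section of $T(-\log|D|)$ is precisely a diffeomorphism preserving $|D|$, the equivalence by such flows appearing in the statement of the lemma matches the diffeomorphism equivalence carried over from the first step, and the rectification identifies the two sets of equivalence classes canonically, exactly as in Lemma~\ref{rectlemma1}.

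The main obstacle is not any single computation but the bookkeeping needed to confirm that both reductions descend to genuine \emph{bijections} on equivalence classes rather than mere maps on representatives: one must check that gauge freedom precisely accounts for the gap between flows $\Phi_1(X_s,b_s)$ and bare diffeomorphisms, so that no equivalence class is split or merged in the first step, and that the rectifying diffeomorphisms of the second step can always be absorbed into the allowed elliptic-flow equivalence without disturbing the zero-residue condition. Both are parallel to the arguments for Lemma~\ref{rectlemma1}; the only genuinely new ingredient is the observation that the co-orientation $\mathfrak{o}_s$ converts the family of elliptic divisors into a family of complex divisors to which Lemma~\ref{mosdiv} directly applies.
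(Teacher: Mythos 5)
Your proposal is correct and takes essentially the same route as the paper, which states this lemma without a separate proof precisely because the surrounding discussion—applying Theorem~\ref{correspellj} pointwise to pass to co-oriented elliptic symplectic structures with vanishing residue, then rectifying the resulting family of divisors as in Lemma~\ref{rectlemma1}—is the intended argument. Your extra bookkeeping (the $b_s$-components being invisible to $Q$ since $\pi\circ e^{b}=\pi$, and the co-orientation converting the family of elliptic divisors into a family of complex divisors so that Lemma~\ref{mosdiv} applies directly) correctly fills in exactly the details the paper leaves implicit.
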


We now define a period map for deformations of pairs $(\JJ,H)$: since, after rectification, these give a family of elliptic symplectic forms with zero elliptic residue, our period map must take values in $H^{2}_{0}(\log |D|)$.

\begin{definition}
Let $\omega_{s}$, $s\in[0,1]$, be a zero-residue deformation of the elliptic symplectic structure $\omega$, with fixed elliptic divisor $|D|$. Its period is defined to be the path given by
\begin{equation}
s\mapsto \sP(\omega_{s}) = [\omega_{s}] \in H^{2}(M\backslash D,\RR)\oplus  H^{1}(D,\RR),
\end{equation}
where we use the identification of $H^{2}_{0}(M,\log |D|)$ with the above group from Theorem~\ref{ellzerores}.
\end{definition} 

\begin{theorem}\label{theo:stable<=>elliptic symplectic}
Let $\omega$ be an elliptic symplectic form with zero elliptic residue on the compact manifold with elliptic divisor $(M,|D|)$.  The period map defines a canonical bijection between germs of deformations of $\omega$ up to equivalence and germs of smooth paths beginning at $[\omega]\in H^{2}(M\backslash D, \RR)\oplus H^{1}(D,\RR)$.
\end{theorem}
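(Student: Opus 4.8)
The plan is to run a Moser-type argument directly in the elliptic Lie algebroid de Rham complex, in close parallel to the proof of Theorem~\ref{periodmapcx}, but in fact considerably simpler. The simplification is structural: because equivalences of pairs $(\JJ,H)$ are permitted to alter $H$, the induced equivalences of elliptic symplectic forms are generated by \emph{arbitrary} sections of $T(-\log|D|)$, so there is no analogue of the constraint that forced the two-step (real/imaginary part) procedure in the fixed-flux case. A single Moser step will suffice. Throughout I use the identification $H^2_0(\log|D|)\cong H^2(M\backslash D,\RR)\oplus H^1(D,\RR)$ of Theorem~\ref{ellzerores}, so that the period $\sP(\omega_s)=[\omega_s]$ is a well-defined class in this group.

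First I would check that the period descends to equivalence classes. If $\omega_s'$ arises from $\omega_s$ by the time-$1$ flow of a family $X_s$ of sections of $T(-\log|D|)$, interpolate by the $t$-flow to obtain $\omega_{s,t}$ and compute, in the Lie algebroid complex,
\begin{equation}
\partial_t\omega_{s,t} = -L_{X_s}\omega_{s,t} = -d\,(i_{X_s}\omega_{s,t}),
\end{equation}
using $d\omega_{s,t}=0$ and Cartan's formula for $T(-\log|D|)$. Since $i_{X_s}\omega_{s,t}\in\Omega^1(\log|D|)$, the class $[\omega_{s,t}]\in H^2_0(\log|D|)$ is independent of $t$, so $\sP(\omega_s')=\sP(\omega_s)$. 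By naturality of the anchor, the flow of $X_s$ integrates to diffeomorphisms of $M$ preserving $D$ and commuting with $\Res_q$, so the vanishing of the elliptic residue (hence membership in the subcomplex $\Omega^\bullet_0(\log|D|)$ computing $H^\bullet_0$) is preserved throughout.

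For injectivity on germs, suppose $\omega_s,\omega_s'$ have equal period. The interpolation $\omega_{s,t}=t\omega_s'+(1-t)\omega_s$ is elliptic symplectic for all $t$ and all sufficiently small $s$ (nondegeneracy is open and $\omega_0'=\omega_0$), and each $\omega_{s,t}$ has vanishing elliptic residue by linearity of $\Res_q$ and the vanishing at the endpoints. Equality of periods supplies a smooth family $\alpha_s\in\Omega^1(\log|D|)$ with $\omega_s'-\omega_s=d\alpha_s$ and $\alpha_0=0$. Set $X_{s,t}=-\omega_{s,t}^{-1}(\alpha_s)$, which is a genuine elliptic vector field precisely because $\omega_{s,t}:T(-\log|D|)\to T(-\log|D|)^*$ is an isomorphism; then the Moser identity
\begin{equation}
\tfrac{d}{dt}\,(\varphi_{s,t}^*\omega_{s,t}) = \varphi_{s,t}^*\big(d\alpha_s + d\,i_{X_{s,t}}\omega_{s,t}\big) = 0
\end{equation}
shows that the time-$1$ flow carries $\omega_s'$ to $\omega_s$, yielding the required equivalence. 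Surjectivity is immediate: any germ of a path $\gamma$ in $H^2_0(\log|D|)$ based at $[\omega]$ lifts, using Theorem~\ref{ellzerores} and the fact that $\Omega^\bullet_0(\log|D|)$ computes $H^\bullet_0(\log|D|)$, to a smooth family of closed zero-residue cocycles $\wt\gamma(s)$ with $\wt\gamma(0)=\omega$, which for small $s$ are nondegenerate and hence realize $\gamma$.

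The step demanding the most care is the first: verifying that flows generated by sections of $T(-\log|D|)$ act on the elliptic de Rham complex so that Cartan's formula and Moser's identity hold verbatim, and in particular that such flows preserve both the elliptic divisor and the vanishing of the elliptic residue. Once this elliptic analogue of the Moser machinery is established, the remaining arguments are routine and in fact shorter than their fixed-flux counterparts, since no separate treatment of real and imaginary parts is needed.
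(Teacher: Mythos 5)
Your proposal is correct and takes essentially the same route as the paper: the paper's own proof is exactly a one-step Moser argument in the elliptic de Rham complex (descent to equivalence classes because elliptic flows act by exact terms $L_{X}\omega = d\,i_{X}\omega$, injectivity via linear interpolation plus the Moser flow of $X_{s,t}=-\omega_{s,t}^{-1}(\alpha_{s})$, surjectivity from openness of nondegeneracy), stated there only as a sketch deferring to Theorem~\ref{periodmapcx}. Your write-up fills in the same details, including the accurate structural observation that allowing arbitrary sections of $T(-\log|D|)$ as equivalences eliminates the two-step real/imaginary procedure needed in the fixed-flux case.
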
 
\begin{proof}
The proof is a simpler version of that for Theorem~\ref{periodmapcx}.  The period map descends to equivalence classes because equivalence uses flows of elliptic vector fields, which act trivially on the elliptic de Rham cohomology groups.  

To show injectivity of the period map, suppose $\omega_{s},\omega'_{s}$ are two deformations of $\omega$ with the same period, so that $[\omega_{s}] = [\omega'_{s}]$ in $H^{2}_{0}(\log |D|)$. Then we can apply a simple Moser argument to identify $\omega_{s}$ with $\omega'_{s}$ in a sufficiently small neighbourhood of $s=0$.  

As in the previous case, surjectivity holds for germs by the openness of the nondegeneracy condition for elliptic symplectic forms.
\end{proof}

In summary, we have shown that the local deformation problem for the pair $(\JJ, H)$ of a stable generalized complex structure on the fixed compact manifold $M$ is equivalent to the local deformation problem for the zero-residue elliptic symplectic form $\omega=Q^{-1}$ determined by $\JJ$, and that this is governed by a period map to a neighbourhood of $[\omega]\in H^{2}_{0}(\log |D|)$. 

\begin{corollary}
The Kuranishi moduli space of simultaneous deformations of the pair $(\JJ, H)$ of a stable generalized complex structure integrable with respect to $H$ is unobstructed and is identified by the period map with an open set surrounding the class determined by the underlying real Poisson structure $Q$ in $H^{2}(M\backslash D,\RR)\oplus H^{1}(D,\RR)$, where $D$ is the anticanonical divisor of $\JJ$.
\end{corollary}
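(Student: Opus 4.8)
The plan is to assemble the equivalences established in this section and reinterpret Theorem~\ref{theo:stable<=>elliptic symplectic} as a statement about generalized complex deformations. First I would invoke Theorem~\ref{correspellj} together with the preceding rectification lemma to record that equivalence classes of deformations of the pair $(\JJ,H)$ are in canonical bijection with equivalence classes of zero-residue deformations of the elliptic symplectic form $\omega=Q^{-1}$, holding the underlying elliptic divisor $|D|=(\wedge^{2n}TM,\wedge^{n}Q)$ fixed. This reduces the deformation problem for $(\JJ,H)$ entirely to that for $\omega$, so that it suffices to analyze the latter.

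Next I would apply Theorem~\ref{theo:stable<=>elliptic symplectic}, which states that the period map $\sP$ is a bijection between germs of zero-residue deformations of $\omega$ up to equivalence and germs of smooth paths beginning at $[\omega]=[Q^{-1}]$ in the vector space $H^{2}(M\backslash D,\RR)\oplus H^{1}(D,\RR)$; this space is finite-dimensional since $M$ and $D$ are compact. Because the target is a genuine vector space and every germ of a smooth path through $[\omega]$ is both realized and detected by $\sP$, the germ of the moduli space is identified with the germ of this vector space at $[\omega]$, namely with an open neighbourhood of $[\omega]$. In particular every first-order deformation integrates, so the problem is unobstructed.

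Finally I would match this concrete description with the Kuranishi family constructed in~\cite{MR2811595}. Its tangent space consists of first-order deformations of $(\JJ,H)$, which under the dictionary above are exactly the directions recorded by the linearization of $\sP$, hence the whole of $H^{2}(M\backslash D,\RR)\oplus H^{1}(D,\RR)$; the surjectivity in Theorem~\ref{theo:stable<=>elliptic symplectic} then forces the Kuranishi obstruction map to vanish identically, so the moduli space is smooth and is identified by $\sP$ with an open set surrounding $[Q^{-1}]$. The step requiring the most care is precisely this last identification: reconciling the period-map description, phrased in terms of germs of paths up to equivalence, with the abstract Kuranishi model, and verifying that realizing all path germs is equivalent to the vanishing of the obstruction map. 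This is an argument entirely parallel to the one behind Corollary~\ref{independentgoto} in the fixed-flux case.
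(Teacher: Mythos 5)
Your proposal is correct and follows essentially the same route as the paper: the paper derives this corollary directly from the rectification lemma (reducing deformations of $(\JJ,H)$ to zero-residue deformations of $\omega=Q^{-1}$ with fixed elliptic divisor, via Theorem~\ref{correspellj}) combined with the period map bijection of Theorem~\ref{theo:stable<=>elliptic symplectic}, exactly as you assemble them. The paper treats the final identification with the Kuranishi model of~\cite{MR2811595} as immediate, just as in Corollary~\ref{independentgoto}, so your extra care on that step is consistent with, not divergent from, the paper's argument.
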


\begin{remark}
Let $(\JJ, M, H)$ be a compact stable generalized complex manifold.  The forgetful map from deformations of $(\JJ,H)$ 
to deformations of $H$ induces a map on cohomology groups 
$H^{2}_{0}(\log |D|)\to H^{3}(M,\RR)$ which by Theorem~\ref{sequenceselllog} is the projection from $H^{2}_{0}(\log |D|)$ to $H^{1}(D,\RR)$ followed by the Thom-Gysin map to $H^{3}(M,\RR)$.  
Consequently, not all directions from $[H]$ in $H^{3}(M,\RR)$ may be obtained by deforming the pair $(\JJ,H)$: only those in the kernel of the pullback $i^{*}:H^{3}(M,\RR)\to H^{3}(M\backslash D, \RR)$ to the anticanonical complement.   
\end{remark}

\begin{example}
Revisiting Example~\ref{hopf}, we see that deformations of the pair $(\JJ_{\pm},H)$ are controlled by a period map to $H^{2}(M\backslash D_{\pm})\oplus H^{1}(D_{\pm})$, a real vector space of dimension $3$, in contrast to the period map for $\JJ_{\pm}$, which maps to a complex line.
\end{example}

\subsection{Topological constraints for stable  structures}\label{topconst}

The equivalence between stable generalized complex structures and logarithmic symplectic forms developed in this section, together with the computations of logarithmic and elliptic de Rham cohomology in Section~\ref{sectcxdiv}, lead to topological consequences for the anticanonical divisor $D$, which must admit generalized Calabi-Yau structure, as well as for $M\backslash D$.    
%
%

\begin{theorem}\label{b1big}
Let  $(M,H,\J)$ be a stable \gcm\ of dimension $2n$ whose anticanonical divisor $D$, has a compact component, $D'$. Then the following hold:
\begin{enumerate}
\item $D'$ fibers over $T^2$, $b_1(D') \geq 2$ and $\chi(D') =0$;
\item If $M$ is compact, the pair $(\J,H)$ can be deformed so that the \gcys\ on each component of  $D$ is proper.
\item If $c_1(K)|_{D'} =0$, then the twisting class of $D'$ vanishes, $D'$ admits a symplectic form and there are classes $a,b\in H^1(D')$ and $c\in H^2(D')$ such that $abc^{n-2} \neq 0$. 
\end{enumerate}
\end{theorem}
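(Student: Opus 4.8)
The strategy is to transport all three claims to the induced type one generalized Calabi--Yau structure that Theorem~\ref{gholstrnd} places on the compact component $D'$, and then invoke Theorem~\ref{prop:topology1}. Since $M$ has dimension $2n$, the component $D'$ is a compact connected type one \gcy\ manifold of dimension $2(n-1)$, so the integer $n$ in Theorem~\ref{prop:topology1} is replaced by $n-1$. With this dictionary, claim (1) is immediate: part (1) of Theorem~\ref{prop:topology1} applied to $D'$ yields the fibration $\pi:D'\to T^{2}$ together with $b_{1}(D')\geq 2$ and $\chi(D')=0$.

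For claim (2) I would pass, via Theorem~\ref{equivsgcils}, to the complex log symplectic form $\sigma$ equivalent to $\J$, and deform it with the period map of Theorem~\ref{periodmapcx}, keeping $H$ fixed. The crucial point is that the distinguished $1$-form $\Omega$ on $D'$ is the logarithmic residue of $\sigma$: expanding $a^{*}\rho=\rho^{0}e^{\sigma}$ from~\eqref{actuallog} gives $\Res(e^{\sigma})=\Res(\sigma)\wedge e^{\sigma|_{D}}$, so that $\Omega=\Res(\sigma)$ and hence $[\Omega]=\Res_{*}[\sigma]\in H^{1}(D,\CC)$. By Proposition~\ref{poincaregysin} the image of $\Res_{*}$ is the subspace $\ker(j_{*})\subset H^{1}(D,\CC)$, which is defined over $\QQ$ and already contains the linearly independent classes $[\Re\,\Omega],[\Im\,\Omega]$ on each component. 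Because rational classes are dense in $\ker(j_{*})$ and linear independence is open, I can approximate $[\Omega]$ by a nearby class whose real and imaginary parts are rational and independent on every component; surjectivity of the period map realizes this as an actual deformation of $\J$. As in the proof of Theorem~\ref{prop:topology1}, rational periods turn $\pi:D'\to T^{2}$ into a proper submersion onto a torus, whose fibers are the now-compact symplectic leaves, giving properness.

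The essential new input is claim (3), where I must deduce the vanishing of the twisting class from $c_{1}(K)|_{D'}=0$. By~\eqref{isonu} the normal bundle satisfies $N\cong K^{*}|_{D'}$, so $c_{1}(N)=-c_{1}(K)|_{D'}=0$. Theorem~\ref{gholstrnd} equips $N$ with a generalized holomorphic structure whose transverse Higgs field $\phi$, dual to $\Omega$, obeys $i_{\phi}\Omega=1$ and is therefore nowhere vanishing. Feeding this into Theorem~\ref{theo:twisting class and chern class} gives $\iota^{*}c_{1}(N)=i_{\phi}F$ in $H^{2}_{A}(D')$, where $F$ denotes the twisting class; as $c_{1}(N)=0$ the left side vanishes, and since $\phi$ is nonzero the contraction $i_{\phi}:H^{2}_{A}(D',\nu^{*}_{1,0})\to H^{2}_{A}(D')$ is an isomorphism, forcing $F=0$. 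With the twisting class trivial, part (3) of Theorem~\ref{prop:topology1} (read with $n-1$ in place of $n$) supplies the symplectic form on $D'$ and the classes $a,b\in H^{1}(D')$, $c\in H^{2}(D')$ with $abc^{n-2}\neq 0$.

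I expect the twisting-class step in (3) to be the main obstacle: it hinges on assembling the identification $N\cong K^{*}|_{D'}$, the nonvanishing of the Higgs field, and the isomorphism property of $i_{\phi}$ on Lie algebroid cohomology exactly as demanded by Theorem~\ref{theo:twisting class and chern class}. The deformation in (2) is more routine, but still requires verifying carefully the compatibility between the logarithmic residue and the period map, and that rational, independent periods can be achieved simultaneously on all components while remaining in $\ker(j_{*})$.
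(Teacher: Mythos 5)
Your arguments for parts (1) and (3) are correct and coincide with the paper's: both claims follow by combining Theorem~\ref{gholstrnd} with Theorems~\ref{theo:twisting class and chern class} and~\ref{prop:topology1}, and your unwinding of (3) --- the identification $N\cong K^{*}|_{D'}$, the nonvanishing of the transverse Higgs field $\phi$, and the resulting invertibility of $i_{\phi}$ on $H^{2}_{A}$ --- is exactly the intended argument.

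Part (2), however, contains a genuine gap. You deform $\sigma$ with the \emph{fixed-flux} period map (Theorem~\ref{periodmapcx}), resting on the assertions that $[\Omega]=\Res_{*}[\sigma]$ and that $[\Omega]$ lies in $\ker{j_{*}}$, the image of $\Res_{*}$. Both fail as soon as $H\neq 0$: since $d\sigma = a^{*}H$, the form $\sigma$ is not closed and defines no class in $H^{2}(M,\log D)$; only $\Omega=\Res(\sigma)$ is closed (smooth forms have vanishing residue), and its class need not be the residue of any logarithmic class. Indeed, by Theorem~\ref{correspellj} one has $j_{*}\Im[\Omega]=j_{*}\Res_{r}[\omega]=[H]$, so $[\Omega]\in\ker{j_{*}}$ would force $[H]=0$. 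Worse, a deformation with $H$ fixed changes $[\Omega]$ precisely by $\Res_{*}[\sigma_{s}-\sigma]$, i.e.\ by an element of $\ker{j_{*}}$, so $j_{*}\Im[\Omega_{s}]=[H]$ along the entire deformation. If $[H]$ is not a rational class --- and nothing in the hypotheses prevents this --- then $\Im[\Omega_{s}]$ can never be made rational, and your rational-approximation step is impossible. (Even when $[H]$ happens to be rational, you would need to approximate within the affine subspace $[\Omega]+\ker{j_{*}}$, not within $\ker{j_{*}}$ itself.)

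This obstruction is exactly why the statement deforms the \emph{pair} $(\J,H)$, and why the paper argues differently: it passes via Theorem~\ref{correspellj} to the co-oriented elliptic symplectic form $\omega=Q^{-1}$ and uses the variable-flux period map (Theorem~\ref{theo:stable<=>elliptic symplectic}) to perturb the two components $i^{*}[\omega]\in H^{2}(M\backslash D,\RR)$ and $\Res_{r}[\omega]\in H^{1}(D,\RR)$ independently to rational values (rational multiples of $2\pi$ for the first), so that $[\Omega]=\tfrac{1}{2\pi}R\,i^{*}[\omega]+i\Res_{r}[\omega]$ becomes rational on every component; the background class is then free to move to $j_{*}$ of the new, rational residue. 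With $[\Omega]$ rational, the argument of Theorem~\ref{prop:topology1} yields properness, and the deformation of $\omega$ corresponds to a deformation of $(\J,H)$ as required.
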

\begin{proof} The first and last  claims follow directly from  Theorems \ref{gholstrnd},  \ref{theo:twisting class and chern class} and \ref{prop:topology1}. As for the second claim, if $\omega$ is the elliptic symplectic form associated to $\J$, then $\omega$ induces two classes in $H^1(D,\R)$. The first, $\Res_{r}[\omega]$, comes from the second component of the isomorphism from Theorem \ref{ellzerores}:
\begin{equation}
\xymatrix{
H^2_0(\log |D|)\ar[rr]^-{\cong}_-{(i^{*},\Res_{r})}& & H^2(M\backslash D,\R) \oplus H^1(D,\R)
},
\end{equation}
and the second is obtained by applying the topological residue map $R$ (c.f. Proposition~\ref{poincaregysin}) to the first component $i^{*}[\omega]$.  If $\sigma$ is the complex log symplectic form representing $\JJ$, then the complex 1-form $\Omega$ making up the generalized Calabi-Yau structure of $D$ satisfies
\begin{equation}
[\Omega] = [\Res(\sigma)] = \tfrac{1}{2\pi}R i^{*}[\omega] + i\Res_{r}[\omega].
\end{equation}
Since we have the period map for elliptic symplectic forms, we may perturb $\omega$, keeping it symplectic but also ensuring that its classes in $H^{1}(D,\RR)$ and $H^2(M\backslash D,\R)$ have periods which are rational and rational multiples of $2\pi$, respectively, therefore making the periods of $[\Omega]$ rational. Then the argument from Theorem \ref{prop:topology1} shows that the Calabi--Yau structure induced on $D$ is proper. By Theorem \ref{correspellj}, this deformation of $\omega$ corresponds to a deformation of the pair $(\J,H)$, as needed.
\end{proof}

Next, we state the topological constraints placed on $M$ by the stable \gcs, in analogy with those obtained in~\cite{Cavalcanti13,Marcut-Osorno} for real log symplectic structures.

\begin{theorem}\label{theo:topology2}
Let  $(M,H,\J)$ be a stable \gcm\ of dimension $2n$ whose anticanonical divisor $D$ has at least one compact component, $D'$. Then the following hold:
\begin{enumerate}
\item There is a class $\alpha \in H^2(M \backslash D)$ such that $\alpha^{n-1}\neq 0$;
\item There is a class $\beta \in H^2_{c}(M\backslash D)$ such that $\beta^2 =0$ and  $\alpha^{n-1} \beta \neq 0$, both as classes with compact support.
\end{enumerate}
\end{theorem}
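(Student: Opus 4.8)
The plan is to take for $\alpha$ the class of the symplectic form induced on the complement, and to manufacture $\beta$ out of the radial residue of $\omega$ along the compact component $D'$, so that the single pairing $\alpha^{n-1}\beta$ encodes the nondegeneracy of $\omega$ and yields both claims at once. First I would pass, via Theorem~\ref{equivsgcils}, to the complex log symplectic form $\sigma$ representing $\JJ$ and let $\omega=\Im(\iota^{*}\sigma)$ be its elliptic symplectic imaginary part; on $M\backslash D$ this is a genuine symplectic form, and I set $\alpha=i^{*}[\omega]\in H^{2}(M\backslash D,\RR)$, so that $\alpha^{n-1}=[\omega^{n-1}]$. Since $D'$ is a compact component of $D$, I may choose a tubular neighbourhood $U$ with $U\cap D=D'$ together with a metric on its normal bundle, and write $\omega$ near $D'$ exactly as in the proof of Theorem~\ref{cupprod},
\[
\omega = d\log r\wedge\alpha_{0}+\beta_{0}+R ,
\]
where $\alpha_{0}=\Res_{r}(\omega)$ and $\beta_{0}$ are invariant forms on the normal circle bundle $\SS{N'}$ and $R$ vanishes, as an elliptic form, along $D'$.

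To build $\beta$ I would use the diffeomorphism $U\backslash D'\cong\SS{N'}\times(0,1)$ and the resulting K\"unneth isomorphism $H^{2}_{c}(U\backslash D')\cong H^{1}(\SS{N'})$. Fixing a bump function $\phi$ supported in a thin shell $\{r_{1}\le r\le r_{2}\}\subset U\backslash D'$ with $\int\phi\,dr=1$, I set
\[
\beta=\phi(r)\,dr\wedge\alpha_{0},
\]
extended by zero to a class in $H^{2}_{c}(M\backslash D)$. This $\beta$ is closed, since $\alpha_{0}$ is closed as the radial residue of the closed form $\omega$, and the required identity $\beta^{2}=0$ holds already at the level of forms, because $\beta\wedge\beta$ contains the factor $dr\wedge dr$.

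It then remains to evaluate $\alpha^{n-1}\beta$. Because $\beta$ is supported in $U\backslash D'$ the pairing localizes there, and since $\beta$ carries the factor $dr$ while $d\log r=dr/r$, every term of $\omega^{n-1}$ containing $d\log r$ is annihilated; only the finite part survives, and letting the shell shrink to $D'$ (so that $R\to 0$ and the $r$-independent part remains) I expect
\[
\int_{M\backslash D}\omega^{n-1}\wedge\beta=\int_{\SS{N'}}\beta_{0}^{\,n-1}\wedge\alpha_{0}.
\]

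The hard part is to see that this last integral is nonzero, and this is exactly where nondegeneracy enters: expanding $\omega^{n}$ and using $(d\log r\wedge\alpha_{0})^{2}=0$, its most singular term is $n\,d\log r\wedge\alpha_{0}\wedge\beta_{0}^{\,n-1}$, so the fact that $\omega^{n}$ is a nowhere-vanishing elliptic volume form forces $\alpha_{0}\wedge\beta_{0}^{\,n-1}$ to be a nowhere-vanishing top form on $\SS{N'}$, whose integral is therefore nonzero. This gives $\alpha^{n-1}\beta\neq0$, which proves claim $(2)$ and, since $\alpha^{n-1}$ now pairs nontrivially with a compactly supported class, also proves claim $(1)$. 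The one point demanding genuine care is the residue bookkeeping when the normal bundle $N'$ is nontrivial, where there is no global angular coordinate and one must work invariantly on $\SS{N'}$ throughout rather than with a local $d\theta$.
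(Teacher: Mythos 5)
Your proposal is correct and follows essentially the same route as the paper: the paper likewise takes $\alpha = i^{*}[\omega]$, takes $\beta$ to be the radial residue class $\gamma = \Res_r[\omega]$ pushed into $H^2_c(M\backslash D)$ by wedging with a Thom form of the normal bundle of $\SS{N}D'$ (exactly your $\phi(r)\,dr\wedge\alpha_0$, which squares to zero for the same reason), and rests on the identical key fact that nondegeneracy of $\omega^{n}$ forces $\Res_r(\omega^{n}) = n\,\alpha_0\wedge\beta_0^{n-1}$ to be a volume form on the compact circle bundle $\SS{N}D'$. The only difference is organizational: the paper obtains the nonvanishing pairing by citing its cup-product formula (Theorem~\ref{cupprod}) and proves claim (1) before (2), whereas you re-derive that pairing by the explicit shell-localization integral and deduce (1) as a byproduct of (2).
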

\begin{proof} {\it 1.} 
Let $\omega$ be the elliptic symplectic structure corresponding to $\JJ$, and let $i:M\backslash D\to M$ be the inclusion of the divisor complement.  Then by Theorem~\ref{thm:lie algebroid cohomology}, the class $[\omega]\in H^{2}(\log |D|)$ decomposes as $[\omega] = \alpha + \gamma$,  with $\alpha = i^{*}[\omega]$ a class in $H^{2}(M\setminus D,\RR)$ and $\gamma = \Res_{r}[\omega]$ a class in $H^{1}(\SS{N},\RR)$. 

The key geometric observation is that since $\omega^{n}$ is nowhere vanishing, its radial residue defines a volume form on $\SS{N}$ and so $\Res_{r}([\omega]^{n})\neq 0$ in $H^{2n-1}(\SS{N}D',\RR)$, where $\SS{N}D'$ denotes the circle bundle of the normal bundle of $D'$.
We then use the description of cup product on elliptic de Rham cohomology from Theorem~\ref{cupprod} to compute 
\begin{equation}
\Res_{r}([\omega^{n}]) = \Res_{r}([\omega]^{n}) = n r(i^{*}[\omega]^{n-1})\cup \Res_{r}[\omega] = n r(\alpha^{n-1})\cup \gamma.
\end{equation}
where $r:H^{2}(M\backslash D,\RR)\to H^{1}(\SS{N},\RR)$ is the canonical restriction map~\eqref{resttos1}. Since the above class is nonzero on $D'$, we obtain that $\alpha^{n-1}\neq 0$, as required.

%
%
%

{\it 2.} Identify $\SS{N}D'$ with the boundary of a small tubular neighbourhood of $D'$, including via $k:\SS{N}D'\to M\backslash D$ into the divisor complement.  By the proof of the first part, we see that $k^{*}\alpha^{n-1}\cup \gamma\neq 0$.  Pushing forward along $k$, we get a class of compact support $k_{*}\gamma$ in $H^{2}_{c}(M\backslash D)$ with the property $\alpha^{n-1}\cup k_{*}\gamma\neq 0$ in cohomology with compact support.  
At the cochain level, $k_{*}\gamma$ is obtained by wedging with a Thom form for the normal bundle of $S^1ND'$, so it squares to zero and hence the same holds in cohomology.
\end{proof}

\begin{corollary}\label{cor:simple corollary}
Let $M^{2n}$ be a compact manifold and $i:D^{2n-2}\into M^{2n}$  a compact submanifold. If $i^*:H^1(M)\into H^1(D)$ is surjective and $i^*:H^2(M)\into H^2(D)$ is injective, then $M$ does not have a stable \gcs\ whose type change is $D$.   
\end{corollary}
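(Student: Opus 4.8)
The plan is to argue by contradiction: suppose $M$ carries a stable generalized complex structure whose type-change (anticanonical) divisor is $D$. Since $D$ is compact, it qualifies as a compact component in the sense of Theorem~\ref{theo:topology2}, so that result applies with $D'=D$. In particular, part (2) furnishes a class $\beta\in H^2_c(M\backslash D)$ with $\alpha^{n-1}\beta\neq 0$, and hence $\beta\neq 0$, so that $H^2_c(M\backslash D)\neq 0$. The goal is then to show that the two hypotheses on $i^*$ force $H^2_c(M\backslash D)=0$, which is the desired contradiction.

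To compute $H^2_c(M\backslash D)$, I would invoke the long exact sequence in cohomology attached to the decomposition of the compact manifold $M$ into the open set $M\backslash D$ and the closed submanifold $D$. Because both $M$ and $D$ are compact, their compactly supported cohomology coincides with ordinary cohomology, and the sequence reads
\begin{equation*}
\cdots\to H^1(M)\xrightarrow{i^*} H^1(D)\xrightarrow{\partial} H^2_c(M\backslash D)\xrightarrow{e} H^2(M)\xrightarrow{i^*} H^2(D)\to\cdots,
\end{equation*}
where $e$ is the extension-by-zero map from the open inclusion and the maps labelled $i^*$ are genuine restrictions to $D$. Note that this is a different exact sequence from the Thom--Gysin sequence of Proposition~\ref{poincaregysin}; here it is essential to use \emph{compactly supported} cohomology of the complement.

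Now I would feed in the hypotheses, which are placed in exactly the right degrees. Surjectivity of $i^*:H^1(M)\to H^1(D)$ makes the connecting map $\partial$ vanish, so by exactness at $H^2_c(M\backslash D)$ the map $e$ is injective. Injectivity of $i^*:H^2(M)\to H^2(D)$ gives $\ker(i^*)=0$, and by exactness at $H^2(M)$ the image of $e$ equals this kernel, so $e=0$. An injective map which is identically zero has trivial domain, whence $H^2_c(M\backslash D)=0$, contradicting the previous paragraph and completing the proof.

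The argument is essentially formal once the correct exact sequence is in place, so there is no analytic difficulty; the only point requiring care is the bookkeeping --- identifying the middle arrow as the honest restriction $i^*$ and matching degrees so that the two hypotheses bear precisely on the two maps flanking $H^2_c(M\backslash D)$. It is also worth observing that the proof uses only part (2) of Theorem~\ref{theo:topology2}, namely the mere existence of a nonzero compactly supported degree-$2$ class, so none of the intersection-theoretic content of that theorem beyond non-vanishing is needed here.
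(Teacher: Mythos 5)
Your proof is correct and follows essentially the same route as the paper: both arguments show that the two hypotheses force $H^2_c(M\backslash D)=0$, which contradicts the existence of the nonzero class $\beta$ supplied by Theorem~\ref{theo:topology2}(2). The only cosmetic difference is that you invoke the compactly supported long exact sequence of the open-closed decomposition $M=(M\backslash D)\sqcup D$ directly, whereas the paper uses the sequence of the pair $(M,U)$ for a tubular neighbourhood $U$ of $D$ and then shrinks $U$ to identify $H^2(M,U)$ with $H^2_c(M\backslash D)$ --- these are the same sequence in the limit.
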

\begin{proof}
Let $U$ be a tubular \nhood\ of $D$. The sequence for the pair $(M,U)$ gives the vanishing of the relative cohomology group $H^2(M,U)$. Since $U$ can be made arbitrarily small, this implies the vanishing of the compact support cohomology group $H^2_c(M\backslash D)$, precluding the existence of the nonzero class $\beta$.
\end{proof}

\begin{example}[Lefschetz hyperplane theorem] 
We saw in Example~\ref{holpoisson} that a holomorphic Poisson structure $\pi$ defines a stable generalized complex structure when its Pfaffian $\pi^{n}$ vanishes transversely.  We now observe that although many interesting holomorphic Poisson structures exist on Fano manifolds, that is, compact complex manifolds for which the anticanonical bundle is positive, these cannot define stable generalized complex structures because the Lefschetz hyperplane theorem implies
that the conditions of Corollary \ref{cor:simple corollary} are met, and so Fano varieties can not be deformed to stable \gcms\ by means of holomorphic Poisson bivectors. This was known before and is a consequence a simpler argument, as pointed out in~\cite{MR3100779}: Fanos are simply connected, and so any smooth anticanonical divisor must also be, contradicting the constraint $b_{1}(D)>2$ from Theorem~\ref{b1big}.

Notice, however that the Lefschetz hyperplane theorem implies that conditions of Corollary \ref{cor:simple corollary} are met  whenever $D$ is a positive divisor in a complex manifold $M$,  hence no such pair $(M,D)$ may admit a stable \gcs, a fact which does not follow simply from Theorem \ref{b1big}.

The same argument carries through for symplectic manifolds: given a symplectic manifold $(M^{2n},\omega)$ such that $[\omega]$ is a rational class, for $k$ large enough $k[\omega]$ determines a line bundle and by work of Donaldson \cite{MR1438190}, this line bundle has a section for which the Lefschetz hyperplane theorem holds. Again, Corollary~\ref{cor:simple corollary} implies that the pair $(M,D)$ cannot admit a stable \gcs. 
\end{example}

%

\subsection{Darboux coordinates about a degenerate point}\label{darbo}

Start with complex coordinates $(w,z)$ and extend by real coordinates $x_{i}, p_{i}, i=1,\dots, m-2$. Suppose that $w$ is the local defining function for the complex divisor $D$.  Consider the closed logarithmic 2-form 
\begin{equation}\label{stdformimaglog}
\sigma_{0} = d\log w \wedge dz + i \omega,
\end{equation}
 where $\omega=\sum_{i} dx_{i}\wedge dp_{i}$ is the standard symplectic form.  Then this is a complex log symplectic structure since its imaginary part is nondegenerate in the elliptic sense:
 \begin{equation}\label{ellvol}
(\Im^{*}\sigma_{0})^{m} = d\log w\wedge d\log \ol w \wedge dz\wedge d\ol z \wedge \omega^{m-1} 
\end{equation}
is a nowhere vanishing elliptic form of top degree. 
Our aim is to prove that locally all complex log symplectic forms are equivalent to the one above. 

More precisely, we show that if $(\sigma,H)$ is any complex log symplectic form, integrable with respect to the 3-form $H$, then we can find a smooth real 2-form $b$ and a diffeomorphism $\varphi$ such that 
\begin{equation}
\begin{aligned}
\varphi^{*}H +db &= 0\\
\varphi^{*}\sigma + b &= \sigma_{0}.
\end{aligned}
\end{equation}

\begin{theorem}\label{darbouximaglog}
Any complex log symplectic form is equivalent, near a point on its degeneracy divisor, to the normal form~\eqref{stdformimaglog}.
\end{theorem}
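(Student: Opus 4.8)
The plan is to run a Moser interpolation argument adapted to the elliptic tangent bundle, using the local cohomology computations of Section~\ref{sectcxdiv} and the two-step structure already employed in the proof of Theorem~\ref{periodmapcx}. I would work in a ball $B$ around the given point $p\in D$. First I would dispose of the flux: since $B$ is contractible, $H=db_{0}$ for a smooth real $2$-form $b_{0}$, and the gauge transformation $(\sigma,H)\mapsto(\sigma-b_{0},0)$ is an allowed equivalence, so we may assume $H=0$ and hence $d\sigma=0$. The target $\sigma_{0}$ of~\eqref{stdformimaglog} is also closed, so the problem becomes: produce a divisor-preserving diffeomorphism $\varphi$ and a smooth real \emph{closed} $2$-form $b$ with $\varphi^{*}\sigma+b=\sigma_{0}$.

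Next I would normalize coordinates so that $\sigma$ agrees with a form of type~\eqref{stdformimaglog} along $D$. By transversality of the anticanonical section we may take a complex coordinate $w$ cutting out $D$. The residue $\Omega=\Res(\sigma)$ is a closed complex $1$-form on $D$, and the type~$1$ nondegeneracy condition~\eqref{nondegeneracycondition} (with $k=1$), inherited through Theorem~\ref{gholstrnd}, gives $\Omega\wedge\overline\Omega\neq0$; since $D\cap B$ is contractible, $\Omega=dz$ for a complex function $z$ whose real and imaginary parts extend to part of a coordinate system. A transverse Darboux normalization of the smooth symplectic part then introduces the remaining coordinates $x_{i},p_{i}$ and arranges that $\sigma$ and $\sigma_{0}$ coincide along $D$. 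In particular their imaginary parts $\omega=\Im^{*}\sigma$ and $\omega_{0}=\Im^{*}\sigma_{0}$ are elliptic symplectic forms with vanishing elliptic residue (Proposition~\ref{isomzerores}) that agree along $D$, so the linear interpolation $\omega_{t}=(1-t)\omega_{0}+t\,\omega$ remains nondegenerate throughout $t\in[0,1]$ on a possibly smaller ball.

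Then comes the two-step Moser argument. For Step~1, note that $\omega-\omega_{0}$ is a closed elliptic $2$-form with vanishing elliptic residue; since $B\setminus D\simeq S^{1}$ and $D\cap B$ is contractible, the local computation behind Theorem~\ref{ellzerores} gives $H^{2}_{0}(B,\log|D|)=0$, so $\omega-\omega_{0}=d\alpha$ for an elliptic $1$-form $\alpha$ that we may take to vanish along $D$. Setting $X_{t}=-\omega_{t}^{-1}(\alpha)$, a time-dependent \emph{elliptic} vector field, and integrating its flow $\varphi_{t}$ produces, via the Moser identity $\tfrac{d}{dt}\varphi_{t}^{*}\omega_{t}=\varphi_{t}^{*}\big(\mathcal{L}_{X_{t}}\omega_{t}+(\omega-\omega_{0})\big)=\varphi_{t}^{*}\big(d\,i_{X_{t}}\omega_{t}+d\alpha\big)=0$, a diffeomorphism $\varphi=\varphi_{1}$ with $\varphi^{*}\omega=\omega_{0}$; because $X_{t}$ lies in $T(-\log|D|)$ the flow preserves $D$. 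For Step~2, observe that after applying $\varphi$ the complex log form $\varphi^{*}\sigma-\sigma_{0}$ now has vanishing imaginary part, so by the exact sequence~\eqref{complogell} of Proposition~\ref{isomzerores} it equals $-b$ for a smooth real $2$-form $b$; this $b$ is closed since both forms are, and we obtain exactly $\varphi^{*}\sigma+b=\sigma_{0}$ with $\varphi^{*}H+db=0$, the required equivalence.

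The main obstacle is the interaction between the logarithmic singularity and the fact that nondegeneracy constrains only the imaginary part of $\sigma$: one cannot simply invert the complex $2$-form $\sigma_{t}$ to run a single Moser flow, which is precisely why the argument must be organized in two steps and why the elliptic cohomology identifications of Section~\ref{sectcxdiv} are essential. Two points require care: first, the primitive $\alpha$ must be chosen as a genuine \emph{elliptic} $1$-form, so that $X_{t}$ is tangent to the elliptic structure and the flow preserves the divisor, which rests on the local vanishing of $H^{2}_{0}(\log|D|)$; and second, the identification of the Step~2 remainder as a \emph{smooth real} $2$-form, rather than merely a log form with vanishing residue, hinges on Proposition~\ref{isomzerores} rather than on residue vanishing alone, exactly as the paper cautions after~\eqref{defres}.
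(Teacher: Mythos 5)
Your overall architecture is the same as the paper's: a two-step Moser argument that first matches the imaginary (elliptic) parts using the local vanishing of $H^{2}_{0}(\log|D|)$ from Theorem~\ref{ellzerores}, flows by a time-dependent elliptic vector field, and then invokes Proposition~\ref{isomzerores} to recognize the remaining (purely real) discrepancy as a smooth $2$-form $b$ with $\varphi^{*}H+db=0$. Those steps, and the preliminary disposal of $H$, are fine.

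The gap is your normalization step: ``a transverse Darboux normalization of the smooth symplectic part \ldots arranges that $\sigma$ and $\sigma_{0}$ coincide along $D$.'' This cannot be achieved by the means you describe, and the rest of your argument leans on it. Write $\sigma = d\log w\wedge A + \beta$ with $A,\beta$ smooth, so that along $D$ the elliptic form $\Im^{*}\sigma$ restricts to $d\log r\wedge\Im\Omega + d\theta\wedge\Re\Omega + \Im(j^{*}\beta)$, where $\Omega=\Res(\sigma)$ and $j:D\hookrightarrow M$ is the inclusion. Arranging $\Omega=dz$ is fine, but coincidence with $\sigma_{0}$ along $D$ further requires $\Im(j^{*}\beta)=\sum_{i}dx_{i}\wedge dp_{i}$ \emph{exactly}, not merely modulo the ideal generated by $\Re\Omega$ and $\Im\Omega$. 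A concrete obstruction: the form $\sigma = d\log w\wedge dz + i\bigl(\sum_{i}dx_{i}\wedge dp_{i} + dx\wedge dy\bigr)$, with $z=x+iy$, is complex log symplectic (it is the pullback of $\sigma_{0}$ under the divisor automorphism $w\mapsto we^{-iy}$); its residue is already $dz$ and its leafwise symplectic part is already standard, so your normalization has nothing left to do, yet $\Im(j^{*}\beta)=\sum_i dx_i\wedge dp_i + dx\wedge dy$ has trivial kernel on $TD$ while the target $\sum_i dx_i\wedge dp_i$ has a two-dimensional kernel. Since the pointwise rank of a $2$-form on $D$ is a diffeomorphism invariant, \emph{no} coordinate change of $D$ --- leafwise Darboux or otherwise --- can make the smooth parts coincide along $D$. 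What removes such terms are twists of the normal coordinate, $w\mapsto we^{if}$ (and radial rescalings $w\mapsto we^{h}$), which shift $d\theta$ by $df$ and hence shift $\Im(j^{*}\beta)$ by $df\wedge\Re\Omega$; producing the right twist amounts to trivializing the induced generalized holomorphic structure on the normal bundle, which is precisely the bookkeeping the paper carries out separately (modular fields, Lemma~\ref{pullthenzero}) in the proof of the linearization theorem, Theorem~\ref{linaboutd}. So the step as justified does not close, even though the statement you assert can ultimately be proven.

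The repair is cheap, and brings you essentially to the paper's proof: you never need agreement along all of $D$. For the Moser step it suffices to have $\omega(p)=\omega_{0}(p)$ at the single point $p$ --- a linear-algebra normalization in $\wedge^{2}T^{*}_{p}(-\log|D|)$, where the normal twists and rescalings above \emph{are} available as linear maps --- after which $t\omega+(1-t)\omega_{0}$ is nondegenerate on a smaller ball by continuity and compactness of $[0,1]$. (Alternatively, one can check that terms in the ideal of $\Re\Omega,\Im\Omega$ never enter the top power of a zero-residue elliptic form along $D$, so agreement of residues and leafwise parts already suffices.) For the flow to exist up to time $1$ near $p$ it then suffices to choose the primitive with $\alpha(p)=0$, possible because $T^{*}(\log|D|)$ has a local frame of closed sections; your stronger demand that $\alpha$ vanish along all of $D$ would itself require an argument. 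With these substitutions your proof coincides with the paper's, which runs the Moser flow with $\alpha(p)=0$ and concludes via Proposition~\ref{isomzerores} exactly as in your Step 2.
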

\begin{proof}
We are only concerned about the local structure near a point $p$ on the divisor, so we may identify the divisors and assume that the algebroid $T(-\log D)$ is fixed for the remainder of the argument, and that both $\sigma$ and $\sigma_{0}$ are elements of $\Omega^{2}(\log D)$.  We also assume that $\sigma_{0},\sigma$ induce the same orientation on the neighbourhood, in the sense that the 
elliptic volume forms~\eqref{ellvol} of $\sigma_{0},\sigma$ have positive ratio.

Let $\omega_{0},\omega$ be the imaginary parts of $\sigma_{0},\sigma$, which are elliptic forms with vanishing elliptic residue, by Proposition~\ref{isomzerores}. By Theorem~\ref{ellzerores}, in a small ball $W$ surrounding a point on the divisor, we have $H^{2}_{0}(W,\log|D|)=H^{2}(W\backslash D,\RR) \oplus H^{1}(W\cap D)$, both of whose summands vanish since $W\backslash D$  is homotopic to the circle and $W\cap D$ is contractible.  Hence $\omega,\omega_{0}$ must be cohomologous:
\begin{equation}
\omega = \omega_{0} + d\alpha,\qquad \alpha\in \Omega^{1}(W,\log |D|),\qquad \alpha(p) = 0.
\end{equation}
Furthermore, we may choose $\alpha$ such that it vanishes at $p$ as a section of $T^{*}(\log |D|)$, which is possible since locally we have a basis of closed sections. 
We then employ the Moser argument: the interpolating family $\omega_{t} = t\omega + (1-t)\omega_{0}$ is nondegenerate for all $t\in[0,1]$ and has derivative $\tfrac{d}{dt}\omega_{t} = d\alpha$.  The vector field $X_{t}=\omega_{t}^{-1}\alpha$ is then a section of $T(-\log |D|)$ which vanishes at $p$, so that we may integrate it (in a possibly smaller neighbourhood of $p$) to a flow $\varphi_{t}$ such that $\varphi_{1}^{*}\omega = \omega_{0}$.

Having found $\varphi$ such that $\varphi^{*}\sigma, \sigma_{0}$ share an imaginary part, we may appeal to Proposition~\ref{isomzerores} to conclude that $\varphi^{*}\sigma - \sigma_{0} = -b$ for a smooth real 2-form $b$, which by the integrability condition satisfies $\varphi^{*}H+db =0$, as required.
\end{proof}

In view of the equivalence between stable generalized complex structures and complex log symplectic forms, the existence of the normal form~\eqref{stdformimaglog} means that we have a normal form for stable generalized complex structures. Indeed, a generator for the canonical line bundle may be written as 
\begin{equation}\label{localformgc}
w e^{\sigma_{0}} = (w + dw\wedge dz)\wedge e^{i\omega},
\end{equation}
which may also be viewed as a deformation of the type 2 structure 
\begin{equation}
dw\wedge dz\wedge e^{i\omega}
\end{equation}
by the holomorphic Poisson structure $w\del_{w}\wedge \del_{z}$.  In fact, one can alternatively deduce the local normal form from Bailey's theorem~\cite{bailey-2012}, which states that near a point of type $k$, a generalized complex structure is equivalent to the product of a symplectic structure with a deformation of the complex structure on a neighbourhood of the origin in $\CC^{k}$ by a holomorphic Poisson structure vanishing at the origin; this, together with the nondegeneracy assumption, determines the form~\eqref{localformgc} uniquely.

\subsection{Linearization about the degeneracy locus}\label{lindeglocsec}

In Section~\ref{stablestructures}, we introduced the linearization of a stable generalized complex $\JJ$ structure along its anticanonical divisor $D$. This is a $\CC^{*}$-invariant stable structure on the total space of the normal bundle $N$ of $D$, and was defined purely in terms of the induced geometric structure on $D$, namely, the generalized Calabi-Yau structure and the generalized holomorphic structure on $N$. 
We now show that $\JJ$ is equivalent to its linearization $\JJ'$ in a sufficiently small neighbourhood of $D$, using a Moser argument applied to the complex log symplectic structures associated to $\JJ,\JJ'$.

\begin{theorem}\label{linaboutd}
Let $\JJ,\JJ'$ be stable generalized complex structures on $(M,H)$ with the same anticanonical divisor $D$.  If $\JJ,\JJ'$ induce the same generalized Calabi-Yau structure on $D$, as well as the same generalized  holomorphic structure on the normal bundle to $D$, then $\JJ$ and $\JJ'$ are equivalent on a neighbourhood of $D$.  In particular, any stable generalized complex structure is equivalent to its linearization in a sufficiently small tubular neighbourhood of $D$.
\end{theorem}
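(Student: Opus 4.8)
The plan is to move the whole problem into the setting of complex log symplectic forms via Theorem~\ref{equivsgcils} and then run a relative Moser argument in the spirit of the Darboux theorem (Theorem~\ref{darbouximaglog}), with the single fixed point there replaced by the whole divisor $D$. First I would apply Theorem~\ref{equivsgcils} to represent $\JJ$ and $\JJ'$ by complex log symplectic forms $\sigma,\sigma'\in\Omega^{2}(\log D)$, both satisfying $d\sigma=d\sigma'=a^{*}H$ since both are integrable with respect to the same $H$. Setting $\omega=\Im^{*}\sigma$ and $\omega'=\Im^{*}\sigma'$, Proposition~\ref{isomzerores} guarantees that these imaginary parts are elliptic symplectic forms with vanishing elliptic residue, and it is precisely $\omega,\omega'$ that I will interpolate.

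The heart of the argument is to show that the hypotheses force $\omega'-\omega$ to vanish along $D$, i.e. that all of its coefficient forms (in the notation of~\eqref{ellform}, \eqref{basisell}) restrict to zero on $D$. This is where the matching of induced data enters. By Theorem~\ref{gholstrnd}, the common generalized Calabi--Yau form $\rho_{D}=\Omega\wedge e^{\sigma}$ on $D$ is recovered as the residue $\Res(\rho/\rho^{0})$, which pins down the residues of the log forms; meanwhile the common generalized holomorphic structure on $N$ is exactly what determines the transverse first-order behaviour of the structure, as displayed in the linearization formula~\eqref{locallindef}. A local computation comparing the expansions $\sigma=d\log w\wedge\alpha+\beta$ and $\sigma'=d\log w\wedge\alpha'+\beta'$ should then show that $\alpha-\alpha'$ and $\beta-\beta'$, and hence $\omega'-\omega$, vanish along $D$.

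Once $\omega'-\omega$ is a closed elliptic form vanishing along $D$, Lemma~\ref{retractlog} makes it exact on a tubular neighbourhood $U$ of $D$; more precisely, running the homotopy operator implicit in the proof of that lemma (average over the circle action, then integrate fibrewise along the retraction of $U$ onto $D$) should produce a primitive $\alpha\in\Omega^{1}(U,\log|D|)$ with $\omega'-\omega=d\alpha$ and with $\alpha$ itself vanishing along $D$. I would then interpolate by $\omega_{t}=(1-t)\omega+t\omega'$, which is nondegenerate in the elliptic sense near $D$ after shrinking $U$ (the two forms agree along $D$, and nondegeneracy is open), and integrate the elliptic vector field $X_{t}=-\omega_{t}^{-1}\alpha$. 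Because $\alpha$, and hence $X_{t}$, vanishes along $D$, the flow $\varphi_{t}$ is defined on a neighbourhood of $D$, fixes $D$ pointwise, and satisfies $\varphi_{1}^{*}\omega'=\omega$.

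Finally, having matched imaginary parts, $\Im^{*}(\varphi_{1}^{*}\sigma'-\sigma)=\varphi_{1}^{*}\omega'-\omega=0$, so Proposition~\ref{isomzerores} yields $\varphi_{1}^{*}\sigma'-\sigma=b$ for a smooth real $2$-form $b$. Differentiating and using $d(\varphi_{1}^{*}\sigma')=a^{*}(\varphi_{1}^{*}H)$, $d\sigma=a^{*}H$ together with injectivity of $a^{*}$ on smooth forms gives $\varphi_{1}^{*}H=H+db$, so that $(\varphi_{1},b)$ is the required equivalence of stable structures near $D$. The \emph{in particular} clause is then immediate: by construction the linearization of Definition~\ref{linearizedef} (equation~\eqref{locallindef}) induces on $D$ exactly the generalized Calabi--Yau structure and normal generalized holomorphic structure of $\JJ$, so the general statement applies. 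The main obstacle I anticipate is the local step establishing that $\omega'-\omega$ genuinely vanishes along $D$ — that is, that matching the Calabi--Yau structure on $D$ together with the generalized holomorphic structure on $N$ really controls the full transverse $1$-jet of the log symplectic form and not merely its residues — together with the delicate point of arranging the primitive $\alpha$ to vanish along $D$ so that the Moser flow stays near and fixes the divisor.
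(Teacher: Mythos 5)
Your overall strategy coincides with the paper's: pass to complex log symplectic forms via Theorem~\ref{equivsgcils}, show that the hypotheses force $\sigma'-\sigma$ to vanish along $D$ as an elliptic form, conclude exactness near $D$ from Lemma~\ref{retractlog}, run a Moser argument, and finish with Proposition~\ref{isomzerores}. However, the step you yourself flag as the main obstacle is precisely the heart of the proof, and your proposal contains no argument for it --- only the assertion that a local computation ``should'' show the coefficient forms vanish along $D$. This is a genuine gap, and it is worth seeing why it is not routine. Writing $\sigma = d\log w\wedge\Omega+\beta$ and $\sigma'=d\log w\wedge\Omega'+\beta'$, the matching of Calabi--Yau structures reads $\iota^{*}(\Omega e^{\beta})=\iota^{*}(\Omega' e^{\beta'})$, which yields only $\iota^{*}(\Omega'-\Omega)=0$ together with the weaker wedge condition $\iota^{*}(\beta'-\beta)\wedge\iota^{*}\Omega=0$; it does \emph{not} pin down $\iota^{*}(\beta'-\beta)$ by itself. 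The paper closes this by using the second hypothesis quantitatively: equality of the generalized holomorphic structures on $N$ is the equality of connection forms $X+\iota^{*}\xi = X'+\iota^{*}\xi'$ for the modular fields determined by \eqref{fielddeterm}, from which one deduces $\iota^{*}(i_{X}(\beta'-\beta))=0$; since $\iota^{*}(i_{X}\Omega)=-1$, contracting the wedge condition with $X$ finally gives $\iota^{*}(\beta'-\beta)=0$. Even then one still needs Lemma~\ref{pullthenzero} to upgrade ``vanishing pullback to $D$'' to ``vanishing along $D$ as an elliptic form'' (a smooth form with zero pullback, such as $dw$ near $\{w=0\}$, does not vanish along $D$ as a smooth form, but does as an elliptic one). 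Without this chain of deductions, your claim that the two hypotheses control the full transverse 1-jet of the log symplectic form remains exactly that --- a claim.

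Two smaller points. First, your worry about arranging the Moser primitive $\alpha$ to vanish along $D$ is unnecessary: the Moser field $X_{t}=\omega_{t}^{-1}\alpha$ is a section of $T(-\log|D|)$, hence automatically tangent to $D$, and compactness of $D$ already guarantees the time-1 flow exists in a small enough neighbourhood (this is how Lemma~\ref{semilocalform} proceeds); the flow need only preserve $D$, not fix it pointwise. Second, the ``in particular'' clause is not quite immediate: the linearization lives on $\tot(N)$ with its own background 3-form $H'$, so before the main statement applies one must match the fluxes --- since $\iota^{*}H=\iota^{*}H'$, the forms are cohomologous near $D$, and one chooses $B$ with $dB=H'-H$ and $\iota^{*}B=0$ and gauge transforms, as the paper does explicitly.
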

\begin{proof}
Let $\sigma,\sigma'$ be the complex log symplectic forms corresponding to $\JJ,\JJ'$, let $w$ be a local defining function for the anticanonical divisor, and let $\iota:D\to M$ be the inclusion.  That is, $\JJ,\JJ'$ have local trivializations $\rho,\rho'$ for their canonical bundles given by 
\begin{equation}
\rho = we^{\sigma},\qquad \rho' = we^{\sigma'}.
\end{equation}
The assumption that $\JJ,\JJ'$ induce the same Calabi-Yau structure on $D$ is the condition
\begin{equation}\label{equalres}
\Res(e^{\sigma}) = \Res(e^{\sigma'}).
\end{equation}

Now write $\sigma = d\log w \wedge \Omega + \beta$ and $\sigma' = d\log w \wedge \Omega' + \beta'$, for $\Omega,\Omega'$ and $\beta,\beta'$ smooth complex forms, so that $\Res(\sigma) = \iota^{*}\Omega$ and similarly for $\sigma'$.  Then condition~\eqref{equalres} is equivalent to the condition that 
\begin{equation}
\iota^{*}(\Omega e^{\beta}) = \iota^{*}(\Omega' e^{\beta'}).
\end{equation}
This implies that $\iota^{*}(\Omega'-\Omega) = 0$ and also that $\iota^{*}(\beta'-\beta) \wedge \iota^{*}\Omega = 0$. 

The condition that $\JJ,\JJ'$ induce the same generalized holomorphic structure on the normal bundle $N$ to D is equivalent to the condition that the modular vector fields $X+\xi$, $X'+\xi'$ associated to $\rho,\rho'$, determined uniquely by the conditions
\begin{equation}\label{fielddeterm}
\begin{aligned}
i_{X}\ol\sigma + \xi &=0\\
i_{X}\sigma + \xi &=d\log w 
\end{aligned}
\qquad
\begin{aligned}
i_{X'}\ol\sigma' + \xi' &=0\\
i_{X'}\sigma' + \xi' &=d\log w,
\end{aligned}
\end{equation}
must induce the same connection forms for the normal bundle, namely
\begin{equation}\label{equalconnform}
X + \iota^{*} \xi = X' + \iota^{*}\xi',
\end{equation}
as sections of $\TT_{\CC}D$.
From~\eqref{fielddeterm}, we have that $i_{X'}\sigma + \xi' = i_{X}\sigma' + \xi$, and using~\eqref{equalconnform} this implies that $\iota^{*}(i_{X}(\sigma'-\sigma)) = 0$, and therefore $\iota^{*}(i_{X}(\beta'-\beta))=0$. But note that~\eqref{fielddeterm} implies that $\iota^{*}(i_{X}\Omega )= -1$, 
and applying $X$ to the equation $\iota^{*}(\beta'-\beta) \wedge \iota^{*}\Omega = 0$, we obtain that $\iota^{*}(\beta'-\beta) = 0$.  So, we have that 
\begin{equation}
\sigma'-\sigma = d\log w \wedge (\Omega' -\Omega)  + (\beta'-\beta),
\end{equation}
and each of the components $\Omega'-\Omega$ and $\beta'-\beta$ are smooth forms which vanish upon pullback to $D$. We apply Lemma~\ref{pullthenzero} to each of these components to conclude that $\sigma'-\sigma$, as a complexified elliptic form, vanishes along $D$; in particular its imaginary part is an elliptic form which vanishes along $D$.  We now apply Lemma~\ref{semilocalform} to conclude that $\sigma',\sigma$ are equivalent in a tubular neighbourhood of $D$, as required. 

For the final statement, let $\JJ'$ be the linearization of $\JJ$   along $D$, as defined in Definition~\ref{linearizedef}. By the construction of the linearization, we may identify a tubular neighbourhood of $D$ with a neighbourhood in its normal bundle in such a way that the anticanonical divisors of $\JJ$ and $\JJ'$ are identified.  Then $\JJ,\JJ'$ are integrable with respect to three-forms $H,H'$ respectively, which agree on $D$, i.e., $\iota^{*}H = \iota^{*}H'$.  Therefore $H,H'$ are cohomologous in a tubular neighbourhood of $D$, and we may choose $B$ such that $dB=H'-H$ and with the additional property $\iota^{*}B = 0$.  We may then gauge transform $\JJ$ by $B$ so that $\JJ,\JJ'$ share the same 3-form $H$, without changing the fact that, by construction, $\JJ,\JJ'$ induce the same generalized Calabi-Yau structure on $D$ and the same holomorphic structure on its normal bundle.   We then proceed as before.
\end{proof}

\begin{lemma}\label{pullthenzero}
Let $D$ be a complex divisor, $\iota:D\to M$ the inclusion map, and $a:T(-\log |D|)\to TM$ the anchor map for the elliptic tangent bundle.  If $\varpi$ is a smooth differential form such that $\iota^{*}\varpi = 0$, then $a^{*}\varpi$ is an elliptic logarithmic form which vanishes along $D$.
\end{lemma}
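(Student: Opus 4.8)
The statement is local along $D$ and $C^\infty(M)$-linear in $\varpi$, and ``vanishing along $D$'' is checked fibrewise at points of $D$, so the plan is to fix the standard coordinates $(w,x_3,\dots,x_n)$ from \SSS\ref{elltn} in which $D=\{w=0\}$, $w=re^{i\theta}$, and $T(-\log|D|)\otimes\CC$ is freely generated by $w\del_w,\bar w\del_{\bar w},\del_{x_3},\dots,\del_{x_n}$ (equivalently $r\del_r=w\del_w+\bar w\del_{\bar w}$ and $\del_\theta=i(w\del_w-\bar w\del_{\bar w})$), and then to verify the claim directly in this frame.

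First I would record the anchor pullback on the generating one-forms. Because $a$ covers the identity and sends each elliptic generator to the corresponding honest vector field, one gets $a^*(dx_i)=dx_i$, and the key computation
\[
a^*(dw)=w\,d\log w,\qquad a^*(d\bar w)=\bar w\,d\log\bar w ,
\]
where $d\log w=d\log r+i\,d\theta$ and $d\log\bar w=d\log r-i\,d\theta$ are the smooth complexified elliptic one-forms dual to $w\del_w$ and $\bar w\del_{\bar w}$. The content here is the explicit prefactor $w$ (resp.\ $\bar w$): although $d\log w$ is a perfectly good elliptic form, the pullbacks $a^*(dw)$ and $a^*(d\bar w)$ vanish identically along $D$ since $w,\bar w$ vanish there.

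Next I would decompose a general smooth $\varpi$ using the frame $(dw,d\bar w,dx_3,\dots,dx_n)$ as $\varpi=\varpi_1+\varpi_2$, where $\varpi_1$ collects all terms containing a factor $dw$ or $d\bar w$, and $\varpi_2$ is built purely from $dx_3,\dots,dx_n$ (with smooth coefficients that may still depend on all variables). Applying the algebra homomorphism $a^*$ termwise: every summand of $a^*\varpi_1$ carries a factor $a^*(dw)$ or $a^*(d\bar w)$, hence vanishes along $D$ by the previous step; whereas $a^*\varpi_2=\varpi_2$ with its coefficients unchanged. The hypothesis enters only here: $\varpi_2$ is precisely the part of $\varpi$ that survives pullback to $D$, so $\iota^*\varpi=0$ forces $\iota^*\varpi_2=0$, i.e.\ the coefficients of $\varpi_2$ vanish on $D$, whence $\varpi_2$ vanishes along $D$ as an elliptic form. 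Combining, $a^*\varpi=a^*\varpi_1+\varpi_2$ vanishes along $D$, as required.

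I expect the only delicate point to be the bookkeeping: confirming that $\iota^*\varpi=0$ constrains exactly the purely transverse component $\varpi_2$ and nothing else, and that writing $a^*(dw)=w\,d\log w$ (rather than unpacking into the real polar frame $d\log r,d\theta$) legitimately exhibits the vanishing factor. No analytic estimate or $r^2$-divisibility argument is needed, since the prefactors $w,\bar w$ make the mixed terms vanish along $D$ automatically and the transverse terms are handled entirely by the hypothesis.
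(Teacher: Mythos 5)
Your proposal is correct and takes essentially the same approach as the paper's proof: the paper decomposes $\varpi$ in the real frame $(du,dv,dx_3,\dots,dx_n)$ with $u=r\cos\theta$, $v=r\sin\theta$, and uses $du=u\,d\log r-v\,d\theta$, $dv=v\,d\log r+u\,d\theta$, which is exactly the real and imaginary part of your identity $a^{*}(dw)=w\,d\log w$. In both arguments the terms carrying a normal one-form vanish along $D$ because of the prefactors $w,\bar w$ (resp.\ $u,v$), and the purely transverse part vanishes along $D$ precisely because $\iota^{*}\varpi=0$.
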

\begin{proof}
Let $(r,\theta,x_{3},\ldots, x_{n})$ be local coordinates near $D$ as in Section~\ref{elltn}, and let $u = r\cos\theta$, $v=r\sin\theta$, so that $D$ is the common zero set of $u,v$ and we may write 
\begin{equation}\label{smothform}
\varpi = du \wedge dv \wedge \varpi_0 + du \wedge \varpi_1 + dv \wedge \varpi_2 + \varpi_3,
\end{equation}
where $\varpi_{i}$ are smooth forms lying in the subalgebra generated by $dx_{3},\ldots dx_{n}$.  Then since $du = ud\log r - v d\theta$ and $dv=vd\log r + ud\theta$, we see that the first three summands in~\eqref{smothform} vanish along $D$ as elliptic logarithmic forms.  Finally, $\iota^{*}\varpi_{3}=0$ if and only if $\varpi_{3}$ vanishes along $D$, implying it vanishes as an elliptic form as well.
\end{proof}

\begin{lemma}\label{semilocalform}
Complex log symplectic forms whose imaginary parts coincide along $D$ must be equivalent in a tubular neighbourhood of $D$. 
\end{lemma}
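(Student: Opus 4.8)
The plan is to run a Moser argument on the imaginary parts, which are honest elliptic symplectic forms, and then to absorb the leftover difference of real parts using Proposition~\ref{isomzerores}. Write $\sigma,\sigma'$ for the two complex log symplectic forms, integrable with respect to $H,H'$ respectively, and set $\omega=\Im^{*}(\sigma)$ and $\omega'=\Im^{*}(\sigma')$; by Proposition~\ref{isomzerores} these are closed, nondegenerate elliptic $2$-forms with vanishing elliptic residue. The hypothesis is exactly that $\omega'-\omega$ vanishes along $D$ as an elliptic form, and since it is also closed, Lemma~\ref{retractlog} provides a primitive on a tubular neighbourhood: $\omega'-\omega=d\alpha$ with $\alpha\in\Omega^{1}(\log |D|)$.

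Next I would interpolate, setting $\omega_{t}=\omega+t\,d\alpha$ for $t\in[0,1]$. Because $d\alpha=\omega'-\omega$ vanishes along $D$ and $\omega$ is elliptic symplectic, the openness of the nondegeneracy condition guarantees, after shrinking the tubular neighbourhood, that each $\omega_{t}$ is again elliptic symplectic. The time-dependent elliptic vector field $X_{t}=-\omega_{t}^{-1}(\alpha)$, which is a genuine section of $T(-\log |D|)$ since $\omega_{t}$ is an isomorphism onto $T(-\log |D|)^{*}$, then solves the Moser equation $L_{X_{t}}\omega_{t}+\partial_{t}\omega_{t}=d\,i_{X_{t}}\omega_{t}+d\alpha=0$, using $i_{X_{t}}\omega_{t}=-\alpha$. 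Integrating $X_{t}$ gives a flow $\varphi_{t}$, and via the identification $T(-\log |D|)\cong\At(U,s)$ of Proposition~\ref{subell}, $X_{t}$ lifts to a derivation of $U$ fixing the section $s$, so that $\varphi_{t}$ is an automorphism of the complex divisor; in particular it preserves $\Omega^{\bullet}(\log D)$ and is compatible with the anchor and with $\iota^{*}$, hence commutes with $\Im^{*}$. The usual Moser computation then yields $\varphi^{*}\omega'=\omega$ at time one, where $\varphi:=\varphi_{1}$.

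It then remains to compare real parts. Since $\varphi$ commutes with $\Im^{*}$, we get $\Im^{*}(\varphi^{*}\sigma'-\sigma)=\varphi^{*}\omega'-\omega=0$, so the exact sequence~\eqref{complogell} of Proposition~\ref{isomzerores} forces $b:=\varphi^{*}\sigma'-\sigma$ to be a smooth real $2$-form. Differentiating and using the integrability conditions $d\sigma=a^{*}H$ and $d\sigma'=a^{*}H'$, together with $a^{*}\varphi^{*}=\varphi^{*}a^{*}$, gives $a^{*}(\varphi^{*}H'-H)=db$; as $a$ is an isomorphism away from the codimension-two locus $D$, this forces $\varphi^{*}H'=H+db$. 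Thus $\varphi$ and $b$ realize precisely the required equivalence of complex log symplectic structures on a tubular neighbourhood of $D$.

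The step I expect to be the main obstacle is the analytic bookkeeping of the flow: one must check that the elliptic Moser field $X_{t}$ integrates for all of $t\in[0,1]$ on a genuine tubular neighbourhood (handled by shrinking, using that $a(X_{t})$ is tangent to $D$), and, more importantly, that the resulting flow is an automorphism of the \emph{complex} divisor rather than merely of the smooth pair $(M,D)$. This is exactly what the Atiyah-algebroid description in Proposition~\ref{subell} secures, and it is what legitimizes letting $\varphi^{*}$ act on logarithmic forms and commute with $\Im^{*}$ in the concluding step; without it, the passage from $\varphi^{*}\omega'=\omega$ to the identity $\varphi^{*}\sigma'=\sigma+b$ would not be available.
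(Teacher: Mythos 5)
Your proof is correct and follows essentially the same route as the paper: a Moser argument on the imaginary parts, with exactness on the tubular neighbourhood supplied by Lemma~\ref{retractlog}, and the final comparison of real parts via Proposition~\ref{isomzerores}. The only difference is that you spell out two points the paper leaves implicit---that the flow of the elliptic Moser field preserves the complex divisor (via Proposition~\ref{subell}) and therefore acts on logarithmic forms and commutes with $\Im^{*}$, and the explicit verification that $\varphi^{*}H' = H + db$---both of which are correct and harmless additions.
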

\begin{proof}

Let $\sigma_{0},\sigma_{1}$ be the given forms, and let their respective imaginary parts be $\omega_{0},\omega_{1}$, elliptic forms which coincide along $D$. The linear interpolation $\omega_{t} = t\omega_{1} + (1-t)\omega_{0}$ is nondegenerate for all $t\in[0,1]$, and has derivative $\dot\omega_{t}=\omega_{1}-\omega_{0}$, a closed elliptic form vanishing along $D$. By Lemma~\ref{retractlog}, it is exact, so that $\omega_{1} = \omega_{0} + d\alpha$ for $\alpha\in\Omega^{1}(W,\log |D|)$.  Then the vector field $X_{t} = \omega_{t}^{-1}(\alpha)$ is tangent to the compact submanifold $D$, meaning that the time-1 flow $\varphi$ exists in a sufficiently small neighbourhood of $D$ and satisfies $\varphi^{*}\omega_{1} = \omega_{0}$.  We complete the proof as we did for Theorem~\ref{darbouximaglog}, using Proposition~\ref{isomzerores} to argue that $\varphi^{*}\sigma_{1}-\sigma_{0}=-b$ for a smooth real 2-form $b$ satisfying $\varphi^{*}H+db=0$.
\end{proof}

\subsection{Symplectic filling in dimension four}


The idea that one can perform surgeries on a symplectic manifold to produce nontrivial examples of stable structures has been used to produce large families of examples in four dimensions \cite{MR2312048,MR2574746,Goto:2013vn,MR2958956,MR3177992}. As an application of the normal form about the anticanonical divisor~\ref{linaboutd}, we show that a converse to this also holds, in the sense that any stable generalized complex 4-manifold may be modified in a neighbourhood of its anticanonical divisor to produce a compact symplectic manifold.

\begin{theorem}\label{theo:symplectization}
Let $M^4$ be a compact stable \gcm\ whose anticanonical divisor has connected components  $D_{1},\ldots, D_{n}$.  Then for each $i$ there is a tubular neighbourhood $U_{i}$ of $D_{i}$, a symplectic manifold with boundary $(X_i,\omega_i)$ and an orientation reversing diffeomorphism of coisotropic submanifolds $\gf_i:\del X_i \stackrel{\cong}{\longrightarrow} \del {\ol U_i}$, so that 
\begin{equation}
\widetilde{M}= M\backslash \ol{U}\cup_{\gf} X
\end{equation}
is a symplectic manifold, where $U,\gf, X$ denote the unions of $U_{i},\gf_{i},X_{i}$ for all $i$.

Further, $X_i$ can be chosen so that $b^+(X_i) > 0$ and the restriction map $H^2(X_i)\into H^2(\del X_i)$ is surjective.
\end{theorem}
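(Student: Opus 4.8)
The plan is to trade the degenerating generalized complex structure for a genuine (singular) symplectic form and then do symplectic surgery near each $D_i$. First I would apply Theorem~\ref{linaboutd} to replace $\JJ$ on a tubular neighbourhood $U_i$ of $D_i$ by its linearization, and Theorem~\ref{equivsgcils} to pass to the associated complex log symplectic form $\sigma$, whose imaginary part $\omega$ is an honest symplectic form on $U_i\minus D_i$ with a controlled logarithmic singularity. Since $\dim M=4$, Theorem~\ref{gholstrnd} identifies each $D_i$ with a two--torus carrying a nowhere--zero holomorphic one--form $\Omega=\Omega_R+i\Omega_I$ and a holomorphic normal line bundle $N_i$. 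Writing $(r,\theta)$ for polar fibre coordinates and $\alpha$ for a connection one--form on the circle bundle $\SS{N_i}=\del\ol U_i$, the model takes the form
\[
\omega = d\log r\wedge \pi^*\Omega_I + \alpha\wedge \pi^*\Omega_R
\]
(any smooth base term being normalizable away in this dimension). Thus $\del\ol U_i$ appears as the end of a symplectization: the pair $(\pi^*\Omega_I,\ \alpha\wedge\pi^*\Omega_R)$ is a stable Hamiltonian structure whose characteristic foliation exhibits $\SS{N_i}$ as a symplectic $T^2$--fibration over the circle cut out by $\Omega_I$, with $U_i$ the concave end $r\to 0$ and $\Res_r[\omega]=[\Omega_I]$ the residue recorded by Theorem~\ref{thm:lie algebroid cohomology}.

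Next I would set up the filling problem. The removed piece $U_i$ is a concave symplectic end over the circle bundle $Y_i:=\SS{N_i}$ of Euler number $e_i=\deg N_i$, so what is required is a compact symplectic cap $(X_i,\omega_i)$ whose boundary, under a reversal of co--orientation, matches this collar. I would build $X_i$ by choosing an auxiliary closed symplectic four--manifold $W_i$ containing a symplectic torus $T_i$ of self--intersection $\pm e_i$ (the sign absorbed by the orientation--reversing identification) and setting $X_i=W_i\minus \nu(T_i)$, the complement of an open symplectic tubular neighbourhood. Then $\del X_i\cong Y_i$ and near it $\omega_i$ is exactly the symplectic--surface--neighbourhood form, i.e.\ the nonsingular counterpart of the collar above. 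To secure the supplementary conditions I would take $W_i$ with $b^+(W_i)>0$ and with $H^2(W_i)$ rich enough to restrict onto $H^2(Y_i)$---for instance an elliptic surface $E(n)$ with $n\ge 2$, or a symplectic torus bundle---so that $b^+(X_i)>0$ and $H^2(X_i)\into H^2(\del X_i)$ is surjective; the existence of a symplectic torus of any prescribed square in such a $W_i$ is standard.

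Finally I would glue. Before gluing I would use the period map of Theorem~\ref{theo:stable<=>elliptic symplectic}, exactly as in the proof of Theorem~\ref{b1big}, to normalize the cohomological data of $\omega$ near $D_i$---the torus area, the class of $\alpha\wedge\pi^*\Omega_R$, and the residue $[\Omega_I]$---and correspondingly rescale the symplectic form of $W_i$, so that the two collar structures on $Y_i$ represent the same classes. Both collars are then honest symplectic collars with cohomologous restrictions, and a Moser argument on the overlap $(-\e,\e)\times Y_i$ yields an orientation--reversing diffeomorphism $\gf_i:\del X_i\to\del \ol U_i$ together with a global closed nondegenerate two--form on $\widetilde M=(M\minus\ol U)\cup_{\gf}X$; nondegeneracy holds everywhere because the original structure, the cap, and the interpolating family are all genuinely symplectic away from $D$.

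The hard part will be the construction of $X_i$: fabricating a symplectic cap with precisely the right boundary three--manifold and collar while simultaneously meeting the topological constraints, and---within this---pinning down the convexity/concavity and orientation conventions so that the degenerate logarithmic collar of $\JJ$ matches a bona fide symplectic surface neighbourhood under an orientation--reversing identification. The cohomological matching demanded by the Moser step is what forces the preliminary normalization through the period map, and verifying that $b^+(X_i)>0$ and the surjectivity of $H^2(X_i)\into H^2(\del X_i)$ can be arranged together with the exact boundary matching is the most delicate bookkeeping in the argument.
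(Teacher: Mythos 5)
Your reduction follows the paper up to the key point: a normal form for $\omega$ near $D_i$ (Theorem~\ref{linaboutd}), and the recasting of the problem as finding a compact symplectic filling of the circle bundle $Y_i=\SS{N_i}$ that induces on $Y_i$ exactly the closed $2$-form $\sigma=\omega|_{Y_i}$ coming from the stable structure. The genuine gap is your construction of the cap as $X_i=W_i\minus\nu(T_i)$ for a smooth symplectic torus $T_i$ of self-intersection $\pm e_i$: this cap has the wrong collar, and when $e_i=\deg N_i\neq 0$ no diffeomorphism $\gf_i$ can repair it. Indeed, by the Weinstein neighbourhood theorem the form induced on $\del\nu(T_i)$ is basic for the circle fibration $\del\nu(T_i)\into T_i$, and its class is $\pi^*[\omega_{T_i}]$, which is \emph{zero} in $H^2(\del\nu(T_i))$ when $e_i\neq 0$ (the area class of $T_i$ pulls back to a multiple of $[d\alpha]=0$, $\alpha$ a connection form). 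On the stable side, $\sigma$ is never exact: as in the proof of Theorem~\ref{theo:topology2}, $r(i^*[\omega])\cup\Res_r[\omega]$ is the volume class of $Y_i$, so $[\sigma]=r(i^*[\omega])\neq 0$ in $H^2(Y_i)$. Since the coisotropic/hypersurface gluing requires $\gf_i^*\sigma=\omega_i|_{\del X_i}$ and cohomology classes are diffeomorphism invariants, the construction fails precisely in the basic examples, e.g.\ $D$ an anticanonical cubic in $\CC P^2$, where $\deg N=9$. (You in fact record the correct geometry yourself: $\sigma$ is positive on the $T^2$-fibers of a fibration $Y_i\into S^1$, whereas the torus-complement collar form is pulled back from the base of the fibration $Y_i\into T_i$; these two pictures are incompatible.) Even in the case $e_i=0$, the final step as written is too weak: Moser's trick does not apply to the degenerate forms on the $3$-dimensional overlap, so matching cohomology classes does not by itself produce the diffeomorphism intertwining the induced forms that the hypersurface neighbourhood theorem demands.

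The paper's proof sidesteps any explicit cap construction: it observes that $(\sigma,\tau)$, with $\tau=i_{r\del_r}\omega|_{Y_i}$, is a co-symplectic structure, so that $\ker\tau$ is a taut foliation on $Y_i$ with $\sigma$ closed and positive on its leaves, and then invokes the Kronheimer--Mrowka theorem \cite[Theorem 41.3.1]{MR2388043}, which yields a \emph{closed} symplectic $4$-manifold containing $Y_i$ as a separating hypersurface on which the induced form is exactly $\sigma$; cutting along $Y_i$ gives the cap, and the same theorem provides $b^+(X_i)>0$ and the surjectivity of $H^2(X_i)\into H^2(\del X_i)$. If one insists on an explicit cap when $e_i\neq 0$, the correct local model is a neighbourhood of a Kodaira $I_{e_i}$ fiber (a cycle of spheres) in an elliptic fibration --- whose boundary is the same nilmanifold carrying a fiber-positive form --- not the complement of a smooth symplectic torus.
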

\begin{proof}
Let $\omega$ be the elliptic log symplectic form associated to the stable \gcs\ and $U_i$ be a tubular \nhood\ of a component 
$D_{i}$ of the anticanonical divisor in which $\omega$ is equivalent to its linearization on $N$,  the normal bundle of $D$. After a choice of Hermitian structure and connection on $N$ and identification of $N$ with a tubular \nhood\ we have that on an $\e$-disc bundle, $D_\e N$, of $N$
$$\omega = d \log r \wedge \omega_1 + \theta\wedge \omega_2 + \omega_3,$$
where $\omega_j$ are forms pulled back from $D_i$. If $E= r\del_{r}$, then we have that that $\mc{L}_E \omega =0$ and $E$ is transverse to $\SS{N}$, hence $\omega$ induces a co-symplectic structure on $\SS{N}$, i.e. a pair of closed forms $\sigma \in \Omega^2(\SS{N})$ and $\tau  \in \Omega^1(\SS{N})$ such that  $\sigma^{n-1}\wedge \tau \neq 0$, namely,  $\sigma =\omega|_{\SS{N}}$ and $\tau = i_X\omega|_{\SS{N}}$.

To prove the result we consider $M\backslash D_\e N$ and we need to prove that $\del(M\backslash D_\e N) = S^1N$ can be filled symplectically. By the coisotropic neighbourhood theorem, to achieve this it suffices to find a copy of each component $Y$ of $\SS{N}$ as a separating submanifold in a compact symplectic four manifold $(X,\omega_X)$ such that $\omega_X$ restricted to $Y$ is $\sigma$. This is achieved by observing that co-symplectic 3-manifolds are particular examples of taut foliations: indeed, $\tau$ determines an integrable distribution and $\sigma$ is a positive form on the leaves. Therefore our result follows from the following general result of Kronheimer and Mrowka on 3-manifolds with taut foliations:

\begin{theorem}[{\cite[Theorem 41.3.1]{MR2388043}}]
Let $Y$ be a closed oriented 3-manifold. Suppose that $Y$ has a  taut foliation, $\mc{F}$, and let $\sigma$ be a closed 2-form on $Y$ which is positive on the leaves of $\mc{F}$. Then there is a closed symplectic manifold $(X,\omega_X)$ containing $Y$ as a separating submanifold and such that the restriction of $\omega_X$ to $Y$ is $\sigma$.

Furthermore, if $Y$ is not $S^1 \times S^2$, then we can arrange that the map $H^2(X)\into H^2(Y)$ is surjective and that the two components $X_1$ and $X_2$ into which $Y$ divides $X$  both have $b^+>0$. 
\end{theorem}
  
\end{proof}

\subsection{Neighbourhood theorem for Lagrangian branes}

In this section we introduce the elliptic analog of the cotangent bundle construction in symplectic geometry, and in this way produce a large family of examples of stable generalized complex manifolds.  We also prove a generalization of Weinstein's Lagrangian neighbourhood theorem, resulting in a normal form result for neighbourhoods of Lagrangian branes in stable generalized complex manifolds.

Let $D=(R,q)$ be an elliptic divisor on the $n$-manifold $L$, and $TL(-\log |D|)$ the associated elliptic tangent bundle. Then let $M = \tot(T^{*}L(\log |D|))$ be the $2n$-manifold defined by the total space of the elliptic cotangent bundle, with projection map $\pi:M\to L$.  Then $\pi^{*}D = (\pi^{*}R, \pi^{*}q)$ defines an elliptic divisor on $M$, and we have a tautological one-form $\Theta\in\Omega^{1}(M,\log |\pi^{*}D|)$ defined in the familiar way:
\begin{equation}\label{tautoneform}
\Theta_{\xi}(X) = \xi(\pi_{*} X), 
\end{equation}
for $\xi\in M$ and $X\in TM(-\log|\pi^{*}D|)$.
\begin{theorem}\label{canonicalsymp}
The derivative $\omega = d\Theta$ of the tautological 1-form~\eqref{tautoneform} on the total space of the elliptic cotangent bundle is an elliptic symplectic form with vanishing elliptic residue.  Furthermore, it satisfies $i_{E} \omega = \Theta$ for $E$ the Euler vector field, which is therefore Liouville in the sense
\begin{equation}
L_{E}\omega = \omega.
\end{equation}
\end{theorem}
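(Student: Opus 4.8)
The plan is to reduce everything to a local computation in adapted coordinates, mirroring the classical proof that the canonical $1$-form on a cotangent bundle is symplectic. Near a point of $D$ I choose coordinates $(r,\theta,x_{3},\dots,x_{n})$ on $L$ as in~\eqref{basisell}, so that $D=\{r=0\}$ and the elliptic cotangent bundle $T^{*}L(\log|D|)$ is freely generated by the closed elliptic $1$-forms $d\log r,d\theta,dx_{3},\dots,dx_{n}$. A point $\xi\in M=\tot(T^{*}L(\log|D|))$ lying over such a chart is then recorded by fibre coordinates $(a,b,p_{3},\dots,p_{n})$ through $\xi=a\,d\log r+b\,d\theta+\sum_{i}p_{i}\,dx_{i}$, giving $2n$ coordinates on $M$. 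The first step is to confirm that the coordinate-free definition~\eqref{tautoneform} produces the expected expression $\Theta=a\,d\log r+b\,d\theta+\sum_{i}p_{i}\,dx_{i}$; this is obtained by evaluating $\Theta$ on the elliptic frame $r\partial_{r},\partial_{\theta},\partial_{x_{i}}$ and using $\pi_{*}(r\partial_{r})=r\partial_{r}$ together with $d\log r(r\partial_{r})=1$, which in particular certifies that $\Theta$ is a genuine section of $\Omega^{1}(M,\log|\pi^{*}D|)$.

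Next I would differentiate to obtain $\omega=d\Theta=da\wedge d\log r+db\wedge d\theta+\sum_{i}dp_{i}\wedge dx_{i}$, which is closed because it is exact. Nondegeneracy in the elliptic sense is immediate: up to a nonzero constant the top power $\omega^{n}$ equals $da\wedge d\log r\wedge db\wedge d\theta\wedge\bigwedge_{i}(dp_{i}\wedge dx_{i})$, a nowhere-vanishing elliptic form of top degree, so the induced skew map $T(-\log|\pi^{*}D|)\to(T(-\log|\pi^{*}D|))^{*}$ is an isomorphism. For the elliptic residue I put $\omega$ into the normal form~\eqref{ellform}: writing $da\wedge d\log r=-d\log r\wedge da$ and $db\wedge d\theta=-d\theta\wedge db$ exhibits only terms of the shape $d\log r\wedge(\cdot)$, $d\theta\wedge(\cdot)$ and smooth terms, so the $d\log r\wedge d\theta$ coefficient $\rho_{0}$ vanishes and hence $\Res_{q}(\omega)=j^{*}\rho_{0}=0$.

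Finally, for the Liouville identity the fibrewise Euler field is $E=a\partial_{a}+b\partial_{b}+\sum_{i}p_{i}\partial_{p_{i}}$, a section of $T(-\log|\pi^{*}D|)$ tangent to the fibres. Contracting, $i_{E}(da\wedge d\log r)=(Ea)\,d\log r-da\,(i_{E}d\log r)=a\,d\log r$, since $E$ is vertical while $d\log r$ is pulled back from $L$, and similarly for the remaining summands; adding these gives $i_{E}\omega=a\,d\log r+b\,d\theta+\sum_{i}p_{i}\,dx_{i}=\Theta$. As $\omega$ is closed, Cartan's formula then yields $L_{E}\omega=(d\,i_{E}+i_{E}\,d)\omega=d\Theta+0=\omega$, exactly paralleling Theorem~\ref{gcytotk}. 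I expect the only delicate point to be the first step: verifying that $\Theta$, built from the elliptic anchor $\pi_{*}$, really lands in the logarithmic complex $\Omega^{\bullet}(\log|\pi^{*}D|)$ and has the claimed local form, which amounts to controlling the interaction of the pullback divisor $\pi^{*}D$ with the elliptic differential of $\pi$ via a short exact sequence analogous to~\eqref{vertelltang}. Once that bookkeeping is settled, the remaining verifications are the routine cotangent-bundle computations carried out in the elliptic frame.
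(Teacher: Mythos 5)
Your proof is correct and follows essentially the same route as the paper's: both compute in the adapted coordinates of~\eqref{basisell}, extend them by fibre coordinates so that $\Theta = a\,d\log r + b\,d\theta + \sum_i p_i\,dx_i$, differentiate to read off nondegeneracy and the vanishing of the $d\log r\wedge d\theta$ coefficient, and contract with the fibrewise Euler field to get $i_E\omega=\Theta$ and hence $L_E\omega=\omega$ by Cartan's formula. The only difference is that you spell out the verifications (the frame evaluation showing $\Theta$ lies in $\Omega^1(M,\log|\pi^*D|)$, and the residue bookkeeping) that the paper leaves implicit.
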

\begin{proof}
Using coordinates on $L$ as in~\eqref{basisell}, we write any elliptic 1-form as 
\begin{equation}
\Theta = s\,  d\log r+ td\theta + \textstyle\sum_{i=3}^{n} p_{i}dx_{i},
\end{equation}
defining an extension of the coordinate system to $M$ and providing an explicit expression for the tautological form. Its derivative is then   
\begin{equation}\label{canelsympform}
d\Theta = ds\wedge d\log r + dt\wedge d\theta + \textstyle\sum_{i=3}^{n} dp_{i}\wedge dx_{i},
\end{equation}
showing that $\omega$ is nondegenerate and has zero elliptic residue.  Since the Euler vector field is $E = s\del_{s}+t\del_{t}+ \sum_{i} p_{i}\del_{p_{i}}$, we obtain $i_{E}\omega = \Theta$ directly from the local expression.
\end{proof}

We now show that the elliptic cotangent bundle construction is the universal example of a Lagrangian neighbourhood.
Let $\omega\in\Omega^{2}(M,\log |D|)$ be an elliptic symplectic form, and let $\iota: L\hookrightarrow M$ be a submanifold transverse to $D$, so that $D$ pulls back to define an elliptic divisor $D\cap L$ in $L$. We then have an induced inclusion map
\begin{equation}\label{loginc}
\iota_{*}:TL(-\log |D\cap L|)\to TM(-\log |D|),
\end{equation}
and we say that $L$ is \emph{Lagrangian} when $\iota^{*}\omega = 0$.

\begin{theorem}
Let $(M,D,\omega)$ be an elliptic symplectic manifold and $L$ a compact Lagrangian submanifold transverse to $D$.  Then a neighbourhood of $L$ in $M$ is isomorphic to a neighbourhood of the zero section in $T^{*}L(\log|L\cap D|)$.
\end{theorem}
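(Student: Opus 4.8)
The plan is to adapt Weinstein's Lagrangian neighbourhood theorem to the elliptic category, taking the canonical elliptic symplectic form of Theorem~\ref{canonicalsymp} as the target model and finishing with an elliptic Moser argument. First I would linearize: since $\iota^{*}\omega=0$ and $\omega$ is nondegenerate, the contraction $v\mapsto \iota^{*}(i_{v}\omega)$ descends through the inclusion~\eqref{loginc} to an isomorphism
\[
TM(-\log|D|)|_{L}\,/\,\iota_{*}TL(-\log|D\cap L|)\ \xrightarrow{\ \cong\ }\ T^{*}L(\log|L\cap D|),
\]
the elliptic analogue of the identification of the symplectic normal bundle of a Lagrangian with its cotangent bundle. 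This isomorphism is the infinitesimal model along which everything will be matched.

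Next I would produce a tubular-neighbourhood embedding $\Psi_{0}$ of a neighbourhood of the zero section of $T^{*}L(\log|L\cap D|)$ onto a neighbourhood of $L$ in $M$, equal to the identity on $L$, whose linearization along $L$ realizes the isomorphism above, and which matches divisors, i.e. $\Psi_{0}^{*}D=\pi^{*}(L\cap D)$ as \emph{elliptic} divisors. Because $L$ is transverse to $D$, an ordinary tubular neighbourhood of $L$ may be chosen so that $D$ corresponds to $\pi^{-1}(L\cap D)$ as a set; to upgrade this to an equality of elliptic divisors while fixing $L$, I would invoke the rectification of divisors (Lemma~\ref{mosdiv}, Corollary~\ref{coincimpdiffeo}) relative to $L$. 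After pulling back by $\Psi_{0}$, I may therefore assume that $\omega_{1}:=\Psi_{0}^{*}\omega$ and $\omega_{\mathrm{can}}:=d\Theta$ live on the same total space with the same elliptic divisor, and that by the choice of linearization they agree as elliptic $2$-forms along the zero section $L$.

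Then I would run the Moser argument. The difference $\beta:=\omega_{1}-\omega_{\mathrm{can}}$ is a closed elliptic $2$-form vanishing identically along $L$, so in particular $\iota^{*}\beta=0$. Using the fibrewise scaling $m_{t}$ of the elliptic cotangent bundle, generated by the Euler field $E$ of Theorem~\ref{canonicalsymp} (a genuine section of $TM(-\log|\pi^{*}D|)$, since $\pi^{*}D$ lies in base directions), the homotopy formula $\beta=d\big(\int_{-\infty}^{0}m_{e^{u}}^{*}(i_{E}\beta)\,du\big)$ produces a primitive $\alpha\in\Omega^{1}(\log|\pi^{*}D|)$ vanishing along $L$; convergence as $u\to-\infty$ is precisely the relative Poincaré lemma and relies on $\beta$ vanishing along $L$. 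The interpolation $\omega_{s}=\omega_{\mathrm{can}}+s\,d\alpha$ is nondegenerate near $L$ (it equals $\omega_{\mathrm{can}}$ along the compact $L$, and nondegeneracy is open), the elliptic Moser field $X_{s}=\omega_{s}^{-1}(\alpha)$ is a section of the elliptic tangent bundle vanishing along $L$, and its time-$1$ flow $\varphi$ exists near $L$, fixes $L$, preserves $D$, and satisfies $\varphi^{*}\omega_{1}=\omega_{\mathrm{can}}$. The composite $\Psi_{0}\circ\varphi$ is the desired isomorphism.

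The main obstacle I expect is the divisor-compatible tubular neighbourhood of the second step: matching $D$ with $\pi^{*}(L\cap D)$ as elliptic divisors, not merely as zero loci, while keeping $L$ pointwise fixed and the linearization prescribed. A related delicate point is the bookkeeping of the elliptic residue along $L\cap D$: one must check that the Lagrangian condition forces $\Res_{q}\omega$ to vanish along $L\cap D$, so that $\beta$ genuinely vanishes there (the model $\omega_{\mathrm{can}}$ already has vanishing elliptic residue) and the Moser step applies. The relative Poincaré lemma and the Moser flow themselves are routine once one verifies that the scaling homotopy preserves the space of logarithmic forms, which holds because $E$ and $m_{t}$ act only in the cotangent-fibre directions, transverse to the divisor.
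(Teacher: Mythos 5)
Your proof follows the same skeleton as the paper's (identify the normal bundle with $T^{*}L(\log|D\cap L|)$ via $v\mapsto \iota^{*}(i_{v}\omega)$, choose a divisor-compatible tubular neighbourhood realizing this identification, then run an elliptic Moser argument), but it contains a genuine gap at the pivotal claim of Step 2: it is \emph{not} true that matching the linearization forces $\omega_{1}:=\Psi_{0}^{*}\omega$ and $\omega_{\mathrm{can}}$ to ``agree as elliptic $2$-forms along the zero section.'' Matching the linearization only fixes the induced map on the quotient $TM(-\log|D|)|_{L}\,/\,\iota_{*}TL(-\log|D\cap L|)$, i.e.\ it forces the pairing between directions tangent to $L$ and the fibre directions to be the canonical one; it imposes no condition on the restriction of $\omega_{1}$ to the fibre directions themselves, since the quotient map kills exactly that information. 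In block form along $L$, with respect to the splitting into tangent and fibre directions, one has $\omega_{\mathrm{can}}|_{L}=\mat{0 & -I \\ I & 0}$ while $\omega_{1}|_{L}=\mat{0 & -I \\ I & B}$ with $B$ an arbitrary skew form: the image of the fibre directions under $\Psi_{0}$ need not be $\omega$-isotropic. Killing $B$ is precisely the step in Weinstein's classical proof that requires choosing a \emph{Lagrangian complement} to $TL(-\log|D\cap L|)$ in $TM(-\log|D|)|_{L}$ and building the tubular neighbourhood from it, a step you never perform. Since your relative Poincar\'e lemma (``relies on $\beta$ vanishing along $L$'') and your nondegeneracy claim (``it equals $\omega_{\mathrm{can}}$ along the compact $L$'') are both routed through this false premise, the argument as written does not close.

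The gap is reparable, and the paper's proof shows one does not need pointwise agreement along $L$ at all: it suffices that the primitive be conormal. The paper obtains $\omega_{1}-\omega_{0}=d\xi$ cohomologically (both forms pull back to zero on $L$ and $(U,D)$ retracts onto $(L,D\cap L)$), then replaces $\xi$ by $\xi-\pi^{*}\iota^{*}\xi$ so that $\iota^{*}\xi=0$; the interpolation $\omega_{t}$ is nondegenerate along $L$ because $\mat{0 & -I \\ I & tB}$ is invertible for every $t$ and every skew $B$, and conormality of $\xi$ forces the Moser field $X_{t}=-\omega_{t}^{-1}(\xi)$ to be tangent to $L$, which, by compactness of $L$, is enough to integrate the flow for unit time. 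Alternatively, your scaling homotopy can be salvaged essentially verbatim: for any closed elliptic $\beta$ with merely $\iota^{*}\beta=0$, the integral $\int m_{t}^{*}(i_{E_{t}}\beta)\,dt$ converges (the Euler field vanishes linearly along $L$, compensating the $1/t$) and produces a primitive $\alpha$ that vanishes along $L$, because the integrand vanishes identically there; so you may delete the unjustified agreement claim, justify nondegeneracy by the block computation above, and proceed. Your side remark about the elliptic residue is correct but resolves itself: $\Res_{q}$ is a cochain map, so $\Res_{q}\omega$ is locally constant on $D$ for closed $\omega$, and transversality plus $\iota^{*}\omega=0$ force it to vanish on $D\cap L$, hence on all components of $D$ meeting $L$.
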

\begin{proof}
The inclusion~\eqref{loginc} has cokernel given by the normal bundle of $L$, which is identified with $M_{0}=T^{*}L(\log|D\cap L|)$ by the elliptic symplectic form. We then choose an identification of a tubular neighbourhood $U$ of $L$ with a neighbourhood of the zero section in $M_{0}$, with the property that the elliptic divisors on $U$ and $M_{0}$ are identified.  Then the canonical form~\eqref{canelsympform} and the given form define a pair of elliptic symplectic forms $\omega_{0},\omega_{1}$ on $U$ such that $\iota^{*}\omega_{0}=\iota^{*}\omega_{1}=0$.  But this implies that $\omega_{0},\omega_{1}$ are cohomologous in $U$ since $(U,D)$ is smoothly homotopic to $(L,D\cap L)$. That is, $\omega_{1}-\omega_{0} = d\xi$ for some $\xi\in \Omega^{1}(U,\log |D|)$.  To produce a diffeomorphism taking $\omega_{0}$ to $\omega_{1}$, we apply the Moser argument: the interpolating family of symplectic forms $\omega_{t} = t\omega_{1} +(1-t)\omega_{0}$ satisfies $\tfrac{d}{dt}\omega_{t} = d\xi$, and so we obtain the required diffeomorphism by integrating the (elliptic, hence smooth) vector field $X_{t} = -\omega_{t}^{-1}(\xi)$.
Let $\pi:U\to L$ be the retraction. Since $\iota^{*}d\xi=0$, we may subtract $\pi^{*}\iota^{*}\xi$ from $\xi$ in order to ensure that $\xi$ is chosen such that $\iota^{*}\xi =0$. But then $\xi$ is conormal to $L$ away from $L\cap D$ and so the vector field $X_{t}$ is tangent to $L$. By compactness of $L$, we may integrate $X_{t}$ for unit time in a sufficiently small neighbourhood of $L$, yielding the result.    
\end{proof}

\begin{example}
Let $L$ be a 3-dimensional Lagrangian submanifold in a stable generalized complex 6-manifold $M$ which is transverse to the anticanonical divisor $D$.  Then $L\cap D$ defines a complex divisor $K\subset L$ with zero locus consisting of a link of embedded circles. 
A tubular neighbourhood of $L$ in $M$ is then isomorphic to the  canonical structure on the total space of $T^{*}L(\log K)$ provided by Theorem~\ref{canonicalsymp}.
\end{example}

The results of this section may be used to obtain a classification of a certain class of generalized complex branes, defined as follows (we simplify the definition given in~\cite{MR2811595} by ignoring the vector bundle over the submanifold).

\begin{definition}
Let $(M,H,\JJ)$ be a generalized complex manifold. A brane is a pair $(L,F)$ consisting of a submanifold $\iota:L\hookrightarrow M$ and a 2-form $F\in \Omega^{2}(L,\RR)$ such that $\iota^{*}H = dF$ and $\JJ\tau_{F} =\tau_{F}$, where $\tau_{F} = \{X+\xi\in TL\oplus T^{*}M\ |\ \iota^{*}\xi = i_{X}F\}$.  
\end{definition}
The bundle $\tau_{F}$ is an extension over $L$ of the form 
\begin{equation}
\xymatrix{
N^{*}L\ar[r] & \tau_{F}\ar[r] & TL
},
\end{equation}
and requiring that $\tau_{F}$ is $\JJ$-invariant implies that $Q(N^{*}L)\subset TL$, i.e., that $L$ is coisotropic for the underlying real Poisson structure $Q$.  In the stable case, therefore, there is a distinguished class of Lagrangian branes, essentially defined to be Lagrangian for the elliptic symplectic form away from the anticanonical divisor:
\begin{definition}
Let $(M,H,\JJ)$ be a stable generalized complex $2n$-manifold.  We call the brane $(L,F)$ Lagrangian when $L$ has dimension $n$ and is transverse to the anticanonical divisor of $\JJ$.
\end{definition} 
Certainly, any Lagrangian brane defines a Lagrangian submanifold for the elliptic symplectic structure, but the converse also holds.
\begin{proposition}
Let $\JJ$ be a stable generalized complex structure on $(M,H)$. Then any submanifold $L\subset M$ which is transverse to the anticanonical divisor $D$ and Lagrangian for the elliptic symplectic structure underlying $\JJ$ inherits a 2-form $F$ making it a generalized complex brane.
\end{proposition}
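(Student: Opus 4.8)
The plan is to produce $F$ explicitly as a pullback of the complex log symplectic form attached to $\JJ$, to check the two brane conditions away from the divisor by reduction to the classical symplectic fact that Lagrangians are branes, and then to propagate the invariance across $D$ by continuity. Throughout I would fix the complex log symplectic form $\sigma\in\Omega^{2}(M,\log D)$ corresponding to $\JJ$ by Theorem~\ref{equivsgcils}, write $\sigma = b + i\omega$ with $\omega$ the underlying elliptic symplectic form, and set $F:=\iota^{*}\sigma$, the pullback of $\sigma$ along $\iota:L\hookrightarrow M$. This makes sense because transversality of $L$ to $D$ makes $D\cap L$ a complex divisor on $L$, so $\iota^{*}$ is a well-defined morphism of logarithmic de Rham complexes.

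The first point to establish is that $F$ is a genuine \emph{smooth real} $2$-form, not merely a logarithmic one. Since $L$ is Lagrangian, $\iota^{*}\omega=0$ in the elliptic sense; as $\omega=\Im^{*}\sigma$ and the pullback commutes with taking imaginary parts, the logarithmic form $\iota^{*}\sigma$ lies in the kernel of $\Im^{*}$ on $(L,D\cap L)$. By Proposition~\ref{isomzerores} applied to $L$, this kernel consists precisely of the smooth real forms, so $F=\iota^{*}\sigma=\iota^{*}b$ is a smooth real $2$-form, and in particular $F$ has no residue along $D\cap L$. The condition $\iota^{*}H = dF$ is then immediate: pulling back the defining identity $d\sigma = a^{*}H$ along $\iota$ and using that the differential commutes with pullback gives $dF = \iota^{*}a^{*}H = \iota^{*}H$, the last equality because $a^{*}H$ is just the smooth form $H$ viewed logarithmically.

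It remains to verify that $\tau_{F}$ is $\JJ$-invariant. On $M\setminus D$ the form $\omega$ is honestly smooth and nondegenerate, and the canonical bundle of $\JJ$ is generated by $\rho^{0}e^{b}e^{i\omega}$, so there $\JJ = e^{b}\JJ_{\omega}e^{-b}$ is the $B$-field transform of the symplectic structure $\JJ_{\omega}$ by $b$. A one-line computation shows $e^{-b}\tau_{F}=\tau_{F-\iota^{*}b}=\tau_{0}=TL\oplus N^{*}L$, using $F=\iota^{*}b$. Hence, on $M\setminus D$, invariance of $\tau_{F}$ under $\JJ$ is equivalent to invariance of $\tau_{0}$ under $\JJ_{\omega}$, which is exactly the classical statement that $L\setminus(L\cap D)$ is a symplectic brane with trivial $2$-form, i.e.\ Lagrangian for $\omega$ --- true because the elliptic and ordinary pullbacks of $\omega$ agree away from $D$.

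Finally I would extend the invariance across $D$ by continuity. Both $\tau_{F}$ and $\JJ\tau_{F}$ are smooth rank-$2n$ subbundles of $(\TT M)|_{L}$, and the smooth bundle map $\tau_{F}\to (\TT M)|_{L}/\tau_{F}$, $v\mapsto \JJ v \bmod \tau_{F}$, vanishes on the dense open set $L\setminus(L\cap D)$; its zero locus is closed, hence all of $L$, so $\JJ\tau_{F}\subseteq\tau_{F}$ everywhere, and equality follows since $\JJ$ is invertible. I expect the only genuinely delicate step to be the smoothness of $F$ along $D$ --- equivalently, the control of the singular gauge field $b$ as one approaches the divisor --- which is supplied by the identification of the kernel of $\Im^{*}$ in Proposition~\ref{isomzerores}; the final continuity argument is then precisely what lets us conclude across $D$ despite the fact that the gauge $e^{b}$ degenerates there.
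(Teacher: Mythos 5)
Your proposal is correct and follows essentially the same route as the paper's proof: set $F=\iota^{*}\sigma$, use the Lagrangian condition together with Proposition~\ref{isomzerores} to conclude $F$ is a smooth real $2$-form, obtain $\iota^{*}H=dF$ from $d\sigma = H$, and deduce $\JJ\tau_{F}=\tau_{F}$ because away from $D$ it is gauge equivalent (via $e^{-b}$) to the classical statement that a Lagrangian is a brane, and the condition is closed. Your last two paragraphs simply spell out explicitly the gauge-transform computation and the density/continuity argument that the paper compresses into its final sentence.
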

\begin{proof}
Since $L$ is transverse to $D$, we obtain an induced complex divisor $D_{L}$ on $L$ and an inclusion of logarithmic tangent bundles $\iota_{*}:TL(-\log D_{L})\to TM(-\log D)$.  Then let $\sigma$ be the complex log symplectic structure given by $\JJ$; since $L$ is Lagrangian, the imaginary part of $\iota^{*}\sigma$ vanishes, and by Proposition~\ref{isomzerores}, it is a smooth 2-form, i.e. $\iota^{*}\sigma = F\in\Omega^{2}(L,\RR)$.  The integrability condition $d\sigma = H$ then yields the required condition $\iota^{*}H = dF$. The condition $\JJ \tau_{F} = \tau_{F}$ then automatically holds since it is a closed condition which is gauge equivalent, away from $D$, to the condition that $L$ is Lagrangian in the usual sense.
\end{proof}

In this way, we have obtained a classification of neighbourhoods of compact Lagrangian branes in stable generalized complex manifolds, generalizing the following 4-dimensional result to all dimensions. 
\begin{example}[{\cite[Theorem 2.6]{MR2574746}}]
Let $(L,F)\subset (M,H,\JJ)$ be a compact Lagrangian brane in a stable generalized complex 4-manifold.  Then the generalized complex structure in a tubular neighbourhood of $L$ is completely determined by the (0-dimensional) complex divisor $D_{L} = D\cap L$, which itself is completely determined by orienting the tangent spaces $T_{p}L$ at each of the finitely many points $p\in D\cap L$. 
\end{example}

\section{Implications for deformation theory}\label{sec4}

In Section~\ref{equivloggc}, we showed that the deformation problem for stable generalized complex structures, with or without fixing the background 3-form, is unobstructed.  These deformation problems are governed by a differential graded Lie algebra and a $L_{\infty}$-algebra, respectively, and the unobstructedness suggests that these algebras are formal, i.e. quasi-isomorphic to a differential graded Lie algebra with vanishing bracket.  In this section we will establish the basic results which verify this formality.  

\subsection{Real Poisson deformations}\label{realpoisdefs}

We begin with a motivating example from real Poisson geometry. Let $\pi$ be a real Poisson structure on the smooth manifold $M$.   The deformations of $\pi$ are governed by the Lichnerowicz differential graded Lie algebra of multivector fields $\der{M}$, with Lie bracket given by the Schouten bracket and differential given by $d_{\pi}=[\pi,\cdot]$.  Specifically, a deformation of the Poisson structure is given by a tensor $\eps\in \der[2]{M}$ such that 
\begin{equation}\label{poismc}
d_{\pi}\eps + \tfrac{1}{2} [\eps,\eps]=0.
\end{equation}
This deformation theory has the special feature that it receives a morphism of differential graded Lie algebras from the usual de Rham complex $\Omega^{\bullet}(M)$, equipped with the Koszul bracket $[\cdot,\cdot]_{\pi}$, extending the Poisson bracket on functions.  The morphism is given by the exterior powers of $\pi$:
\begin{equation}\label{wedgepi}
\xymatrix{
(\Omega^{\bullet}(M), d, [\cdot,\cdot]_{\pi})\ar[r]^-{\wedge^{\bullet}\pi} & (\der{M}, d_{\pi}, [\cdot,\cdot])
}
\end{equation}
Importantly, this Koszul algebra is formal, in that it receives an $L_{\infty}$ quasi-isomorphism $f=(f_{1},f_{2},\ldots)$ from the de Rham complex with trivial bracket. We refer to~\cite{MR3007085} for a careful description of this $L_{\infty}$ morphism, and describe only its action on Maurer-Cartan elements: if $B\in \Omega^{2}(M)$ is a closed 2-form, then according to~\cite{MR3007085}, the $k$-linear component of the $L_{\infty}$ morphism is
\begin{equation}
f_{k}(B,\cdots ,B) = k! \cdot B(\pi B)^{k-1},
\end{equation}
where we view $B$ and $\pi$ as maps between $TM$ and $T^{*}M$ and the expression above is their successive composition.
Therefore, the Maurer-Cartan element $B$ in the abelian algebra is sent to the 2-form
\begin{equation}\label{linftymcmap}
f_{*}(B) = \sum_{k=1}^{\infty}\tfrac{1}{k!}f_{k}(B,\ldots, B) = B + B\pi B + B\pi B \pi B + \cdots
\end{equation}
which converges to $B(1-\pi B)^{-1}$ when the endomorphism $(1-\pi B)$ has an inverse.
This then satisfies the Maurer-Cartan equation for the Koszul bracket.  Finally, we apply the morphism~\eqref{wedgepi} to obtain
\begin{equation}
\eps = \wedge^{2}\pi(f_{*}(B)) = \pi B \pi + \pi B \pi B \pi + \cdots,
\end{equation}
which satisfies~\eqref{poismc}, giving a deformation of $\pi$.  While this may seem surprising from an algebraic point of view, it is well-understood in Poisson geometry (see~\cite{MR2023853}) and consists of the $B$-field symmetry~\eqref{bsym} applied to the graph of the Poisson structure $\pi$ in $\T M$. Indeed, if we let $\Gamma_{\pi}=\{\pi(\xi) + \xi\ |\  \xi\in T^{*}M\}$, then when $(1-\pi B)$ is invertible, we have the identity
\begin{equation}\label{bfieldgamma}
e^{-B}\Gamma_{\pi} = \Gamma_{(1-\pi B)^{-1}\pi}.
\end{equation}
From a more pedestrian standpoint, one simply deforms $\pi$ by subtracting the pullback of $B$ from the symplectic form on each symplectic leaf of $\pi$.


If $\pi$ is invertible, then the morphism~\eqref{wedgepi} is an isomorphism, and so all deformations of $\pi$ may be obtained by the $B$-field transform~\eqref{bfieldgamma}, reflecting the fact that the symplectic form $\pi^{-1}$ is deformed by adding closed 2-forms.  When $\pi$ does degenerate, however, we cannot expect to capture all deformations of $\pi$ in this way. On the other hand, in the interesting case of real log symplectic structures as developed by~\cite{MR3250302}, this failure can be repaired.

A real log symplectic structure is a Poisson structure $\pi$ on a real $2n$-manifold $M$ such that $\wedge^{n}\pi$ vanishes transverseley.  The zero locus of $\wedge^{n}\pi$ is a real codimension 1 submanifold $Z$, the real analogue of the complex divisors studied in Section~\ref{sectcxdiv}.  Similarly to the complex case, the sheaf of vector fields tangent to $Z$ forms a Lie algebroid $T(-\log Z)$.  The Poisson structure $\pi$ is then a section of $\wedge^{2}(T(-\log Z))$, and so induces a Koszul bracket on the logarithmic de Rham complex $\Omega^\bullet(M,\log Z)$ as before. Furthermore, as a map from $T(-\log Z)$ to $\Omega^{1}(M,\log Z)$, $\pi$ is invertible to a nondegenerate section of $\wedge^{2}(T(-\log Z))^{*}$, i.e., a logarithmic symplectic form. 

As observed in~\cite{MR3245143}, we may factor the morphism~\eqref{wedgepi} through the inclusion of forms into logarithmic forms, and the resulting morphism
\begin{equation}\label{wedgepilog}
\xymatrix{
(\Omega^{\bullet}(M,\log Z), d, [\cdot,\cdot]_{\pi})\ar[r]^-{\wedge^{\bullet}\pi} & (\der{M}, d_{\pi}, [\cdot,\cdot])
}
\end{equation}
is a quasi-isomorphism. The $L_{\infty}$ quasi-isomophism used in Section~\ref{realpoisdefs} then also factors through the inclusion of forms into logarithmic forms, inducing the same map on Maurer-Cartan elements~\eqref{linftymcmap}, but for $B\in\Omega^{2}(M,\log Z)$.  Composing these two quasi-isomorphisms, we render the deformation complex formal, leading to the identification made in~\cite{MR3245143} of the local moduli space of log symplectic structures with an open neighbourhood of $[\pi^{-1}]$ in the second logarithmic cohomology group, which by the theorem of Mazzeo-Melrose~\cite{MR1348401} is canonically isomorphic to $H^{2}(M,\RR)\oplus H^{1}(Z,\RR)$.

\subsection{Stable generalized complex deformations}

In~\cite{MR2811595}, the deformation theory for generalized complex structures is described as follows.  Since $L,\ol L$ are involutive subbundles in duality, we obtain a graded Lie bracket on the algebroid de Rham complex $(\Omega^\bullet_L, d_L)$, where
\begin{equation}
\Omega^\bullet_L(M) = C^\infty(M,\wedge^k L^*),
\end{equation}
rendering it a differential graded Lie algebra.
A deformation of the generalized complex structure $\JJ$ is an element $\eps$ in $\Omega^2_L(M)$ satisfying the Maurer-Cartan equation
\begin{equation}
d_L\eps + \tfrac{1}{2}[\eps,\eps]=0.
\end{equation}
If we set up a local deformation problem in which generalized complex structures are equivalent if they are related by diffeomorphisms connected the identity and exact B-field transforms, we obtain that the tangent space to the local Kuranishi moduli space is given by the Lie algebroid cohomology group $H^2_L(M)$.

For a stable generalized complex structure with anticanonical divisor $D = (K^{*},s)$, we saw in Section~\ref{equivloggc} that the anchor map of $L$ factors through the morphism $T(-\log D)\to T_{\CC}$, and so we have the morphism $\wt\anc: L\to T(-\log D)$,
which is an inclusion on sheaves of sections.  Motivated by the equivalence of stable generalized complex structures and complex log symplectic forms, we expect the dual sheaf inclusion
\begin{equation}\label{qilcomplex}
\wt\anc^{*}:\Omega^{\bullet}(\log D)\to \Omega^{\bullet}_{L}
\end{equation}
to be a quasi-isomorphism, just as in the previous case.
\begin{proposition}
The pullback $\wt\anc^{*}$ of logarithmic forms for the anticanonical divisor to the algebroid de Rham complex $\Omega^{\bullet}_{L}$ of a stable generalized complex manifold is a quasi-isomorphism, yielding the isomorphism
\begin{equation}\label{lcohomology}
H^{k}_{L}(M) \cong H^{k}(M\backslash D,\CC).
\end{equation}
\end{proposition}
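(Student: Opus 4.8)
The plan is to prove the stronger statement that $\wt\anc^{*}$ is a quasi-isomorphism of complexes of sheaves; the isomorphism~\eqref{lcohomology} then follows at once by composing the induced isomorphism $H^{k}_{L}(M)\cong H^{k}(M,\log D)$ with the isomorphism $H^{k}(M,\log D)\cong H^{k}(M\backslash D,\CC)$ of Theorem~\ref{logdcoh}. First I would check that $\wt\anc^{*}$ is a morphism of complexes: this is automatic once one knows that $\wt\anc:L\to T(-\log D)$ is a Lie algebroid morphism, which holds because the anchor of $L$ always intertwines the (restricted, $H$-twisted) Courant bracket with the Lie bracket of vector fields, and by hypothesis its image is the bracket-closed subsheaf $T(-\log D)\subset T_{\CC}M$. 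Since both algebroid de Rham complexes are complexes of sections of vector bundles, hence of fine sheaves, it suffices -- exactly as in the proof of Theorem~\ref{logdcoh}, via the hypercohomology spectral sequence -- to verify that $\wt\anc^{*}$ induces an isomorphism on local cohomology sheaves.

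Away from $D$ the generalized complex structure is of type $0$, so the anchor $L\to T_{\CC}M$ is an isomorphism; since $T(-\log D)\cong T_{\CC}M$ there as well, $\wt\anc$ is an isomorphism of algebroids and $\wt\anc^{*}$ is an isomorphism of complexes. The entire content is therefore the local cohomology in a ball $W$ centred at a point of $D$. For this I would invoke the Darboux normal form of Theorem~\ref{darbouximaglog}, writing a local generator of $K$ as $w\,e^{\sigma_{0}}=(w+dw\wedge dz)\wedge e^{i\omega}$ from~\eqref{localformgc}. This presents $\JJ$ locally as the product of the transverse symplectic structure $e^{i\omega}$ with the structure $w+dw\wedge dz$ on $\CC^{2}_{w,z}$ obtained by deforming the standard complex (type $2$) structure by the holomorphic Poisson bivector $w\del_{w}\wedge\del_{z}$. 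On the symplectic factor $\wt\anc^{*}$ is an isomorphism onto the ordinary de Rham complex, so a K\"unneth argument reduces the local verification to the $\CC^{2}_{w,z}$ factor.

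On this factor one computes directly that $L$ is freely generated by $\del_{\bar w},\ \del_{\bar z},\ dw+w\del_{z},\ dz-w\del_{w}$, with anchor image the proper subsheaf $\langle w\del_{w},\del_{\bar w},\del_{\bar z},w\del_{z}\rangle$ of $T(-\log D)$; consequently $\wt\anc^{*}$ is injective but not surjective, with cokernel supported along $D$. I would therefore recast the problem as the short exact sequence of complexes
\begin{equation}
\xymatrix{
0\ar[r] & \Omega^{\bullet}(\log D)\ar[r]^-{\wt\anc^{*}} & \Omega^{\bullet}_{L}\ar[r] & C^{\bullet}\ar[r] & 0
},
\end{equation}
and establish the quasi-isomorphism by showing the $D$-supported cokernel complex $C^{\bullet}$ is acyclic. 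Equivalently, one computes the local algebroid cohomology of $L$ from the single nonzero structure bracket $[\,dw+w\del_{z},\,dz-w\del_{w}\,]_{L}=dw+w\del_{z}$: the calculation shows that $H^{\bullet}_{L}(W)$ is spanned by the class of the constant in degree $0$ and by the class of the generator dual to $dz-w\del_{w}$ in degree $1$ -- which is precisely $-\wt\anc^{*}(d\log w)$, non-exact because $w\del_{w}f=-1$ has no smooth solution -- and vanishes in higher degree, matching the local logarithmic cohomology of the circle and pairing up the generators compatibly.

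The main obstacle is exactly this last local computation. Unlike the real log symplectic situation of Section~\ref{realpoisdefs}, where $\pi$ is nondegenerate on $T(-\log Z)$ and $\wedge^{\bullet}\pi$ is an honest isomorphism of the relevant bundles, here $\sigma$ is degenerate along the conjugate directions and $\wt\anc^{*}$ fails to be surjective, so no algebraic inverse is available; the acyclicity of the cokernel $C^{\bullet}$ must be extracted directly from the normal form, which is where the transversality of the anticanonical section and the precise shape of the bracket do the work.
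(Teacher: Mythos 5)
Your proposal follows the same skeleton as the paper's proof: argue sheaf-theoretically via local cohomology (exactly as in Theorem~\ref{logdcoh}), note that $\wt\anc^{*}$ is an isomorphism away from $D$, pass to the Darboux model $we^{\sigma_{0}}$ of Section~\ref{darbo}, split off the symplectic factor (your K\"unneth step is the paper's ``dg module over the $dx_{i},dp_{i}$ de Rham complex plus Poincar\'e lemma''), and compare local cohomologies on the $\CC^{2}$ factor. Moreover your concrete assertions all check out: $L$ is indeed framed by $\del_{\ol w},\del_{\ol z},\,dw+w\del_{z},\,dz-w\del_{w}$; the anchor image is the proper subsheaf you name (note this contradicts your earlier phrase that the image of the anchor ``is'' $T(-\log D)$ --- it is only contained in it, which is all that is needed for $\wt\anc^{*}$ to be a cochain map); the only nonvanishing bracket is $[dw+w\del_{z},dz-w\del_{w}]_{L}=dw+w\del_{z}$; and the generator dual to $dz-w\del_{w}$ is $-\wt\anc^{*}(d\log w)$, closed and non-exact because $w\del_{w}f=-1$ has no smooth solution.

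The gap is that the decisive step --- that $H^{\bullet}_{L}(W)$ of a small ball $W$ centred on $D$ is $\CC$ in degree $0$, \emph{exactly} one-dimensional in degree $1$, and zero in degrees $2,3,4$ (equivalently, that your cokernel complex $C^{\bullet}$ is acyclic) --- is asserted rather than proved, and your suggestion that it can be read off ``from the single nonzero structure bracket'' is not accurate: with smooth-function coefficients the cohomology of $\Omega^{\bullet}_{L}$ is not determined by the bracket relations alone, since the anchor makes the coefficients a nontrivial module and the problem becomes one of solving PDEs. Your non-exactness argument for $e^{4}$ only gives a lower bound on $H^{1}_{L}(W)$; what carries the proposition is the upper bound in degree $1$ and the vanishing above, and this is where all the work in the paper's proof lies. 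The paper gets there by two reduction steps --- the Poincar\'e lemma in the symplectic directions and then the Dolbeault lemma in the $d\ol w, d\ol z$ directions, landing in \emph{holomorphic} coefficients on $\CC^{2}$ --- followed by explicit function theory: in degree $1$, closedness of $\alpha=w^{-1}a$ forces $da=w^{-1}dw\wedge a$, whence $a=f\,dw+w\,dg$ with $f$ constant, so $H^{1}_{L}$ is spanned by $[w^{-1}dw]=\wt\anc^{*}[d\log w]$; in degree $2$, every $f\,w^{-1}dw\wedge w^{-1}dz$ is exact, with a primitive built by splitting $f=f_{0}+wf_{1}$, $\del_{z}f_{0}=0$, and integrating $f_{1}$ in $z$. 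Some device of this kind (a Dolbeault-type reduction plus explicit division by $w$ and integration in $z$) is unavoidable --- it is precisely where the transversality you invoke at the end actually enters --- and without it your proposal identifies the crux but stops short of a proof.
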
 
\begin{proof}
Working in the local Darboux coordinates of Section~\ref{darbo}, we have that $L^{*}$ is locally generated by $w^{-1}dw, w^{-1}dz, d\ol z, d\ol w,$ as well as the remaining real symplectic generators $dx_{i}, dp_{i}, i=1,\ldots, m-2$, whereas 
$\Omega^{1}(\log D)$ has identical generators except $w^{-1}dz$ is replaced with $dz$. 

Viewing each complex as a dg module over the de Rham complex generated by the $dx_{i}, dp_{i}$, we may apply the Poincar\'e lemma to reduce the problem to the case $m=2$, that is, for $\CC^{2}$ with coordinates $w,z$.  Then, viewing each complex as a dg module over the Dolbeault complex generated by $d\ol w, d\ol z$, we apply the Dolbeault lemma to reduce the problem to proving a quasi-isomorphism only for sheaves of holomorphic sections. 

We now compute the cohomology groups $H^{k}_{L}$ explicitly on $\CC^{2}$ in the holomorphic category. For $f\in \Omega^{0}_{L}$, 
we have
\begin{equation}\label{eq:dpif}
df = (w\del_{w}f) w^{-1}dw +(w\del_{z}f) w^{-1}dz,
\end{equation}
which vanishes if and only if $f$ is constant, proving $H^{0}_{L}=\CC$, with the same generator as $H^{0}(\log D)$.  

Now we compute  $H_{L}^1$. For $\alpha = f_{1} w^{-1}dw + f_{2}w^{-1}dz$, we may write $\alpha = w^{-1} a$ for $a$ a smooth 1-form, and  
\begin{equation}\label{eq:dpiX}
d\alpha = -w^{-2}dw \wedge a + w^{-1}da,
\end{equation}
so that $\alpha$ is closed if and only if $da = w^{-1}dw \wedge a$.  For $a$ smooth this is possible only if $a = fdw + wa'$ for $a'$ a smooth 1-form, which can be taken to satisfy $i_{\del_{w}}a' =0$.  Differentiating, we obtain
$df\wedge dw + wda' = 0$,
implying that $f$ is constant and $a' = dg$, so that $\alpha = fw^{-1}dw + dg$, proving that $H^{1}_{L}$ has rank 1 and is generated by the class of $w^{-1}dw$, also the generator for $H^{1}(\log D )$.

Finally, we compute $H^2_{L}$.  Given any $\sigma = f w^{-1}dw \wedge w^{-1}dz$, write $f = f_{0} + wf_{1}$ for $f_{0}, f_{1}$ holomorphic functions with $\del_{z}f_{0}=0$.  Then $\sigma$ is exact:
\begin{equation}
\sigma  =  d( - f_{0} w^{-1}dz + (\textstyle\int^{z}\!f_{1}) w^{-1}dw      ).
\end{equation}

This verifies that $\wt\anc^{*}$ is an isomorphism on local cohomology groups, proving it is a quasi-isomorphism. The final isomorphism~\eqref{lcohomology} then follows from Theorem~\ref{logdcoh}.


\end{proof}

As described in Section~\ref{equivloggc}, the real Poisson structure underlying the generalized complex structure defines an elliptic log symplectic structure $\omega$, or equivalently, the imaginary elliptic Poisson structure $\pi = i\omega^{-1}/2$, which we may view as a section of $\wedge^{2}T_{\CC}(-\log|D|)$.  This defines a Koszul bracket on the elliptic forms $[\cdot,\cdot]_{\pi}$, which is induced on the logarithmic forms by the inclusion of $\Omega^{\bullet}(M,\log D)$ in $\Omega^{\bullet}_{\CC}(M,\log|D|)$.  
As in the real log symplectic case~\eqref{wedgepilog}, the quasi-isomorphism of complexes~\eqref{qilcomplex} defines a homomorphism of differential graded Lie algebras from the logarithmic de Rham complex with the above Koszul bracket to the deformation complex of the generalized complex structure. 
Also, by the results of~\cite{MR3007085}, there is a formality $L_{\infty}$ quasi-isomorphism $f = (f_{1}, f_{2}, \ldots)$ from the logarithmic de Rham complex to itself, taking the zero bracket to the Koszul bracket.  Summarizing, we have the following morphisms 
\begin{equation}\label{homllogdgla}
\xymatrix@C=2ex{
(\Omega^{\bullet}(M,\log D), d, 0 )\ar[r]^-{f} & (\Omega^{\bullet}(M,\log D), d, [\cdot,\cdot]_{\pi})\ar[r]^-{\wt\anc^{*}} & (\Omega^{\bullet}_{L}(M), d_{L}, [\cdot,\cdot])
},
\end{equation}
whose composition provides the formality map for the deformation complex of stable generalized complex structures.
\begin{proposition}
The formality map given by the composition of the quasi-isomorphisms~\eqref{homllogdgla} takes a closed 2-form $\beta\in\Omega^{2}(\log D)$ to the Maurer-Cartan element 
\begin{equation}
\wt\anc^{*}(f(\beta)) = \wt\anc^{*}(\beta + \beta\pi\beta + \beta\pi\beta\pi\beta + \cdots),
\end{equation}
which converges to $\wt\anc^{*}(\beta(1-\pi\beta)^{-1})$ when $1-\pi\beta$ is an invertible endomorphism of $T_{\CC}M$. 
\end{proposition}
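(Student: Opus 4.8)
The plan is to reduce the statement to the computation of the underlying $L_\infty$ morphism on Maurer--Cartan elements, exactly as was carried out in the real Poisson setting of Section~\ref{realpoisdefs}. First I would observe that the second arrow in~\eqref{homllogdgla}, the pullback $\wt\anc^{*}$, is a \emph{strict} morphism of differential graded Lie algebras: by construction it commutes with the differentials and intertwines the Koszul bracket $[\cdot,\cdot]_{\pi}$ on $\Omega^{\bullet}(\log D)$ with the algebroid bracket on $\Omega^{\bullet}_{L}(M)$. Consequently the composite $\wt\anc^{*}\circ f$ is again an $L_\infty$ morphism, and its effect on a Maurer--Cartan element is simply to apply $\wt\anc^{*}$ after computing $f$; thus the entire content is concentrated in the first arrow.

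For that first arrow I would invoke the formality $L_\infty$ quasi-isomorphism of~\cite{MR3007085}. The key point is that the Taylor coefficients of this morphism are given by a \emph{universal algebraic formula} in the Poisson bivector and the input form: as recalled in~\eqref{linftymcmap} and the surrounding discussion, the $k$-linear term evaluated on a closed $2$-form $\beta$ equals $f_{k}(\beta,\dots,\beta) = k!\,\beta(\pi\beta)^{k-1}$, where $\beta$ and $\pi$ are regarded as bundle maps and the expression denotes their iterated composition. Since this formula depends only on the pairing of $\beta$ with $\pi$ through composition of the induced maps, it transcribes verbatim once $\pi$ is taken to be the elliptic Poisson bivector $\pi = i\omega^{-1}/2$, a genuine section of $\wedge^{2}T_{\CC}(-\log|D|)$, and $\beta$ a closed logarithmic $2$-form included into the complexified elliptic forms. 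This is precisely the factorization through logarithmic forms already exploited for real log symplectic structures in~\eqref{wedgepilog}.

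Summing the Taylor coefficients then yields
\[
f(\beta) = \sum_{k=1}^{\infty}\tfrac{1}{k!}f_{k}(\beta,\dots,\beta) = \sum_{k=1}^{\infty}\beta(\pi\beta)^{k-1} = \beta + \beta\pi\beta + \beta\pi\beta\pi\beta + \cdots,
\]
which is a geometric series in the endomorphism $\pi\beta$ of $T_{\CC}M$; when $1-\pi\beta$ is invertible it sums to $\beta(1-\pi\beta)^{-1}$. Applying the strict morphism $\wt\anc^{*}$ produces the asserted Maurer--Cartan element $\wt\anc^{*}(f(\beta))$, together with its closed-form expression $\wt\anc^{*}(\beta(1-\pi\beta)^{-1})$, completing the argument.

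The main obstacle I anticipate is the second paragraph: justifying that the explicit formula for the formality morphism of~\cite{MR3007085} applies without change in the elliptic (logarithmic) category. This requires, on the one hand, that $\pi$ is a bona fide bivector for the elliptic tangent bundle---guaranteed by nondegeneracy of $\omega$, which makes $\omega:T(-\log|D|)\to T^{*}(\log|D|)$ invertible---and, on the other, the naturality of the construction under replacing the ordinary tangent bundle $TM$ in the Lichnerowicz complex by the Lie algebroid $T_{\CC}(-\log|D|)$. Granting this naturality, which parallels the real log symplectic case, the remaining steps are the purely formal manipulations above.
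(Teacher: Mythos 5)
Your argument is correct in outline but follows a genuinely different route from the paper. The paper's proof is a direct geometric calculation which, in its own words, ``circumvents'' the algebraic method of~\cite{MR3007085}: it writes $L$ as the graph $\{X+\sigma X \mid X\in T_{\CC}M\}$ of the complex log symplectic form, deforms it to $L_{\beta} = \{X+\sigma X+\beta X\}$, expresses $L_{\beta}$ as the graph of a skew map $L\to \ol L$ of the form $\wt\anc^{*}\delta$ with $\delta\in\Omega^{2}(\log D)$, and then solves the resulting linear equation $(\sigma+\beta)(1+\pi\delta)=\sigma+\ol\sigma\pi\delta$ to get $\delta=\beta(1-\pi\beta)^{-1}$ in closed form, with no series manipulation at all. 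You instead concentrate all the content in the first arrow of~\eqref{homllogdgla}: you use strictness of $\wt\anc^{*}$ as a dgLa morphism (which the paper does assert) and then import the explicit Taylor coefficients $f_{k}(\beta,\ldots,\beta)=k!\,\beta(\pi\beta)^{k-1}$ of the formality morphism of~\cite{MR3007085}, summing the geometric series. Your route buys economy: granted the universal formula, everything else is purely formal. The paper's route buys self-containedness and geometric content: it never needs the explicit form of $f$ in the Lie algebroid category, and it simultaneously identifies the resulting Maurer--Cartan element with the geometrically expected object, namely the deformation of the Dirac structure given by the graph of $\sigma+\beta$.

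The one genuine weakness in your proposal is the step you yourself flag as the ``main obstacle'' and then grant rather than prove: that the explicit formula of~\cite{MR3007085} transcribes verbatim when the tangent bundle is replaced by the elliptic/logarithmic algebroid. The paper invokes~\cite{MR3007085} only for the \emph{existence} of the formality quasi-isomorphism $f$ in the logarithmic setting, and recalls the explicit Maurer--Cartan formula only in the ordinary Poisson case of Section~\ref{realpoisdefs} (and, citing~\cite{MR3245143}, in the real log symplectic case). To make your argument complete you would need to justify this naturality, for instance by observing that the construction of~\cite{MR3007085} is algebraic and applies to the de Rham complex of any Lie algebroid equipped with a Poisson bivector; as written, this is exactly the dependence that the paper's direct computation was designed to avoid, and it is the point where your proof defers to an unverified input rather than closing the argument.
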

\begin{proof}
We present a direct geometric calculation of the map on Maurer-Cartan elements which circumvents but coincides with the general algebraic method of~\cite{MR3007085}.  Let $\sigma\in\Omega^{2}(M,\log D)$ be the complex log symplectic form corresponding to $\JJ$.
Then $L = \{X + \sigma X\ |\ X\in T_{\CC}M\}$ and its deformation by the closed logarithmic 2-form $\beta$ is $L_{\beta} = \{X + \sigma X + \beta X\ |\  X\in T_{\CC}M\}$. Writing $L_{\beta}$ as the graph of a skew map from $L$ to $\ol L\cong L^{*}$, and expressing this map as the pullback $\wt\anc^{*}\delta$ of $\delta\in\Omega^{2}(\log D)$, we obtain that for every $X\in T_{\CC}M$ there exists $Y\in T_{\CC}M$ such that 
\begin{equation}
X + \sigma X + \beta X = Y + \sigma Y  + (\wt\anc^{*}\delta)(Y + \sigma Y).
\end{equation}
The map $\wt\anc^{*}:T^{*}_{\CC}M\to \ol L$ may be written $\wt\anc^{*}\eta = \pi\eta + \ol \sigma \pi\eta$, and so we have 
\begin{equation}
X + \sigma X + \beta X = Y + \sigma Y  + \pi\delta Y + \ol\sigma\pi\delta Y.
\end{equation}
Equating tangent and cotangent components, we obtain 
$(\sigma + \beta)(1+\pi\delta) = \sigma + \ol\sigma\pi\delta$,
and solving for $\delta$, we obtain $\delta = \beta(1-\pi\beta)^{-1}$, as required.
\end{proof}

\subsection{Deformations with varying background 3-form}

In the previous section we studied the deformation theory of a generalized complex structure $\JJ$ on the pair $(M,H)$, where $H$ was a fixed real closed 3-form.  We now consider the simultaneous deformations of $(\JJ,H)$: a deformation is given by $\eps\in \Omega^{2}_{L}(M)$ as before, together with a 3-form variation $\eta\in \Omega^{3}(M,\RR)$.  The integrability condition for the deformed structure may be obtained as follows: first the deformed twisted differential $d^{H+\eta} = d + (H+\eta)\wedge\cdot$ must square to zero, and second, we require that the deformed canonical bundle $e^{\eps}K$ satisfies the integrability condition~\eqref{modfield} with respect to the deformed differential $d^{H+\eta}$. 

The first condition holds if and only if $d\eta = 0$, and the second may be phrased in terms of the alternative grading~\eqref{fockspace}:  the operator 
\begin{equation}\label{integsimdef}
e^{-\eps}d^{H+\eta}e^{\eps} = e^{-\ad[\eps]}(d^{H} +\eta\wedge\cdot)
\end{equation}
must have only degree +1 and -1 components, which holds if and only if the degree $3$ component vanishes. Since we are deforming away from an integrable structure, $d^{H}$ decomposes into degree $-1$ and $+1$ components, namely $d^{H}= \del + \delbar$, and we may decompose $\eta$ into its components according to the decomposition of $\wedge^{3}\T M$ into summands $\wedge^{i}L\otimes \wedge^{j}\ol L$ for $i+j=3$ which act by the Clifford action with degree $j-i$:
\begin{equation}
\eta = \eta^{3,0} + \eta^{2,1} + \eta^{1,2} + \eta^{0,3}.
\end{equation}
Finally, as a section of $\wedge^{2}\ol L$, $\eps$ acts with degree $+2$.
Expanding~\eqref{integsimdef} and extracting the degree $+3$ component, we obtain the sum of commutators
\begin{equation}\label{triplebracket}
\eta^{0,3} - [\eps, \delbar + \eta^{1,2}] + \tfrac{1}{2}[\eps,[\eps,\del + \eta^{2,1}]] - \tfrac{1}{3!} [\eps,[\eps,[\eps,\eta^{3,0}]]].
\end{equation}
The linear part of the above sum is simply $\eta^{0,3} - [\eps,\delbar]$, which may be written in terms of the anchor and de Rham operator for $L$ as $\anc^{*}\eta + d_{L}\eps$. Together with the earlier linear condition $d\eta =0$, this suggests that deformations are governed by the double complex
\begin{equation}\label{dblcxgch}
\begin{aligned}
\xymatrix@R=2.2ex{
\ar[r]&\Omega^{k-1}_L\ar[r]^-{d_{L}} & \Omega^{k}_L\ar[r]^-{d_{L}} & \Omega^{k+1}_L\ar[r]&\\
\ar[r]&\Omega^{k-1}_M\ar[r]^-{d}\ar[u]_{\anc^*} & \Omega^{k}_M\ar[r]^-{d}\ar[u]_{\anc^*} & \Omega^{k+1}_M\ar[u]_{\anc^*}\ar[r]&
}
\end{aligned}
\end{equation}
where the bottom row is the de Rham complex with real coefficients.
In fact, the vanishing of ~\eqref{triplebracket} and the requirement $d\eta=0$ are the components of the Maurer-Cartan equation for an $L_{\infty}$ structure on the total complex of the above double complex.  The $L_{\infty}$ structure may be obtained via the derived bracket construction of homotopy algebras~\cite{MR2163405}, as implemented by~\cite{Fregier:2012uq} for the study of simultaneous deformations. 

In the case of stable generalized complex structures, 
this double complex receives a quasi-isomorphism from the double complex governing deformations of the pair $(\sigma, H)$ consisting of a complex log symplectic structure integrable with respect to the real closed 3-form $H$.  The latter complex is as follows:
\begin{equation}\label{formaldbl}
\begin{aligned}
\xymatrix@R=2.2ex{
\ar[r]&\Omega^{k-1}_{\log D}\ar[r]^-{d} & \Omega^{k}_{\log D}\ar[r]^-{d} & \Omega^{k+1}_{\log D}\ar[r]&\\
\ar[r]&\Omega^{k-1}_M\ar[r]^-{d}\ar[u] & \Omega^{k}_M\ar[r]^-{d}\ar[u] & \Omega^{k+1}_M\ar[u]\ar[r]&
}
\end{aligned}
\end{equation}
and the quasi-isomorphism is given by combining $\wt\anc^{*}$ on the top row with the identity map on the bottom row.  But the latter deformation problem is formal, with Maurer-Cartan equation for an element $(\beta,\eta)\in \Omega^{2}(\log D)\oplus \Omega^{3}(M,\RR)$ being
\begin{equation}
d\beta + \eta = 0.
\end{equation}
This leads us to the following conjecture, which should be tractable by combining the results of~\cite{MR3007085} and~\cite{Fregier:2013fk}:
\begin{conjecture}\label{conjlinfty}
There is a canonical $L_{\infty}$ morphism from the total complex $\Omega^{\bullet}_{\JJ,H}$ of~\eqref{formaldbl} equipped with the zero bracket to the $L_{\infty}$ algebra structure on the total complex of~\eqref{dblcxgch}, extending the given quasi-isomorphism and for which the map on Maurer-Cartan elements is given by the map
\begin{equation}
(\beta,\eta)\mapsto (\wt\anc^{*}(\beta(1-\pi\beta)^{-1}), \eta).
\end{equation}
\end{conjecture}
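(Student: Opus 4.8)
The plan is to realize both total-complex $L_\infty$ algebras in \eqref{dblcxgch} and \eqref{formaldbl} as instances of the simultaneous-deformation (mapping-cone) construction of \cite{Fregier:2013fk}, and then to lift the single-structure formality morphism $\Phi=\wt\anc^{*}\circ f$ of \eqref{homllogdgla} to a morphism of these mapping-cone $L_\infty$ algebras, checking that the induced map on Maurer--Cartan elements is the asserted one. First I would set up each double complex as the Fr\'egier--Zambon $L_\infty$ algebra attached to a vertical cochain map: in the target this is $\anc^{*}:(\Omega^\bullet_M,d)\to(\Omega^\bullet_L,d_L)$ with the Lie bracket on $\Omega^\bullet_L$, while in the source it is the inclusion $\Omega^\bullet_M\hookrightarrow\Omega^\bullet(\log D)$ with \emph{trivial} bracket upstairs. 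The total $L_\infty$ structure is then the derived bracket built from the cone grading \cite{MR2163405}, and I would confirm that its degree $+3$ Maurer--Cartan component reproduces exactly \eqref{triplebracket} together with $d\eta=0$, by matching the Clifford-degree bookkeeping of Section~\ref{equivloggc} against the $(j-i)$-graded action used to derive \eqref{triplebracket}.

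The core construction is then the morphism $F=(F_1,F_2,\dots)$. On the bottom row I would take the identity on $\Omega^\bullet_M$, and on the top row the Taylor coefficients of $\Phi$. The point to verify is that $\Phi$, which is already an $L_\infty$ morphism for the single Koszul problem by \cite{MR3007085}, is compatible with the two vertical connecting maps: concretely, that the coefficients $f_k(\beta,\dots,\beta)=k!\,\beta(\pi\beta)^{k-1}$ interact with $\anc^{*}$ through the identity $\wt\anc^{*}\eta=\pi\eta+\ol\sigma\pi\eta$ established in the preceding proposition, so that inserting a bottom-row input $\eta$ into a higher bracket produces only top-row (algebroid/log) output and never modifies the $3$-form slot. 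Granting this, the induced Maurer--Cartan map factors as $(\beta,\eta)\mapsto(\sum_k \tfrac{1}{k!}\wt\anc^{*}f_k(\beta,\dots,\beta),\,\eta)=(\wt\anc^{*}(\beta(1-\pi\beta)^{-1}),\eta)$, reproducing the conjectured formula and the convergence criterion that $1-\pi\beta$ be invertible.

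The hard part will be establishing the full tower of $L_\infty$-morphism coherence relations once the two directions are coupled, rather than merely the behaviour on Maurer--Cartan elements. The mixed higher brackets --- those pairing several copies of $\eps$ against the components $\eta^{i,j}$, already visible in the $[\eps,[\eps,\del+\eta^{2,1}]]$ and $[\eps,[\eps,[\eps,\eta^{3,0}]]]$ terms of \eqref{triplebracket} --- are exactly the obstruction: one must produce higher coefficients $F_k$ for which every morphism relation closes, with no anomaly arising from the interaction of the formality homotopies of \cite{MR3007085} with the derived brackets of \cite{Fregier:2013fk}. I expect this to reduce, through the obstruction calculus of the latter reference, to a sequence of cohomological vanishing statements in $\Omega^\bullet(\log D)$, each checkable locally in the Darboux model of Section~\ref{darbo}.

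Finally, I would address the word \emph{canonical} in the statement. Existence of some lift is plausible by the transfer argument above, but the conjecture asks for a distinguished morphism extending the given quasi-isomorphism; proving that the mixed coefficients can be chosen canonically (for instance functorially in the Darboux charts, or uniquely up to $L_\infty$-homotopy fixing $F_1$ and the Maurer--Cartan map) is the delicate step, and it is precisely this rigidity --- together with the all-orders coherence of the mixed brackets --- that I would single out as the main difficulty keeping the statement at the level of a conjecture.
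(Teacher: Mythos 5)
There is a fundamental mismatch here: the statement you are trying to prove is a \emph{conjecture} in the paper, not a theorem. The authors offer no proof at all; they only remark that it ``should be tractable by combining the results of~\cite{MR3007085} and~\cite{Fregier:2013fk}.'' Your proposal follows exactly that suggested route---realizing both total complexes \eqref{dblcxgch} and \eqref{formaldbl} as Fr\'egier--Zambon mapping-cone $L_{\infty}$ algebras via the derived-bracket construction of~\cite{MR2163405}, and attempting to lift the single-structure formality morphism $\wt\anc^{*}\circ f$ of~\eqref{homllogdgla} to the coupled problem---so your strategy coincides with the authors' intended plan of attack. But the proposal is a research outline, not a proof, and you say so yourself: the construction of the higher Taylor coefficients $F_{k}$ handling the mixed inputs (several copies of $\eps$ against the components $\eta^{i,j}$, visible already in \eqref{triplebracket}), and the verification of the full tower of $L_{\infty}$-morphism coherence relations, are deferred to ``a sequence of cohomological vanishing statements'' that are never formulated, let alone established. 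That deferred step is precisely the open content of the conjecture; without it nothing has been proved.

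Two further points of caution. First, your derivation of the Maurer--Cartan formula $(\beta,\eta)\mapsto(\wt\anc^{*}(\beta(1-\pi\beta)^{-1}),\eta)$ is conditional on the phrase ``granting this''---i.e., granting that the mixed higher brackets never feed back into the $3$-form slot and that suitable $F_{k}$ exist. Since the existence and shape of those $F_{k}$ is exactly what is unproven, the MC computation cannot be cited as independent evidence; it is a consistency check on the conjectured answer, which the paper already supplies by matching the linear part of \eqref{triplebracket} with the quasi-isomorphism of double complexes. Second, the word \emph{canonical} in the statement imposes a rigidity requirement (uniqueness up to homotopy, or functoriality) that your outline flags but does not address; local verification in the Darboux model of Section~\ref{darbo} would at best give existence chart by chart, and gluing such local choices into a canonical global morphism is itself a nontrivial problem. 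In short: your plan is reasonable and matches the authors' own suggestion, but it leaves the statement exactly where the paper leaves it---as a conjecture.
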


Using only the above quasi-isomorphism, we may go further:
because of the short exact sequence of complexes~\eqref{complogell}, the total complex of~\eqref{formaldbl} is quasi-isomorphic to the complex of elliptic forms with zero elliptic residue.  As a consequence of Proposition~\ref{qilcomplex}, therefore, we obtain an explicit topological expression for the cohomology of the $L_{\infty}$ algebra controlling deformations of the pair $(\JJ,H)$ of a stable generalized complex structure with background closed 3-form $H$:
\begin{corollary}
The total complex $\Omega^{\bullet}_{\JJ,H}$ of~\eqref{dblcxgch} which controls the simultaneous deformations of stable generalized complex structures and background 3-forms is quasi-isomorphic to the complex of elliptic forms with zero elliptic residue, and its cohomology groups are given by 
\begin{equation}
H^{k}_{\JJ,H} = H^{k}(M\backslash D,\RR)\oplus H^{k-1}(D,\RR).
\end{equation}
 \end{corollary}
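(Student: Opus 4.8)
The plan is to deduce the corollary by recognizing both double complexes as mapping cones and then applying the short exact sequence~\eqref{complogell}. First I would note that the vertical arrow $\anc^{*}$ exhibits the total complex of~\eqref{dblcxgch} as the mapping cone of $\anc^{*}\colon\Omega^{\bullet}_{M}\to\Omega^{\bullet}_{L}$, while the analogous vertical arrow exhibits the total complex of~\eqref{formaldbl} as the mapping cone of the inclusion $\psi\colon\Omega^{\bullet}(M,\RR)\to\Omega^{\bullet}(\log D)$. The comparison map of double complexes, being the identity on the bottom rows and the quasi-isomorphism $\wt\anc^{*}$ of~\eqref{qilcomplex} on the top rows, fits into a commuting square over these two injections; the long exact sequences of the two cones together with the five lemma then show it is a quasi-isomorphism of total complexes. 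This reduces the computation of $H^{\bullet}_{\JJ,H}$ to the cohomology of the cone of $\psi$.

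Before proceeding I would confirm that the unlabeled vertical map of~\eqref{formaldbl} is indeed the inclusion $\psi$: it is the pullback along the anchor $a\colon T(-\log D)\to T_{\CC}M$ applied to real smooth forms, and $a^{*}$ is by construction the inclusion of smooth differential forms among the logarithmic ones. With this in place, the essential input is Proposition~\ref{isomzerores}, whose exact sequence~\eqref{complogell} has $\psi$ as its injection and the complex $\Omega^{\bullet}_{0}(\log|D|)$ of elliptic forms with vanishing elliptic residue as its cokernel. Since the mapping cone of an injection of cochain complexes is canonically quasi-isomorphic to the cokernel --- the connecting homomorphism of~\eqref{complogell} coinciding with the connecting map in the cone's long exact sequence --- I conclude that $\Omega^{\bullet}_{\JJ,H}$ is quasi-isomorphic to $\Omega^{\bullet}_{0}(\log|D|)$. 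Finally I would read off the cohomology from Theorem~\ref{ellzerores}, whose isomorphism $(i^{*},\Res_{r})$ of~\eqref{decompelliptzerores} gives $H^{k}_{0}(\log|D|)=H^{k}(M\backslash D,\RR)\oplus H^{k-1}(D,\RR)$, which is exactly the asserted value of $H^{k}_{\JJ,H}$.

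The step requiring the most care is the degree bookkeeping. I must check that the total grading used for the $L_{\infty}$ structure, in which an infinitesimal deformation $(\eps,\eta)$ lies in $\Omega^{2}_{L}\oplus\Omega^{3}_{M}$, places $\Omega^{k}_{L}$ in total degree $k$ and $\Omega^{k+1}_{M}$ also in total degree $k$, so that the total complex of~\eqref{formaldbl} agrees in each degree with the shifted mapping cone $\Omega^{n}_{\log D}\oplus\Omega^{n+1}(M,\RR)$. Only then does the cone-equals-cokernel identification land on $\Omega^{\bullet}_{0}(\log|D|)$ with degrees unshifted, so that the answer is $H^{k}(M\backslash D,\RR)\oplus H^{k-1}(D,\RR)$ rather than a reindexed group. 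A minor accompanying point is that passing to cohomology sees only the linear part of the differential, so the higher Maurer--Cartan terms appearing in~\eqref{triplebracket} play no role in this computation.
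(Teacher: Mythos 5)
Your proposal is correct and follows essentially the same route as the paper: the comparison map ($\wt\anc^{*}$ on the top row, identity on the bottom) is a quasi-isomorphism of the two double complexes, the total complex of~\eqref{formaldbl} is the mapping cone of the inclusion $\Omega^{\bullet}(M,\RR)\hookrightarrow\Omega^{\bullet}(\log D)$ and hence quasi-isomorphic to its cokernel $\Omega^{\bullet}_{0}(\log|D|)$ by the exact sequence~\eqref{complogell}, and the cohomology is then read off from Theorem~\ref{ellzerores}. Your added care with the five lemma and the degree bookkeeping (placing $\Omega^{k+1}_{M}$ in total degree $k$ so that no shift appears in the final answer) simply makes explicit what the paper leaves implicit.
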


\bibliographystyle{hyperamsplain} \bibliography{references}

\end{document}